\numberwithin{equation}{section}
\newtheorem{letterthm}{Theorem}
\newtheorem{thm}{Theorem}[section]
\newtheorem{lem}[thm]{Lemma}
\newtheorem{cor}[thm]{Corollary}
\newtheorem{prop}[thm]{Proposition}
\theoremstyle{definition}
\newtheorem{example}[thm]{Example}
\newtheorem{df}[thm]{Definition}
\newtheorem*{newclaim}{Claim}
\newcommand{\R}{\mathbf{R}}
\newcommand{\C}{\mathbf{C}}
\newcommand{\Z}{\mathbf{Z}}
\newcommand{\F}{\mathbf{F}}
\newcommand{\T}{\mathbf{T}}
\newcommand{\N}{\mathbf{N}}
\newcommand{\B}{\mathbf{B}}
\newcommand{\cH}{\mathcal{H}}
\newcommand{\cK}{\mathcal{K}}
\newcommand{\Ad}{\operatorname{Ad}}
\newcommand{\id}{\text{\rm id}}
\newcommand{\Inn}{\operatorname{Inn}}
\newcommand{\Aut}{\operatorname{Aut}}
\newcommand{\Out}{\operatorname{Out}}
\newcommand{\rL}{\mathord{\text{\rm L}}}
\newcommand{\conv}{\overline{\mathord{\text{\rm conv}}} \,}
\newcommand{\rE}{\mathord{\text{\rm E}}}
\newcommand{\ovt}{\mathbin{\overline{\otimes}}}
\newcommand{\Mod}{\mathord{\text{\rm Mod}}}
\newcommand{\op}{^{\mathrm{op}}}
\newcommand{\rd}{\: \mathrm{d}}
\newcommand{\ri}{\mathrm{i}}
\newcommand{\II}{{\rm II}}
\newcommand{\III}{{\rm III}}
\begin{document}

\title[Tensor product decompositions and rigidity of full factors]{Tensor product decompositions\\ and rigidity of full factors}

\begin{abstract}
We obtain several rigidity results regarding tensor product decompositions of factors. First, we show that any full factor with separable predual has at most countably many tensor product decompositions up to stable unitary conjugacy. We use this to show that the class of separable full factors with countable fundamental group is stable under tensor products. Next, we obtain new primeness and unique prime factorization results for crossed products comming from compact actions of higher rank lattices (e.g.\ $\mathrm{SL}(n,\Z), \: n \geq 3$) and noncommutative Bernoulli shifts with arbitrary base (not necessarily amenable). Finally, we provide examples of full factors without any prime factorization.
\end{abstract}

\author{Yusuke Isono}
\email{isono@kurims.kyoto-u.ac.jp}
\thanks{YI is supported by JSPS KAKENHI Grant Number JP17K14201.}
\address{RIMS, Kyoto University, 606-8502 Japan}

\author{Amine Marrakchi}
\email{amine.marrakchi@math.u-psud.fr}
\thanks{AM is a JSPS International Research Fellow (PE18760)}
\address{RIMS, Kyoto University, 606-8502 Japan}

\subjclass[2010]{46L10, 46L36, 46L40, 46L55}

\keywords{Full factor; Tensor product; Rigidity; Prime factor; Bernoulli; Fundamental group}

\maketitle

%%%%%%%%%%

\section{Introduction}
A central theme in the theory of von Neumann algebras is to determine all possible tensor product decompositions of a given factor $M$. More precisely, we will say that a subfactor $P \subset M$ is a \emph{tensor factor} of $M$ if $M=P \ovt P^c$ where $P^c=P' \cap M$. We will denote by $\mathrm{TF}(M)$ the set of all tensor factors of $M$. The set $\mathrm{TF}(M)$ contains all type $\mathrm{I}$ subfactors of $M$. Moreover, if $P \in \mathrm{TF}(M)$, then $uPu^* \in \mathrm{TF}(M)$ for every unitary $u \in \mathcal{U}(M)$. In order to eliminate both of these trivialities, one introduces the following equivalence relation: two tensor factors $P,Q \in \mathrm{TF}(M)$ are called \emph{stably unitarily conjugate}, written $P \sim Q$, if there exists type $\mathrm{I}_\infty$ factors $F_1,F_2$ and a unitary $u \in \mathcal{U}(M \ovt F_1 \ovt F_2)$ such that $u(P \ovt F_1)u^*=Q \ovt F_1$. One then wants to study the quotient space $\mathrm{TF}(M)/{\sim}$.

In many cases, one can give a complete description of $\mathrm{TF}(M)/{\sim}$. Indeed, a celebrated result of Ozawa \cite{Oz03} says that for every ICC hyperbolic group $\Gamma$, the $\II_1$ factor $M=L(\Gamma)$ is \emph{prime}. This means that for every tensor factor $P \in \mathrm{TF}(M)$, we have that either $P$ or $P^c$ is of type $\mathrm{I}$, or equivalently that $\mathrm{TF}(M)/{\sim} = \{ [\C], [M] \}$. More generally, we say that a factor $M$ satisfies the \emph{Unique Prime Factorization (UPF) property} if there exists prime factors $P_1,\dots, P_n \in \mathrm{TF}(M)$ with $M=P_1 \ovt \cdots \ovt P_n$ such that for every $Q \in \mathrm{TF}(M)$, there exists a subset $\{i_1,\dots, i_m\} \subset \{1,\dots, n\}$ such that $Q \sim P_{i_1} \ovt \cdots \ovt P_{i_m}$. In \cite{OP03}, Ozawa and Popa showed that if $\Gamma_1,\dots, \Gamma_n$ are ICC hyperbolic groups, then the factor $M=L(\Gamma_1 \times \dots \times \Gamma_n)$ has the UPF property. These seminal results were later generalized to larger and larger classes of factors by using Popa's deformation/rigidity theory and Ozawa's $C^*$-algebraic techniques \cite{Po06b, Pe06, Sa09, CSU11, SW11, Is14, CKP14, HI15, Ho15, Is16, DHI17}.

The main goal of this paper is to provide new rigidity and classification results for tensor product decompositions by combining the following two approaches:

\subsubsection*{Rigidity of full factors.} A factor is called \emph{full} when it has no nontrivial central sequences \cite{Co74}. Fullness is a very weak rigidity property when compared to Kazhdan's property (T) for example. In this paper, we use the following key bimodule characterization of fullness due to Ozawa \cite{BMO19} (and based on \cite{Co75,Ma18}): a factor $M$ is full if and only if for every $M$-$M$-bimodule $\mathcal{H}$ such that $\rL^2(M) \prec \cH$ and $\cH \prec \rL^2(M)$, we have $\rL^2(M) \subset \cH$. Note that if we remove the condition $\cH \prec \rL^2(M)$, this becomes precisely the definition of property (T). Therefore, in some specific situations, in particular for tensor product decompositions, full factors can behave in a very rigid way, as if they had property (T). See for instance Lemma 5.2 which shows that ``relative amenability" can be automatically improved to ``corner embedability" for full tensor factors. This can be seen as an instance of the \emph{spectral gap rigidity} phenomenon discovered in \cite{Po06b}.

\subsubsection*{Flip automorphisms.} Let $M$ be a factor. To every $P \in \mathrm{TF}(M)$ one can associate an automorphism $\sigma_P \in \Aut(M \ovt M)$ which flips the two copies of $P$ in $M\ovt M=P \ovt P^c \ovt P \ovt P^c$ and fixes the two copies of $P^c$. The key point is that it is in general much easier to study the flip automorphism $\sigma_P$ then to study directly the mysterious tensor factor $P$. Any information obtained on $\sigma_P$ can then be used to locate $P$ inside $M$ (observe in particular that $P \sim Q$ if and only if $\sigma_P \circ \sigma_Q$ is an \emph{inner} automorphism). As we will see, this trick combines very well with $\mathrm{W}^*$-rigidity results, since they generally give a good understanding of the automorphism group $\Aut(M \ovt M)$ in terms of the building data of $M$. This approach can be used to obtain new primeness or UPF results which do not rely on any kind of negative curvature or rank 1 assumption, but, on the other hand, cannot be used to obtain solidity or relative solidity results.

\bigskip
 
Let us now state our main theorems. We start with a very general rigidity result based on a \emph{separability argument} (see \cite[Section 4]{Po06c} for a survey). Unlike the separability arguments used in \cite{Co80} \cite{Po86}, \cite{Oz02}, \cite{Hj02}, \cite{GP03}, the rigidity in our case comes from fullness instead of property (T). Here $\mathrm{TF}_{\mathrm{full}}(M) \subset \mathrm{TF}(M)$ denotes the set of all full tensor factors of $M$. Note that $\mathrm{TF}_{\mathrm{full}}(M) = \mathrm{TF}(M)$ when $M$ itself is full.

\begin{letterthm} \label{main topological}
Let $M$ be a factor with separable predual. Then $\mathrm{TF}_{\mathrm{full}}(M)/{\sim}$ is countable. Consequently, if $\Omega$ (resp.\ $\Omega_{\mathrm{full}}$) denotes the set of all stable isomorphism classes of (resp.\ full) factors with separable predual, then the natural map
\begin{gather*} \Omega \times \Omega_{\mathrm{full}} \rightarrow \Omega \\
([P],[Q]) \mapsto [P \ovt Q]
\end{gather*}
is countable-to-one.
\end{letterthm}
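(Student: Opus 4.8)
The plan is to deduce the ``consequently'' part from the first assertion, and to prove the latter by a separability argument in the Polish group $\Aut(M\ovt M)$, the needed rigidity being supplied by Ozawa's bimodule characterization of fullness. For the reduction, fix a stable isomorphism class $[N]$ and a type $\mathrm{I}_\infty$ factor $F$, and put $\widehat N:=N\ovt F$. For any $([P],[Q])$ in the fibre of the map over $[N]$ one has $\widehat N\cong P\ovt Q'$ for some full factor $Q'$ with $[Q']=[Q]$; hence there is $R\in\mathrm{TF}_{\mathrm{full}}(\widehat N)$ (a copy of $Q'$) with $[R]=[Q]$ and $[R^c]=[P]$. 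If $R'\in\mathrm{TF}_{\mathrm{full}}(\widehat N)$ satisfies $R'\sim R$, then $R'$ is stably isomorphic to $R$ and $(R')^c$ is stably isomorphic to $R^c$; thus the $\sim$-class of $R$ recovers the pair $([R^c],[R])=([P],[Q])$. This exhibits an injection of the fibre over $[N]$ into $\mathrm{TF}_{\mathrm{full}}(\widehat N)/{\sim}$, which is countable by the first assertion applied to the separable factor $\widehat N$.

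For the first assertion, suppose $\mathrm{TF}_{\mathrm{full}}(M)/{\sim}$ were uncountable, and lift it to an uncountable family $(P_i)_{i\in I}$ of pairwise non-$\sim$ full tensor factors of $M$. Associate to each $P_i$ its flip automorphism $\sigma_{P_i}\in\Aut(M\ovt M)$. Since $M$ has separable predual, $\Aut(M\ovt M)$ with the $u$-topology is a Polish group, hence a separable metrizable space; and, since $P_i\sim P_j$ if and only if $\sigma_{P_i}\circ\sigma_{P_j}\in\Inn(M\ovt M)$ (possibly after amplifying by a type $\mathrm{I}_\infty$ factor), the map $i\mapsto\sigma_{P_i}$ is injective. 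It therefore suffices to prove: for each $P\in\mathrm{TF}_{\mathrm{full}}(M)$ there is a $u$-neighbourhood $\mathcal{V}$ of $\sigma_P$ such that $\sigma_Q\in\mathcal{V}$ forces $P\sim Q$, for every $Q\in\mathrm{TF}_{\mathrm{full}}(M)$. Indeed, granting this, each $\sigma_{P_i}$ is isolated in $\{\sigma_{P_j}:j\in I\}$ (because $P_i\not\sim P_j$ for $j\neq i$), so that set --- a discrete subset of a separable metrizable space, in bijection with $I$ --- is countable, a contradiction.

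To prove the displayed claim, set $\theta:=\sigma_P\circ\sigma_Q\in\Aut(M\ovt M)$, so that $\sigma_Q$ being $u$-close to $\sigma_P$ makes $\theta$ $u$-close to the identity; the goal is to deduce that $\theta$ is inner (equivalently $P\sim Q$). The $u$-convergence $\theta\to\mathrm{id}$ makes the $\theta$-twisted standard bimodule ${}_{M\ovt M}\rL^2(M\ovt M)_{\theta}$ approximate the standard bimodule in the Fell topology, so $\rL^2(M\ovt M)$ is weakly contained in it; and, the twist being by an automorphism, it is in turn weakly contained in $\rL^2(M\ovt M)$. Were $M\ovt M$ full, Ozawa's characterization would then give $\rL^2(M\ovt M)\subset{}_{M\ovt M}\rL^2(M\ovt M)_{\theta}$, i.e.\ $\theta$ inner, and we would be done.

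The main obstacle is precisely that $M\ovt M$ \emph{need not be full}: if $P^c$ is McDuff then neither $M=P\ovt P^c$ nor $M\ovt M$ is full, even though $P$ is. One must therefore localize the argument to a genuinely full factor --- $P$ itself, or the flip-symmetric subfactor $\Delta\subset P\ovt P$ (which is full whenever $P$ is, the tensor square of a full factor being full). Using that $\sigma_P$ fixes $P^c\ovt P^c$ pointwise while $\sigma_Q$ fixes $Q^c\ovt Q^c$ pointwise, the near-conjugacy of $\sigma_P$ and $\sigma_Q$ should force $Q^c\ovt Q^c$ to be almost contained in $\Fix(\sigma_P)=\Delta\ovt(P^c\ovt P^c)$, and from such data one should extract a $P$-$P$-bimodule wedged between two copies of $\rL^2(P)$ in the weak-containment order; Ozawa's characterization applied to the full factor $P$ then produces a genuine intertwining vector, hence a partial isometry conjugating the two tensor decompositions, hence --- tensor factors being rigid under such perturbations, in the spirit of Lemma~5.2 --- a stable unitary conjugacy $P\sim Q$. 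Carrying this out, and in particular reconciling the two a priori unrelated decompositions $M=P\ovt P^c=Q\ovt Q^c$ while converting $u$-topology smallness into Hilbert-space estimates in the type $\III$ setting, is where the real work lies.
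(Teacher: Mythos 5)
Your reduction of the ``consequently'' part to the countability of $\mathrm{TF}_{\mathrm{full}}$ of a single factor is fine, but the main step rests on a claim that is false in general. You reduce everything to: \emph{for each $P \in \mathrm{TF}_{\mathrm{full}}(M)$ there is a neighbourhood $\mathcal V$ of $\sigma_P$ such that $\sigma_Q \in \mathcal V$ forces $P \sim Q$ for every full $Q$} --- i.e.\ to the discreteness of $\mathrm{TF}_{\mathrm{full}}(M)/{\sim}$. When $M$ itself is not full this fails. Take $P$ and $A_1, A_2, \dots$ to be full $\II_1$ factors (say copies of $L(\F_2)$) and $M = P \ovt \bigl(\ovt_{m \in \N} A_m\bigr)$ with the product trace. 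Each $Q_n := P \ovt A_n$ is a full tensor factor, and $Q_n \to P$ in $\mathrm{TF}(M)$ (check on functionals depending on finitely many tensor components), so $\sigma_{Q_n} \to \sigma_P$. Yet $Q_n \not\sim P$: by Lemma 3.3 this would require $P^c \prec_M Q_n^c$, i.e.\ $\ovt_m A_m \prec_M \ovt_{m \neq n} A_m$, which fails because unitaries $u_k \in A_n$ tending weakly to $0$ satisfy $\|E_{\ovt_{m\neq n}A_m}(x u_k y)\|_2 \to 0$ for all $x,y \in M$. So the flips $\sigma_{Q_n}$ of pairwise inequivalent full tensor factors accumulate at $\sigma_P$, and no isolation neighbourhood exists. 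Closeness of $\sigma_Q$ to $\sigma_P$ only ever yields the one-sided intertwining $P \prec_M Q$ (fullness of $P$ gives an open set $\{Q : P \prec_M Q\}$); the reverse $Q \prec_M P$ needs a neighbourhood whose size depends on $Q$, and the example shows this asymmetry is essential, not a technicality. Your third and fourth paragraphs, which you yourself leave as ``where the real work lies,'' are aimed precisely at this unattainable two-sided statement, so the gap is not merely one of detail.

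The paper gets around this by \emph{not} proving discreteness in general. It shows (i) the flip map embeds $\mathrm{TF}(M)$ as a closed subset of $\Aut(M \ovt M)$, so $\mathrm{TF}_{\mathrm{full}}(M)$ is separable; (ii) for $P$ full, $\{Q : P \prec_M Q\}$ is open (the one-sided statement, via convergence of conditional expectations, weak bicentralization of tensor factors, and Ozawa's bimodule characterization of fullness); (iii) when a factor $P$ is full, $\mathrm{TF}(P)/{\sim}$ \emph{is} discrete, hence countable, because then both members of any decomposition of $P$ are full and the two-sided statement is available. Countability for general $M$ then follows from a covering argument: pick a countable dense $X \subset \mathrm{TF}_{\mathrm{full}}(M)$; every full $Q$ satisfies $Q \prec_M P$ for some $P \in X$ by (ii) applied to $Q$, and $\{Q : Q \prec_M P\}/{\sim}$ injects into $\mathrm{TF}(P)/{\sim}$ (Proposition 3.4), which is countable by (iii). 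If you want to salvage your argument, you should replace the isolation claim by this one-sided openness plus the covering-by-a-countable-dense-set scheme.
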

The fullness assumption in Theorem \ref{main topological} is essential since $\mathrm{TF}(M)/{\sim}$ is uncountable whenever $M$ is an infinite tensor product of $\II_1$ factors (i.e.\ a \emph{McDuff} factor). Note that if a factor $M$ satisfies the UPF property, then $\mathrm{TF}(M)/{\sim}$ is actually finite. In view of Theorem \ref{main topological} and of all the known UPF results in the litterature, one might wonder if there exists any full factor $M$ which does not satisfy the UPF property. We answer this question affirmatively in the last section of this paper by providing the first examples of full factors which do not admit any prime factorization. For these examples, $\mathrm{TF}(M)/{\sim}$ is infinite but can still be completely described.

In our next main result, we give an application of this rigidity phenomenon to fundamental groups. Let $M$ be a $\II_\infty$ factor. Then every $\theta \in \Aut(M)$ scales the trace of $M$ by some scalar $\Mod(\theta) \in \R^*_+$ and the map $\Mod : \Aut(M) \rightarrow \R^*_+$ is a continuous group homomorphism. Its image is called the \emph{fundamental group} of $M$ and denoted by $\mathcal{F}(M)$. The fundamental group $\mathcal{F}(M)$ is also defined when $M$ is a $\II_1$ factor by $\mathcal{F}(M)=\mathcal{F}(M^\infty)$ where $M^\infty=M \ovt \B(\ell^2)$. The invariant $\mathcal{F}(M)$ is very hard to compute in general. In fact, for a long time, the only known computation, due to Murray and von Neumann, was $\mathcal{F}(M)=\R^*_+$ where $M$ is the hyperfinite $\II_1$ factor (or more generally a McDuff factor). The first breakthrough is the rigidity result of Connes \cite{Co80} which shows that $\mathcal{F}(M)$ is \emph{countable} for $M=L(\Gamma)$ where $\Gamma$ is a countable ICC group with Kazhdan's property (T). Voiculescu and R$\rm \breve{a}$dulescu then proved $\mathcal{F}(L\F_\infty)=\R^*_+$ \cite{Vo89,Ra91} by using the free entropy machinery. Since $L\F_\infty$ is full, this example shows in particular that fullness does not imply countability of the fundamental group. Later on, spectacular progress in the study of fundamental groups has been accomplished thanks to Popa's deformation/rigidity theory \cite{Po01}, \cite{Po03}, \cite{PV10}, \cite{PV11}. In particular, in \cite{PV11}, Popa and Vaes settled a longstanding question by giving the first example of a $\II_\infty$ factor $M$ with $\mathcal{F}(M)=\R^*_+$ but such that $M$ does \emph{not} admit a trace scaling action, i.e.\ a continuous action $\theta : \R^*_+ \curvearrowright M$ such that $\Mod(\theta_\lambda)=\lambda$ for all $\lambda \in \R^*_+$. Moreover, they gave an example of two factors $M$ and $N$ such that $\mathcal{F}(M \ovt N)=\R^*_+$ but $\mathcal{F}(M) \neq \R^*_+$ and $\mathcal{F}(N) \neq \R^*_+$. This should be compared with item $(\rm ii)$ below.

\begin{letterthm} \label{trace scaling}
Let $M$ and $N$ be two $\II_\infty$ factors with separable predual and suppose that one of them is full. Then the following holds:
\begin{itemize}
\item [$(\rm i)$] The quotient group $\mathcal{F}(M \ovt N)/\mathcal{F}(M)\mathcal{F}(N)$ is countable. In particular, if $\mathcal{F}(M)$ and $\mathcal{F}(N)$ are countable, then $\mathcal{F}(M \ovt N)$ is also countable.
\item [$(\rm ii)$] $M \ovt N$ admits a trace scaling action if and only if $M$ or $N$ admits a trace scaling action.
\end{itemize}
\end{letterthm}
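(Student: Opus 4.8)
The plan is to derive both parts from Theorem~\ref{main topological} applied to $M\ovt N$ (which has separable predual), together with the following \emph{key lemma}: taking, without loss of generality, $N$ to be the factor assumed full, if $\psi\in\Aut(M\ovt N)$ satisfies $\psi(N)\sim N$ as tensor factors of $M\ovt N$, then $\Mod(\psi)\in\mathcal F(M)\mathcal F(N)$. To prove the key lemma I would \emph{stabilise}: put $F=\B(\ell^2)$, $B=M\ovt N\ovt F\ovt F$, and use $\psi(N)\sim N$ to find $u\in\mathcal U(B)$ with $u(\psi(N)\ovt F)u^*=N\ovt F$, the two copies of $F$ playing the roles of $F_1,F_2$ in the definition of $\sim$. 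Writing $\tilde N=N\ovt F_1$ and $\tilde M=M\ovt F_2$, so that $B=\tilde M\ovt\tilde N$ and $\tilde N'\cap B=\tilde M$, the automorphism $\rho:=\Ad(u)\circ(\psi\otimes\id_{F\ovt F})$ maps $\tilde N$ onto $\tilde N$, hence also $\tilde M=\tilde N'\cap B$ onto $\tilde M$, so $\rho=\rho|_{\tilde M}\otimes\rho|_{\tilde N}$. Since $\Mod$ is multiplicative under tensor products, vanishes on inner automorphisms, and is a stable-isomorphism invariant, $\Mod(\psi)=\Mod(\rho)=\Mod(\rho|_{\tilde M})\Mod(\rho|_{\tilde N})\in\mathcal F(\tilde M)\mathcal F(\tilde N)=\mathcal F(M)\mathcal F(N)$.

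For $(\rm i)$, note first that $\mathcal F(M)\mathcal F(N)$ is a subgroup of $\mathcal F(M\ovt N)$ (via $\alpha\mapsto\alpha\otimes\id_N$, $\beta\mapsto\id_M\otimes\beta$, and multiplicativity of $\Mod$). Since $N$ is full, $\theta(N)\in\mathrm{TF}_{\mathrm{full}}(M\ovt N)$ for every $\theta\in\Aut(M\ovt N)$; choosing for each $\lambda\in\mathcal F(M\ovt N)$ some $\theta_\lambda$ with $\Mod(\theta_\lambda)=\lambda$, the map $\lambda\mapsto[\theta_\lambda(N)]_\sim$ takes values in $\mathrm{TF}_{\mathrm{full}}(M\ovt N)/{\sim}$, which is countable by Theorem~\ref{main topological}. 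If $[\theta_\lambda(N)]_\sim=[\theta_\mu(N)]_\sim$ then, since automorphisms preserve $\sim$, $(\theta_\mu^{-1}\theta_\lambda)(N)\sim N$, whence $\mu^{-1}\lambda=\Mod(\theta_\mu^{-1}\theta_\lambda)\in\mathcal F(M)\mathcal F(N)$ by the key lemma. Hence $\mathcal F(M\ovt N)$ is covered by countably many cosets of $\mathcal F(M)\mathcal F(N)$, which proves $(\rm i)$; the final assertion follows since a product of two countable subgroups of $\R^*_+$ is countable.

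For $(\rm ii)$ the ``if'' direction is clear, as $\alpha\otimes\id_N$ is trace scaling on $M\ovt N$ when $\alpha$ is trace scaling on $M$. For the converse, let $\theta\colon\R^*_+\curvearrowright M\ovt N$ be a trace scaling action and set $G=\{\lambda:\theta_\lambda(N)\sim N\}$. Transitivity of $\sim$ and $\theta$-equivariance make $G$ a subgroup, and $\lambda G\mapsto[\theta_\lambda(N)]_\sim$ embeds $\R^*_+/G$ into the countable set $\mathrm{TF}_{\mathrm{full}}(M\ovt N)/{\sim}$, so $G$ has countable index. With $B,\tilde M,\tilde N$ as above and $\Theta_\lambda=\theta_\lambda\otimes\id_{F\ovt F}$, the condition ``$u\,\Theta_\lambda(\tilde N)\,u^*=\tilde N$'' is closed in $(\lambda,u)\in\R^*_+\times\mathcal U(B)$ (use joint continuity of $(\lambda,u)\mapsto u\Theta_\lambda(x)u^*$ and the description of $\tilde N$ as the relative commutant of $\tilde M$), so $G$, being the projection of a closed set, is analytic and hence Lebesgue measurable. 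A measurable subgroup of $\R^*_+$ of countable index has positive measure, hence by Steinhaus' theorem contains a neighbourhood of $1$, hence is open, hence equals $\R^*_+$. Thus $\theta_\lambda(N)\sim N$ for all $\lambda$. Now choose, in a Borel way, unitaries $u_\lambda\in\mathcal U(B)$ with $u_\lambda\,\Theta_\lambda(\tilde N)\,u_\lambda^*=\tilde N$; after correcting $(u_\lambda)$ into a $\Theta$-cocycle, $\rho_\lambda:=\Ad(u_\lambda)\circ\Theta_\lambda$ becomes a genuine continuous action of $\R^*_+$ on $B$ preserving $\tilde M$ and $\tilde N$, hence of the form $\rho_\lambda=\alpha_\lambda\otimes\beta_\lambda$ for continuous actions $\alpha$ on $\tilde M\cong M$ and $\beta$ on $\tilde N\cong N$ with $\Mod(\alpha_\lambda)\Mod(\beta_\lambda)=\Mod(\rho_\lambda)=\lambda$. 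Then $\lambda\mapsto\Mod(\alpha_\lambda)$ and $\lambda\mapsto\Mod(\beta_\lambda)$ are continuous characters $\lambda\mapsto\lambda^s$ and $\lambda\mapsto\lambda^{1-s}$ of $\R^*_+$, one of whose exponents is non-zero; reparametrising the corresponding action by $\lambda\mapsto\lambda^{1/s}$ (resp.\ $\lambda\mapsto\lambda^{1/(1-s)}$) yields a trace scaling action on $M$ (resp.\ on $N$).

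The main obstacle is the last step of $(\rm ii)$: passing from the pointwise statement ``$\theta_\lambda(N)\sim N$ for all $\lambda$'' to a genuine one-parameter action. Correcting the Borel family $(u_\lambda)$ into a $\Theta$-cocycle is a $1$-cohomology vanishing statement for the $\R^*_+$-action $\Theta$, and it is here that one must really work, exploiting the special structure of the group $\R^*_+\cong\R$ (and, presumably, the fullness of $N$ to control the ambiguity in the choice of $u_\lambda$). A secondary technical point is the measurable selection of the $u_\lambda$ and the analyticity of $G$ underlying the Steinhaus argument; by contrast, the key lemma and $(\rm i)$ are soft consequences of Theorem~\ref{main topological}.
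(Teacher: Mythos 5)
Your part (i) is correct, and it goes by a genuinely different (and softer) route than the paper's. The paper deduces (i) from Theorem \ref{main open map}: the two surjections $\Out(M\ovt N)\to\mathcal F(M\ovt N)$ and $\Out(M)\times\Out(N)\to\mathcal F(M)\mathcal F(N)$ induce a surjection of $\Out(M\ovt N)/\Out(M)\times\Out(N)$ onto $\mathcal F(M\ovt N)/\mathcal F(M)\mathcal F(N)$, and the former is discrete (hence countable) by the openness of $\Out(M)\times\Out(N)$, which rests on Lemma \ref{clopen set} and \cite[Theorem A]{HMV16}. You instead use Theorem \ref{main topological} directly, via the stabilised ``key lemma'' that $\psi(N)\sim N$ forces $\Mod(\psi)\in\mathcal F(M)\mathcal F(N)$; that lemma and the coset-counting argument are sound (an automorphism of $B=\tilde M\ovt\tilde N$ preserving both tensor factors splits as a tensor product, $\Mod$ is multiplicative and kills inners), so (i) stands, with the mild benefit of not needing Theorem \ref{main open map} and the mild cost of not recording the discreteness statement the paper gets for free.

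Part (ii), however, has a genuine gap, and it is exactly the one you flag: the passage from ``$\theta_\lambda(N)\sim N$ for all $\lambda$'' to continuous one-parameter groups $\alpha\colon\R^*_+\to\Aut(M)$ and $\beta\colon\R^*_+\to\Aut(N)$ with $\alpha_\lambda\otimes\beta_\lambda$ an inner perturbation of $\theta_\lambda$. Writing ``after correcting $(u_\lambda)$ into a $\Theta$-cocycle'' is not a proof step: the obstruction to making $\rho_\lambda=\Ad(u_\lambda)\circ\Theta_\lambda$ multiplicative is a nontrivial lifting problem, and it is precisely what the paper's Theorems \ref{main open map} and \ref{cocycle perturbation} are for. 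Concretely, the paper first uses the openness of $\iota\colon\mathcal U(M\ovt N)\times\Aut(M)\times\Aut(N)\to\Aut(M\ovt N)$ (which is where fullness enters, through Lemma \ref{clopen set} and \cite[Theorem A]{HMV16}) to get a Borel lift of $\theta$; note that this openness also makes your Steinhaus detour unnecessary, since $\{\lambda:\theta_\lambda\in\mathrm{im}\,\iota\}$ is then an open subgroup of the connected group $\R^*_+$. It then invokes Sutherland's cohomological results \cite{Su80} (vanishing of $H^3(\R,\T)$, valid because $M$ and $N$ are infinite) to replace the Borel families $\alpha_t$, $\beta_t$ by genuine continuous homomorphisms into $\Aut(M)$ and $\Aut(N)$ after inner perturbations, and finally $H^2(\R,\T)=0$ to repair the scalar $2$-cocycle. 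Without these ingredients (or a substitute for them), your construction only produces a pointwise family $\rho_\lambda=\alpha_\lambda\otimes\beta_\lambda$ with no homomorphism property and no continuity or measurability of $\lambda\mapsto\alpha_\lambda$, so you cannot even conclude $\Mod(\alpha_\lambda)=\lambda^s$ (an abstract character of $\R^*_+$ need not be of that form). So the heart of (ii) is missing, not merely a technical refinement.
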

We point out that in many concrete cases, one can actually show that $\mathcal{F}(M \ovt N)=\mathcal{F}(M)\mathcal{F}(N)$. See \cite{Is16} for recent results regarding this question. Nevertheless, we believe that Theorem \ref{trace scaling} is optimal and that the equality $\mathcal{F}(M \ovt N) = \mathcal{F}(M)\mathcal{F}(N)$ does not hold in general, even though we do not know any counter-example.

We now move to more concrete applications. The first one is a UPF result for crossed products comming from noncommutative Bernoulli shifts. Here, by a \emph{noncommutative Bernoulli shift} we always mean an action of the form $\Gamma \curvearrowright (B_0,\varphi_0)^{\ovt \Gamma}$ where $B_0$ is a non-trivial von Neumann algebra with separable predual, $\varphi_0$ is a faithful normal state on $B_0$ and $\Gamma$ is a countable group acting by shifting the tensor components. It is known that if $\Gamma$ is non-amenable and $B_0$ is amenable, then the crossed product $(B_0,\varphi_0)^{\ovt \Gamma}$ is prime \cite{Po06b, SW11, Ma16}. By exploiting the fullness of $(B_0,\varphi_0)^{\ovt \Gamma} \rtimes \Gamma$ (see \cite{VV14}), we are able to remove the amenability assumption on $B_0$.

\begin{letterthm} \label{bernoulli strongly prime}
Let $\Gamma \curvearrowright (B_0,\varphi_0)^{\ovt \Gamma}$ be any noncommutative Bernoulli shift. Assume that $\Gamma$ is non-amenable. Then $M=(B_0,\varphi_0)^{\ovt \Gamma} \rtimes \Gamma$ is prime.

Moreover, for any full factor $N$ with separable predual and any tensor product decomposition $M \ovt N=P \ovt Q$, we have $M \prec_{M \ovt N} P$ or $M \prec_{M \ovt N} Q$.
\end{letterthm}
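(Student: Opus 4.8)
The plan is to prove primeness and the dichotomy in one stroke, using the flip automorphism trick together with the fullness of $M$. Recall that $M = (B_0,\varphi_0)^{\ovt\Gamma}\rtimes\Gamma$ is full by \cite{VV14} since $\Gamma$ is non-amenable. Write $A = (B_0,\varphi_0)^{\ovt\Gamma}$ for the Bernoulli base, so $A \subset M$ is the canonical Cartan-like subalgebra (more precisely a ``Bernoulli subalgebra''). First I would reduce to the second statement: primeness of $M$ is the special case $N = \C$ (so $M\ovt N = M$), and the conclusion $M\prec_M P$ or $M\prec_M Q$ forces one of $P,Q$ to be a type $\mathrm{I}$ factor by a standard argument (if $M\prec_M P$ and $M = P\ovt Q$ with both $P,Q$ diffuse, one derives a contradiction, e.g.\ via \cite[Lemma 5.2]{} one upgrades the intertwining to a corner embedding and then uses that $Q = P^c$ must be atomic). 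So it suffices to establish the dichotomy.

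So suppose $M\ovt N = P\ovt Q$ with $N$ a full factor with separable predual. The strategy is to apply a deformation/rigidity analysis to the inclusion $A\ovt N \subset M\ovt N = P\ovt Q$. The noncommutative Bernoulli shift $\Gamma\actson A$ comes with a \emph{malleable deformation} in the sense of Popa (the Gaussian/automorphism rotation on $A\ovt A$, or the one built from the second quantization when $B_0$ is tracial, and its general-state analogue from \cite{Ma16}), and this deformation extends to $M\ovt N$ acting trivially on the $N$ factor. Applying spectral gap rigidity for this deformation to the subalgebra $P$ (or $Q$): since $P\ovt Q = M\ovt N$ is full — it is a tensor product of two full factors, hence full — the deformation converges uniformly on the unit ball of $P$ or of $Q$. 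Here is where I would use the fullness input most essentially, exactly as advertised after Lemma 5.2: ``relative amenability'' type conclusions get automatically boosted to genuine intertwining for full tensor factors. Say the deformation converges uniformly on $(P)_1$. By the standard transversality/Popa-type dichotomy for Bernoulli deformations (as in \cite{Po06b, SW11, Ma16}), uniform convergence on $(P)_1$ implies either $P\prec_{M\ovt N} A\ovt N$, or else the relative commutant $P^c = Q$ of $P$ can be located: $Q\prec_{M\ovt N} \rL(\Gamma)\ovt N$ composed with a control coming from the fact that the ``core'' directions of the Bernoulli shift are mixing.

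The heart of the argument — and the step I expect to be the main obstacle — is to pass from such an intertwining into $A\ovt N$ or $\rL(\Gamma)\ovt N$ to the desired conclusion $M\prec P$ or $M\prec Q$. Concretely, if $Q\prec_{M\ovt N}\rL(\Gamma)\ovt N$, I would like to conclude $M\prec_{M\ovt N} P$; this requires showing that the ``complementary'' information — namely that $P$ contains, up to corners and unitary conjugacy, the whole Bernoulli part $A$ together with enough of $\Gamma$ — actually captures all of $M$. This is where the mixing properties of the Bernoulli shift relative to $\rL(\Gamma)$, together with a commutation/normalizer argument (in the spirit of \cite{IPP08, Po06b}) showing that the normalizer of $A$ inside $M$ generates $M$, should be combined: one shows that if $Q$ corners into $\rL(\Gamma)\ovt N$ then $P$ must contain a corner of $A\ovt N$, and then the quasi-normalizer of that corner, computed inside $P\ovt Q$, forces $M\prec_{M\ovt N} P$. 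The technical nuisances will be (a) running the malleable deformation and spectral gap argument cleanly in the non-tracial setting, which is why we invoke the state-dependent machinery of \cite{Ma16} and the continuous-core reformulation, and (b) keeping track of the $N$ factor throughout — since $N$ is only assumed full (not amenable), one cannot absorb it into an amenable envelope, but fullness of $N$ is exactly what keeps the tensor product $M\ovt N$ full and hence keeps the spectral gap argument alive. Once the dichotomy $M\prec P$ or $M\prec Q$ is in hand, specializing to $N=\C$ and invoking the standard fact that $M$ prime (no diffuse tensor factor) follows, completes the proof.
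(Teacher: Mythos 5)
Your overall strategy (Bernoulli malleable deformation plus a fullness boost plus normalizer control) is the same as the paper's, but two of your key steps have genuine gaps. First, the sentence ``since $P\ovt Q=M\ovt N$ is full \dots the deformation converges uniformly on the unit ball of $P$ or of $Q$'' is not a valid deduction: fullness of the ambient factor does not produce uniform convergence of the deformation. The actual dichotomy (Theorem \ref{rigidity for Bernoulli shift}, following \cite{Ma16}) is applied to a pair of commuting subalgebras with the \emph{finite} one playing the role on which uniform convergence may hold, and its other branch is an ultraproduct statement $P'\cap p\widetilde{M}^\omega p\not\subset pM^\omega p$, which yields only $P\lessdot_M N\ovt B$ (left amenability), not intertwining. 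In the possibly type $\mathrm{III}$ setting one must first manufacture finite subalgebras to feed into this theorem; the paper does this not via the continuous core but by tensoring with a full type $\mathrm{III}_1$ factor with trivial bicentralizer so that $P$ and $Q$ acquire irreducible finite subfactors with expectation $P_0\subset P$, $Q_0\subset Q$. Your sketch has no substitute for this step, and the ``continuous-core reformulation'' you invoke is a different (and unexecuted) route.

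Second, the fullness boost you appeal to (``relative amenability upgraded to corner embeddability as in Lemma \ref{key rigidity lemma}'') does not apply as stated to the targets arising here. Lemma \ref{key rigidity lemma} needs the bimodules $\rL^2\langle M,\,\cdot\,\rangle$ over the targets to be weakly contained in $\rL^2(M)$; for tensor factors this is automatic, but $N\ovt B$ and $N\ovt(\mathcal{Z}(B)\vee B_0^F)$ are \emph{not} tensor factors of $M\ovt N$. The paper supplies exactly this missing ingredient: central freeness of the Bernoulli action implies these subalgebras are weakly bicentralized in $M$ (Propositions \ref{action lemma} and \ref{bernoulli weakly bicentralized}), which is what makes $\rL^2(P)\prec\bigoplus_F\rL^2\langle M,N\ovt(\mathcal{Z}(B)\vee B_0^F)\rangle$ a weak equivalence and lets fullness of $P$ give $P\prec_M N\ovt(\mathcal{Z}(B)\vee B_0^F)$; this is precisely how the amenability assumption on $B_0$ is avoided, and your proposal never addresses it. Finally, the endgame is simpler and goes in the opposite direction from your quasi-normalizer plan: after the mixing lemmas one lands in $P\prec_M N\ovt\mathcal{Z}(B)$, hence $P\lessdot_M N$, hence (fullness again, now with the tensor factor $N$ as target) $P\prec_M N$, and taking relative commutants gives $M\prec_{M\ovt N}Q$; the branch $Q_0\prec N\ovt L(\Gamma)$ is ruled out by a contradiction ($M\prec_M N\ovt L(\Gamma)$ is impossible since $B_0$ is nontrivial) rather than converted into ``$P$ contains $A$''.
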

The second part of the theorem shows in particular that if $N$ is a full factor which has the UPF property, then so does $M \ovt N$. The technique used in the proof of Theorem \ref{bernoulli strongly prime} also allows us to prove the following rigidity result which generalizes \cite[Corollary 0.2]{Io06} where the base algebras $A_0$ and $B_0$ are assumed to be weakly rigid $\II_1$ factors (e.g.\ $\II_1$ factors with property (T)).

\begin{letterthm} \label{bernoulli remembers groups}
Let $G \curvearrowright (A_0,\psi_0)^{\ovt G}$ and $H \curvearrowright (B_0,\varphi_0)^{\ovt H}$ be two noncommutative Bernoulli shifts. Assume that $A_0$ and $B_0$ are diffuse full factors. If $(A_0,\psi_0)^{\ovt G} \rtimes G \cong (B_0,\varphi_0)^{\ovt H} \rtimes H$, then there exists two finite normal subgroups $G_0 \lhd G$ and $H_0 \lhd H$ such that $G/G_0 \cong H/H_0$.
\end{letterthm}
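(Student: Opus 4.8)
The strategy is to locate the two Bernoulli base algebras relative to one another inside the common factor and then to read off the groups from their positions. Write $A := (A_0,\psi_0)^{\ovt G}$ and $B := (B_0,\varphi_0)^{\ovt H}$, and use the given isomorphism to identify $M := A \rtimes G = B \rtimes H$; we may assume $G$ and $H$ are infinite. We shall use the following standard structural facts: each Bernoulli action is free and mixing, so $A$ and $B$ are irreducible ($A' \cap M = \C = B' \cap M$) and regular ($\mathcal{N}_M(A)'' = M = \mathcal{N}_M(B)''$) subfactors of $M$ with no exotic normalizers, i.e.\ $\mathcal{N}_M(A) = \overline{\mathcal{U}(A)\{u_g\}}$ and $\mathcal{N}_M(B) = \overline{\mathcal{U}(B)\{v_h\}}$; moreover $M$ is full (by \cite{VV14}), and each single copy $A_0^{(g)} \subseteq A$, $B_0^{(h)} \subseteq B$ of a base is a diffuse full factor.

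\emph{Step 1 (intertwining the bases).} This is the crux and it imports the mechanism behind Theorem \ref{bernoulli strongly prime}. Let $(\alpha_t)$ be the malleable deformation of an enlargement $\widetilde M \supseteq M$ coming from the $H$-Bernoulli structure of $M = B \rtimes H$: replace each factor $B_0$ by an enlargement $\widetilde{B_0}$ carrying the standard malleable path $(\theta_t)$ that rotates $B_0$ onto a free complement, and set $\widetilde M := \widetilde{B_0}^{\ovt H} \rtimes H$, $\alpha_t := \theta_t^{\ovt H}$, so that $\alpha_t$ fixes $\rL(H)$ pointwise and $\alpha_t \to \id$ pointwise but not uniformly on $M$. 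Fix $e \in G$ and consider the copy $A_0^{(e)} \subseteq A \subseteq M$ of $A_0$ at the identity. Since $A_0^{(e)}$ is a full factor, the spectral gap argument of Theorem \ref{bernoulli strongly prime} applies: via the bimodule characterization of fullness recalled in the introduction, any failure of uniform convergence of $(\alpha_t)$ on the unit ball of $A_0^{(e)}$ would produce a relative amenability that fullness promotes to a corner embedding incompatible with $A_0^{(e)}$ being diffuse, so $\alpha_t \to \id$ uniformly on $(A_0^{(e)})_1$. Popa's intertwining-from-deformation machinery then yields $A_0^{(e)} \prec_M B$, and a concentration argument, again using that $A_0^{(e)}$ is a full factor and therefore cannot spread over infinitely many Bernoulli coordinates, refines this to $A_0^{(e)} \prec_M B_0^{\ovt F}$ for a finite $F \subseteq H$. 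Doing this for each $g \in G$, passing to relative commutants — which under these intertwinings land inside $B_0^{\ovt(H \setminus F_g)} \rtimes \Stab_H(F_g)$, where $\Stab_H(F_g)$ is finite because $H$ acts freely on the Bernoulli index set — and using $A = \bigvee_g A_0^{(g)}$, a patching argument in the style of \cite{Io06} bootstraps to $A \prec_M B$. Running the symmetric argument with the deformation built from the $G$-Bernoulli structure against the full factor $B_0^{(e)}$ gives $B \prec_M A$.

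\emph{Steps 2 and 3 (from double intertwining to the groups).} From $A \prec_M B$ and $B \prec_M A$, with $A$ and $B$ irreducible regular subfactors of $M$, Popa's conjugacy analysis for intertwined regular subalgebras (as in the proofs of uniqueness of Cartan subalgebras, here for subfactors) produces $u \in \mathcal{U}(M)$ so that, after replacing $A$ by $uAu^*$ and conjugating everything by $u$, one has $A \subseteq B \subseteq M = A \rtimes G = B \rtimes H$ with $[B:A] < \infty$, the index being finite precisely because $B \prec_M A$. Now one exploits this rigid inclusion: the Fourier support of $B$ over $A$ inside $A \rtimes G$ is a finite, symmetric, multiplicatively closed subset of $G$, hence a finite subgroup $G_0 \leq G$, so $B \subseteq A \rtimes G_0$ up to a $2$-cocycle; since $B$ is regular in $M$ and the $u_g$ normalize $A$, intersecting the $G$-conjugates of $G_0$ lets one take $G_0 \lhd G$. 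Consequently $\mathcal{N}_M(A) = \mathcal{N}_M(B) =: W$ (adjoining finitely many coordinates to the regular, irreducible subfactor $A$ does not enlarge its normalizer, as the $H$-action is mixing), and the Weyl-group identities $G \cong W/\T\mathcal{U}(A)$, $H \cong W/\T\mathcal{U}(B)$ produce a surjection $G \twoheadrightarrow H$ whose kernel sits inside $G_0$. This gives a finite normal $G_0 \lhd G$ with $G/G_0 \cong H$, which proves the theorem; had the inclusion in Step 2 nested the other way one would symmetrically obtain $H/H_0 \cong G$, which is why the statement is phrased with both quotients.

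\emph{Main obstacle.} The delicate point is Step 1. Because $A_0$ is only assumed full, not weakly rigid, the deformation/rigidity argument cannot invoke property (T) to upgrade pointwise convergence of the malleable deformation on $A_0^{(e)}$ to uniform convergence; this must be forced instead through the bimodule characterization of fullness and the attendant promotion of relative amenability to corner embeddability — exactly the new input developed for Theorem \ref{bernoulli strongly prime}. A secondary, more bookkeeping-flavored burden is Step 3: tracking the finite-index discrepancy between $A$ and $B$ precisely enough to recognize it as a quotient by a finite normal subgroup, rather than merely as abstract commensurability.
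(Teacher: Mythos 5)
Your Step 1 is, in outline, the paper's own mechanism (its Theorem \ref{rigidity for Bernoulli shift} together with the fullness upgrade), but the logic of where fullness enters is scrambled. The spectral gap dichotomy from the malleable deformation is not run on $A_0^{(e)}$ itself and fullness does not force uniform convergence of $\alpha_t$ on $(A_0^{(e)})_1$: the deformation argument is applied to a commuting pair, with uniform convergence obtained on a diffuse \emph{finite} subalgebra $Q \subset A_0' \cap A$ (giving $Q \prec_M L(H)$ or $Q \prec_M B_0^F$, cases that are then handled by the mixing/normalizer lemmas), while the remaining alternative is precisely the relative amenability $A_0 \lessdot_M B$ -- which is not a contradiction to be excluded, but the case one exploits. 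It is only there that fullness of $A_0$ is used: writing $B$ as the increasing union of the $B_0^F$, combining Proposition \ref{convergence weak containment} with the weak bicentralization of $B_0^F$ in $M$ (Proposition \ref{bernoulli weakly bicentralized}) and the fact that ${_{A_0}}\rL^2(M)_{A_0} \prec \rL^2(A_0)$ (coarseness of $\rL^2(M)\ominus\rL^2(A)$ over $A_0$), one gets an $A_0$-bimodule weakly equivalent to $\rL^2(A_0)$, and Proposition \ref{full bimodule} then yields $A_0 \prec_M B_0^F$. So your ``fullness promotes relative amenability to a corner embedding incompatible with diffuseness, hence uniform convergence'' is not a correct account of the argument, though the endpoint $A_0 \prec_M B_0^F$ and the passage to $A \prec_M B$, $B \prec_M A$ via normalizer control are the right intermediate goals.

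The genuine gap is in Steps 2--3. From $A \prec_M B$ and $B \prec_M A$ you claim a unitary conjugacy putting $A$ \emph{inside} $B$ with finite index, invoking a Cartan-style ``intertwining plus regularity implies conjugacy'' principle; this is unjustified for regular irreducible subfactors and is not what double intertwining gives. The correct statement (the paper's Lemma \ref{conjugacy bernoulli}, resting on \cite[Proposition 4.4]{Is19}) is that one can only conjugate $A$ into the \emph{enlargement} $B \rtimes H_0$ for some finite normal subgroup $H_0 \lhd H$; then the intermediate subalgebra property of $A \subset A \rtimes G$ identifies $u^*(B \rtimes H_0)u$ with $A \rtimes G_0$ for a subgroup $G_0 < G$, regularity of $B \rtimes H_0$ makes $G_0$ normal, and the second intertwining $B \prec_M A$ (hence $B \rtimes H_0 \prec_M A$, as $H_0$ is finite) forces $G_0$ finite. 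The exact equality $u(A \rtimes G_0)u^* = B \rtimes H_0$ is what makes the two Weyl-type quotients literally coincide and yields $G/G_0 \cong H/H_0$. Your stronger conclusion $G/G_0 \cong H$ (i.e.\ $H_0$ absorbed), your claim $\mathcal{N}_M(A) = \mathcal{N}_M(B)$, and the ``Fourier support of $B$ over $A$ is a finite subgroup'' step are all unproved and, as stated, not available; the fact that the theorem must allow both $G_0$ and $H_0$ nontrivial is exactly the reason the one-sided finite-index inclusion you posit cannot be the right intermediate statement.
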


  In our next theorem, we present a new UPF result which shows how the flip automorphism approach can be used to study tensor factors. This result should really be considered as an application of a  recent $\mathrm{W}^*$-rigidity result of Boutonnet-Ioana-Peterson \cite{BIP18} for compact actions of higher rank lattices. Here, by \emph{higher rank} lattice we mean a lattice $\Gamma$ in a connected semi-simple Lie group $G$ with finite center such that every simple quotient of $G$ has real rank $\geq 2$. A basic example is given by $\Gamma=\mathrm{SL}(n,\Z)$ for $n \geq 3$. We also recall that an ergodic pmp action $\Gamma \curvearrowright (X,\mu)$ is \emph{compact} if the closure of the image of $\Gamma$ in $\Aut(X,\mu)$ is compact. These are precisely the actions of the form $\Gamma \curvearrowright K/L$ where $K$ is a compact group, $L$ is a closed subgroup of $K$ and $\Gamma < K$ is a dense subgroup which acts by left translations.

\begin{letterthm} \label{irreducible lattice thm}
Let $\Gamma$ be an irreducible higher rank lattice. Let $\Gamma \curvearrowright (X,\mu)$ be a compact free ergodic pmp action. Then the crossed product $M=\rL^\infty(X) \rtimes \Gamma$ is prime. Moreover, for any finite family of factors $M_1, \dots, M_n$ of that form, the tensor product $M_1 \ovt \cdots \ovt M_n$ has the Unique Prime Factorization property.
\end{letterthm}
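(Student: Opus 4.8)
The plan is to run the flip-automorphism argument, feeding it the $\mathrm{W}^*$-rigidity theorem of \cite{BIP18} for compact actions of higher rank lattices together with the spectral gap rigidity of full tensor factors (Lemma 5.2).

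\emph{Reductions.} Write $M_i = \rL^\infty(X_i) \rtimes \Gamma_i$ with $\Gamma_i$ an irreducible higher rank lattice in a semisimple Lie group $G_i$ and $\Gamma_i \actson X_i = K_i/L_i$ a compact free ergodic pmp action. Each $\Gamma_i$ has Kazhdan's property (T), so each $M_i$, and hence $M := M_1 \ovt \cdots \ovt M_n$, is a full $\II_1$ factor. Moreover $M = \rL^\infty(X) \rtimes \Gamma$ where $X = \prod_i X_i$ and $\Gamma = \prod_i \Gamma_i$; this $\Gamma$ is again a higher rank lattice (in $\prod_i G_i$, no longer irreducible) acting compactly, freely and ergodically, with Cartan subalgebra $A := \rL^\infty(X)$, and the same holds for $M \ovt M = \rL^\infty(X \times X) \rtimes (\Gamma \times \Gamma)$. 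It therefore suffices to prove: for every tensor product decomposition $M = P \ovt Q$ there is $S \subseteq \{1,\dots,n\}$ with $P \sim \bigovt_{i \in S} M_i$ (whence $Q \sim \bigovt_{i \notin S} M_i$). For $n = 1$ this is exactly primeness of $M_1$ (only $S = \emptyset, \{1\}$ are possible), and in general it is precisely the Unique Prime Factorization property with $P_i = M_i$.

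\emph{The flip automorphism and the rigidity input.} Fix $M = P \ovt Q$ and let $\sigma := \sigma_P \in \Aut(M \ovt M)$ be the flip automorphism: identifying $M \ovt M = P \ovt Q \ovt P \ovt Q$, it is the standard flip on $P \ovt P$ (legs $1,3$), fixes $Q \ovt Q$ (legs $2,4$) pointwise, and $\sigma^2 = \id$. Apply the rigidity theorem of \cite{BIP18} to $M \ovt M$, the crossed product of the compact action of the higher rank lattice $\Gamma \times \Gamma$: it gives strong control on $\Aut(M \ovt M)$. Concretely, after composing $\sigma$ with an inner automorphism (and up to the finite-index commensuration ambiguity intrinsic to lattice rigidity), $\sigma$ preserves the Cartan subalgebra $A \ovt A$ and is implemented by a measure space transformation together with an automorphism of $\Gamma \times \Gamma = \prod_i \Gamma_i \times \prod_i \Gamma_i$; by Margulis-type superrigidity such a group automorphism permutes the $2n$ simple factors among isomorphic ones and acts on each by an automorphism of the relevant $\Gamma_i$. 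Hence $\sigma_P$, modulo $\Inn(M \ovt M)$, permutes the $2n$ ``blocks'' $\rL^\infty(X_i) \rtimes \Gamma_i$ of $M \ovt M$ among themselves.

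\emph{Locating $P$; the main obstacle.} It remains to match this block permutation with the tensor-leg data of $\sigma_P$. Since $\sigma_P$ is an involution, fixes $Q \ovt Q$ pointwise, and is the flip on $P \ovt P$, one argues that the induced permutation of the $2n$ blocks is a product of transpositions which, for the indices $i$ in some subset $S$, interchange the first-copy and the second-copy of $M_i$, fixing all blocks indexed outside $S$; the fixed blocks must then assemble into $Q$ and the interchanged pairs into $P$. A priori this produces only soft information — relative amenability of $P$ inside $\bigovt_{i \in S} M_i$ and an intertwining of sub-pieces — and here fullness of $P$ is decisive: Lemma 5.2 upgrades relative amenability to a corner embedding $P \prec \bigovt_{i \in S} M_i$, and the analysis of full tensor factors in Section 5 then promotes this to a stable unitary conjugacy $P \sim \bigovt_{i \in S} M_i$. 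For $n = 1$ this forces $P$ or $Q$ to be of type $\mathrm{I}$, i.e.\ $M_1$ is prime. I expect the heart of the difficulty to be exactly this matching step: ruling out ``mixed'' block configurations, tracking which blocks are flipped versus fixed, separating the type $\mathrm{I}$ case from the diffuse case, and absorbing the finite-index corrections produced by lattice superrigidity — all of which lean on the rigidity of full factors rather than on any negative-curvature assumption.
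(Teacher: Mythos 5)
Your skeleton (pass to the flip automorphism $\sigma_P$ of $M \ovt M$, apply the Cartan rigidity of \cite{BIP18}, use lattice rigidity to get a permutation of the $2n$ blocks, then locate $P$ by intertwining) is indeed the paper's strategy, but two essential steps are missing or rest on the wrong tool. First, \cite{BIP18} only gives uniqueness of the Cartan subalgebra: after an inner perturbation one may assume $\sigma_P(A \ovt A)=A \ovt A$. This does \emph{not} yet make $\sigma_P$ ``implemented by a measure space transformation together with an automorphism of $\Gamma\times\Gamma$'': a Cartan-preserving automorphism is a priori only a self orbit equivalence of $\Gamma\times\Gamma \curvearrowright X\times X$, twisted by an arbitrary cocycle, and need not come from any group isomorphism. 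Passing from this to a (virtual) conjugacy of the actions requires the cocycle/OE superrigidity of compact actions of property (T) groups (the paper invokes \cite[Theorem A]{Io08}, see also \cite{Fu09}), and even then one only obtains an isomorphism $\theta : H \to K$ between \emph{finite index} subgroups of $\Gamma \times \Gamma$. Moreover, that such a $\theta$ must permute the $2n$ irreducible lattice factors up to commensurability is not a one-line consequence of ``Margulis-type superrigidity''; the paper proves it separately (Lemma \ref{permutation lattice}) by an induction based on Margulis' normal subgroup theorem.

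Second, the ``locating $P$'' step, which you yourself flag as the heart of the difficulty and leave open, is not done with fullness, relative amenability, or Lemma \ref{key rigidity lemma} -- none of these enter the paper's proof of this theorem. The efficient mechanism is Proposition \ref{UPF prop} combined with Lemma \ref{lemma for intertwining and flip}: it suffices to prove that \emph{every} automorphism $\alpha$ of a product $M_1 \ovt \cdots \ovt M_n$ of such factors satisfies $\alpha(M_i) \sim M_{\sigma(i)}$ for some permutation $\sigma$. Applying this to $\alpha=\sigma_P$ on the $2n$-fold product $M \ovt M$, the dichotomy ``$\sigma_P(M_i \otimes 1)$ is equivalent to a block of the first copy or of the second copy'' translates directly, via Lemma \ref{lemma for intertwining and flip}, into $M_i \prec_M P^c$ or $M_i \prec_M P$, which is exactly the UPF conclusion; there is no need to decide beforehand which blocks are flipped versus fixed, nor to exclude mixed configurations by spectral gap arguments. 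The automorphism statement itself follows from the virtual isomorphism $\theta$ by observing that $L(\Gamma_{\sigma(i)}) \prec_M L(F_i) \subset \alpha(L(\Gamma_i))$ with $F_i = \theta(H\cap\Gamma_i)\cap\Gamma_{\sigma(i)}$ of finite index in $\Gamma_{\sigma(i)}$, and then taking relative commutants twice, using that $L(\Gamma_i)' \cap M_i$ is finite dimensional. So to repair your proposal you need to insert the OE superrigidity input, prove (not cite) the factor-permutation lemma for virtual isomorphisms of products of irreducible higher rank lattices, and replace the fullness-based matching by the intertwining calculus of Section 3.
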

We point out that the question of whether the group von Neumann algebras of irreducible higher rank lattices are prime is a well-known and notoriously difficult open problem. We mention, however, the remarkable result of \cite{DHI17} which  shows that $L(\Gamma)$ is prime whenever $\Gamma$ is an irreducible lattice in a direct product of \emph{rank one} simple Lie groups.

For our last application, we consider factors of the form $M=R \rtimes \Gamma$ where $\Gamma \curvearrowright R$ is a compact minimal action of an ICC higher rank lattice $\Gamma$. Recall that an action $\Gamma \curvearrowright R$  is \emph{minimal} if it is faithful and $(R^\Gamma)' \cap R=\C$. Since every compact group admits one and only one minimal action on the hyperfinite $\II_1$ factor \cite{MT06}, a compact minimal action $\Gamma \curvearrowright R$ is uniquely determined, up to conjugacy, by the pair $\Gamma < K$ where the compact group $K$ is the closure of $\Gamma$ in $\Aut(R)$. In this context, we show that all tensor factors of $M$ are McDuff so that one cannot classify them up to stable \emph{unitary} conjugacy. However, we prove a unique semi-prime factorization result up to conjugacy by an \emph{automorphism}. Recall that a factor $M$ is \emph{semi-prime} if it is nonamenable and for every tensor product decomposition $M=P \ovt Q$, either $P$ or $Q$ is amenable.  
 
\begin{letterthm} \label{compact minimal rigidity}
Let $\Gamma$ be an ICC higher rank lattice. Let $\Gamma \curvearrowright R$ be a compact minimal action on the hyperfinite $\II_1$ factor and put $M=R \rtimes \Gamma$. Then the following holds:
\begin{itemize}
\item [$(\rm i)$] Every tensor factor of $M$ is either of type $\mathrm{I}$ or McDuff.
\item [$(\rm ii)$] $M$ admits a tensor product decomposition $$M=(R_1 \rtimes \Gamma_1) \ovt \cdots \ovt (R_n \rtimes \Gamma_n)$$ where $\Gamma=\Gamma_1 \times \cdots \times \Gamma_n$ and $R=R_1 \ovt \cdots \ovt R_n$ such that each $M_i=R_i \rtimes \Gamma_i$ is semi-prime.
\item [$(\rm iii)$] For every tensor product decomposition $M=P \ovt Q$ with $P$ and $Q$ nonamenable, there exists a partition $I \sqcup J=\{1, \dots, n \}$ and an automorphism $\theta$ of $M$ such that $\theta(P)=\ovt_{i \in I} M_i$ and $\theta(Q)=\ovt_{j \in J} M_j$, up to equivalence in $\mathrm{TF}(M)$.
\end{itemize}
 In particular, $M$ admits a unique semi-prime factorization up to conjugacy by an automorphism.
\end{letterthm}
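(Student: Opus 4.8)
The plan is to combine the flip automorphism trick from the introduction with the $\mathrm{W}^*$-rigidity theorem of \cite{BIP18} for crossed products by compact actions of higher rank lattices and the uniqueness of minimal actions of compact groups on the hyperfinite $\II_1$ factor \cite{MT06}. First I would produce the decomposition in $(\rm ii)$. By the structure theory of higher rank lattices (Margulis), $\Gamma$ is, up to finite index, a direct product $\Gamma_1 \times \cdots \times \Gamma_n$ of \emph{irreducible} ICC higher rank lattices; accordingly the compact group $K = \overline{\Gamma} \subseteq \Aut(R)$ splits as $K = K_1 \times \cdots \times K_n$ with $K_i = \overline{\Gamma_i}$, so that every irreducible representation of $K$ is an outer tensor product over the $K_i$ and the defining multiplicity property of the minimal $K$-action factorises slot by slot. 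Applying \cite{MT06} on each slot identifies $R = R_1 \ovt \cdots \ovt R_n$ with $\Gamma_i$ acting minimally on $R_i$ and trivially on the other slots, whence $M = M_1 \ovt \cdots \ovt M_n$ with $M_i = R_i \rtimes \Gamma_i$. It then remains to prove $(\rm i)$, the semi-primeness of each $M_i$, and the uniqueness statement $(\rm iii)$.

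For semi-primeness, suppose $M_i = P \ovt Q$ with $P$ and $Q$ nonamenable, and consider the flip $\sigma_P \in \Aut(M_i \ovt M_i)$. Since $M_i \ovt M_i = (R_i \ovt R_i) \rtimes (\Gamma_i \times \Gamma_i)$ is again a crossed product of an amenable hyperfinite factor by a compact action of a higher rank lattice, \cite{BIP18} applies to it: up to an inner automorphism and a unitary conjugacy normalising the base $R_i \ovt R_i$, the automorphism $\sigma_P$ is induced by an automorphism of $\Gamma_i \times \Gamma_i$ (together with a compatible automorphism of the compact action). As $\Gamma_i$ is ICC and directly indecomposable (an irreducible higher rank lattice is not virtually a product), $\Aut(\Gamma_i \times \Gamma_i) = (\Aut(\Gamma_i) \times \Aut(\Gamma_i)) \rtimes \langle \mathrm{flip} \rangle$, so $\sigma_P$ either preserves or exchanges the two copies of $\Gamma_i$; writing $\sigma_Q = \sigma_P \circ \sigma_{M_i \ovt M_i}$ and noting that the total flip induces the exchange, exactly one of $\sigma_P, \sigma_Q$ induces it, and by symmetry we may assume it is $\sigma_P$. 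Transporting the resulting normal form back to $M_i$ — where $\sigma_P$ is now the total flip twisted only by an automorphism of $R_i \ovt R_i$ and an inner automorphism — forces a nonzero corner of $P$ to intertwine into the base, i.e.\ $P \prec_{M_i} R_i$, so $P$ is amenable, a contradiction; hence $M_i$ is semi-prime. Running the same analysis for an arbitrary decomposition $M = P \ovt Q$ into nonamenable pieces and recording which of the canonical $M_j \subseteq M$ are absorbed by $P$ and by $Q$ (permuting by an automorphism of $M$ those indices $j$ for which the $M_j$ are mutually isomorphic) yields $(\rm iii)$, and with it the uniqueness of the semi-prime factorisation up to conjugacy by an automorphism.

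For $(\rm i)$ one first checks that each $M_i$, hence $M$, is McDuff: a direct computation gives $M_i' \cap M_i^\omega \cong (R_{i,\omega})^{\Gamma_i}$, where $R_{i,\omega}$ is the central sequence algebra of $R_i$ with its induced $\Gamma_i$-action, and the structure of the minimal action exhibits a $\Gamma_i$-fixed copy of the hyperfinite $\II_1$ factor inside $R_{i,\omega}$ (spanned by ``weight-zero'' central sequences straddling several tensor slots of $R_i$), so $M_i' \cap M_i^\omega$ is noncommutative. Given this, let $P \in \mathrm{TF}(M)$ be non-type-$\mathrm{I}$: if $P$ is amenable it is hyperfinite, hence McDuff; if $P$ and $P^c$ are both nonamenable, then by $(\rm iii)$, after an automorphism and up to equivalence in $\mathrm{TF}(M)$, $P = \ovt_{i \in I} M_i$ is a tensor product of McDuff factors, hence McDuff; and the remaining case, $P$ nonamenable with $P^c$ amenable, is handled by the same flip-automorphism analysis together with Lemma 5.2, which rules out $P$ being a full tensor factor with amenable complement, so $P$ — non-type-$\mathrm{I}$ with hyperfinite complement — is again McDuff.

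The main obstacle, beyond isolating the precise form of the rigidity statements of \cite{BIP18} needed above, is that the base $R$ is non-abelian: there is no $\Gamma$-invariant Cartan subalgebra of $R$ to latch onto, and — as $(\rm i)$ shows — the tensor factors of $M$ are McDuff rather than full, so the spectral-gap shortcuts available for full tensor factors (such as Lemma 5.2) cannot be applied to them directly. Everything must therefore be routed through the flip automorphism $\sigma_P$ and its normal form on $M \ovt M$, and the delicate point is to convert ``$\sigma_P$ is standard and induces the coordinate exchange on $\Gamma_i \times \Gamma_i$'' into the intertwining $P \prec_{M_i} R_i$, keeping track simultaneously of how the normal form acts on the amenable base and on the group algebra.
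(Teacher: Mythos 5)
Your overall strategy (flip automorphism plus the rigidity of \cite{BIP18} plus \cite{MT06}) is the paper's, but the crucial step is missing and, as stated, goes the wrong way. After normalizing $\alpha=\Ad(u)\circ\sigma_P$ so that $\alpha(\widehat{R_i})=\widehat{R_i}$ and $\alpha$ induces $\theta\in\Aut(\Gamma_i\times\Gamma_i)$, you claim that the ``exchange'' case forces $P\prec_{M_i}R_i$. Test this on $P=M_i$, $Q=\C$: then $\sigma_P$ is the total flip, which preserves $\widehat{R_i}$ and induces exactly the coordinate exchange, yet $M_i\nprec_{M_i}R_i$. The exchange case can only force the factor \emph{fixed} by $\sigma_P$, namely $Q$, to be small, and the copy-preserving case forces $P$ to be small; more importantly, you give no mechanism for extracting \emph{any} intertwining from the normal form, and this is precisely where the work lies. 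The paper's route is: from $\theta(\Gamma_2)=G_1\times H_2$ (with $\Gamma=G\times H$) one gets $\alpha(M_2\ovt R_1)=(R_1\rtimes G_1)\ovt(R_2\rtimes H_2)=:L$; property (T) of $G_1,H_2$ confines central sequences of $L$ to the fixed-point algebra $R_1^{G_1}\ovt R_2^{H_2}$, so the \emph{amenable} tensor factor $\alpha(R_1)$ of $L$ must satisfy $\alpha(R_1)\prec_L R_1^{G_1}\ovt R_2^{H_2}$; taking relative commutants yields $L(G)\prec_M P$ and $L(H)\prec_M Q$, and a further bootstrap with the minimal-action intertwining lemma (Lemma \ref{minimal intertwining}) upgrades these to actual containments and finally to $uPu^*=A\rtimes G$, $uQu^*=B\rtimes H$ with $R=A\ovt B$. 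Without this structure theorem you cannot conclude semi-primeness, nor (iii), nor the last case of (i); and your treatment of that case (``Lemma 5.2 rules out $P$ full with amenable complement, hence $P$ is McDuff'') is a non sequitur — non-fullness does not imply McDuff. The paper gets (i) for \emph{every} non-type-$\mathrm{I}$ tensor factor at once from the structure theorem: $P$ is conjugate to $A\rtimes G$ with $A$ hyperfinite, and by \cite{MT06} the minimal action satisfies $\sigma\cong\sigma\otimes 1$, so $A\rtimes G\cong(A\rtimes G)\ovt R$.

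Two further problems. For (ii) you take the splitting of $\Gamma$ from Margulis, but a higher rank lattice is only \emph{virtually} a product of irreducible lattices; $\Gamma$, $K=\overline{\Gamma}$ and $M$ need not split along that decomposition, whereas the theorem requires an exact decomposition $\Gamma=\Gamma_1\times\cdots\times\Gamma_n$. The paper instead produces (ii) intrinsically: whenever $M=P\ovt Q$ with both parts nonamenable, the structure theorem yields a genuine splitting $\Gamma=G\times H$, and one iterates, the process terminating because the length of direct product decompositions of $\Gamma$ is bounded (so the resulting $\Gamma_i$ need not be irreducible, and irreducibility is never needed). For (iii), ``recording which $M_j$ are absorbed by $P$ and by $Q$'' presupposes exactly what must be proved, namely that after an automorphism each $M_j$ lies entirely in $P$ or in $Q$; the paper proves this by writing $\Gamma_i=G_i\times H_i$, using semi-primeness of $M_i$ together with property (T) and ICC to force $G_i$ or $H_i$ to be trivial, and then constructing a $\Gamma$-equivariant automorphism of $R$ aligning $R=A\ovt B$ with $R=R_1\ovt\cdots\ovt R_n$ (uniqueness of the tensor decomposition of the hyperfinite $\II_1$ factor plus the equivariance supplied by \cite{MT06}); the needed automorphism of $M$ is this equivariant one, not a permutation of isomorphic tensor factors.
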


\subsection*{Acknowledgments} The authors are very grateful to Adrian Ioana for allowing us to include the proof of Theorem \ref{full stable gap} in our paper and for his valuable comments on an earlier draft of this paper. We also thank Narutaka Ozawa for a helpful discussion regarding Proposition \ref{meier group}.

\tableofcontents

\section{Preliminaries}
\subsection*{Ultraproduct von Neumann algebras}
Let $M$ be any von Neumann algebra. Let $I$ be any nonempty directed set and $\omega$ any cofinal ultrafilter on $I$. Define
\begin{align*}
	\ell^\infty(I,M) &= \{ (x_i)_{i\in I} \mid x_i\in M, \ \sup_i \|x_i\|_\infty <\infty \} \\
	\mathcal I_{\omega} &= \left\{ (x_i)_{i} \in \ell^\infty(I, M) \mid x_i \to 0 \text{ *-strongly as } i \to \omega \right\} \\
	\mathcal M^{\omega} &= \left \{ x \in \ell^\infty(I, M) \mid  x \mathcal I_{\omega} \subset \mathcal I_{\omega} \text{ and } \mathcal I_{\omega}x \subset \mathcal I_{\omega}\right\}.
\end{align*}
The quotient C$^*$-algebra $M^\omega := \mathcal{M}^\omega/ \mathcal{I}_\omega$ is in fact a von Neumann algebra, and we call it the \textit{ultraproduct von Neumann algebra} \cite{Oc85}. For more on ultraproduct von Neumann algebras, we refer the reader to \cite{Oc85,AH12}.

\subsection*{Topological groups associated to a von Neumann algebra}
Let $M$ be any von Neumann algebra and let $\mathcal{U}(M)$ be its unitary group. The restrictions of the weak topology, the strong topology and the $*$-strong topology all coincide on $\mathcal{U}(M)$. Equipped with this topology, $\mathcal{U}(M)$ is a complete topological group (which is Polish if $M_*$ is separable). Let $\Aut(M)$ be the group of all automorphisms of $M$. We equip it with the topology of pointwise norm convergence on $M_*$, which means that a net $(\alpha_i)_i $ in $\Aut(M)$ converges to $\alpha \in \Aut(M)$ if and only if $\| \phi\circ \alpha_i - \phi\circ \alpha \| \to 0$ for any $\phi \in M_*$. With this topology, $\Aut(M)$ is a complete topological group and it is Polish when $M_*$ is separable. There is continuous homomorphism
$$\Ad : \mathcal{U}(M) \ni u \mapsto \Ad(u) \in \Aut(M)$$
where $\Ad(u)(x)=uxu^*$ for all $x \in M$. We denote by $\Inn(M)\subset \Aut(M)$ the image of $\Ad$, i.e.\ the set of all inner automorphisms. Since $\Inn(M)$ is a normal subgroup in $\Aut(M)$, we can form the quotient group $\Out(M):=\Aut(M)/\Inn(M)$ which we call the \textit{outer automorphism group of $M$} and we equip it with the quotient topology (which is not necessarily Hausdorf).

For any von Neumann algebras $M$ and $N$, we have a natural continuous homomorphism 
$$\Aut(M) \times \Aut(N) \ni (\alpha,\beta) \mapsto \alpha \otimes \beta \in \Aut(M \ovt N)$$
which also induces a continuous injective homomorphism $$\Out(M) \times \Out(N) \rightarrow \Out(M \ovt N).$$ 

\subsection*{Full factors}
Following \cite{Co74}, we say that a factor $M$ is \emph{full} if the map $\Ad\colon \mathcal U(M)\to \Aut(M)$ is open on its range. Equivalently, $M$ is full if and only if the quotient topology on $\Inn(M)$ comming from the surjection $\mathcal{U}(M) \rightarrow \Inn(M)$ coincides with the induced topology comming from the inclusion $\Inn(M) \subset \Aut(M)$. In that case $\Inn(M)$ is a complete topological group hence it must be closed in $\Aut(M)$ and the quotient group $\Out(M)$ is also a Hausdorf complete topological group (Polish if $M_*$ is separable).

We also recall \cite{Co74} that a factor $M$ is full if and only if it satisfies the following property: every uniformly bounded net $(x_i)_{i \in I}$ in $\ell^\infty(I,M)$ that is {\em centralizing}, meaning that $\lim_i \|x_i \varphi - \varphi x_i\| = 0$ for all $\varphi \in M_\ast$, must be {\em trivial}, meaning that there exists a bounded net $(\lambda_i)_{i \in I}$ in $\C$ such that $x_i - \lambda_i 1 \to 0$ strongly as $i \to \infty$. See also \cite{Ma18} for another characterization of fullness.

\subsection*{Bimodules and Popa's intertwining theory} Let $M$ and $N$ be two von Neumann algebras. An \emph{$M$-$N$-bimodule} is a $*$-representation $\pi_{\cH} \colon M\odot N\op\rightarrow \mathbf{B}(\cH)$ that is normal on each tensor component, where $\odot$ is the algebraic tensor product and $N\op=\{n\op:n\in N\}$ is the opposite von Neumann algebra of $N$. When the underlying representation $\pi_{\cH}$ is obvious, we will often use the notation ${_M}\cH_N$ to specify the $M$-$N$-bimodule  structure of $\cH$. We refer the reader to the preliminary section of \cite{AD95} for the general theory of bimodules and for the definition of the Connes' fusion tensor product. We will simply fix some notations and recall the needed facts. 

We denote by $\mathcal{L}_{N^{\op}}(\cH)$ the commutant of the right $N$-action on $\cH$. Then $\rL^2(\mathcal{L}_{N^{\op}}(\cH))$ identifies canonically with $\cH \otimes_B \overline{\cH}$ where $\overline{\cH}$ is the opposite $B$-$A$-bimodule of $\cH$. Suppose that $N \subset M$ is a subalgebra of $M$. We denote by $\langle M,N \rangle$ the commutant of the right $N$-action on $\rL^2(M)$ (namely, the restriction of the canonical right $M$-action). Then we have $\rL^2(M) \otimes_N \rL^2(M)=\rL^2 \langle M, N \rangle$ as $M$-$M$-bimodules. We view $M$ as a subalgebra of $\langle M, N \rangle$.

We will say that an $M$-$N$-bimodule $\cH$ is \emph{contained} in another $M$-$N$-bimodule $\cK$, written abusively as $\cH \subset \cK$, if there exists an $M$-$N$-bimodular isometry $V\colon \cH \rightarrow \cK$. We will say that $\cH$ is \emph{weakly contained} in $\cK$, written as $\cH\prec\cK$, if we have $\| \pi_{\cH}(T) \| \leq \| \pi_{\cK}(T) \|$ for all $T \in M \odot N\op$.

We have the following very important characterizations (see \cite[Appendix]{BMO19} for item (ii)):
\begin{thm}\label{bimodule theorem}
Let $M \subset N$ be an inclusion of von Neumann algebras. Then the following holds:
\begin{itemize}
\item ${_M}\rL^2(M)_M \subset  {_M}\rL^2(N)_M$ if and only if there exists a normal conditional expectation from $N$ to $M$.
\item ${_M}\rL^2(M)_M \prec  {_M}\rL^2(N)_M$ if and only if there exists a conditional expectation from $N$ onto $M$.
\end{itemize}
\end{thm}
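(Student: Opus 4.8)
The plan is to handle the two items separately; in each the implication ``(normal) conditional expectation $\Rightarrow$ (weak) containment'' is soft, and the real content is the converse. For item (i) I would prove the ``only if'' direction first. Given an $M$-$M$-bimodular isometry $V\colon \rL^2(M)\to\rL^2(N)$, form the normal unital completely positive map $\Phi\colon N\to\B(\rL^2(M))$, $\Phi(n)=V^*nV$, where $n$ acts on $\rL^2(N)$ by left multiplication. Writing $J_M,J_N$ for the modular conjugations, right $M$-modularity of $V$ reads $VJ_M a J_M=J_N a J_N V$ for $a\in M$; combining this with the fact that the left action of $n$ on $\rL^2(N)$ commutes with the right action $J_N a J_N$, a one-line computation shows that $\Phi(n)$ commutes with $J_M M J_M$, hence $\Phi(n)\in(J_M M J_M)'=M$ since $M'=J_M M J_M$ on $\rL^2(M)$. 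Left $M$-modularity of $V$ gives $\Phi|_M=\id_M$, so $\Phi$ is a normal norm-one projection of $N$ onto $M$, i.e.\ a normal conditional expectation by Tomiyama's theorem. For the ``if'' direction, from a normal conditional expectation $E\colon N\to M$ I would fix a faithful normal state $\psi$ on $M$ and set $\varphi=\psi\circ E$: since $E$ is $\varphi$-preserving, Takesaki's theorem gives $\sigma^\varphi_t(M)=M$ with $\sigma^\varphi|_M=\sigma^\psi$, and then $\Lambda_\psi(x)\mapsto\Lambda_\varphi(x)$ extends to an $M$-$M$-bimodular isometry of $\rL^2(M)$ into $\rL^2(N)$ (a standard reduction handles the case where $\varphi$ is not faithful).

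For item (ii), the ``if'' direction starts from a conditional expectation $E\colon N\to M$ assumed only to be a norm-one projection. I would again set $\varphi=\psi\circ E$, now a possibly non-normal state on $N$, write it as a pointwise limit of normal states $\varphi_k\in N_*$, and show that the $\varphi_k$ can be implemented by vectors $\xi_k\in\rL^2(N)$ whose bimodular coefficient functionals $T\mapsto\langle\pi_{\rL^2(N)}(T)\xi_k,\xi_k\rangle$ converge, uniformly over the unit ball of $M\odot M\op$, to the coefficient functional of the cyclic vector of $\rL^2(M)$; this is exactly $\rL^2(M)\prec\rL^2(N)$. Conversely, from the weak containment I would extract a u.c.p.\ map $\mathcal E\colon N\to M$ with $\mathcal E|_M=\id_M$, using the correspondence between pointed $M$-$M$-bimodules and completely positive maps together with the $\sigma$-weak compactness of the set of u.c.p.\ maps $N\to M$; Tomiyama's theorem then upgrades $\mathcal E$ to a conditional expectation. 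Both halves of item (ii) are what is carried out in \cite[Appendix]{BMO19}.

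The main difficulty is confined to item (ii): since the conditional expectation there need not be normal, one cannot exhibit the weak containment by a single modular-theoretic construction as in item (i), and must instead pass through an approximation --- by normal states, respectively by u.c.p.\ maps --- while keeping uniform control of the relevant coefficient functionals over the unit ball of $M\odot M\op$. Item (i), by contrast, is essentially formal once modular theory and Tomiyama's theorem are in hand.
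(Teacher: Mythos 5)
Your item (i) is correct: the compression argument $\Phi(n)=V^*nV$, together with the observation that $\Phi(n)$ commutes with $J_MMJ_M$ and $\Phi|_M=\id_M$, does produce a normal conditional expectation, and the converse via $\varphi=\psi\circ E$ and Takesaki's theorem is the standard construction (modulo the reduction by the support projection of $E$, and replacing the state $\psi$ by a weight when $M$ is not $\sigma$-finite). This is in fact more than the paper offers, since the paper treats item (i) as classical and simply cites \cite[Appendix]{BMO19} for item (ii). Your sketch of the direction ``$\rL^2(M)\prec\rL^2(N)$ $\Rightarrow$ conditional expectation'' is also the right route (point--weak$^*$ compactness of u.c.p.\ maps into $M\subset\B(\rL^2(M))$ plus Tomiyama, or equivalently running your item (i) compression on a limit/ultraproduct bimodule), although the invoked ``correspondence between pointed bimodules and c.p.\ maps'' hides a Radon--Nikodym-type correction, since the approximating vectors only have right-$M$-state approximately equal to $\psi$.

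The genuine gap is in the other half of item (ii), ``conditional expectation $\Rightarrow$ weak containment'', which is precisely the nontrivial content of \cite[Appendix]{BMO19}. You propose to approximate the (possibly singular) state $\varphi=\psi\circ E$ weak$^*$ by normal states $\varphi_k$, implement them by vectors $\xi_k\in\rL^2(N)$, and claim that the bimodular coefficients $T\mapsto\langle\pi_{\rL^2(N)}(T)\xi_k,\xi_k\rangle$ converge to those of $\psi^{1/2}$ --- even uniformly on the unit ball of $M\odot M\op$, which is far stronger than Fell's criterion requires and is not attainable in general. But no argument is given for any convergence, and none is available from weak$^*$ approximation alone: the one-sided values $\langle x\xi_k,\xi_k\rangle=\varphi_k(x)$ are controlled, while the two-sided coefficients $\langle a\xi_k b,\xi_k\rangle$ (for $\xi_k=\varphi_k^{1/2}$ this is $\mathrm{tr}(\varphi_k^{1/2}a\varphi_k^{1/2}b)$) depend on the modular data of $\varphi_k$ relative to $M$ and are not weak$^*$-continuous in $\varphi_k$. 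Even exact control of the restriction does not help: for $M\subset N=M_2(\C)$ the diagonal subalgebra, all density matrices with diagonal $(\tfrac12,\tfrac12)$ and off-diagonal entry $t$ restrict to the same $\psi$ on $M$, yet $\langle a\rho^{1/2}b,\rho^{1/2}\rangle$ acquires cross terms as soon as $t\neq 0$. In the normal case this is exactly what Takesaki's theorem ($\sigma^{\psi\circ E}$ preserves $M$) repairs, and that tool is unavailable for a non-normal $E$; your scheme would need a mechanism to select vectors whose modular position relative to $M$ is approximately correct, and supplying that mechanism is the actual difficulty. As written, this half is a deferral to \cite[Appendix]{BMO19} rather than a proof --- which is also what the paper does, but it means your sketch does not close the statement on its own.
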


We now introduce the notion of \emph{left weakly mixing} bimodules and \emph{left amenable} bimodules via the following propositions which are consequences of Theorem \ref{bimodule theorem}.

\begin{prop}[Left weakly mixing bimodules] Let $A$ and $B$ be two von Neumann algebras and let $\cH$ be an $A$-$B$-bimodule. The following properties are equivalent:
\begin{itemize}
\item The $A$-$A$-bimodule $\cH \otimes_B \overline{\cH}$ is disjoint from $\rL^2(A)$, i.e.\ does not contain $z\rL^2(A)$ for any non-zero projection $z \in \mathcal{Z}(A)$. 
\item The $A$-$A$-bimodule $\cH \otimes_B \cK$ is disjoint from $\rL^2(A)$ for every $B$-$A$-bimodule $\cK$.
\item There is no normal conditional expectation $E : z\mathcal{L}_{B^{\op}}(\cH)z \rightarrow zA$ for any non-zero projection $z \in \mathcal{Z}(A)$. 
\end{itemize}
When these properties are satisfied, we say that $\cH$ is left weakly mixing.
\end{prop}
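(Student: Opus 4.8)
The plan is to reduce all three conditions to Theorem~\ref{bimodule theorem}, by systematically using the canonical identification $\cH \otimes_B \overline{\cH} = \rL^2(N)$ with $N := \mathcal{L}_{B^{\op}}(\cH)$ the commutant of the right $B$-action, inside which $A$ sits as a unital subalgebra via its left action on $\cH$.

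First I would prove (1)$\Leftrightarrow$(3). Fix a non-zero projection $z \in \mathcal{Z}(A)$; it is also a projection in $N$, and $z\rL^2(A) = \rL^2(zA)$ as an $A$-$A$-bimodule, with the $A$-actions factoring through $zA$. A routine cornering argument shows that an $A$-$A$-bimodular isometry $z\rL^2(A) \to \rL^2(N)$ necessarily has range in $z\rL^2(N)z = \rL^2(zNz)$ and is $zA$-$zA$-bimodular, and conversely; so ``$\cH \otimes_B \overline{\cH}$ contains $z\rL^2(A)$'' is the same as ``$\rL^2(zA) \subset \rL^2(zNz)$'', which by the first item of Theorem~\ref{bimodule theorem} (applied to $zA \subset zNz$) means exactly that there is a normal conditional expectation $zNz \to zA$. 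Letting $z$ range over $\mathcal{Z}(A)$ gives (1)$\Leftrightarrow$(3), and then (2)$\Rightarrow$(1) is trivial (take $\cK = \overline{\cH}$).

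The substance is the implication (1)$\Rightarrow$(2), which I would prove contrapositively. Suppose there is a $B$-$A$-bimodule $\cK$, a non-zero $z \in \mathcal{Z}(A)$ and an $A$-$A$-bimodular isometry $V \colon z\rL^2(A) \hookrightarrow \cH \otimes_B \cK$. Let $Q := \mathcal{L}_{A^{\op}}(\cH \otimes_B \cK)$. Since the left $N$-action on $\cH$ descends to the relative tensor product and commutes there with the right $A$-action, we get a unital chain $A \subset N \subset Q$. By the preliminaries, $\rL^2(Q) = (\cH \otimes_B \cK) \otimes_A \overline{(\cH \otimes_B \cK)}$ as bimodules, so tensoring $V$ with its conjugate yields an $A$-$A$-bimodular isometry
\[
z\rL^2(A) \otimes_A \overline{z\rL^2(A)} \;\hookrightarrow\; (\cH \otimes_B \cK) \otimes_A \overline{(\cH \otimes_B \cK)} \;=\; \rL^2(Q).
\]
Because $\rL^2(A)$ is the identity object for Connes fusion and is self-conjugate, the left-hand side is just $z\rL^2(A) = \rL^2(zA)$. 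As in the first part, this embedding lands in $\rL^2(zQz)$ and is $zA$-$zA$-bimodular, so Theorem~\ref{bimodule theorem} gives a normal conditional expectation $zQz \to zA$; restricting it to $zNz$ (this is where $N \subset Q$ is used) produces a normal conditional expectation $zNz \to zA$, i.e.\ the negation of (3), hence of (1).

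The one genuinely nontrivial point is this last step: passing from left weak mixing of the single bimodule $\cH \otimes_B \overline{\cH}$ to that of all $\cH \otimes_B \cK$. One cannot simply ``cancel $\cK$'', since $B$ is not assumed amenable; what does the job is the self-absorption $z\rL^2(A) \otimes_A \overline{z\rL^2(A)} \cong z\rL^2(A)$, which lets us implant the copy of $z\rL^2(A)$ into the honest standard form $\rL^2(Q)$ of the larger algebra $Q \supset N$ and then restrict the conditional expectation it furnishes. Everything else is routine manipulation of relative tensor products together with Theorem~\ref{bimodule theorem}.
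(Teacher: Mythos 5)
Your proof is correct and follows the route the paper intends: the proposition is stated there without a written proof as a consequence of Theorem \ref{bimodule theorem}, and your reduction via the canonical identification $\cH \otimes_B \overline{\cH} \cong \rL^2(\mathcal{L}_{B^{\op}}(\cH))$ together with the cornering by $z$ is exactly that derivation. Your extra step for (1)$\Rightarrow$(2) — using the self-absorption $z\rL^2(A) \otimes_A \overline{z\rL^2(A)} \cong z\rL^2(A)$ to embed $z\rL^2(A)$ into $\rL^2(\mathcal{L}_{A^{\op}}(\cH \otimes_B \cK))$ and then restricting the resulting normal expectation through $zA \subset z\mathcal{L}_{B^{\op}}(\cH)z \subset z\mathcal{L}_{A^{\op}}(\cH \otimes_B \cK)z$ — is a standard and valid way to supply the detail the paper leaves implicit.
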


\begin{prop}[Left amenable bimodules] Let $A$ and $B$ be two von Neumann algebras and let $\cH$ be an $A$-$B$-bimodule. The following properties are equivalent:
\begin{itemize}
\item The $A$-$A$-bimodule $\cH \otimes_B \overline{\cH}$ weakly contains $\rL^2(A)$.
\item The $A$-$A$-bimodule $\cH \otimes_B \cK$ weakly contains $\rL^2(A)$ for some $B$-$A$-bimodule $\cK$.
\item There exists a conditional expectation $E : \mathcal{L}_{B^{\op}}(\cH) \rightarrow A$.
\end{itemize}
When these properties are satisfied, we say that $\cH$ is left amenable.
\end{prop}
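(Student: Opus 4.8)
The plan is to establish the cycle of implications $(3)\Rightarrow(1)\Rightarrow(2)\Rightarrow(3)$, using Theorem \ref{bimodule theorem} as the fundamental bridge between bimodule weak containment and the existence of conditional expectations. The key structural fact to exploit is that $\rL^2(\mathcal{L}_{B^{\op}}(\cH))$ identifies canonically with $\cH \otimes_B \overline{\cH}$ as an $A$-$A$-bimodule, together with the standard inclusion $A \subset \mathcal{L}_{B^{\op}}(\cH)$ coming from the left $A$-action. Under this identification, the $A$-$A$-bimodule $\rL^2(A)$ corresponds to $\rL^2(A)$ sitting inside $\rL^2(\mathcal{L}_{B^{\op}}(\cH))$ via the inclusion $A \subset \mathcal{L}_{B^{\op}}(\cH)$. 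So I would first set $N := \mathcal{L}_{B^{\op}}(\cH)$ and observe that ${_A}\rL^2(A)_A \prec {_A}\rL^2(N)_A$ is, by Theorem \ref{bimodule theorem}(ii), equivalent to the existence of a (not necessarily normal) conditional expectation $N \to A$. This is precisely the content of $(1)\Leftrightarrow(3)$, so the substantial part of that equivalence is just unwinding the canonical identifications.

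For $(1)\Rightarrow(2)$, this is trivial: take $\cK=\overline{\cH}$, which is a $B$-$A$-bimodule, and then $\cH\otimes_B\cK=\cH\otimes_B\overline{\cH}$ weakly contains $\rL^2(A)$ by hypothesis. For $(2)\Rightarrow(1)$, suppose $\cH\otimes_B\cK$ weakly contains $\rL^2(A)$ for some $B$-$A$-bimodule $\cK$. The idea is to ``fold'' $\cK$ back using $\overline{\cH}$: since $\cK$ is a $B$-$A$-bimodule, one has $\cK \prec \overline{\cH}\otimes_A(\cH\otimes_B\cK)$ is not quite what I want; instead, I would use that any $B$-$A$-bimodule $\cK$ satisfies $\overline{\cK}\otimes_A\cK \prec$ (a multiple of) the coarse $B$-$B$-bimodule $\rL^2(B)\otimes\rL^2(B)$ only in the amenable case, so that is the wrong track. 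The correct move: from $\rL^2(A)\prec\cH\otimes_B\cK$ we deduce, tensoring on the right over $A$ with $\overline{\cK}$ (a $A$-$B$-bimodule), that
\[
\overline{\cK} = \rL^2(A)\otimes_A\overline{\cK} \prec (\cH\otimes_B\cK)\otimes_A\overline{\cK} = \cH\otimes_B(\cK\otimes_A\overline{\cK}).
\]
Since $\cK\otimes_A\overline{\cK}$ is a $B$-$B$-bimodule that weakly contains... no — one needs $\cK\otimes_A\overline{\cK}\prec$ something controlled by $\rL^2(B)$-type bimodules, which again fails in general. So instead I would argue directly at the level of conditional expectations: weak containment $\rL^2(A)\prec\cH\otimes_B\cK$ means $\rL^2(A)\prec\rL^2(\mathcal{L}_{A^{\op}}(\cH\otimes_B\cK))$ where $A$ acts on the left, hence by Theorem \ref{bimodule theorem}(ii) there is a conditional expectation $\mathcal{L}_{A^{\op}}(\cH\otimes_B\cK)\to A$; and since $\mathcal{L}_{B^{\op}}(\cH)$ embeds naturally into $\mathcal{L}_{A^{\op}}(\cH\otimes_B\cK)$ by acting on the $\cH$-leg (this is an inclusion commuting with the left $A$-action), restricting the conditional expectation gives a conditional expectation $\mathcal{L}_{B^{\op}}(\cH)\to A$, which by the $(1)\Leftrightarrow(3)$ argument is equivalent to $(1)$.

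\textbf{Main obstacle.} The delicate point is $(2)\Rightarrow(1)$ — specifically, verifying carefully that $\mathcal{L}_{B^{\op}}(\cH)$ really does embed as a von Neumann subalgebra of $\mathcal{L}_{A^{\op}}(\cH\otimes_B\cK)$ in a way that is compatible with the left $A$-actions on both sides, so that a conditional expectation onto $A$ restricts correctly. One must check that an operator $T\in\mathcal{L}_{B^{\op}}(\cH)$, extended by $T\otimes 1$ on $\cH\otimes_B\cK$, is well-defined on the Connes fusion (it is, because $T$ is right-$B$-linear), bounded, and that this map is a normal unital $*$-homomorphism onto a subalgebra containing the image of $A$. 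The other subtlety is bookkeeping the normality hypotheses: Theorem \ref{bimodule theorem}(ii) only produces a (possibly non-normal) conditional expectation, matching the statement of the proposition, whereas the left weakly mixing proposition needs normality — so I must be careful to state everything at the right level of normality and not accidentally demand more than Theorem \ref{bimodule theorem} gives. Finally I would record that the equivalences are insensitive to the choice of $\cK$ precisely because of this folding argument, which justifies the ``for some'' in $(2)$.
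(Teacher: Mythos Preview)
Your approach is correct and is exactly what the paper intends: the proposition is stated there without proof, merely as a ``consequence of Theorem~\ref{bimodule theorem}'', and your argument unwinds precisely that. The cycle $(1)\Leftrightarrow(3)$ via $N=\mathcal{L}_{B^{\op}}(\cH)$ and $\rL^2(N)\cong\cH\otimes_B\overline{\cH}$, together with the trivial $(1)\Rightarrow(2)$, matches the paper's implicit route.

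One small gap to close in $(2)\Rightarrow(3)$: you write that $\rL^2(A)\prec\cH\otimes_B\cK$ ``means'' $\rL^2(A)\prec\rL^2(\mathcal{L}_{A^{\op}}(\cH\otimes_B\cK))$, but these are not the same statement, since $\rL^2(\mathcal{L}_{A^{\op}}(\cH\otimes_B\cK))=(\cH\otimes_B\cK)\otimes_A\overline{(\cH\otimes_B\cK)}$. You need one extra line: from $\rL^2(A)\prec\mathcal{L}:=\cH\otimes_B\cK$, tensor on the right over $A$ by $\overline{\mathcal{L}}$ to get $\overline{\mathcal{L}}\prec\mathcal{L}\otimes_A\overline{\mathcal{L}}$, and combine with $\rL^2(A)\prec\overline{\mathcal{L}}$ (conjugation) to conclude $\rL^2(A)\prec\mathcal{L}\otimes_A\overline{\mathcal{L}}$. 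After that, your restriction argument via the (unital, normal) $*$-homomorphism $\mathcal{L}_{B^{\op}}(\cH)\to\mathcal{L}_{A^{\op}}(\cH\otimes_B\cK)$, $T\mapsto T\otimes_B 1$, goes through; note that even if this map has a kernel, composing it with the conditional expectation onto $A$ still yields a ucp map fixing $A$, which is all you need.
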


We recall the following Popa's \textit{intertwining-by-bimodule} technique \cite{Po01,Po03}. For the proof, we refer the reader to \cite[Theorem 4.3]{HI15} and \cite[Theorem 2]{BH16}. Recall that a von Neumann subalgebra $P\subset M$ is \textit{with expectation} if there is a faithful normal conditional expectation from $1_PM1_P $ onto $P$.

\begin{thm}[{\cite{Po01,Po03}}]\label{intertwining theorem}
Let $M$ be any $\sigma$-finite von Neumann algebra and $A \subset 1_AM1_A$ and $B \subset 1_BM1_B$ two von Neumann subalgebras with expectations. Then the following are equivalent:
\begin{itemize}
\item The $A$-$B$-bimodule $1_A\rL^2(M)1_B$ is not left weakly mixing.
\item There exists projections $e\in A$, $f\in B$, a nonzero partial isometry $v\in eMf$ and a unital normal $\ast$-homomorphism $\theta\colon eAe \to fBf$ such that $v\theta(a)= av$ for all $a\in eAe$.
\end{itemize}
When these properties hold, we write $A \prec_M B$.
\end{thm}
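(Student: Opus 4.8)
The plan is to route everything through the Jones basic construction. Write $\langle M,B\rangle := \mathcal{L}_{B^{\op}}(\rL^2(M))$ for the basic construction, with Jones projection $e_B$, and for notational ease assume $1_A = 1_B = 1$ (the general case is a routine reduction carrying the support projections along), so that $\rL^2(M)\otimes_B\overline{\rL^2(M)} = \rL^2(\langle M,B\rangle)$ as $M$-$M$-bimodules and $A$ acts on the left of $\langle M,B\rangle$. Fix a faithful normal conditional expectation $E_B\colon M\to B$, a faithful normal state $\varphi_0$ on $B$, and set $\varphi := \varphi_0\circ E_B$, a faithful normal state on $M$ with $\varphi\circ E_B = \varphi$; let $T\colon\langle M,B\rangle_+\to\widehat M_+$ be the canonical normal semifinite operator-valued weight with $T(e_B) = 1$ and $T(xe_By) = xy$ for $x,y\in M$, and $\widehat\varphi := \varphi\circ T$, a faithful normal semifinite weight on $\langle M,B\rangle$ with $e_B$ in its centralizer. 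Call a projection $p\in\langle M,B\rangle$ \emph{finite relative to $M$} if $T(p)\in M$, equivalently if $p$ is equivalent in $\langle M,B\rangle$ to a subprojection of a finite sum of projections $xe_Bx^*$ with $x\in M$. The proof will show that each of the two displayed conditions is equivalent to the condition $(\ast)$ that $A'\cap\langle M,B\rangle$ contains a nonzero projection finite relative to $M$.

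\emph{Equivalence of $(\ast)$ with the intertwiner.} Given $(\ast)$, let $p$ be such a projection; then $p\,\rL^2(M)$ is an $A$-$B$-subbimodule of $\rL^2(M)$ which is finitely generated as a right $B$-module, so it embeds as a right $B$-submodule of some $\rL^2(B)^{\oplus n}$, and the left $A$-action becomes a normal unital $\ast$-homomorphism into a corner $g(\mathrm M_n(\C)\otimes B)g$; reading off the matrix coefficients produces $v_1,\dots,v_n\in M$, and after compressing by suitable projections $e\in A$ and $f\in B$ one obtains a single nonzero partial isometry $v\in eMf$ and a unital normal $\ast$-homomorphism $\theta\colon eAe\to fBf$ with $v\theta(a) = av$. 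Conversely, from such $v,\theta$ the $A$-$B$-subbimodule $\overline{AvB}\subset\rL^2(M)$ is finitely generated as a right $B$-module because it is contained in $\overline{vB}$, which is a quotient of $\rL^2(B)$ under the bounded map $\widehat b\mapsto vb$ and hence has right $B$-dimension at most $1$; the orthogonal projection onto $\overline{AvB}$ is then a nonzero projection in $A'\cap\langle M,B\rangle$ finite relative to $M$. Both directions are soft bimodule bookkeeping.

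\emph{Equivalence of $(\ast)$ with ``not left weakly mixing''.} If $(\ast)$ holds, then a standard argument based on Theorem \ref{bimodule theorem} produces a nonzero $z\in\mathcal Z(A)$ and a normal conditional expectation $z\langle M,B\rangle z\to zA$, equivalently the $A$-$A$-bimodule $\rL^2(M)\otimes_B\overline{\rL^2(M)}$ is not disjoint from $z\rL^2(A)$, so $\rL^2(M)$ is not left weakly mixing over $A$-$B$. For the converse — the crux — suppose $\rL^2(M)$ is not left weakly mixing; by the characterization of left weak mixing there is a nonzero $z\in\mathcal Z(A)$ and a normal conditional expectation $\mathcal E\colon z\langle M,B\rangle z\to zA$. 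Using $\mathcal E$ together with weak-$\ast$ compactness of the relevant convex set one first extracts a finite subset $F\subset M$ and a $\delta>0$ with $\sum_{x,y\in F}\|E_B(xuy)\|_{2,\varphi}^2\ge\delta$ for every $u\in\mathcal U(A)$. Then I would run Popa's averaging argument with $\xi_F := \sum_{x\in F}xe_Bx^*\in\langle M,B\rangle$, which is finite relative to $M$: conjugation by any unitary is an isometry of the Haagerup space $\rL^2(\langle M,B\rangle,\widehat\varphi)$, so the closed convex hull $C$ of $\{u\xi_Fu^* : u\in\mathcal U(A)\}$ in $\rL^2(\langle M,B\rangle,\widehat\varphi)$ is bounded, and its unique element $\eta$ of minimal norm is fixed by conjugation by $\mathcal U(A)$, hence is a positive element affiliated with $A'\cap\langle M,B\rangle$ and finite relative to $M$; it is nonzero because a direct computation using that $e_B$ is $\widehat\varphi$-central and $\varphi = \varphi\circ E_B$ gives $\langle u\xi_Fu^*,\xi_F\rangle_{\widehat\varphi}\ge\delta$ for all $u$, whence $\langle\eta,\xi_F\rangle\ge\delta>0$. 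A nonzero spectral projection of $\eta$ then witnesses $(\ast)$.

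The main obstacle is exactly this averaging step in the absence of a trace on $\langle M,B\rangle$: one must justify that conjugation-invariant positive vectors in the Haagerup $\rL^2$-space of $\widehat\varphi$ have well-behaved spectral projections, lying in $A'\cap\langle M,B\rangle$ and finite relative to $M$, and one must pass carefully between the bimodule form of ``not left weakly mixing'' and the operator form involving $E_B$; the point that makes it work is that the Jones projection $e_B$ lies in the centralizer of $\widehat\varphi$. This is precisely the technical content of \cite[Theorem 4.3]{HI15} and \cite[Theorem 2]{BH16}, which I would invoke for this step, the rest being routine manipulation with bimodules and Theorem \ref{bimodule theorem}.
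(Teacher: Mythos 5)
Your proposal is correct and follows essentially the same route as the paper: the paper gives no proof of Theorem \ref{intertwining theorem} at all, but recalls it with pointers to \cite[Theorem 4.3]{HI15} and \cite[Theorem 2]{BH16}, and your sketch is an accurate outline of precisely those arguments (basic construction, finiteness relative to $M$, Popa's averaging in the Haagerup $\rL^2$-space of the dual weight $\widehat\varphi$ with $e_B$ in its centralizer). You correctly isolate the genuinely delicate step in the non-tracial setting and defer it to the same two references the paper itself cites, so there is nothing to add.
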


We will also write $A \lessdot_M B$ when the $A$-$B$-bimodule $1_A \rL^2(M) 1_B$ is left amenable. When $1_B=1_M$, this means there is conditional expectation from $1_A\langle M,B \rangle 1_A$ onto $A$. If it is normal on $1_AM1_A$, this is equivalent to relative amenability. We will not use this normality in this paper and our notion of $A \lessdot_M B$ is more appropriate for our study.

\begin{prop} \label{convergence weak containment}
Let $P\subset M$ be a von Neumann algebra and $E\colon M\to P$ a normal conditional expectation. 
Suppose that $E_i : M \rightarrow P_i$ is a net of normal conditional expectations on von Neumann subalgebras $P_i \subset M$ which converges to $E$ in the sense that $\|\phi\circ E_i - \phi\circ E\| \to0$ for all $\phi\in M_*$. 
Then $\rL^2(P) \prec \bigoplus_{i \in I} \rL^2 \langle M, P_i \rangle$ as $P$-$P$-bimodules. If moreover $E$ is faithful then we have $\rL^2\langle M, P \rangle \prec \bigoplus_{i \in I} \rL^2 \langle M, P_i \rangle$ as $M$-$M$-bimodules.
\end{prop}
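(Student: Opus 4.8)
The plan is to realize the weak containments concretely by producing, for each $\varepsilon > 0$ and each finite set of vectors, an approximately bimodular almost-isometry from a corner of $\rL^2(P)$ into the direct sum $\bigoplus_i \rL^2\langle M, P_i\rangle$, using the conditional expectations $E_i$ as the building blocks. Recall that $\rL^2\langle M, P_i\rangle \cong \rL^2(M) \otimes_{P_i} \rL^2(M)$ as $M$-$M$-bimodules, and the expectation $E_i$ gives a canonical vector: fixing a faithful normal state $\varphi$ on $M$ with cyclic vector $\xi_\varphi \in \rL^2(M)$, the element $\xi_\varphi \otimes_{P_i} \xi_\varphi \in \rL^2\langle M, P_i\rangle$ satisfies $\langle (x \otimes_{P_i} y)\cdot \text{stuff}\rangle$ computations that reduce to $\varphi(x E_i(\cdot) y)$-type formulas. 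The first step is therefore to write down the Jones-type vector $\hat\xi_i \in \rL^2\langle M, P_i\rangle$ associated to $E_i$ and compute the coefficient functionals $m \mapsto \langle m \hat\xi_i, \hat\xi_i\rangle$ and $m \mapsto \langle \hat\xi_i m, \hat\xi_i\rangle$ in terms of $\varphi \circ E_i$; these will be controlled, via the hypothesis $\|\varphi\circ E_i - \varphi \circ E\| \to 0$, by the corresponding coefficients for $E$.

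Next I would package these into a single vector in the direct sum. The second step: for the first assertion, we want $\rL^2(P) \prec \bigoplus_i \rL^2\langle M, P_i\rangle$ as $P$-$P$-bimodules. Take the cyclic vector $\hat\xi^P \in \rL^2(P)$ coming from $\varphi|_P$ (or rather from $\varphi \circ E$, which is the relevant state since $E$ carries the ambient data); the point is that the $P$-$P$-bimodular coefficient of $\hat\xi^P$ is $p \mapsto (\varphi\circ E)(p \cdot)$, and on each $\rL^2\langle M, P_i\rangle$ the vector $\hat\xi_i$ has coefficient $p \mapsto (\varphi\circ E_i)(p\cdot)$ when restricted to $P$ acting on the left and right (here one uses that $P$ need not be inside $P_i$, so one must be slightly careful: the left-right $P$-action on $\rL^2\langle M, P_i\rangle$ is through $P \subset M \subset \langle M, P_i\rangle$, and the coefficient of $\hat\xi_i$ against $p_1, p_2 \in P$ is $\varphi(p_1 E_i(p_2))$ or similar). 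Since these converge to the coefficient of $\hat\xi^P$, a standard weak-containment criterion — a net of unit vectors whose diagonal coefficients converge pointwise in norm on $P \odot P\op$ to the coefficient of a cyclic vector forces weak containment — yields the claim. I would phrase this using the characterization that $\rL^2(P) \prec \cK$ iff $\|\pi_{\rL^2(P)}(T)\| \le \|\pi_{\cK}(T)\|$ for all $T \in P \odot P\op$, checking the inequality on the cyclic vector $\hat\xi^P$ by approximating $\langle \pi_{\rL^2(P)}(T)\hat\xi^P, \hat\xi^P\rangle$ with $\langle \pi_{\cK}(T)(\hat\xi_i)_i, (\hat\xi_i)_i\rangle$ after normalization.

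For the second assertion, when $E$ is faithful, I would upgrade from $P$-$P$-bimodules to $M$-$M$-bimodules. Here the natural vector is $\hat\eta \in \rL^2\langle M, P\rangle \cong \rL^2(M)\otimes_P \rL^2(M)$, namely $\xi_\varphi \otimes_P \xi_\varphi$, with $M$-$M$-coefficient $m_1 \mapsto \varphi(m_1 E(\cdot))$-type expression; faithfulness of $E$ guarantees this vector is well-behaved (the associated form is faithful, so $\hat\eta$ generates a copy of something large enough — in fact one uses that $\rL^2\langle M, P\rangle$ is generated as an $M$-$M$-bimodule by $\hat\eta$ together with $M$, equivalently that the coefficient captures $E$ exactly). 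The vectors $\xi_\varphi \otimes_{P_i} \xi_\varphi \in \rL^2\langle M, P_i\rangle$ have $M$-$M$-coefficients given by $\varphi \circ E_i$-expressions, and again $\|\varphi\circ E_i - \varphi\circ E\| \to 0$ forces the coefficient convergence on $M \odot M\op$, hence $\rL^2\langle M, P\rangle \prec \bigoplus_i \rL^2\langle M, P_i\rangle$.

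The main obstacle I anticipate is the bookkeeping in Step 2/3 around the fact that $P$ (resp.\ $M$) is \emph{not} contained in the $P_i$: one must be careful that the relevant coefficient functional of $\hat\xi_i$, computed against elements of $P$ (resp.\ $M$) acting on both sides of $\rL^2\langle M, P_i\rangle$, really is the $\varphi \circ E_i$-expression and not something involving $E_i$ twice in an uncontrolled way; writing $\langle a \hat\xi_i b^*, \hat\xi_i\rangle = \varphi(a E_i(b^*))$ cleanly (using $\hat\xi_i = \xi_\varphi \otimes_{P_i}\xi_\varphi$ and the defining relation $\xi_\varphi x \otimes_{P_i} \xi_\varphi = \xi_\varphi \otimes_{P_i} E_i(x)\xi_\varphi$... actually $m\xi_\varphi \otimes_{P_i} \xi_\varphi$ with the right-$P_i$ identification) requires care with which side the expectation lands on. Once the coefficient formulas are pinned down, the convergence argument and the weak-containment criterion are routine. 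A secondary subtlety is ensuring the normalization: $\|\hat\xi_i\|^2 = \varphi(E_i(1)) = 1$ is automatic, but one should confirm the limiting vector is genuinely cyclic/separating for the piece of $\langle M, P\rangle$ one needs, which is exactly where faithfulness of $E$ enters in the second statement.
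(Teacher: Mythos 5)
Your overall strategy coincides with the paper's (implementing vectors in $\rL^2\langle M,P_i\rangle$, convergence of their coefficients, then the standard cyclic-vector criterion for weak containment), but there is a genuine gap at precisely the step you flag as ``bookkeeping''. The vector $\xi_\varphi\otimes_{P_i}\xi_\varphi$ built from a \emph{fixed} faithful state $\varphi$ does not implement $E_i$ at all: in $\rL^2(M)\otimes_{P_i}\rL^2(M)$ only elements of $P_i$ cross the tensor sign, and the $P_i$-valued inner product $\langle \xi_\varphi, x\,\xi_\varphi\rangle_{P_i}$ is the one determined by $\varphi$, not by $E_i$. Hence your identity $\xi_\varphi x\otimes_{P_i}\xi_\varphi=\xi_\varphi\otimes_{P_i}E_i(x)\xi_\varphi$ and the coefficient formula $\langle a\hat\xi_i b^*,\hat\xi_i\rangle=\varphi(aE_i(b^*))$ are valid only when $\varphi\circ E_i=\varphi$, i.e.\ when $E_i$ happens to be the $\varphi$-preserving expectation; for arbitrary normal expectations $E_i$ your vectors see the $\varphi$-preserving expectations onto the $P_i$ and the hypothesis on the $E_i$ never enters the argument. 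The problem is not which side the expectation lands on; it is that the reference functional must be allowed to depend on $i$.

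The repair is what the paper's proof does. Take $p\in P$ a $\sigma$-finite projection and $\varphi\in(pPp)_*^+$ faithful (your fixed faithful state on $M$ tacitly assumes $M$ is $\sigma$-finite, which the statement does not), extend it to the $E$-invariant functional $\varphi\circ E$, and set $\varphi_i:=(\varphi\circ E)\circ E_i$, which \emph{is} $E_i$-invariant. The canonical vector $\xi_i\in\rL^2\langle M,P_i\rangle$ associated with $(E_i,\varphi_i)$ satisfies $\langle x\xi_i y,\xi_i\rangle=\langle E_i(x)\varphi_i^{1/2}y,\varphi_i^{1/2}\rangle=\langle x\varphi_i^{1/2}y,\varphi_i^{1/2}\rangle$ (its GNS vector is fixed by the Jones projection $e_{P_i}$), and the Powers--St{\o}rmer inequality converts the hypothesis $\|\varphi_i-\varphi\circ E\|\to0$ (applied with $\phi=\varphi\circ E$) into $\|\varphi_i^{1/2}-(\varphi\circ E)^{1/2}\|_{\rL^2}\to0$, which is what actually yields the coefficient convergence; norm convergence of the functionals does not by itself act on vectors built from a fixed $\varphi$. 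Your idea for the first assertion (compare with the cyclic vector of $\rL^2(P)$, so that non-faithfulness of $E$ is harmless) is sound, but identifying that vector's $P$-$P$ coefficient with the limiting coefficient again requires the $E$-invariant state $\varphi\circ E$ (whose modular group preserves $P$), not an arbitrary $\varphi$; the paper instead tracks the support projection $q$ of $E$, proves the containment on corners $pqr\,\rL^2\langle M,P\rangle\,pqr$, lets the $\sigma$-finite projection $p$ exhaust, and uses $\rL^2(P)\subset qr\,\rL^2\langle M,P\rangle\,qr$. Your remark that faithfulness of $E$ is exactly what makes the canonical vector cyclic for the $M$-$M$-bimodule $\rL^2\langle M,P\rangle$ in the second assertion is correct and matches the paper's use of $q=r=1$.
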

\begin{proof}
Let $q\in P' \cap M$ be the support projection of $E$ and we denote by $r$ the right action of $q$ on $\rL^2(M)$, which is contained in $\langle M,P\rangle$. 
Let $p\in P$ be any $\sigma$-finite projection and $\varphi\in (pPp)_*^+$ a faithful state. Let $\xi \in pr\rL^2 \langle M,P \rangle pr$ be the canonical vector such that 
	$$ \langle x \xi y, \xi \rangle = \langle E(x) \varphi^{\frac{1}{2}}y ,\varphi^{\frac{1}{2}}\rangle, \quad \text{for all }x,y\in pqMpq.$$
Put $\varphi_i= \varphi\circ E_i$ and observe that $\varphi_i \to \varphi$ by assumption. We have
	$$ \langle E(x) \varphi^{\frac{1}{2}}y ,\varphi^{\frac{1}{2}}\rangle = \lim_i\langle E_i(x) \varphi_i^{\frac{1}{2}}y ,\varphi_i^{\frac{1}{2}}\rangle \quad \text{for all }x,y\in pqMpq.$$
This shows that $$pqr\rL^2 \langle M, P \rangle pqr \prec \bigoplus_{i \in I} pq\rL^2\langle M, P_i \rangle pq$$ as $pqMpq$-bimodules. Since the $\sigma$-finite projection $p$ is arbitrary, the case $p=1$ also holds. 
If $q=r=1$, then we are done. For general $q$, since $\rL^2(P)$ is contained in $qr\rL^2 \langle M, P \rangle qr$ as $P$-$P$-bimodules, we are also done.
\end{proof}

\section{Tensor factors and flip maps}

Let $M$ be a factor. A \emph{tensor factor} of $M$ is a subfactor $P \subset M$ such that $M=P \ovt (P' \cap M)$. We denote by $\mathrm{TF}(M)$ the set of all tensor factors of $M$. For $P \in \mathrm{TF}(M)$, we will often denote its commutant by $P^c=P' \cap M \in \mathrm{TF}(M)$ when no confusion is possible. We equip $\mathrm{TF}(M)$ with the weakest topology which makes the maps 
$$ \mathrm{TF}(M) \ni P \mapsto \varphi |_P \otimes \psi |_{P^c} \in M_*$$ continuous for every $\varphi, \psi \in M_*$.

Let $\widehat{M}=M \ovt M$ be the tensor double of $M$. We will often distinguish the two copies of $M$ (and its subalgebras) by using the notation $M_1=M\otimes \C$ and $M_2=\C \otimes M$. We denote by $\sigma_M$ the \emph{flip automorphism} of $\widehat{M}$ given by $\sigma_M(x \otimes y)=y \otimes x$ for every $x,y \in M$. For every $P \in \mathrm{TF}(M)$, we obtain naturally a tensor product decomposition $\widehat{M}=\widehat{P} \ovt \widehat{P^c}$. Therefore, we can view $\sigma_P$ as an automorphism of $\widehat{M}$ by identifying abusively $\sigma_P$ with $\sigma_P \otimes \id_{\widehat{P^c}}$. The map $P \mapsto \sigma_P$ is clearly injective since $P=\{ x \in M \mid \sigma_P(x \otimes 1)=1 \otimes x \}$ for every $P \in \mathrm{TF}(M)$.

\begin{thm}\label{thm for TF(M)}
Let $M$ be a factor. The map $\mathrm{TF}(M) \ni P \mapsto \sigma_P \in \Aut(\widehat{M})$ is a homeomorphism on its range, and its range is closed.
\end{thm}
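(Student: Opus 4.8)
The plan is to check three things in turn: that $P \mapsto \sigma_P$ is continuous, that it is open onto its range (equivalently, that its inverse is continuous on the range), and that the range is closed in $\Aut(\widehat M)$. Throughout I will use the explicit formula $P = \{ x \in M \mid \sigma_P(x \otimes 1) = 1 \otimes x\}$, which already gives injectivity and will be the main tool for recovering $P$ from $\sigma_P$.

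\emph{Continuity of $P \mapsto \sigma_P$.} A net $\alpha_i \to \alpha$ in $\Aut(\widehat M)$ means $\|\psi \circ \alpha_i - \psi \circ \alpha\| \to 0$ for every $\psi \in \widehat M_*$. Since the predual of $\widehat M = M \ovt M$ is generated (in norm) by elementary tensors $\varphi \otimes \psi$ with $\varphi, \psi \in M_*$, it suffices to test convergence against such functionals. Now if $P_i \to P$ in $\mathrm{TF}(M)$, then by definition $\varphi|_{P_i} \otimes \chi|_{P_i^c} \to \varphi|_P \otimes \chi|_{P^c}$ in $M_*$ for all $\varphi, \chi \in M_*$; writing out $(\varphi \otimes \psi) \circ \sigma_{P_i}$ on a product $x_1 x_1' \otimes x_2 x_2'$ with $x_j \in P_i$, $x_j' \in P_i^c$ and using that $\sigma_{P_i}$ swaps the $P_i$-legs and fixes the $P_i^c$-legs, one sees that $(\varphi \otimes \psi) \circ \sigma_{P_i}$ is a fixed algebraic combination of the slice functionals $\varphi|_{P_i}, \varphi|_{P_i^c}, \psi|_{P_i}, \psi|_{P_i^c}$, each of which converges by hypothesis; hence $(\varphi \otimes \psi)\circ \sigma_{P_i} \to (\varphi \otimes \psi)\circ \sigma_P$ in norm. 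This needs to be made precise by first reducing to a $\sigma$-strongly dense subalgebra (finite linear combinations of such products) and then using uniform boundedness, but it is routine.

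\emph{The inverse is continuous; the range is closed.} Suppose $\sigma_{P_i} \to \alpha$ in $\Aut(\widehat M)$. First I would argue that $\alpha$ is again a flip: each $\sigma_{P_i}$ is an involution commuting with $\sigma_M$ (it is $\sigma_{P_i} \otimes \mathrm{id}$ viewed appropriately, and conjugating $\sigma_M$ by the unitary implementing the decomposition $\widehat M = \widehat{P_i} \ovt \widehat{P_i^c}$ shows $\sigma_M \sigma_{P_i} = \sigma_{P_i^c} \sigma_M$, etc.); these algebraic relations pass to the limit since $\Aut(\widehat M)$ is a topological group, so $\alpha^2 = \mathrm{id}$. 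Then set $P := \{x \in M \mid \alpha(x \otimes 1) = 1 \otimes x\}$ and $Q := \{x \in M \mid \alpha(1 \otimes x) = x \otimes 1\}$; the relation $\sigma_{P_i}(x \otimes 1) = 1 \otimes x$ for $x \in P_i$, combined with the convergence $\sigma_{P_i} \to \alpha$, lets one show that $P$ (resp.\ $Q$) is a von Neumann subalgebra and that $P, Q$ commute, are each other's relative commutants, and together generate $M$, so that $P \in \mathrm{TF}(M)$ with $P^c = Q$ and $\alpha = \sigma_P$. This simultaneously shows the range is closed and, by the argument above applied in reverse (convergence of $\sigma_{P_i}$ forces the slice functionals of $P_i$ to converge to those of $P$), that $P_i \to P$ in $\mathrm{TF}(M)$, i.e.\ that $\sigma^{-1}$ is continuous.

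\emph{Main obstacle.} The delicate point is the limit step: showing that the $\alpha$-fixed set $P$ is not just a weakly closed subspace but an honest tensor factor, and in particular that $\overline{\mathrm{span}}\,(P \cup Q)'' = M$ rather than something smaller. The issue is that weak-$*$ limits of the "tensor-splitting" functionals need not a priori split as tensors over the limit algebras, so one must be careful extracting, from $\sigma_{P_i} \to \alpha$, enough control to conclude that every $m \in M$ lies in $P \vee Q$ and that the product map $P \odot Q \to M$ extends to an isomorphism $P \ovt Q \cong M$. I expect this to be handled by testing against a suitable family of functionals: the convergence $\sigma_{P_i} \to \alpha$ gives, for each $\varphi, \psi \in M_*$, convergence of $\varphi|_{P_i} \otimes \psi|_{P_i^c}$, and a normalization/separation argument (using that $M$ is a factor, so the slice maps are rich enough) then pins down that $\alpha$ restricted to $M_1$ and $M_2$ has commuting ranges whose join is all of $\widehat M$. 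Once the algebraic structure is secured, identifying $\alpha$ with $\sigma_P$ and deducing $P_i \to P$ are both formal.
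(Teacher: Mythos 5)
Your outline gets the easy parts right, but the step you yourself flag as the ``main obstacle'' is exactly the heart of the theorem, and you do not supply an argument for it. Note first that the intermediate statement you propose to ``pin down'' --- that $\alpha(M_1)$ and $\alpha(M_2)$ commute and generate $\widehat{M}$ --- is automatic for \emph{any} automorphism $\alpha$, so it cannot do the work. What actually has to be proved is that the fixed-point-type algebras $P=\{x\in M \mid \alpha(1\otimes x)=x\otimes 1\}$ and $Q=\{x \in M\mid \alpha(x\otimes 1)=x\otimes 1\}$ generate $M$, equivalently that $\alpha(M\otimes\C)\subset Q\ovt P$; and weak-$*$ limits of splitting functionals give no direct handle on this, as you concede. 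The paper resolves precisely this point with a device absent from your sketch: from $\sigma_{P_i}\to\alpha$ it builds normal conditional expectations as limits, namely $E_P(x)\otimes 1=(\id\otimes\varphi)(\alpha(1\otimes x))$ as a limit of $E_{P_i}=\id\otimes\varphi|_{P_i^c}$ (multiplicativity on the range being checked by a limit computation), similarly $E_Q$ from $\sigma_{Q_i}=\sigma_M\circ\sigma_{P_i}\to\sigma_M\circ\alpha$, and then proves $P\vee Q=M$ by showing that the normal expectation $\alpha\circ(E_Q\otimes E_P)\circ\alpha$ onto $(P\vee Q)\otimes\C$ restricts to the identity on $M\otimes\C$, because it is the pointwise limit (in norm on the predual) of the expectations $\sigma_{P_i}\circ(E_{Q_i}\otimes E_{P_i})\circ\sigma_{P_i}$, each of which maps onto $M\otimes\C$. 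Without an argument of this kind, both the closedness of the range and the continuity of the inverse remain unproven, since both rest on this limit step.

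There is also a flaw in your forward continuity argument. For general $\varphi,\psi\in M_*$, the functional $(\varphi\otimes\psi)\circ\sigma_{P_i}$ is \emph{not} an algebraic combination of the slice functionals $\varphi|_{P_i},\varphi|_{P_i^c},\psi|_{P_i},\psi|_{P_i^c}$: on $x_1x_1'\otimes x_2x_2'$ it evaluates to $\varphi(x_2x_1')\,\psi(x_1x_2')$, which is not determined by the restrictions unless $\varphi$ and $\psi$ themselves split as products. Moreover, your proposed repair (pointwise convergence on a $\sigma$-strongly dense subalgebra plus uniform boundedness) only yields weak-$*$ convergence of $(\varphi\otimes\psi)\circ\sigma_{P_i}$, whereas convergence in $\Aut(\widehat{M})$ requires \emph{norm} convergence in the predual. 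The correct fix is the one the paper uses: test only on the norm-dense span of $K\odot K$, where $K=\{\varphi\in M_*\mid \varphi=\varphi|_P\otimes\varphi|_{P^c}\}$ for the limit decomposition; for such $\varphi,\psi$ the hypothesis $P_i\to P$ gives $\|\varphi|_{P_i}\otimes\varphi|_{P_i^c}-\varphi\|\to 0$, hence $(\varphi\otimes\psi)\circ\sigma_{P_i}$ is within vanishing norm of $(\psi|_{P_i}\otimes\varphi|_{P_i^c})\otimes(\varphi|_{P_i}\otimes\psi|_{P_i^c})$, which converges in norm to $(\varphi\otimes\psi)\circ\sigma_P$. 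So the overall strategy is the right one, but as written the proposal is missing the key construction and has a defective density argument.
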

\begin{proof} 
Let $\iota : (M \ovt M) _* \rightarrow M_*$ be the continuous map given by $\iota(\phi)(x)=\phi(x \otimes 1)$ for all $x \in M$. Let $\varphi \in M_*$. Then, we have $\varphi |_P \otimes \psi |_{P^c}=\iota(\sigma_P( \varphi \otimes \psi))$ for all $\varphi, \psi \in M_*$. This shows that if $\sigma_{P_i} \to \sigma_P$ then $P_i \to P$.

Conversely, suppose that $P_i \to P$. Let $K=\{ \varphi \in M_* \mid \varphi=\varphi|_P \otimes \varphi|_{P^c} \}$. Take $\varphi,\psi \in K$. Then we have $\| \varphi|_{P_i} \otimes \varphi |_{P_i^c} - \varphi \| \to 0$ and similarly for $\psi$. Thus we get
$$ \lim_i \| \sigma_{P_i}(\varphi \otimes \psi) - (\psi|_{P_i} \otimes \varphi |_{P_i^c}) \otimes (\varphi|_{P_i} \otimes \psi |_{P_i^c}) \|=0.$$
But we have
$$ \lim_i  (\psi|_{P_i} \otimes \varphi |_{P_i^c}) \otimes (\varphi|_{P_i} \otimes \psi |_{P_i^c})  =  (\psi|_{P} \otimes \varphi |_{P^c}) \otimes (\varphi|_{P} \otimes \psi |_{P^c}) =\sigma_P(\varphi \otimes \psi).$$
Thus we have the pointwise norm convergence of $\sigma_{P_i}$ to $\sigma_P$ on the set $K \odot K$ which is dense in $(M \ovt M)_*$. We conclude that $\sigma_{P_i}$ converges to $\sigma_P$.

Let us now show that the range is closed. Suppose that a net $(\sigma_{P_i})_{i \in I}$ converges to some $\alpha \in \Aut(\widehat{M})$. Fix $\varphi$ a normal state on $M$. For all $i \in I$, define a normal conditional expectation from $M$ to $P_i$ by $E_{P_i} = \id_{P_i} \otimes (\varphi|_{P_i^c})$. Note that we have $E_{P_i}(x) \otimes 1=(\id \otimes \varphi)(\sigma_{P_i}(1 \otimes x))$ for all $x \in M$. Define a normal completely positive map $E : M \rightarrow M$ by $E(x) \otimes 1=(\id \otimes \varphi)(\alpha(1 \otimes x))$. Observe that $\lim_i E_{P_i}(x)=E(x)$ in the $*$-strong topology for all $x \in M$. Also, observe that for every $\psi \in M_*$, we have $\| \psi \circ E_{P_i} - \psi \circ E \| \leq \| \sigma_{P_i}(\psi \otimes \varphi)-\alpha(\psi \otimes \varphi) \| \to 0$. From this, it is easy to see that in the weak$^*$-topology, we have $E(xE(y))=\lim_i E_{P_i}(xE_{P_i}(y))=\lim_i E_{P_i}(x)E_{P_i}(y)=E(x)E(y)$ for all $x,y \in M$. This shows that $E$ is a normal conditional expectation on a subalgebra $P \subset M$. For every unitary $u \in P$, we have $u \otimes 1 = (\id \otimes \varphi)(\alpha(1 \otimes u))$. This forces $u \otimes 1= \alpha(1 \otimes u)$ because $\id \otimes \varphi$ is a conditional expectation and $u \otimes 1$ and $\alpha(1 \otimes u)$ are unitaries. Thus we have $P=\{ x \in M \mid x \otimes 1=\alpha(1 \otimes x) \}$. We denote $E$ by $E_P$ from now on.

We can do the same with $Q_i=P_i' \cap M$ as $\sigma_{Q_i}=\sigma_M \circ \sigma_{P_i} \to \sigma_M \circ \alpha$. We obtain a subalgebra $Q \subset M$ with a normal conditional expectation $E_Q : M \rightarrow Q$ given by $ E_Q(x) \otimes 1=(\id \otimes \varphi)(\alpha(x \otimes 1))$ for all $x \in M$. We have $Q=\{ x \in M \mid x \otimes 1=\alpha(x \otimes 1) \}$. Thus, we see that $P \otimes \C=\alpha(\C \otimes P)$ and $Q \otimes \C=\alpha(Q \otimes \C)$ are in tensor product position inside $M \otimes \C$. It remains to show that $P$ and $Q$ generate $M$. Let $D \subset M$ be the von Neumann algebra generated by $P$ and $Q$. Observe that $D \otimes \C=\alpha(Q \ovt P)$. Thus $E_{D \otimes \C}=\alpha \circ (E_Q \otimes E_P) \circ \alpha$ defines a normal conditional expectation of $M \ovt M$ onto $D \otimes \C$. Observe that $\psi \circ (E_Q \otimes E_P) = \lim_i \psi \circ (E_{Q_i} \otimes E_{P_i})$ for all $\psi \in M_* \odot M_*$ hence for all $\psi \in (M \ovt M)_*$ by density. Thus we get
$$\psi \circ E_{D \otimes \C}=\lim_i \psi \circ \sigma_{P_i} \circ (E_{Q_i} \otimes E_{P_i}) \circ \sigma_{P_i}$$ for all $\psi \in (M \ovt M)_*$. Since $\sigma_{P_i}(Q_i \otimes P_i)=M \otimes \C$ for all $i$, we have that $\sigma_{P_i} \circ (E_{Q_i} \otimes E_{P_i}) \circ \sigma_{P_i}$ is a conditional expectation onto $M \otimes \C$ for all $i$. We conclude that $E_{D \otimes \C}(x \otimes 1)=x \otimes 1$ for all $x \in M$, i.e.\ $D=M$ as we wanted.
\end{proof}
\begin{cor}
If $M_*$ is separable, then $\mathrm{TF}(M)$ is a Polish space. 
\end{cor}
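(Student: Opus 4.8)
The plan is to read off the result directly from Theorem~\ref{thm for TF(M)} together with the standard fact recalled in the preliminaries that $\Aut(N)$ is a Polish group whenever $N_*$ is separable. So the first step is to observe that separability of $M_*$ implies separability of $\widehat{M}_* = (M \ovt M)_*$: indeed, the algebraic tensor product $M_* \odot M_*$ is norm-dense in $(M \ovt M)_*$, and a countable dense subset of $M_*$ gives rise to a countable (rational-span) dense subset of $M_* \odot M_*$, hence of $(M\ovt M)_*$. Consequently $\Aut(\widehat M)$, equipped with the topology of pointwise norm convergence on $\widehat M_*$, is a Polish group.

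Next I would invoke Theorem~\ref{thm for TF(M)}: the map $P \mapsto \sigma_P$ is a homeomorphism from $\mathrm{TF}(M)$ onto its range in $\Aut(\widehat M)$, and that range is closed. A closed subspace of a Polish space is again Polish (completeness of the metric passes to closed sets, and second countability is inherited by any subspace). Therefore the range of $P \mapsto \sigma_P$ is a Polish space, and since $\mathrm{TF}(M)$ is homeomorphic to it, $\mathrm{TF}(M)$ is Polish as well.

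There is essentially no obstacle here: the corollary is a formal consequence of the homeomorphism-onto-a-closed-subset statement already proved, plus the separability bookkeeping for the predual of the tensor square. The only point requiring a line of justification is the passage from separability of $M_*$ to separability of $\widehat M_*$, which is routine.
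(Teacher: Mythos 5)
Your proof is correct and follows exactly the paper's route: the corollary is deduced from Theorem \ref{thm for TF(M)} by identifying $\mathrm{TF}(M)$ with a closed subset of the Polish group $\Aut(\widehat{M})$. The extra remark on separability of $(M \ovt M)_*$ is a harmless spelling-out of what the paper leaves implicit.
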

\begin{proof}
By the theorem, $\mathrm{TF}(M)$ is homeomorphic to a closed subset of $\Aut(\widehat{M})$.
\end{proof}

The following items will be very useful in the study of $\mathrm{TF}(M)$ for a factor $M$. For this, recall that all $\sigma$-finite infinite projections in $M$ are equivalent, so that we can often reduce problems to the $\sigma$-finite case.

\begin{lem}\label{lemma for intertwining and flip}
Let $P,Q \in \mathrm{TF}(M)$. The following conditions are equivalent:
\begin{itemize}
\item  [$(\rm i)$] $P \prec_M Q$;
\item [$(\rm ii)$] $\sigma_{Q}(P_1) \prec_{\widehat{M}} M_2$;
\item [$(\rm iii)$] $Q^c \prec_M P^c$.
\end{itemize}
\end{lem}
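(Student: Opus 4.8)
The plan is to prove the equivalence $(\mathrm{i})\Leftrightarrow(\mathrm{ii})$ and then deduce $(\mathrm{i})\Leftrightarrow(\mathrm{iii})$ by a symmetry argument, reducing first to the $\sigma$-finite case. Write $\widehat M = P \ovt P^c \ovt P \ovt P^c$ and similarly $\widehat M = Q \ovt Q^c \ovt Q \ovt Q^c$; after reindexing we identify $M_1 = P \ovt P^c \ovt \C \ovt \C$ and $M_2 = \C \ovt \C \ovt P \ovt P^c$, and likewise for $Q$. The key observation is that intertwining $A \prec_M B$ for tensor factors $A,B \in \mathrm{TF}(M)$ should be expressible purely in terms of the corner-embedding of the relevant corners, and the flip $\sigma_Q$ is precisely the device that moves $P_1$ into a position where being intertwined into $M_2$ becomes the statement $P \prec_M Q$ (after a further flip identifying the two copies of $M$).

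For $(\mathrm{i})\Rightarrow(\mathrm{ii})$: suppose $P \prec_M Q$. The bimodule reformulation (Theorem~\ref{intertwining theorem}) says the $P$-$Q$-bimodule $1_P \rL^2(M) 1_Q$ is not left weakly mixing, i.e.\ there is a normal conditional expectation onto a corner of $P$ from a corner of $\mathcal L_{Q^{\mathrm{op}}}(\cH)$. Inside $\widehat M = M_1 \ovt M_2$, the algebra $M_2$ plays the role of $\langle \widehat M, M_2\rangle$-style right-action commutant, and $\sigma_Q(P_1)$ sits as a subfactor of $\widehat M$; the claim $\sigma_Q(P_1) \prec_{\widehat M} M_2$ is that the $\sigma_Q(P_1)$-$M_2$-bimodule $\rL^2(\widehat M)$ is not left weakly mixing. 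Since $\sigma_Q$ is an automorphism, $\sigma_Q(P_1) \prec_{\widehat M} M_2$ is equivalent to $P_1 \prec_{\widehat M} \sigma_Q^{-1}(M_2) = \sigma_Q(M_2)$. Now the point is that $\sigma_Q(M_2) \supset \sigma_Q(Q_2) = Q_1$ up to the flip on the $Q$-coordinate only, so one computes $\sigma_Q(M_2) = Q^c_1 \ovt Q_2$ inside $\widehat M = Q_1 \ovt Q^c_1 \ovt Q_2 \ovt Q^c_2$ (the flip $\sigma_Q$ swaps the two $Q$-legs and fixes the two $Q^c$-legs). Then $P_1 = P \ovt \C$ lives in $M_1 = Q \ovt Q^c \ovt \C \ovt \C$, and intertwining $P_1$ into $Q^c_1 \ovt Q_2$ inside $\widehat M$ reduces, via the product structure and the fact that $P$ is a factor, exactly to $P \prec_M Q$. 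This last reduction is the step I expect to require some care: one must check that intertwining a tensor factor $P \ovt \C \subset M \ovt \C$ into $Q^c \ovt Q \subset (Q\ovt Q^c) \ovt (Q \ovt Q^c)$ is equivalent to $P \prec_M Q$ — the intuition being that the ``spectator'' copy of $Q$ (the factor $Q_2$) can be absorbed since intertwining is insensitive to tensoring both sides by a common factor (a standard fact: $A \prec_M B \iff A \ovt \C \prec_{M \ovt F} B \ovt F$ for any factor $F$), while $Q^c_1$ is the honest target after recalling $Q \prec_M Q^c$ is false but $Q \not\prec$ is not what we want — rather one uses $P \prec_M Q \iff Q^c \prec_M P^c$ is what we are also proving, so care is needed to avoid circularity. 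To stay safe I would run the argument directly at the level of conditional expectations / non-weak-mixing bimodules rather than quoting the ``corner'' equivalence.

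For $(\mathrm{ii})\Rightarrow(\mathrm{i})$ one simply reverses the bimodule computation above, and for $(\mathrm{i})\Leftrightarrow(\mathrm{iii})$ one notes that applying the already-proven equivalence $(\mathrm{i})\Leftrightarrow(\mathrm{ii})$ to the pair $(P^c, Q^c)$ gives $P^c \prec_M Q^c \iff \sigma_{Q^c}(P^c_1) \prec_{\widehat M} M_2$, and then uses $\sigma_{Q^c} = \sigma_M \circ \sigma_Q$ (since $\sigma_M = \sigma_Q \circ \sigma_{Q^c}$ as automorphisms of $\widehat M = Q \ovt Q^c \ovt Q \ovt Q^c$, the full flip being the composite of the two partial flips) together with the fact that $\sigma_M$ conjugates $M_2$-intertwining into $M_1$-intertwining, which is the same thing as $M_2$-intertwining after one more application of $\sigma_M$; chasing this identifies $\sigma_{Q^c}(P^c_1) \prec_{\widehat M} M_2$ with $\sigma_Q(P_1) \prec_{\widehat M} M_2$, i.e.\ with $(\mathrm{ii})$, hence with $(\mathrm{i})$. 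Throughout, the reduction to the $\sigma$-finite case is harmless because all $\sigma$-finite infinite projections are equivalent and intertwining is stable under cutting by such projections.

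The main obstacle, as flagged, is the bookkeeping in the middle step: correctly identifying $\sigma_Q(M_2)$ as $Q^c_1 \ovt Q_2$ (or whichever of the four legs it is) and then verifying that intertwining of $P \ovt \C \ovt \C \ovt \C$ into that subalgebra collapses cleanly to $P \prec_M Q$ without sneaking in the hypothesis $Q \prec_M Q^c$ (which is false). I would therefore phrase this carefully: unfold $\sigma_Q(P_1) \prec_{\widehat M} M_2$ as the existence of projections and a partial isometry/$*$-homomorphism as in Theorem~\ref{intertwining theorem}, push everything through $\sigma_Q^{-1}$, and observe that the resulting data live inside $M_1 \ovt M_2$ in a position that is manifestly equivalent to an intertwining of $P$ into $Q$ inside $M$ tensored with the identity on the spectator factor, invoking only the standard stability of $\prec$ under tensoring by a common factor and under automorphisms.
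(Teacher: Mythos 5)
Two things go wrong here. First, in your $(\mathrm{i})\Leftrightarrow(\mathrm{ii})$ step the flip bookkeeping is off: since $\sigma_Q$ exchanges $Q_1$ and $Q_2$ and fixes $Q_1^c$ and $Q_2^c$, one has $\sigma_Q(M_2)=\sigma_Q(Q_2\ovt Q_2^c)=Q_1\ovt Q_2^c$, \emph{not} $Q_1^c\ovt Q_2$ (your own observation $\sigma_Q(Q_2)=Q_1\subset\sigma_Q(M_2)$ already contradicts the formula you wrote down). With your target $Q_1^c\ovt Q_2$, the reduction you then invoke would identify $(\mathrm{ii})$ with $P\prec_M Q^c$ rather than with $P\prec_M Q$. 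The correct (and shorter) route is the one the paper takes, run forwards: from the bimodule definition of $\prec$, $P\prec_M Q$ holds if and only if $P_1\prec_{\widehat{M}}Q_1\ovt Q_2^c$ (the extra tensor legs are not seen by $P_1$ and do not affect left weak mixing), and applying the automorphism $\sigma_Q$ carries the target $Q_1\ovt Q_2^c$ onto $Q_2\ovt Q_2^c=M_2$, which is exactly $(\mathrm{ii})$. Your closing remark that you would ``run the argument at the level of conditional expectations'' rather than complete the identification confirms that this middle step was in fact not carried out.

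Second, and more seriously, your derivation of $(\mathrm{i})\Leftrightarrow(\mathrm{iii})$ cannot work. You treat $(\mathrm{iii})$ as $P^c\prec_M Q^c$, but the statement is $Q^c\prec_M P^c$, with the roles reversed; and the equivalence your ``chase'' would establish, namely $P\prec_M Q\Leftrightarrow P^c\prec_M Q^c$, is false: take $M$ a $\II_1$ factor, $P=\C$ and $Q=M$, so that $P\prec_M Q$ holds while $P^c=M\prec_M\C=Q^c$ fails. Hence no formal manipulation of the identities $\sigma_M=\sigma_Q\circ\sigma_{Q^c}$ can convert $(\mathrm{ii})$ for the pair $(P^c,Q^c)$ (or $(Q^c,P^c)$) into $(\mathrm{ii})$ for $(P,Q)$. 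The commutant-reversal duality $P\prec_M Q\Leftrightarrow Q^c\prec_M P^c$ is a genuine statement about tensor factors, and the paper does not deduce it from the flip picture at all: it quotes Lemma 4.9 of \cite{HI15} in the $\sigma$-finite case and reduces the general case to that one (this reduction, via equivalence of $\sigma$-finite infinite projections, is the only part of your $\sigma$-finiteness remark that is needed). So the proposal contains a fixable slip in $(\mathrm{i})\Leftrightarrow(\mathrm{ii})$ and a genuine gap in $(\mathrm{i})\Leftrightarrow(\mathrm{iii})$.
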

\begin{proof}
	By the bimodule definition of the relation $\prec_M$, it is easy to see that $P\prec_MQ$ if and only if $P_1\prec_{\widehat{M}}Q_1\ovt Q_2^c$. Then applying $\sigma_Q$, we get (i) $\Leftrightarrow$ (ii). For item (iii), if $M$ is $\sigma$-finite, the proof is given in \cite[Lemma 4.9]{HI15}. The general case can be reduced to the $\sigma$-finite case.
\end{proof}

\begin{prop}\label{tensor lemma2}
Let $P,Q \in \mathrm{TF}(M)$. The following conditions are equivalent.
\begin{itemize}
\item  [$(\rm i)$] We have $P \prec_M Q$.
\item [$(\rm ii)$] There exists $D \in \mathrm{TF}(Q) \subset \mathrm{TF}(M)$ such that $P \sim D$.
\end{itemize}
\end{prop}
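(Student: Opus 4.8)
\emph{The easy direction $(\rm ii)\Rightarrow(\rm i)$.} First I would note that if $D\in\mathrm{TF}(Q)$ then $D$ is a subfactor of $Q$ with expectation, so $D\prec_M Q$ holds trivially by Theorem \ref{intertwining theorem} (take $v$ a unit and $\theta$ the inclusion). If moreover $P\sim D$, say $u(P\ovt F_1)u^*=D\ovt F_1$ for type $\mathrm{I}_\infty$ factors $F_1,F_2$ and $u\in\cU(M\ovt F_1\ovt F_2)$, then amplifying $D\prec_M Q$ to $D\ovt F_1\prec_{M\ovt F_1\ovt F_2}Q\ovt F_1$, conjugating by $u^*$, and using that $\prec$ is unaffected by replacing the target with a unitarily conjugate subalgebra of the ambient algebra, I get $P\ovt F_1\prec_{M\ovt F_1\ovt F_2}Q\ovt F_1$, hence $P\prec_M Q$ after de-amplification.

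\emph{The hard direction $(\rm i)\Rightarrow(\rm ii)$.} Here the plan is to upgrade the weak intertwining $P\prec_M Q$ to a stable unitary conjugacy of $P$ onto a genuine tensor factor inside $Q$. Applying Theorem \ref{intertwining theorem}, I obtain nonzero projections $p\in P$, $q\in Q$, a unital normal $\ast$-isomorphism $\theta$ from $pPp$ onto a subalgebra of $qQq$, and a nonzero partial isometry $v$ with $v^*v=q$, $vv^*\in(pPp)'\cap pMp$, and $v\theta(x)=xv$ for all $x\in pPp$. The decisive observation is that $M=P\ovt P^c$ forces $(pPp)'\cap pMp=pP^cp$, so $vv^*\in pP^cp$; hence one can write $qMq=qQq\ovt Q^c$ (using $q\in Q$) and $(vv^*)M(vv^*)=(pPp)\ovt\big((vv^*)P^c(vv^*)\big)$ (using $vv^*\in pP^cp$), where the first tensor factor of the latter is identified via $\Ad(v)$ with $\theta(pPp)$. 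Transporting the decomposition $qMq=qQq\ovt Q^c$ through the $\ast$-isomorphism $\Ad(v)\colon qMq\to(vv^*)M(vv^*)$ and comparing with the second one — via the modularity of tensor products, that a von Neumann subalgebra of $A\ovt B$ containing the factor $A\otimes 1$ splits as $A\ovt(\,\cdot\,)$ — I would conclude that $\theta(pPp)$ is a \emph{tensor factor} of $qQq$, say $qQq=\theta(pPp)\ovt C$.

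\emph{Removing the corners, and the main difficulty.} It then remains to get rid of the compressions $p,q$. A tensor factor of a corner $qQq$ of the factor $Q$ determines, up to $\sim$, a tensor factor of $Q$ (and likewise $pPp$ determines $P$), since $qQq$ and $Q$ are stably isomorphic and $\sim$ is stable under amplifications and compressions; so $\theta(pPp)$ corresponds to some $D\in\mathrm{TF}(Q)\subset\mathrm{TF}(M)$, and, completing $v$ to a unitary after tensoring with the type $\mathrm{I}_\infty$ factors from the definition of $\sim$, this correspondence is realized by a conjugacy witnessing $P\sim D$. The hard part is the modularity step: showing that the image $\theta(pPp)$, a priori merely a subfactor of the corner $qQq$ with no visible complement, is actually a tensor factor of it. This is exactly where the hypothesis $P\in\mathrm{TF}(M)$ is used essentially — it is what puts $vv^*$ into $P^c$ and makes $P^c$ the full relative commutant of $P$, feeding the modularity argument — and it has no analogue for general subalgebras. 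The remaining bookkeeping, namely checking that $\sim$ behaves well under passage to corners and amplifications, is routine.
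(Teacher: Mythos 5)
Your overall strategy --- apply Theorem \ref{intertwining theorem}, observe that $vv^*\in(pPp)'\cap pMp=P^cp$ (the one place where $P\in\mathrm{TF}(M)$ is used), split off the image as a tensor factor, then remove corners by amplification --- is exactly the route of the proofs that the paper itself invokes: the paper gives no argument of its own here, but cites \cite{OP03}, \cite[Lemma 4.13]{HI15} and \cite[Proposition 7.3]{HMV16} and reduces the general case to the $\sigma$-finite one. Your easy direction is fine, and your ``modularity'' step is legitimate once it is named: it is the Ge--Kadison splitting theorem (every intermediate von Neumann algebra between the factor $A\otimes 1$ and $A\ovt B$ is of the form $A\ovt C$), a genuine theorem rather than a formal modularity property, and it does require $A$ to be a factor.

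There is, however, a concrete gap in the hard direction. Theorem \ref{intertwining theorem} does not give $v^*v=q$: it gives $v\in pMq$ with $v\theta(a)=av$, so $v^*v$ is merely a projection in $\theta(pPp)'\cap qMq$, and there is no reason for it to lie in $qQq$ or in $Q^c$, let alone to equal $q$. Consequently ``$\Ad(v)\colon qMq\to(vv^*)M(vv^*)$'' is not defined, and the comparison of the decomposition $qMq=qQq\ovt Q^c$ with $(vv^*)M(vv^*)=pPp\ovt eP^ce$ --- the step on which your conclusion that $\theta(pPp)$ is a tensor factor of $qQq$ rests --- does not go through as written. The standard repair is not cosmetic: one amplifies by $\B(\ell^2)$ and uses that $P^c$, $Q$ and $Q^c$ are factors (properly infinite after amplification) to replace the partial isometry by a unitary, and only then applies splitting. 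Likewise, the ``routine bookkeeping'' you defer --- passing from a tensor factor of the corner $qQq$ to an honest $D\in\mathrm{TF}(Q)$, and producing a single unitary in $M\ovt F_1\ovt F_2$ that conjugates $P\ovt F_1$ onto $D\ovt F_1$ (hence also matches the complements), together with the reduction of a possibly non-$\sigma$-finite $M$ to the $\sigma$-finite setting required by Theorem \ref{intertwining theorem} --- is precisely where the cited lemmas do their work. So the skeleton is the right one, but the two steps you treat as immediate are where the actual proof lives.
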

\begin{proof}
If $M$ is $\sigma$-finite, then the proof is given in \cite{OP03}, \cite[Lemma 4.13]{HI15} and \cite[Proposition 7.3]{HMV16}.  The general case can be reduced to the $\sigma$-finite case.
\end{proof}

\begin{prop}\label{tensor lemma for stably unitary conjugacy}
Let $P,Q \in \mathrm{TF}(M)$. The following conditions are equivalent.
\begin{itemize}
\item  [$(\rm i)$] We have $P \prec_M Q$ and $Q \prec_M P$.
\item  [$(\rm ii)$] $P \sim Q$.
\item [$(\rm iii)$] $\sigma_P \circ \sigma_Q \in \Inn(\widehat{M})$.
\end{itemize}
\end{prop}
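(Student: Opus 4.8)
The plan is to establish the cycle $(\rm iii)\Rightarrow(\rm i)\Rightarrow(\rm ii)\Rightarrow(\rm iii)$; the remaining implication $(\rm ii)\Rightarrow(\rm i)$ is immediate, since $P\sim Q$ directly forces $P\prec_M Q$ and $Q\prec_M P$ (take $D=Q\in\mathrm{TF}(Q)$ in Proposition~\ref{tensor lemma2}). For $(\rm iii)\Rightarrow(\rm i)$, write $\sigma_P\circ\sigma_Q=\Ad(u)$ with $u\in\mathcal U(\widehat M)$. Since $\sigma_P$ is an involution, $\sigma_Q=\sigma_P\circ\Ad(u)$, so
\[\sigma_Q(P_1)=\sigma_P(uP_1u^*)=\sigma_P(u)\,\sigma_P(P_1)\,\sigma_P(u)^*=\sigma_P(u)\,P_2\,\sigma_P(u)^*.\]
Now $P_2=\C\ovt P\subset M_2$ is a subalgebra with normal conditional expectation, hence $P_2\prec_{\widehat M}M_2$; since $\prec$ is stable under unitary conjugation of the left algebra, $\sigma_Q(P_1)\prec_{\widehat M}M_2$, and Lemma~\ref{lemma for intertwining and flip} gives $P\prec_M Q$. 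Applying the same reasoning to $(\sigma_P\circ\sigma_Q)^{-1}=\sigma_Q\circ\sigma_P=\Ad(u^*)$ with $P$ and $Q$ exchanged yields $Q\prec_M P$.

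For $(\rm i)\Rightarrow(\rm ii)$, I would use Proposition~\ref{tensor lemma2}: from $P\prec_M Q$ we obtain $D\in\mathrm{TF}(Q)$ with $P\sim D$, and since $P\sim D$ entails $P\prec_M D$ and $D\prec_M P$, the hypothesis $Q\prec_M P$ upgrades to $Q\prec_M D$; it then suffices to prove $D\sim Q$, so we may assume $P\in\mathrm{TF}(Q)$. Then $M=P\ovt R\ovt Q^c$ with $R=P'\cap Q$ a factor, $Q=P\ovt R$ and $P^c=R\ovt Q^c$. By Lemma~\ref{lemma for intertwining and flip}, $Q\prec_M P$ is equivalent to $R\ovt Q^c\prec_M Q^c$, and since the $\rL^2(P)$-multiplicity is irrelevant this is equivalent to $R\ovt Q^c\prec_{R\ovt Q^c}Q^c$; computing the Connes fusion $\rL^2(R\ovt Q^c)\otimes_{Q^c}\overline{\rL^2(R\ovt Q^c)}$ as an $(R\ovt Q^c)$-bimodule, the latter holds if and only if the standard bimodule ${}_R\rL^2(R)_R$ embeds into a multiple of the coarse $R$-$R$-bimodule, i.e.\ if and only if $R$ is of type $\mathrm I$. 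Once $R$ is of type $\mathrm I$, tensoring by $R$ does not change the stable unitary conjugacy class (a rank-one projection of $R$ exhibits the required partial isometries), so $P\sim P\ovt R=Q$. Alternatively, when $M$ is $\sigma$-finite one may quote $(\rm i)\Leftrightarrow(\rm ii)$ from \cite{HI15,HMV16} and reduce the general case to it, exactly as in Proposition~\ref{tensor lemma2}.

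For $(\rm ii)\Rightarrow(\rm iii)$, let $F_1,F_2$ be type $\mathrm I_\infty$ factors and $u\in\mathcal U(N)$, $N:=M\ovt F_1\ovt F_2$, with $u(P\ovt F_1)u^*=Q\ovt F_1$. For any factor $L$, any $X\in\mathrm{TF}(L)$ and $v\in\mathcal U(L)$ one has $\sigma_{vXv^*}=\Ad(v\otimes v)\circ\sigma_X\circ\Ad(v\otimes v)^*$, so $\sigma_{vXv^*}\circ\sigma_X\in\Inn(\widehat L)$; with $L=N$, $X=P\ovt F_1$, $v=u$ this gives $\sigma_{P\ovt F_1}\circ\sigma_{Q\ovt F_1}\in\Inn(\widehat N)$. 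On the other hand, under the canonical identification $\widehat N=\widehat M\ovt(F_1\ovt F_1)\ovt(F_2\ovt F_2)$ one has $\sigma_{P\ovt F_1}=\sigma_P\ovt\tau\ovt\id$ and $\sigma_{Q\ovt F_1}=\sigma_Q\ovt\tau\ovt\id$, where $\tau$ is the flip of $F_1\ovt F_1$, which is inner and satisfies $\tau^2=\id$; composing, $\sigma_{P\ovt F_1}\circ\sigma_{Q\ovt F_1}=(\sigma_P\circ\sigma_Q)\ovt\id\ovt\id$. Hence $(\sigma_P\circ\sigma_Q)\ovt\id_{\B(\cH)}\in\Inn(\widehat M\ovt\B(\cH))$ for the type $\mathrm I$ factor $\B(\cH)=(F_1\ovt F_1)\ovt(F_2\ovt F_2)$; writing it as $\Ad(U)$ and observing that $U$ must commute with $\C\ovt\B(\cH)$, hence $U\in\widehat M\ovt\C$, we conclude $\sigma_P\circ\sigma_Q\in\Inn(\widehat M)$.

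The step I expect to be the main obstacle is the bimodule identification inside $(\rm i)\Rightarrow(\rm ii)$, namely proving that $R\ovt Q^c\prec_M Q^c$ (equivalently $Q\prec_M P$) holds exactly when $R=P'\cap Q$ is of type $\mathrm I$, i.e.\ that ${}_R\rL^2(R)_R$ sits in an amplification of the coarse bimodule precisely for type $\mathrm I$ algebras; everything else is routine manipulation of flip automorphisms and Popa's intertwining relation.
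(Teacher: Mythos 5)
Your proof is correct and follows essentially the paper's own route: $(\rm iii)\Rightarrow(\rm i)$ is the same flip computation combined with Lemma~\ref{lemma for intertwining and flip}, $(\rm ii)\Rightarrow(\rm iii)$ spells out what the paper dismisses as trivial, and $(\rm i)\Rightarrow(\rm ii)$ supplies the details behind the paper's ``easy consequence of Proposition~\ref{tensor lemma2}'' (your alternative of reducing to the $\sigma$-finite case and quoting \cite{OP03,HI15,HMV16} is exactly what the paper does implicitly), with your type~$\mathrm{I}$ relative commutant argument for $R=P'\cap Q$ being a correct self-contained substitute. The one phrase to repair is the deduction of $Q\prec_M D$ from $Q\prec_M P$ and $P\prec_M D$, since $\prec_M$ is not transitive in general; the step is nevertheless valid because it uses the full strength of $P\sim D$: for instance, $P\sim D$ gives $\sigma_P\circ\sigma_D\in\Inn(\widehat M)$ (your $(\rm ii)\Rightarrow(\rm iii)$, which does not depend on this step), and Lemma~\ref{lemma for intertwining and flip} converts $Q\prec_M P$ into $\sigma_P(Q_1)\prec_{\widehat M}M_2$, hence $\sigma_D(Q_1)\prec_{\widehat M}M_2$ by inner conjugacy, i.e.\ $Q\prec_M D$.
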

\begin{proof}
The equivalence of (i) and (ii) is an easy consequence of Proposition \ref{tensor lemma2}. Item (ii) trivially implies item (iii). Conversely, if item (iii) holds, then $P_1 \prec_{\widehat{M}} \sigma_Q\circ \sigma_P(P_1)$, hence $\sigma_Q(P_1) \prec_{\widehat{M}} M_2$. By Lemma \ref{tensor lemma2}, we get $P\prec_MQ$. Similarly we get $Q\prec_MP$ and item (i) holds.
\end{proof}

\section{Weakly bicentralized subalgebras}

In this section, we investigate the following property which plays a key role in our deformation/rigidity arguments. It was already used in \cite{BMO19}.

\begin{df}
Let $M$ be a von Neumann algebra. We say that a subalgebra $P \subset M$ is \emph{weakly bicentralized} in $M$ if ${_M}\rL^2 \langle M,P \rangle_M \prec {_M}\rL^2(M)_M$.
\end{df}
The terminology is justified by the following bicentralizer criterion from \cite{BMO19}.
\begin{prop} \label{weakly bicentralized criterion}
Let $M$ be a von Neumann algebra and let $P$ be a subalgebra of $M$. Suppose that there exists a faithful normal state $\varphi$ on $M$ such that $P$ is globally invariant by $\sigma^\varphi$ and $(P' \cap (M^\omega)_{\varphi^\omega})' \cap M=P$ for some cofinal ultrafilter $\omega$. Then $P$ is weakly bicentralized in $M$.
\end{prop}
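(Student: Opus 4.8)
The plan is to work with the standard form of $M$ associated to $\varphi$ and to transfer the hypothesis $(P' \cap (M^\omega)_{\varphi^\omega})' \cap M = P$ into a statement about conditional expectations on $\langle M, P\rangle$, which by Theorem \ref{bimodule theorem} is exactly what weak bicentralization amounts to. First I would fix the GNS representation $\rL^2(M)$ with cyclic vector $\xi_\varphi$, let $e_P \in \langle M,P\rangle$ be the Jones projection implementing the $\varphi$-preserving conditional expectation $E_P\colon M \to P$ (which exists because $P$ is $\sigma^\varphi$-invariant, by Takesaki's theorem), and recall that $\langle M,P\rangle$ is generated by $M$ and $e_P$, with $M' \cap \langle M,P\rangle = JPJ$ (here $J$ is the modular conjugation). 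The goal $\rL^2\langle M,P\rangle \prec \rL^2(M)$ as $M$-$M$-bimodules is, by the second item of Theorem \ref{bimodule theorem}, equivalent to the existence of a (not necessarily normal) $M$-$M$-bimodular conditional expectation $\Phi\colon \langle M,P\rangle \to M$, i.e.\ a state on $\langle M,P\rangle$ that is $M$-central in the appropriate sense and restricts suitably; concretely, it suffices to produce a state $\Omega$ on $\langle M,P\rangle$ with $\Omega|_M = \varphi$ and $\Omega$ invariant under the $M$-bimodule action in the sense that $\Omega(x T) = \Omega(T x)$ for $x \in M$, $T \in \langle M,P\rangle$, since such a state yields the desired conditional expectation by the usual $M' \cap \langle M,P\rangle^{**}$ argument.

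The construction of $\Omega$ is where the ultraproduct hypothesis enters, and this is the main obstacle. The idea is to use an element of $P' \cap (M^\omega)_{\varphi^\omega}$ to build an asymptotically central net and average against it. Since $(P' \cap (M^\omega)_{\varphi^\omega})' \cap M = P$, the algebra $N := P' \cap (M^\omega)_{\varphi^\omega}$ is "large enough" that its relative commutant in $M$ is no bigger than $P$; I would exploit this via a maximality/separation argument: if no $M$-central state $\Omega$ on $\langle M, P\rangle$ with $\Omega|_M=\varphi$ existed, one could separate, by Hahn--Banach in the predual of $\langle M,P\rangle$, and extract a net $(\eta_k)$ of vectors (or normal states) on $\rL^2\langle M,P\rangle$ that are asymptotically $M$-central but bounded away from $\rL^2(M)$. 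Asymptotic $M$-centrality together with the identification $M' \cap \langle M,P\rangle = JPJ$ should force the limiting behaviour to live "over $JPJ$", and packaging the net into the ultraproduct $M^\omega$ produces an element of $(M^\omega)_{\varphi^\omega}$ commuting with $P$ — i.e.\ an element of $N$ — whose failure to be approximated by $\rL^2(M)$ contradicts $N' \cap M = P$. Making this contradiction precise is the technical heart: one must check that the central net really does give a $\varphi^\omega$-centralizing element of $M^\omega$ (this uses that we start inside $\langle M,P\rangle$, so the only "new" direction beyond $M$ is $e_P$, which is handled by $E_P$), and that $N'\cap M = P$ is exactly the obstruction needed.

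An alternative and possibly cleaner route, which I would try first, is direct rather than by contradiction: let $a \in N = P' \cap (M^\omega)_{\varphi^\omega}$ be represented by a bounded net $(a_k)$ in $M$ with $a_k$ asymptotically $\varphi$-centralizing and asymptotically commuting with $P$; then the maps $T \mapsto \lim_{k\to\omega} \varphi(a_k^* \, \mathbb{E}(T) \, a_k)$, where $\mathbb{E}$ is the canonical (non-normal, $M$-bimodular) map $\langle M,P\rangle \to M$ obtained by restricting the $\varphi$-state of $\langle M,P\rangle$ along $e_P \mapsto$ something bounded — more carefully, one uses that $\langle M,P\rangle$ acts on $\rL^2(M)$ and evaluates vector states along $a_k \xi_\varphi$ — give a net of functionals on $\langle M,P\rangle$ whose $\omega$-limit is the desired $M$-central state. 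The condition $N' \cap M = P$ guarantees that $N$ is nontrivial enough that such $a$'s exist in sufficient supply to make the vector states $a_k\xi_\varphi$ approximately $M$-central on all of $\langle M,P\rangle$ and not merely on $M$; quantitatively, the discrepancy $\|(xe_P - e_P x)a_k\xi_\varphi\|$ must go to zero along $\omega$, and this is precisely controlled by $a_k$ asymptotically commuting with $P = E_P(M)$. Once $\Omega$ is in hand, the conclusion ${_M}\rL^2\langle M,P\rangle_M \prec {_M}\rL^2(M)_M$ follows immediately from Theorem \ref{bimodule theorem}, giving that $P$ is weakly bicentralized in $M$.
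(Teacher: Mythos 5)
There is a genuine gap, and it lies at the very first step: you have applied Theorem \ref{bimodule theorem} in the wrong direction. For the inclusion $M \subset \langle M,P\rangle$, the second item of that theorem says that the existence of a (possibly non-normal) conditional expectation $\langle M,P\rangle \to M$ is equivalent to ${_M}\rL^2(M)_M \prec {_M}\rL^2\langle M,P\rangle_M$, i.e.\ to left amenability of the bimodule ${_M}\rL^2(M)_P$ (the relation $M \lessdot_M P$). Weak bicentralization is the \emph{reverse} weak containment ${_M}\rL^2\langle M,P\rangle_M \prec {_M}\rL^2(M)_M$, and it is not characterized by any conditional expectation (or $M$-central state) on $\langle M,P\rangle$. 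Consequently both of your routes, which aim at producing an $M$-central state $\Omega$ on $\langle M,P\rangle$ extending $\varphi$, would---even if carried out---prove the wrong statement. (As a side remark, such an $\Omega$ cannot exist unless $\varphi$ is a trace: $M$-centrality of $\Omega$ forces $\Omega|_M=\varphi$ to be tracial. Also, in your ``direct route'' the vector states along $a_k\xi_\varphi$ with $(a_k)$ representing $a \in P'\cap(M^\omega)_{\varphi^\omega}$ are only asymptotically $P$-central and $\varphi$-central, not $M$-central, so the limit state would not have the invariance you need in any case.)

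What is actually required, and what the paper does (following the proof of Lemma 3.4 in \cite{BMO19}), is to approximate the \emph{coefficient functional of the canonical cyclic vector} of $\rL^2\langle M,P\rangle$ by coefficients of vectors in $\rL^2(M)$. Concretely: using the unitaries $u\in\mathcal{U}(M)$ that almost commute with a finite subset of $P$ in $\|\cdot\|_\varphi$ and almost commute with $\varphi$ in norm (these assemble into unitaries of $P'\cap(M^\omega)_{\varphi^\omega}$), one runs a Dixmier-type averaging argument: finiteness of $(M^\omega)_{\varphi^\omega}$ allows one to take minimal $\|\cdot\|_\varphi$-norm elements in the closed convex hulls of $\{u x u^*\}$, and the hypothesis $(P'\cap(M^\omega)_{\varphi^\omega})'\cap M=P$ is what forces these minimal elements to be $E_P(x)$. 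This shows that $E_P$ is a pointwise weak$^*$-limit of convex combinations of inner automorphisms $\Ad(u)$ with $u$ almost $\varphi$-preserving. Since the cyclic vector $\xi\in\rL^2\langle M,P\rangle$ satisfies $\langle x\xi y,\xi\rangle=\langle E_P(x)\varphi^{1/2}y,\varphi^{1/2}\rangle$, its coefficients are then approximated by convex combinations of the coefficients $\langle x(u\varphi^{1/2})y,(u\varphi^{1/2})\rangle$ of vectors in $\rL^2(M)$, which yields ${_M}\rL^2\langle M,P\rangle_M\prec{_M}\rL^2(M)_M$ directly, without ever invoking an expectation onto $M$. Your sketch does not contain this averaging mechanism, and the relative commutant hypothesis is never used in a way that could replace it.
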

\begin{proof}
Let $E_P : M \rightarrow P$ be the unique $\varphi$-preserving conditional expectation. Let $\{x_1, \dots, x_n \}$ be a finite subset of $M$. We will use the notations $\underline{M}=M^{\oplus n}$, $\underline{x}=(x_1,\dots,x_n) \in \underline{M}$, $\underline{\varphi}=\varphi^{\oplus n}$, $\underline{M}^\omega=(M^\omega)^{\oplus n}=(M^{\oplus n})^\omega$. For every finite set $F \subset P$ and every $\varepsilon > 0$, we define $$U_{F,\varepsilon}=\{ u \in \mathcal{U}(M) \mid \sum_{a \in F} \|ua-au\|_\varphi < \varepsilon \text{ and } \| u \varphi - \varphi u \| < \varepsilon \}$$
and we let
$$ C_{F,\varepsilon}=\conv \{ u \underline{x} u ^* \mid u \in U_{F,\varepsilon} \}, \quad C= \bigcap_{F,\varepsilon} C_{F,\varepsilon}.$$
Then since $(M^\omega)_{\varphi^\omega}$ is finite, one can follow the proof of \cite[Lemma 3.4]{BMO19} and get that $E_P$ is the pointwise weak$^*$-limit of convex combinations of inner automorphisms of $M$. Note that the condition $\| u \varphi - \varphi u \| < \varepsilon$ above is used to make a unitary element in $M^\omega_{\varphi^\omega}$.

%Then $C$ is a non-empty convex weak$^*$-compact subset of $M$. Let $\underline{y}=(y_1,\dots, y_n) \in C$ be the unique element with minimal $\| \cdot \|_{\underline{\varphi}}$-norm. Observe that for every $u \in \mathcal{U}(P' \cap M^\omega_{\varphi^\omega})$, we have $E(uyu^*) \in C$ where $E \colon \underline{M}^\omega \rightarrow \underline{M}$ is the canonical conditional expectation (which is $\underline{\varphi}^\omega$-preserving). Moreover, we have $\|E(uyu^*) \|_{\underline{\varphi}} \leq \| uyu^*\|_{\underline{\varphi}^\omega}=\|y\|_{\underline{\varphi}}$. Therefore, by minimality of $\|y\|_{\underline{\varphi}}$, we must have $y=uyu^*=E(uyu^*)$. This shows that $y \in (P' \cap M_{\varphi^\omega}^\omega)' \cap \underline{M}=\underline{P}$. But since $y \in C$, we also have $E_P(y_i)=E_P(x_i)$ for all $i \leq n$. We conclude that $y_i=E_P(x_i)$ for all $i \leq n$. This shows that $E_P$ is the pointwise weak$^*$-limit of convex combinations of inner automorphisms.

Now, $\rL^2\langle M,P \rangle$ contains a natural $M$-$M$-cyclic vector $\xi$ which satisfies $\langle x \xi y,\xi \rangle=\langle E_P(x)\varphi^{1/2}y, \varphi^{1/2} \rangle$ for all $x,y \in M$. But we have proved that $\langle x \xi y,\xi \rangle=\langle E_P(x)\varphi^{1/2}y, \varphi^{1/2} \rangle$ can be approximated by convex combinations of $\langle x (u\varphi^{1/2})y, (u\varphi^{1/2}) \rangle$ where $u \in \mathcal{U}(M)$. Thus $_M \rL^2\langle M ,P \rangle_M \prec {_M}\rL^2(M)_M$.
\end{proof}

In the following two propositions, we collect basic properties for weakly bicentralized subalgebras.

\begin{prop} \label{proposition weakly bicentralized}
Let $P, M, N$ be three von Neumann algebras.
\begin{itemize}
\item [$(\rm i)$] If $P \subset M \subset N$, $P$ is weakly bicentralized in $M$ and $M$ is weakly bicentralized in $N$, then $P$ is weakly bicentralized in $N$.
\item [$(\rm ii)$] The algebra $P$ is weakly bicentralized in $M$ if and only if $P \ovt N$ is weakly bicentralized in $M \ovt N$.
\item [$(\rm iii)$] If $M_*$ is separable, then $\mathcal{Z}(M)$ is weakly bicentralized in $M$.
\end{itemize}
\end{prop}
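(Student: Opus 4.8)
The plan is to prove the three items in order, using Theorem 3.1 (the bimodule characterization of weak containment/conditional expectations) and Proposition 3.2 throughout. For item (i), I would work with the basic construction algebras $\langle N, P\rangle$, $\langle N, M\rangle$ and $\langle M, P\rangle$. The key observation is that $\rL^2\langle N, P\rangle = \rL^2(N)\otimes_P\rL^2(N)$ as $N$-$N$-bimodules, and there is a natural decomposition routing through $M$: since $P\subset M\subset N$, one has an $N$-$N$-bimodular map realizing $\rL^2\langle N,P\rangle$ inside $\rL^2\langle N,M\rangle\otimes_M\rL^2\langle M,P\rangle\otimes_M\rL^2\langle N,M\rangle$ (or more directly, compose the conditional expectation $N\to M$ with $M\to P$; at the level of Jones projections $e_P = e_M e_P^{M} e_M$ where $e_P^M$ is the Jones projection for $P\subset M$ computed inside $M$). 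Using the hypotheses $\rL^2\langle M,P\rangle \prec \rL^2(M)$ as $M$-$M$-bimodules and $\rL^2\langle N,M\rangle\prec \rL^2(N)$ as $N$-$N$-bimodules, together with the standard fact that weak containment is preserved under Connes fusion and under induction/restriction of bimodules, I would chain these to conclude $\rL^2\langle N,P\rangle \prec \rL^2(N)$ as $N$-$N$-bimodules, which is exactly the assertion that $P$ is weakly bicentralized in $N$.

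For item (ii), the point is that $\langle M\ovt N, P\ovt N\rangle = \langle M,P\rangle\ovt N$, so that $\rL^2\langle M\ovt N, P\ovt N\rangle = \rL^2\langle M,P\rangle\otimes \rL^2(N)$ as $(M\ovt N)$-$(M\ovt N)$-bimodules, while $\rL^2(M\ovt N) = \rL^2(M)\otimes\rL^2(N)$. For one direction, if $\rL^2\langle M,P\rangle\prec \rL^2(M)$ as $M$-$M$-bimodules, tensoring on the right by the identity $N$-$N$-bimodule $\rL^2(N)$ preserves weak containment, giving $\rL^2\langle M\ovt N, P\ovt N\rangle\prec \rL^2(M\ovt N)$. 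For the converse, one restricts: if $P\ovt N$ is weakly bicentralized in $M\ovt N$, I would compress by a fixed minimal projection of a type $\mathrm I$ subfactor of $N$ (after first noting $N$ contains one, or more simply use that $\rL^2(N)$ contains a cyclic vector to "evaluate" the $N$-component), equivalently apply the slice/amplification functor that sends an $(M\ovt N)$-$(M\ovt N)$-bimodule to the $M$-$M$-bimodule obtained by cutting with a rank-one projection on the $N$-side; this sends $\rL^2(M\ovt N)$ to $\rL^2(M)$ and $\rL^2\langle M\ovt N, P\ovt N\rangle$ to $\rL^2\langle M,P\rangle$, and preserves weak containment, yielding $\rL^2\langle M,P\rangle\prec\rL^2(M)$. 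Some care is needed when $N$ has no trace, so I would phrase this using a faithful normal state $\psi$ on $N$ and the associated cyclic vector $\psi^{1/2}\in\rL^2(N)$.

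For item (iii), when $M_*$ is separable I would apply Proposition 4.2 with $P=\mathcal Z(M)$. One needs a faithful normal state $\varphi$ on $M$ such that $\mathcal Z(M)$ is globally $\sigma^\varphi$-invariant (automatic, since $\mathcal Z(M)$ is pointwise fixed by every $\sigma^\varphi_t$) and $(\mathcal Z(M)'\cap(M^\omega)_{\varphi^\omega})'\cap M = \mathcal Z(M)$. The inclusion $\supset$ is trivial; for $\subset$, note $\mathcal Z(M)'\cap(M^\omega)_{\varphi^\omega}$ contains $M\cap(M^\omega)_{\varphi^\omega} = M_\varphi$, so its commutant in $M$ is contained in $M_\varphi'\cap M\subset \mathcal Z(M)$ provided $\varphi$ is chosen so that $M_\varphi'\cap M=\mathcal Z(M)$, i.e. $\varphi$ has "full centralizer"; such a state exists on any $\sigma$-finite von Neumann algebra with separable predual (this is standard — e.g. a state whose centralizer contains a Cartan-type or maximal abelian subalgebra, or directly by a generic-state argument). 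Thus Proposition 4.2 applies and gives $\mathcal Z(M)$ weakly bicentralized in $M$.

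The main obstacle I anticipate is item (ii)'s converse direction: making the "restriction to the $M$-corner" argument rigorous in the non-tracial setting, since one cannot simply cut by a trace-finite projection. The cleanest fix is to check directly at the level of conditional expectations using Theorem 3.1, namely that a conditional expectation $\langle M\ovt N, P\ovt N\rangle\to M\ovt N$ restricts (via the state $\psi$ on the $N$-factor and the corresponding slice map $\mathrm{id}\otimes\psi$ on the basic construction) to a conditional expectation $\langle M,P\rangle\to M$, and then invoke the equivalence in Theorem 3.1(ii). For item (i) the analogous subtlety — tracking supports and using Proposition 3.2 to pass between nets of expectations — should be routine given the tools already assembled in the preliminaries.
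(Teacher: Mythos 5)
Items (i) and (ii) of your plan are essentially the paper's argument: for (i) the paper fuses the weak containment $\rL^2\langle M,P\rangle \prec \rL^2(M)$ with $\rL^2(N)$ on both sides over $M$, using $\rL^2\langle N,P\rangle=\rL^2(N)\otimes_M\rL^2\langle M,P\rangle\otimes_M\rL^2(N)$, and then applies $\rL^2\langle N,M\rangle\prec\rL^2(N)$; your triple-fusion formula should be stated in this form, but the idea is the same. For (ii) the paper uses exactly your identity $\rL^2\langle M\ovt N,P\ovt N\rangle=\rL^2\langle M,P\rangle\otimes\rL^2(N)$; note however that the converse direction needs none of the compressions or slice maps you worry about: a weak containment of $(M\ovt N)$-bimodules restricts to the subalgebra $M\odot M\op$, and as $M$-$M$-bimodules both sides are multiples of $\rL^2\langle M,P\rangle$ and $\rL^2(M)$, so the containment follows since weak containment ignores multiplicity. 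Your proposed ``cleanest fix'' via Theorem \ref{bimodule theorem} is actually the wrong equivalence: weak bicentralization is $\rL^2\langle M,P\rangle\prec\rL^2(M)$, not $\rL^2(M)\prec\rL^2\langle M,P\rangle$, so it is not characterized by a conditional expectation $\langle M,P\rangle\to M$.

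The genuine gap is in item (iii). Your argument rests on the existence, for every von Neumann algebra with separable predual, of a faithful normal state $\varphi$ with $M_\varphi'\cap M=\mathcal Z(M)$. This is not a standard fact, and it is false in general: for a type ${\rm III_0}$ factor no faithful normal state has irreducible centralizer (if $M_\varphi'\cap M=\C$ then $\mathrm{Sp}(\Delta_\varphi)=S(M)=\{0,1\}$, forcing $\Delta_\varphi=1$, i.e.\ $\varphi$ tracial), and for a type $\III_1$ factor the existence of such a state is, by Haagerup's theorem, \emph{equivalent} to triviality of the bicentralizer, which is an open problem in general. So your claim would silently resolve the bicentralizer problem. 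This is precisely the difficulty the paper's proof is designed to circumvent: it first uses item (ii) to tensor with the Araki--Woods factor $R_\infty$, invokes \cite[Theorem D]{Ma18} (together with the disintegration into factors) to reduce to the case where every fiber is a type $\III_1$ factor with trivial bicentralizer, and then for \emph{any} faithful normal state one has $((M^\omega)_{\varphi^\omega})'\cap M=\mathcal Z(M)$, which is what Proposition \ref{weakly bicentralized criterion} requires (note also that the criterion involves the asymptotic centralizer $(M^\omega)_{\varphi^\omega}$, which is in general strictly larger than the constant sequences from $M_\varphi$ that your argument uses). Without this reduction, your proof of (iii) does not go through.
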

\begin{proof}
$(\rm i)$ Since $P$ is weakly bicentralized in $M$, we have $$\rL^2\langle M, P \rangle=\rL^2(M) \otimes_P \rL^2(M)  \prec \rL^2(M)$$ as $M$-$M$-bimodules. By tensoring on the left and on the right with $\rL^2(N) \otimes_M $ and $ \otimes_M \rL^2(N)$ respectively, we get $\rL^2 \langle N, P \rangle \prec \rL^2 \langle N, M \rangle$ as $N$-$N$-bimodules. On the other hand, since $M$ is weakly bicentralized in $N$, we have $\rL^2 \langle N, M \rangle \prec \rL^2(N)$ as $N$-$N$-bimodules. Thus, we conclude that $\rL^2 \langle N,P \rangle \prec \rL^2(N)$ as $N$-$N$-bimodules.

$(\rm ii)$ This follows from the equality of $(M\ovt N)$-bimodules
$$ {_{M \ovt N}}\rL^2\langle M \ovt N, P \ovt N \rangle_{M \ovt N} = {_M}\rL^2 \langle M, P \rangle_{M} \otimes {_N}\rL^2(N)_N.$$

$(\rm iii)$ Let $M=\int_{\oplus} M_x \rd \mu(x)$ be the desintegration of $M$ into factors.  By item $(\rm ii)$, for any factor $N$, we have that $\mathcal{Z}(M)$ is weakly bicentralized in $M$ if and only if $\mathcal{Z}(M)=\mathcal{Z}(M\ovt N)$ is weakly bicentralized in $M \ovt N$. Thus, by taking $N=R_\infty$ and using \cite[Theorem D]{Ma18} if necessary, we may assume that each $M_x$ is a type $\III_1$ factor with trivial bicentralizer. Take $\varphi$ a faithful normal state on $M$. Then we get $(M^\omega_{\varphi^{\omega}})' \cap M=\mathcal{Z}(M)$ and we conclude by Proposition \ref{weakly bicentralized criterion}.
\end{proof}

Recall that an action $\alpha : \Gamma \curvearrowright B$ of a discrete group $\Gamma$ on a von Neumann algebra $B$ is \emph{centrally free} if for every $g \in \Gamma \setminus \{1\}$ and every nonzero $z \in \mathcal{Z}(B)$, there exists a cofinal ultrafilter $\omega$ and some $b \in B_\omega$ such that $\alpha_g(b)z \neq bz$.

\begin{prop}\label{action lemma}
Let $(B,\varphi)$ be a von Neumann algebra with a faithful normal state $\varphi$ and let $\alpha : \Gamma \curvearrowright (B,\varphi)$ be a state preserving action. Assume either that:
\begin{itemize}
	\item [$(\rm i)$] $\Gamma$ is ICC and $\alpha$ is approximately inner; or
	\item [$(\rm ii)$] $\alpha$ is centrally free.
\end{itemize}Then $B$ is weakly bicentralized in $B \rtimes \Gamma$.
\end{prop}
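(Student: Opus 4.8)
The strategy is to verify the bicentralizer criterion of Proposition \ref{weakly bicentralized criterion}. So let $M = B \rtimes \Gamma$, fix the dual faithful normal state $\varphi$ on $M$ (extending $\varphi$ on $B$ and preserving the group, i.e.\ $\varphi \circ E_B = \varphi$ where $E_B : M \to B$ is the canonical conditional expectation), and note that $B$ is globally invariant under $\sigma^\varphi$ since the modular flow is supported on the $B$-part. What remains is to show $(B' \cap (M^\omega)_{\varphi^\omega})' \cap M = B$; the nontrivial containment is ``$\subseteq$'', i.e.\ that the relative commutant of the centralizer-inside-$B'$ is no larger than $B$. First I would establish that under either hypothesis (i) or (ii) the algebra $B' \cap (M^\omega)_{\varphi^\omega}$ contains enough unitaries implementing the group to ``see'' all the nontrivial group directions. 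Concretely, for hypothesis (ii), central freeness of $\alpha$ gives, for each $g \neq 1$ and each nonzero central projection of $B$, a centralizing sequence in $B$ not commuting (modulo that projection) with $u_g$; a Fubini/reindexing argument promotes this to an element of $B' \cap (M^\omega)_{\varphi^\omega}$ that does not commute with $u_g$. For hypothesis (i), approximate innerness of $\alpha$ produces unitaries $w_n \in \mathcal U(B)$ with $w_n b w_n^* \to \alpha_g(b)$ and $\|w_n \varphi - \varphi w_n\| \to 0$, so $v_g := (w_n^* u_g)_\omega \in (M^\omega)_{\varphi^\omega}$ lies in $B' \cap (M^\omega)_{\varphi^\omega}$ and acts on $B$ as $u_g$ does, while the ICC condition on $\Gamma$ guarantees that the $v_g$ together generate a copy of $L(\Gamma)$ with trivial relative commutant in the crossed product.

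The heart of the argument is then the following: an element $x \in M$ commuting with $B$ and with all the $v_g$ (resp.\ with the centralizing sequences witnessing central freeness) must lie in $B$. Expand $x = \sum_g x_g u_g$ in its Fourier decomposition with $x_g \in B$. Commutation with $B$ forces each $x_g$ to satisfy $x_g b = \alpha_g(b) x_g$ for all $b \in B$; commutation with the $v_g$ (which act like $u_g$ on $B$ but lie in the $\varphi^\omega$-centralizer, so their interaction with the Fourier coefficients can be controlled) forces, for $g \neq 1$, the coefficient $x_g$ to be supported on a central projection of $B$ on which $\alpha_g$ is ``inner in the ultrapower sense'', which central freeness (ii) rules out entirely, and which the ICC hypothesis (i) rules out after averaging over the group. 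Hence $x_g = 0$ for $g \neq 1$ and $x = x_1 \in B$. I would handle case (ii) first as it is the cleaner of the two, then reduce case (i) to a similar computation using that an approximately inner centrally trivial... more precisely, using that under (i) the unitaries $v_g$ generate an ICC group factor $L(\Gamma) \subset M^\omega$ in tensor-independent position relative to $B$, so that $(L(\Gamma)' \cap M^\omega) \cap B' \cap M = B$ by a standard conditional-expectation argument.

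The main obstacle I anticipate is the bookkeeping in the ultraproduct: one must produce, from pointwise data (central freeness at each $g$ and each central projection, or approximate innerness of each $\alpha_g$), a \emph{single} subalgebra $B' \cap (M^\omega)_{\varphi^\omega}$ whose relative commutant in $M$ is exactly $B$, which requires a careful diagonalization over the (countably many, in the separable case; otherwise a directed-set) group elements and projections, together with care that all the approximating sequences can be chosen inside the $\varphi^\omega$-centralizer (this is where the conditions $\|w_n\varphi - \varphi w_n\| \to 0$ and $\|[\varphi, \cdot]\| \to 0$ enter, exactly as flagged in the proof of Proposition \ref{weakly bicentralized criterion}). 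Once the right ultrapower subalgebra is in hand, the Fourier-coefficient computation is routine. A secondary subtlety is the reduction from a general $\sigma$-finite $B$ to the case handled by the criterion, but since $\varphi$ is already assumed faithful normal this is immediate.
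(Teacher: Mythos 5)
Your overall strategy is the paper's: verify Proposition \ref{weakly bicentralized criterion} for the dual state $\varphi\circ E_B$, and produce elements of $B'\cap(M^\omega)_{\varphi^\omega}$ from approximate innerness (your $v_g=(w_n^*u_g)_\omega$ is exactly the element $u_g^*\lambda_g$ used in the paper), resp.\ from central freeness. The gap is in what you call the heart of the argument: you set out to prove that an $x\in M$ commuting \emph{with $B$} and with the $v_g$ (resp.\ with the centralizing sequences) lies in $B$, and the commutation with $B$ is used essentially, namely to derive $x_gb=\alpha_g(b)x_g$ for all $b\in B$, which is what you then feed into central freeness. But the criterion requires $(B'\cap(M^\omega)_{\varphi^\omega})'\cap M\subseteq B$, and an element of that commutant is \emph{not} assumed to commute with $B$ (the commutant trivially contains all of $B$); the set you bound is contained in $B'\cap M$, so your conclusion amounts to a statement about $\mathcal{Z}(B)$ and does not verify the hypothesis of the criterion. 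The relations have to be extracted from commutation with the ultrapower elements alone. In case (ii) this works as follows: for $b\in B'\cap(B^\omega)_{\varphi^\omega}$, comparing Fourier coefficients of $bx=xb$ gives $bx_g=x_g\alpha_g(b)$, and since $\alpha_g(b)$ still commutes with $B\ni x_g$, this becomes $(b-\alpha_g(b))x_g=0$, hence $(b-\alpha_g(b))z=0$ where $z$ is the central support of $x_g$; central freeness forbids this for $g\neq e$ and $z\neq0$. No commutation of $x$ with $B$ is needed anywhere.

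In case (i) there is a second problem: the claim that the $v_g$ ``generate a copy of $L(\Gamma)$ with trivial relative commutant'' is unjustified (the $v_g$ need not multiply like a representation of $\Gamma$; in general $v_gv_h$ differs from $v_{gh}$ by a $B^\omega$-valued correction), and even granting it, your ``standard conditional-expectation argument'' is again aimed at a commutant intersected with $B'\cap M$, i.e.\ the wrong set. The argument that works is a direct Fourier computation: if $x=\sum_h x_h\lambda_h$ commutes with $u_g^*\lambda_g$, where $u_g\in\mathcal{U}((B^\omega)_{\varphi^\omega})$ implements $\alpha_g$, then $x_{ghg^{-1}}=u_g^*\alpha_g(x_h)\alpha_{ghg^{-1}}(u_g)$, so $\|x_{ghg^{-1}}\|_\varphi=\|x_h\|_\varphi$ for all $g,h$; since the coefficients are square-summable and every nontrivial conjugacy class of the ICC group $\Gamma$ is infinite, $x_h=0$ for $h\neq e$. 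This conjugacy-class counting on Fourier coefficients is the missing ingredient that replaces your appeal to ``averaging over the group''.
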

\begin{proof}
 Let $\rE_B : M \rightarrow B$ be the canonical conditional expectation and use it to extend $\varphi$ to $M$. We will use the criterion of Proposition \ref{weakly bicentralized criterion}. We trivially have $B\subset (B' \cap (M^\omega)_{\varphi^\omega})' \cap M$, so we only have to prove the converse. Observe that
	$$B' \cap (B^\omega)_{\varphi^\omega} \subset B' \cap (B^\omega \rtimes \Gamma)_{\varphi^\omega} \subset B' \cap (M^\omega)_{\varphi^\omega}$$
and hence 
	$$[B' \cap (M^\omega)_{\varphi^\omega}]' \cap M \subset [B' \cap (B^\omega \rtimes \Gamma)_{\varphi^\omega}]'\cap M \subset [B' \cap (B^\omega)_{\varphi^\omega}]' \cap M$$
We have only to show that one of these sets is contained in $B$.

	We first assume that $\Gamma$ is ICC and $\alpha$ is approximately inner. Fix $u_g \in \mathcal{U}((B^\omega)_{\varphi^\omega})$ such that $\alpha_g = \Ad(u_g)$ for all $g\in \Gamma$. 
Observe $u_g^* \lambda_g \in B' \cap (B^\omega \rtimes \Gamma)_{\varphi^\omega}$. Fix any $x\in [B' \cap (B^\omega \rtimes \Gamma)_{\varphi^\omega}]'\cap M$ and let $x= \sum_{g\in \Gamma} x_g \lambda_g$ be the Fourier decomposition in $M$. We have
	$$ \sum_{h\in \Gamma} x_h \lambda_h = x = u_g^* \lambda_g x (u_g^* \lambda_g)^* = \sum_{h\in \Gamma} u_g^* \alpha_g(x_h) \alpha_{ghg^{-1}}(u_g) \lambda_{ghg^{-1}} ,$$
so that $x_{ghg^{-1}} = u_g^* \alpha_g(x_h) \alpha_{ghg^{-1}}(u_g)$ for all $g,h\in \Gamma$. This implies $\|x_{ghg^{-1}}\|_2 = \| x_h\|_2$ for all $g,h\in \Gamma$. Since $\Gamma$ is ICC, we conclude $x_g=0$ for all $g\neq e$ and hence $x\in B$.

	Assume next that $\alpha$ is centrally free. Take any $x\in [B' \cap (B^\omega)_{\varphi^\omega}]' \cap M$ and decompose it as $x=\sum_{g\in \Gamma} x_g \lambda_g$. For any $b\in B' \cap (B^\omega)_{\varphi^\omega}$, by comparing coefficients, it holds that
%	$$ \sum_g b x_g \lambda_g = bx =xb = \sum_g x_g \lambda_g b,$$
 $bx_g = x_g \alpha_g(b) = \alpha_g(b)x_g$ for all $g\in \Gamma$. This is equivalent to $(b - \alpha_g(b))x_g=0$, so one has $(b - \alpha_g(b))z=0$, where $z\in \mathcal{Z}(B)$ is the central left support projection of $x_g$. Since this holds for all $b\in B' \cap (B^\omega)_{\varphi^\omega}$, by assumption, we conclude that $x_g= 0$ if $g\neq e$, hence $x\in B$.
\end{proof}

\section{Rigidity for full tensor factors}

	In this section, we study the set $\mathrm{TF}(M)$ by assuming $M$ is a full factor. We particularly prove Theorem \ref{main topological}. We start by recalling the following key property.

\begin{prop}[\cite{BMO19}] \label{full bimodule}
Let $M$ be a full factor. Then every $M$-$M$-bimodule that is weakly equivalent to $\rL^2(M)$ must contain $\rL^2(M)$.
\end{prop}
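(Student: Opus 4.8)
The plan is to derive the statement from Connes's characterization of fullness via centralizing sequences (recalled in the Preliminaries). Fix a faithful normal state $\varphi$ on $M$ with canonical vector $\xi_\varphi\in\rL^2(M)$, so that $\langle\pi_{\rL^2(M)}(x\otimes y\op)\xi_\varphi,\xi_\varphi\rangle=\varphi(xy)$ for all $x,y\in M$. It suffices to produce a single vector $\xi\in\cH$ with the same $M$-$M$-matrix coefficients as $\xi_\varphi$, i.e. $\langle\pi_\cH(x\otimes y\op)\xi,\xi\rangle=\varphi(xy)$: by the standard rigidity of cyclic representations, the sub-$M$-$M$-bimodule generated by such a $\xi$ is then isomorphic to $\rL^2(M)$, giving $\rL^2(M)\subset\cH$. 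At the outset we may replace $\cH$ by $\cH\otimes\ell^2$; this changes neither the hypotheses nor the conclusion, since the trivial bimodule is ``irreducible'' in the sense that its relative commutant $\mathcal L_M(\rL^2(M))\cap\mathcal L_{M\op}(\rL^2(M))$ equals $\C$, and hence $\rL^2(M)$ embeds in $\cH\otimes\ell^2$ if and only if it embeds in $\cH$.

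First I would use $\rL^2(M)\prec\cH$ to extract an asymptotically central net. By weak containment, the state $T\mapsto\langle\pi_{\rL^2(M)}(T)\xi_\varphi,\xi_\varphi\rangle$ on $M\odot M\op$ is bounded for the $C^*$-norm coming from $\pi_\cH$, hence lies in the weak-$*$ closure of the vector states of $\cH$; after the amplification above, single vectors suffice, and we obtain a net of unit vectors $(\xi_i)_i$ in $\cH$ with $\langle\pi_\cH(x\otimes y\op)\xi_i,\xi_i\rangle\to\varphi(xy)$ for all $x,y\in M$. Writing $u\xi_i u^*:=\pi_\cH(u\otimes(u^*)\op)\xi_i$, a direct computation then gives, for every $u\in\mathcal U(M)$,
$$\|u\xi_i-\xi_i u\|_\cH^2=\|u\xi_i\|^2+\|\xi_i u\|^2-2\,\mathrm{Re}\,\langle u\xi_i u^*,\xi_i\rangle\ \longrightarrow\ 1+1-2=0,$$
so the net $(\xi_i)$ is asymptotically $M$-central.

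The crux — and the step I expect to be the only genuine difficulty — is to upgrade this approximate centrality to an exact invariant vector, and this is precisely where the fullness of $M$ and the reverse weak containment $\cH\prec\rL^2(M)$ are both used, in the spirit of spectral-gap rigidity. I would argue as follows. Realize $\cH$ as a right $M$-module with left action $\pi\colon M\hookrightarrow N:=\mathcal L_{M\op}(\cH)$; by Theorem \ref{bimodule theorem}, the weak containment $\rL^2(M)\prec\cH$ furnishes a (possibly non-normal) conditional expectation $E\colon N\to\pi(M)$, while the desired conclusion $\rL^2(M)\subset\cH$ amounts to the existence of a \emph{normal} such expectation. The hypothesis $\cH\prec\rL^2(M)$ says that $\pi(M)$ is ``large'' in $N$: the $\pi(M)$-$\pi(M)$-bimodule $\rL^2(N)$ is weakly contained in the trivial one, so every bounded sequence in $N$ that asymptotically commutes with $\pi(M)$ (with respect to the state $\varphi\circ E$) is asymptotically contained in $\pi(M)$, hence corresponds under $\pi^{-1}$ to a centralizing sequence of $M$, which by fullness of $M$ is trivial. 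This spectral gap is exactly what is needed to average $E$ into a normal expectation: replacing $E$ by weak-$*$ limits of convex combinations of $\Ad(\pi(u))\circ E$, $u\in\mathcal U(M)$, the spectral gap forces these averages to converge in the $\|\cdot\|_{\varphi\circ E}$-topology, and the limit is a normal conditional expectation $N\to\pi(M)$, whence $\rL^2(M)\subset\cH$ by Theorem \ref{bimodule theorem} again. (Equivalently, one can perform the descent in the ultrapower bimodule $\cH^\omega$, where $(\xi_i)$ produces a genuine vector $\Xi$ with $\langle x\Xi y,\Xi\rangle=\varphi(xy)$ and hence $\rL^2(M)\subset\cH^\omega$, and then use fullness together with $\cH\prec\rL^2(M)$ to push this copy back into the diagonally embedded $\cH$; the delicate point, again, is that without fullness this copy could ``spread out'' over the index set.) The extraction of the asymptotically central net and the final identification of the bimodule generated by $\xi$ are then routine.
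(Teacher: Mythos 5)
Your plan has the right silhouette --- extract an almost central net from $\rL^2(M)\prec\cH$, then use fullness together with $\cH\prec\rL^2(M)$ to upgrade --- but the step you rely on to conclude rests on a false equivalence, so the argument does not close. (For the record, the paper itself gives no proof of this proposition; it is quoted from \cite{BMO19}, so your sketch has to stand on its own.) Set $N=\mathcal{L}_{M\op}(\cH)$ and let $\pi\colon M\to N$ be the left action. Theorem \ref{bimodule theorem} speaks about the inclusion $\pi(M)\subset N$, hence about the $M$-$M$-bimodule $\rL^2(N)\cong\cH\otimes_M\overline{\cH}$, not about $\cH$ itself. The half you use first is fine: $\rL^2(M)\prec\cH$ gives $\rL^2(M)\prec\cH\otimes_M\overline{\cH}$ and therefore a (possibly singular) expectation $E\colon N\to\pi(M)$, i.e.\ left amenability of $\cH$. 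But the half you need at the end --- that a \emph{normal} expectation $N\to\pi(M)$ implies $\rL^2(M)\subset\cH$ --- is false: normality only yields $\rL^2(M)\subset\cH\otimes_M\overline{\cH}$, and containment does not descend through the fusion. Concretely, let $\theta\in\Aut(M)$ be outer and let $\cH_\theta$ be $\rL^2(M)$ with the right action twisted by $\theta$: then $N=M$, $\pi=\id$, the identity map is a normal conditional expectation, and $\cH_\theta\otimes_M\overline{\cH_\theta}\cong\rL^2(M)$, yet $\rL^2(M)\not\subset\cH_\theta$. (Such an $\cH_\theta$ violates $\rL^2(M)\prec\cH_\theta$, so the proposition is safe; the point is that your reduction discards the right $M$-module structure of $\cH$, which is exactly where the hypotheses must act.)

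Even granting that reduction, the analytic heart is asserted rather than proved. If $E$ is singular, $\varphi\circ E$ is not normal, so ``convergence in $\|\cdot\|_{\varphi\circ E}$'' is not a meaningful route to normality, and averaging a singular expectation over $\Ad(\pi(u))$ has no reason to produce a normal one; likewise the claim that $\cH\prec\rL^2(M)$ forces every bounded net in $N$ that asymptotically commutes with $\pi(M)$ to lie asymptotically in $\pi(M)$ is unproved and is essentially the theorem in disguise (the ultrapower variant in your parenthesis has the same status). Finally, your almost-centrality computation is tracial: in general $\langle x\xi_\varphi y,\xi_\varphi\rangle\neq\varphi(xy)$, and $\langle u\xi_\varphi u^*,\xi_\varphi\rangle=1$ only when $u$ lies in the centralizer of $\varphi$, so what you actually get is $\|u\xi_i-\xi_i u\|\to\|u\xi_\varphi-\xi_\varphi u\|$, which is nonzero for type $\III$ factors --- and the paper applies the proposition precisely in non-tracial situations, where the centralizing condition relevant to fullness involves all of $M_*$ rather than one fixed state. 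As written, the proposal reduces the proposition to statements that are either false or equivalent to it; a correct proof has to engage the right-module structure of $\cH$ directly, as in the appendix of \cite{BMO19}.
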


The next lemma applies in particular to $X=\{Q\}$ when $P \lessdot_M Q$.
\begin{lem} \label{key rigidity lemma}
Let $P \in \mathrm{TF}(M)$ and $X \subset \mathrm{TF}(M)$. Suppose that $P$ is full and $$\rL^2(P) \prec \bigoplus_{Q \in X} \rL^2 \langle M,Q \rangle$$ as $P$-$P$-bimodules. Then there exists $Q \in X$ such that $P \prec_M Q$.
\end{lem}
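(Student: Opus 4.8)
The key rigidity input will be Proposition \ref{full bimodule}: since $P$ is full and $\rL^2(P)$ is weakly contained in $\bigoplus_{Q \in X} \rL^2\langle M,Q\rangle$ as $P$-$P$-bimodules, it would suffice to upgrade this weak containment to an honest containment $\rL^2(P) \subset \bigoplus_{Q \in X} \rL^2\langle M,Q\rangle$. But this is not automatic, because the right-hand side need not be weakly equivalent to $\rL^2(P)$ (it is only known to weakly contain it). The trick is that $P$-$P$-bimodules weakly contained in $\rL^2(P)$ behave well: I would first observe that each summand, as a $P$-$P$-bimodule, decomposes as $\rL^2\langle M, Q\rangle = \rL^2(M) \otimes_Q \rL^2(M)$, and restrict attention to the $P$-$P$-subbimodule structure. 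The plan is to reduce to a single $Q$ by a maximality/support-projection argument, and for that single $Q$ to extract a conditional expectation witnessing $P \prec_M Q$ via Theorem \ref{bimodule theorem} / Lemma \ref{lemma for intertwining and flip}.

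\smallskip

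Here is the step-by-step strategy. First, I would pass to the $\sigma$-finite (indeed $\sigma$-finite with a faithful state) setting; if $M$ is not $\sigma$-finite one truncates by a $\sigma$-finite projection of $P$, using that intertwining $\prec_M$ only sees $\sigma$-finite corners, exactly as in the reductions invoked in Lemma \ref{lemma for intertwining and flip} and Proposition \ref{tensor lemma2}. Second, I would use Proposition \ref{full bimodule}: the bimodule $\cH := \bigoplus_{Q \in X}\rL^2\langle M,Q\rangle$ weakly contains $\rL^2(P)$, and I want to show it actually contains $\rL^2(P)$. Here I would exploit that $P$ is a \emph{tensor factor} of $M$, so $\rL^2(M) = \rL^2(P) \otimes \rL^2(P^c)$ and the $P$-$P$-bimodule $\rL^2\langle M,Q\rangle$ can be analyzed; more efficiently, I would translate to the flip picture of Section 3: by Lemma \ref{lemma for intertwining and flip}, $P \prec_M Q$ iff $\sigma_Q(P_1) \prec_{\widehat M} M_2$, and the hypothesis $\rL^2(P) \prec \bigoplus_Q \rL^2\langle M,Q\rangle$ should be repackaged as a weak-containment statement about $\widehat{P}$-bimodules inside $\bigoplus_Q \rL^2\langle \widehat M, \sigma_Q(M_1)\rangle$. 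Third — the heart — having a genuine containment $\rL^2(P) \subset \bigoplus_{Q\in X}\rL^2\langle M,Q\rangle$, the cyclic-vector / support-projection argument shows the embedding must be carried (up to a further compression) into a single summand $\rL^2\langle M,Q\rangle$ for one $Q\in X$, since $\rL^2(P)$ is generated by a single $P$-central cyclic vector $\hat 1$; projecting $\hat 1$ to the summands, at least one component is nonzero, and irreducibility-type considerations for $\rL^2(P)$ as a $P$-$P$-bimodule let one discard the others. Fourth, from $\rL^2(P) \subset \rL^2\langle M,Q\rangle$ as $P$-$P$-bimodules, Theorem \ref{bimodule theorem}(i) (applied to the inclusion $P \subset p\langle M,Q\rangle p$ for a suitable projection, or directly via the left-amenability/conditional-expectation dictionary in the Propositions on left amenable bimodules) yields a normal conditional expectation onto (a corner of) $P$ inside $\langle M,Q\rangle$, which is exactly the condition $P \lessdot_M Q$ refined to $P \prec_M Q$; alternatively, once we know $P \lessdot_M Q$ we apply Lemma 5.2 (the ``relative amenability upgrades to corner embeddability for full tensor factors'' result advertised in the introduction) to conclude $P \prec_M Q$.

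\smallskip

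\textbf{Main obstacle.} The delicate point is the third step: passing from \emph{weak} containment into a direct sum to an actual copy of $\rL^2(P)$ landing (after compression) in a \emph{single} summand. Weak containment into an infinite direct sum does not a priori localize, and Proposition \ref{full bimodule} only gives containment into the whole sum $\bigoplus_{Q\in X}\rL^2\langle M,Q\rangle$, not into one piece. Localizing requires using that the copy of $\rL^2(P)$ is cyclic and that $P$ is a factor, so that the central support of the projection onto any one summand is either $0$ or $1$ — I expect one argues that the subbimodule generated by the image of the cyclic vector, being isomorphic to $\rL^2(P)$, cannot spread its support nontrivially across infinitely many mutually "independent" summands without producing a nonzero central projection of $P$ annihilating the cyclic vector, a contradiction. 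Making this precise — possibly by a cutting argument producing finitely many, then one, relevant $Q$'s, or by directly invoking that $\rL^2(P)\prec \bigoplus_Q \rL^2\langle M,Q\rangle$ forces $\rL^2(P) \prec \rL^2\langle M,Q\rangle$ for some single $Q$ when the summands are $\langle M,Q\rangle$-type bimodules with their characteristic left-amenability dichotomy (left weakly mixing vs.\ left amenable, as in the two Propositions after Theorem \ref{bimodule theorem}) — is where the real work lies; everything before and after is dictionary translation.
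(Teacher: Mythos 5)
There is a genuine gap, and it sits exactly where you flagged uncertainty but then looked in the wrong place. To apply Proposition \ref{full bimodule} you need the $P$-$P$-bimodule $\cH=\bigoplus_{Q\in X}\rL^2\langle M,Q\rangle$ to be \emph{weakly equivalent} to $\rL^2(P)$, i.e.\ you must also prove the reverse weak containment ${}_P\cH_P\prec\rL^2(P)$. You acknowledge that this is not automatic, but your outline never supplies it; instead you try to bypass it by localizing the weak containment $\rL^2(P)\prec\cH$ into a single summand by a support/maximality argument, which cannot work: weak containment into an (infinite) direct sum does not localize, and without the reverse containment fullness gives you nothing. The paper obtains the reverse containment from the weak bicentralization machinery of Section 4: since each $Q\in X$ is a tensor factor, $Q^c$ is a factor, so $\C$ is weakly bicentralized in $Q^c$, hence by Proposition \ref{proposition weakly bicentralized} each $Q$ is weakly bicentralized in $M$, which means precisely ${}_M\rL^2\langle M,Q\rangle_M\prec\rL^2(M)$; and since $P\in\mathrm{TF}(M)$, the $P$-$P$-bimodule $\rL^2(M)=\rL^2(P)\otimes\rL^2(P^c)$ is a multiple of $\rL^2(P)$, whence ${}_P\cH_P\prec\rL^2(P)$. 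This is the heart of the lemma (and the reason the hypothesis $X\subset\mathrm{TF}(M)$, rather than $X$ consisting of arbitrary subalgebras, is needed); your proposal contains no substitute for it.

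By contrast, the step you call the main obstacle is essentially trivial. Once Proposition \ref{full bimodule} gives an honest $P$-$P$-bimodular isometry of $\rL^2(P)$ into $\cH$, compose it with the projection onto each summand: some component $T\colon\rL^2(P)\to\rL^2\langle M,Q\rangle$ is nonzero, and since $P$ is a factor the commutant of the $P$-$P$-action on $\rL^2(P)$ is $\C$, so $T^*T$ is a scalar and $\rL^2(P)$ embeds into that single $\rL^2\langle M,Q\rangle=\rL^2(M)\otimes_Q\rL^2(M)$; this says exactly that the $P$-$Q$-bimodule $\rL^2(M)$ is not left weakly mixing, i.e.\ $P\prec_M Q$ by Theorem \ref{intertwining theorem}. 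No cutting, flip-automorphism translation, or central-support analysis across summands is needed, and no reduction to the $\sigma$-finite case is required for this argument. Finally, your fallback of invoking ``Lemma 5.2'' to upgrade $P\lessdot_M Q$ to $P\prec_M Q$ is circular: Lemma 5.2 \emph{is} the statement you are being asked to prove.
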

\begin{proof}
Let $\cH= \bigoplus_{Q \in X} \rL^2 \langle M,Q \rangle$. Since each $Q^c=Q' \cap M$ is a factor, $\C$ is weakly bicentralized in $Q^c$. Combined with  Proposition \ref{proposition weakly bicentralized}, each $Q$ is weakly bicentralized in $M$, hence ${_M}\cH_M \prec \rL^2(M)$. Moreover, ${_P}\rL^2(M)_P$ is a multiple of ${_P}\rL^2(P)_P$ because $P \in \mathrm{TF}(M)$. Hence ${_P}\cH_P \prec \rL^2(P)$. Combining this with the assumption, we get that ${_P}\cH_P$ is weakly equivalent to $\rL^2(P)$. Thus $\rL^2(P) \subset {_P}\cH_P$ by Proposition \ref{full bimodule}. We conclude that $\rL^2(P) \subset {_P}\rL^2\langle M,Q\rangle{_P}$ for some $Q \in X$ and this means that $P \prec_M Q$.
\end{proof}

\begin{example} \label{example full rigidity}
Let $M$ be a factor and $P,Q \in \mathrm{TF}_{\mathrm{full}}(M)$ two full tensor factors such that $P^c$ and $Q^c$ are amenable. Since $Q^c$ is amenable, we have $P \lessdot_M Q$, hence $P \prec_M Q$ by Lemma \ref{key rigidity lemma}. Similarly, we have $Q \prec_M P$. We conclude that $P \sim Q$. This provides a short proof of \cite[Theorem 5.1]{Po06a} and \cite[Theorem E]{HMV16}.
\end{example}

\begin{lem} \label{clopen set}
Let $P \in \mathrm{TF}(M)$. Suppose that $P$ is full. Then the set $U=\{ Q \in \mathrm{TF}(M) \mid P \prec_M Q \}$ is both closed and open.
\end{lem}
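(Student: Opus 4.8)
The plan is to prove that $U$ is open and that its complement is open, using the topological description of $\mathrm{TF}(M)$ via flip automorphisms (Theorem \ref{thm for TF(M)}) together with Lemma \ref{key rigidity lemma}. First I would translate the condition $P \prec_M Q$ into a statement about the flip automorphisms: by Lemma \ref{lemma for intertwining and flip}, $P \prec_M Q$ is equivalent to $\sigma_Q(P_1) \prec_{\widehat M} M_2$. Since $Q \mapsto \sigma_Q$ is a homeomorphism onto its (closed) range in $\Aut(\widehat M)$, it suffices to understand how the relation ``$\sigma_Q(P_1) \prec_{\widehat M} M_2$'' behaves as $\sigma_Q$ varies continuously.

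The key technical input is Proposition \ref{convergence weak containment}, which converts convergence of conditional expectations into weak containment of the associated Jones-type bimodules. Suppose $Q_i \to Q$ in $\mathrm{TF}(M)$. Fixing a faithful normal state and forming the conditional expectations $E_{Q_i} : M \to Q_i$ (as in the proof of Theorem \ref{thm for TF(M)}), one gets $\|\psi \circ E_{Q_i} - \psi \circ E_Q\| \to 0$ for all $\psi \in M_*$, hence by Proposition \ref{convergence weak containment}, $\rL^2\langle M, Q\rangle \prec \bigoplus_i \rL^2\langle M, Q_i\rangle$ as $M$-$M$-bimodules. Now suppose moreover that each $Q_i \in U$, i.e.\ $P \prec_M Q_i$. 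By Proposition \ref{tensor lemma2}, $P \prec_M Q_i$ means there is $D_i \in \mathrm{TF}(Q_i)$ with $P \sim D_i$; in any case $P \prec_M Q_i$ gives $P \lessdot_M Q_i$, so $\rL^2(P) \prec \rL^2\langle M, Q_i\rangle$ as $P$-$P$-bimodules (restricting the $M$-$M$-bimodule $\rL^2\langle M, Q_i\rangle$ and using $P \in \mathrm{TF}(M)$ so that $\rL^2(M)$ is a multiple of $\rL^2(P)$ over $P$). Combining, $\rL^2(P) \prec \bigoplus_i \rL^2\langle M, Q\rangle$, a multiple of $\rL^2\langle M,Q\rangle$, so $\rL^2(P) \prec \rL^2\langle M,Q\rangle$ as $P$-$P$-bimodules. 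Then Lemma \ref{key rigidity lemma} applied to $X = \{Q\}$ (using that $P$ is full) yields $P \prec_M Q$, i.e.\ $Q \in U$. Hence $U$ is closed.

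For openness, I would argue that the complement $\mathrm{TF}(M) \setminus U$ is closed by the symmetric statement. The cleanest route: by Lemma \ref{lemma for intertwining and flip}, $P \prec_M Q \iff Q^c \prec_M P^c$, and $Q \mapsto Q^c$ is a homeomorphism of $\mathrm{TF}(M)$ (it corresponds to composing $\sigma_Q$ with $\sigma_M$, which is a homeomorphism of $\Aut(\widehat M)$). So $U = \{Q : Q^c \prec_M P^c\}$. Now I claim the set $V = \{ R \in \mathrm{TF}(M) : R \prec_M P^c \}$ is closed by exactly the argument above (with the roles of the fixed full algebra played by $P^c$... wait, $P^c$ need not be full). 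Instead, one should use: the negation ``$P \not\prec_M Q$'' means the $P$-$Q$-bimodule $\rL^2(M)$ is left weakly mixing; equivalently, since $P$ is full, Lemma \ref{key rigidity lemma} tells us $\rL^2(P) \not\prec \rL^2\langle M, Q\rangle$ as $P$-$P$-bimodules (for a single $Q$). If $Q_i \to Q$ with each $Q_i \notin U$, then $\rL^2(P) \not\prec \rL^2\langle M, Q_i\rangle$; but $\rL^2\langle M, Q_i\rangle \prec \bigoplus_i \rL^2\langle M, Q\rangle$ fails in general — rather, one has $\rL^2 \langle M, Q\rangle \prec \bigoplus_i \rL^2\langle M, Q_i\rangle$. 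So if $\rL^2(P) \prec \rL^2\langle M,Q\rangle$ then $\rL^2(P) \prec \bigoplus_i \rL^2\langle M, Q_i\rangle$, and by Lemma \ref{key rigidity lemma} there is $i$ with $P \prec_M Q_i$, contradiction. Hence $\rL^2(P) \not\prec \rL^2\langle M, Q\rangle$, so $P \not\prec_M Q$ and $Q \notin U$. Thus $\mathrm{TF}(M)\setminus U$ is closed, so $U$ is open.

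The main obstacle I anticipate is bookkeeping the bimodule weak-containments carefully: making sure that the convergence of conditional expectations hypothesis of Proposition \ref{convergence weak containment} really holds for $E_{Q_i}$ (this was already checked inside the proof of Theorem \ref{thm for TF(M)}, so it can be cited), and correctly exploiting that $P \in \mathrm{TF}(M)$ to pass between $\rL^2(M)$ and a multiple of $\rL^2(P)$ as $P$-$P$-bimodules when restricting. Given those, both halves follow uniformly from the same mechanism: Proposition \ref{convergence weak containment} plus Lemma \ref{key rigidity lemma}, the latter being exactly the point where fullness of $P$ is used.
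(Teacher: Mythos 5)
Your third paragraph (openness) is essentially correct and is in fact the paper's own mechanism run at an arbitrary point of $U$: if $Q_i \to Q$ with $P \prec_M Q$, then $\rL^2(P) \subset \rL^2\langle M,Q\rangle$ as $P$-$P$-bimodules, Proposition \ref{convergence weak containment} gives $\rL^2\langle M,Q\rangle \prec \bigoplus_i \rL^2\langle M,Q_i\rangle$, and Lemma \ref{key rigidity lemma} forces $P \prec_M Q_i$ for some $i$. (Note that for this you need the second, $M$-$M$-bimodule conclusion of Proposition \ref{convergence weak containment}, hence faithfulness of $E_Q$, i.e.\ a faithful normal state; the paper only ever invokes the first conclusion, at the single point $Q=P$, which avoids this issue and works for a general factor.)

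The closedness half, however, has a genuine gap at the word ``Combining''. You have $\rL^2(P) \prec \rL^2\langle M,Q_i\rangle$ for every $i$ and $\rL^2\langle M,Q\rangle \prec \bigoplus_i \rL^2\langle M,Q_i\rangle$, and from these you conclude $\rL^2(P) \prec$ a multiple of $\rL^2\langle M,Q\rangle$. This does not follow: weak containment only composes in one direction, and Proposition \ref{convergence weak containment} controls the bimodule of the \emph{limit} algebra $Q$ by those of the approximants $Q_i$, never the reverse. Nothing in your argument excludes that $\rL^2(P)$ embeds in each $\rL^2\langle M,Q_i\rangle$ while being disjoint from $\rL^2\langle M,Q\rangle$, so the appeal to Lemma \ref{key rigidity lemma} with $X=\{Q\}$ is unjustified. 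The paper's way around this is the flip bootstrap: having shown that $U$ is a neighborhood of $P$ (your mechanism at the point $P$, needing only the first conclusion of Proposition \ref{convergence weak containment}), one applies this statement inside $\widehat M$ to the full tensor factor $P_1$ and the nets $(\sigma_Q \circ \sigma_{Q_i})(P_1) \to P_1$ and $(\sigma_{Q_i} \circ \sigma_Q)(P_1) \to P_1$, obtaining $\sigma_Q(P_1) \prec_{\widehat M} \sigma_{Q_i}(P_1)$ and $\sigma_{Q_i}(P_1) \prec_{\widehat M} \sigma_Q(P_1)$, hence $\sigma_{Q_i}(P_1) \sim \sigma_Q(P_1)$, for $i$ large; by Lemma \ref{lemma for intertwining and flip} this transfers membership in $U$ both ways between $Q$ and the $Q_i$, giving closedness and openness simultaneously. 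Some such two-sided device is needed; purely one-directional weak-containment bookkeeping cannot yield the closed half.
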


\begin{proof}
First, we show that $U$ is a neighborhood of $P$. Suppose, by contradiction, that there exists a net $(Q_i)_{i \in I}$ in $\mathrm{TF}(M)$ which converges to $P$ but such that $Q_i \nprec_M P$ for all $i$. Take $\phi$ a normal state on $M$ and define a normal conditional expectation $\rE_i=\id \otimes \phi|_{Q^c} : M \rightarrow Q_i$. Since $(Q_i)_{i \in I}$ converges to $P$, we have that $\rE_i$ converges to the normal conditional expectation $\rE = \id \otimes \phi|_{P^c} : M \rightarrow P$ pointwisely in the norm of $M_*$. Thus $\rL^2(P) \prec \bigoplus_i \rL^2 \langle M, Q_i \rangle$ as $P$-$P$-bimodules by Proposition \ref{convergence weak containment}. By Lemma \ref{key rigidity lemma}, we conclude that $P \prec_M Q_i$ for some $i \in I$ which is a contradiction. Hence $U$ is a neighborhood of $P$.

%Let $e_i \in \langle M,Q_i \rangle$ be the associated Jones projection. Let $N=\bigoplus_i \langle M, Q_i \rangle$ and embed $M$ diagonally into $N$. For each $i \in I$, there exists a unique normal conditional expectation $\Phi_i : N \rightarrow Q_i$ such that $\Phi_i(x)e_i=e_ixe_i$ for all $x \in N$. Take $\Phi : N \rightarrow M$ any accumulation point of the net $(\Phi_i)_{i \in I}$ for the topology of pointwise weak$^*$ convergence.  Since $(Q_i)_{i \in I}$ converges to $P$, we have that $\rE_i$ converges pointwise to the normal conditional expectation $\rE = \id \otimes \phi|_{P^c} : M \rightarrow P$. This shows that $\Phi$ is a conditional expectation from $N$ onto $P$. Thus $\rL^2(P) \prec {_P} \rL^2(N) _P$.

Now, we show that $U$ is indeed closed and open. Let $(Q_i)_{i \in I}$ be a net in $\mathrm{TF}(M)$ which converges to $Q$. Since $\sigma_Q \circ \sigma_{Q_i} \to \id$, we have that $(\sigma_{Q} \circ \sigma_{Q_i})(P_1)$ converges to $P_1$ in $\mathrm{TF}(\widehat{M})$. Thus, by the first part of the proof, for $i$ large enough we have $P_1 \prec_{\widehat{M}} (\sigma_{Q} \circ \sigma_{Q_i})(P_1)$, hence $\sigma_Q(P_1) \prec_{\widehat{M}} \sigma_{Q_i}(P_1)$. Similarly, since $\sigma_{Q_i} \circ \sigma_{Q} \to \id$, we get $\sigma_{Q_i}(P_1) \prec_{\widehat{M}} \sigma_{Q}(P_1)$ for $i$ large enough. We conclude that for $i$ large enough we have $\sigma_{Q_i}(P_1) \sim_{\widehat{M}} \sigma_Q(P_1)$. 
Combined with Lemma \ref{lemma for intertwining and flip}, 
%Now, observe that $Q \in U$ if and only if $\sigma_{Q}(P_1) \prec_{\widehat{M}} Q_2$, and $Q_i \in U$ if and only if  $\sigma_{Q_i}(P_1) \prec_{\widehat{M}} Q_2$. This shows that 
$Q \in U$ if and only if $Q_i \in U$ for $i$ large enough,. This means that $U$ is closed and open.
\end{proof}

Recall that $\mathrm{TF_{full}}(M) \subset \mathrm{TF}(M)$ is the set of tensor factors which are full and $\mathrm{TF}(M)$ has an equivalence relation given in Proposition \ref{tensor lemma for stably unitary conjugacy}. Now we prove Theorem \ref{main topological}.

\begin{thm}
Let $M$ be any factor. Then the following holds:
\begin{itemize}
\item [$(\rm i)$] The space $\mathrm{TF_{full}}(M)/{\sim}$ is Hausdorff and totally disconnected.
\item [$(\rm ii)$] If $M$ is full, the space $\mathrm{TF}(M)/{\sim}$ is discrete.
\item [$(\rm iii)$] If $M_*$ is separable, then $\mathrm{TF_{full}}(M)/{\sim}$ is countable.
\end{itemize}
\end{thm}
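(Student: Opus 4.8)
The plan is to deduce all three parts from Lemma~\ref{clopen set}, which, for a \emph{full} $P \in \mathrm{TF}(M)$, makes the ``up-set'' $U_P := \{Q \in \mathrm{TF}(M) : P \prec_M Q\}$ a clopen subset of $\mathrm{TF}(M)$; everything else is soft. I will use freely the following structural facts from Section~3: by Proposition~\ref{tensor lemma for stably unitary conjugacy}, $P \sim Q$ if and only if $P \prec_M Q$ \emph{and} $Q \prec_M P$; the relation $\prec_M$ is transitive on $\mathrm{TF}(M)$ and hence descends to a partial order on $\mathrm{TF}(M)/{\sim}$ (this follows from Proposition~\ref{tensor lemma2} together with the facts that a tensor factor of a tensor factor is a tensor factor and that intertwining is insensitive to type $\mathrm{I}$ factors); and by Lemma~\ref{lemma for intertwining and flip}, $Q \prec_M P$ if and only if $P^c \prec_M Q^c$. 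Transitivity makes every $U_P$ a $\sim$-saturated set, so $U_P \cap \mathrm{TF_{full}}(M)$ descends to a clopen subset $\overline{U}_P$ of $X := \mathrm{TF_{full}}(M)/{\sim}$.

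For (i) and (ii), I would observe that the sets $\overline{U}_P$ separate the points of $X$: if $[P] \neq [Q]$ with $P, Q$ full, then $P \not\sim Q$, so $P \not\prec_M Q$ or $Q \not\prec_M P$, and after possibly exchanging $P$ and $Q$ and replacing $\overline{U}_P$ by the complement of $\overline{U}_Q$ we may assume $P \not\prec_M Q$; then $[P] \in \overline{U}_P$ while $[Q] \notin \overline{U}_P$. Thus $X$ is separated by clopen sets, hence Hausdorff and totally disconnected, which gives (i). For (ii), assume $M$ full. Then every tensor factor of $M$ is full (a centralizing net inside $P$ is a centralizing net in $M = P \ovt P^c$, once one approximates arbitrary normal functionals by elementary tensors), so $\mathrm{TF_{full}}(M) = \mathrm{TF}(M)$ and, for each $P$, both $P$ \emph{and} $P^c$ are full. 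The extra ingredient compared to (i) is that the ``down-set'' $\{Q : Q \prec_M P\}$ is now clopen too: by Lemma~\ref{lemma for intertwining and flip} it equals $\{Q : P^c \prec_M Q^c\}$, which is the image of the clopen set $U_{P^c}$ under the homeomorphism $c : \mathrm{TF}(M) \to \mathrm{TF}(M)$, $Q \mapsto Q^c$ (continuity is immediate from the definition of the topology on $\mathrm{TF}(M)$, and $c$ is an involution). Hence $[P] = U_P \cap \{Q : Q \prec_M P\}$ is a $\sim$-saturated clopen set, so $\{[P]\}$ is open in $\mathrm{TF}(M)/{\sim}$; as $P$ is arbitrary, the quotient is discrete.

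For (iii), I would run a classical separability argument. By the Corollary to Theorem~\ref{thm for TF(M)}, $\mathrm{TF}(M)$ is Polish, in particular second countable; fix a countable base $\mathcal{B}$. Suppose $\mathrm{TF_{full}}(M)/{\sim}$ were uncountable and pick pairwise inequivalent representatives $(P_\lambda)_{\lambda \in \Lambda}$ in $\mathrm{TF_{full}}(M)$ with $\Lambda$ uncountable. For each $\lambda$, $U_{P_\lambda}$ is open (Lemma~\ref{clopen set}) and contains $P_\lambda$, so choose $B_\lambda \in \mathcal{B}$ with $P_\lambda \in B_\lambda \subseteq U_{P_\lambda}$. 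If $B_\lambda = B_\mu$ for some $\lambda \neq \mu$, then $P_\mu \in U_{P_\lambda}$ and $P_\lambda \in U_{P_\mu}$, i.e.\ $P_\lambda \prec_M P_\mu$ and $P_\mu \prec_M P_\lambda$, whence $P_\lambda \sim P_\mu$ by Proposition~\ref{tensor lemma for stably unitary conjugacy}, contradicting the choice of representatives. So $\lambda \mapsto B_\lambda$ is injective and $\Lambda$ is countable, a contradiction. As for the main obstacle: given Lemma~\ref{clopen set}, there is none of real substance; the points that need care are (a) checking $\sim$-saturation of the clopen sets so that they descend to the quotients, which is exactly where transitivity of $\prec_M$ on tensor factors is used in (i); and (b) in (ii), converting the down-set $\{Q : Q \prec_M P\}$ into the clopen set $U_{P^c}$, which genuinely relies on $P^c$ being full, hence on the hypothesis that $M$ itself is full.
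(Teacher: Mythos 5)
Your proposal is correct, and for parts (i) and (ii) it is essentially the paper's argument: separate inequivalent classes by the clopen up-sets $\{Q : P \prec_M Q\}$ from Lemma \ref{clopen set}, and in the full case write the class of $P$ as $\{Q : P \prec_M Q\} \cap \{Q : P^c \prec_M Q^c\}$ via Lemma \ref{lemma for intertwining and flip}, using that $P$ and $P^c$ are full; you are in fact slightly more careful than the paper on two points it leaves implicit, namely the $\sim$-saturation of the up-sets (needed for them to descend to open sets of the quotient) and the continuity of $Q \mapsto Q^c$. For part (iii) you take a genuinely different, and arguably more economical, route: a direct pigeonhole with a countable base of the Polish space $\mathrm{TF}(M)$, assigning to each class a basic open set $B$ with $P \in B \subseteq \{Q : P \prec_M Q\}$ and using Proposition \ref{tensor lemma for stably unitary conjugacy} to see the assignment is injective. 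The paper instead covers $\mathrm{TF_{full}}(M)$ by the down-sets $U_P = \{Q : Q \prec_M P\}$ over a countable dense set of full $P$'s (density plus Lemma \ref{clopen set} gives the covering) and shows each $U_P/{\sim}$ is countable by identifying it with $\mathrm{TF}(P)/{\sim}$ via Proposition \ref{tensor lemma2} and then invoking part (ii) for the full factor $P$. Your argument needs only the ``neighborhood of $P$'' half of Lemma \ref{clopen set}, second countability, and Proposition \ref{tensor lemma for stably unitary conjugacy}, and bypasses the reduction to $\mathrm{TF}(P)$; the paper's version has the mild extra payoff of exhibiting the quotient as a countable union of pieces parametrized by tensor factors of full subfactors. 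Both are valid, and your (iii) is a clean instance of the classical separability argument the introduction alludes to.
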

\begin{proof}
$(\rm i)$ Let $P_1,P_2 \in \mathrm{TF_{full}}(M)$ and suppose that $P_1 \nsim_M P_2$. Assume, without loss of generality, that $P_1 \nprec_M P_2$. Then, by Lemma \ref{clopen set}, $\{ Q \in \mathrm{TF}(M) \mid P_1 \prec_M Q \}$ is an open set which contains $P_1$ and its complement is also an open set which contains $P_2$. This shows that the equivalence classes of $P_1$ and $P_2$ in $\mathrm{TF_{full}}(M)/{\sim}$ are separated by two open sets which form a partition of $\mathrm{TF_{full}}(M)/{\sim}$. Therefore, $\mathrm{TF_{full}}(M)/{\sim}$ is Hausdorff and totally disconnected.

$(\rm ii)$. Let $P \in \mathrm{TF}(M)$. Since $M$ is full, we know that $P$ and $P^c$ are also full. Therefore by Lemma \ref{lemma for intertwining and flip},
$$\{ Q \in \mathrm{TF}(M) \mid Q \sim_M P\}=\{ Q \in \mathrm{TF}(M) \mid P \prec_M Q \text{ and }  P^c \prec_M Q^c\}$$
is an intersection of two open sets by Lemma \ref{clopen set}. This shows that the equivalence classes of $\sim$ are open, which means that the quotient is discrete.

$(\rm iii)$. In this case, $\mathrm{TF}(M)$ is Polish by Theorem \ref{thm for TF(M)}. 
When $M$ is full, the conclusion follows from $(\rm ii)$ as $\mathrm{TF}(M)/{\sim}$ is a discrete separable space, hence countable. Now, for general $M$, the space $\mathrm{TF_{full}}(M)$ is separable so we can find a dense countable subset $X \subset \mathrm{TF_{full}}(M)$. For every $P \in X$, let $U_P=\{ Q \in \mathrm{TF_{full}}(M) \mid Q \prec_M P \}$. Observe that $U_P/{\sim}$ is in bijection with $\mathrm{TF}(P)/{\sim}$, thus it is countable because $P$ is full. Moreover, by using Lemma \ref{clopen set} and since $X$ is dense, we know that $\mathrm{TF_{full}}(M)=\bigcup_{P \in X} U_P$. This shows that $\mathrm{TF_{full}}(M)/{\sim}$ is a countable union of countable sets.
\end{proof}

\section{Application to fundamental groups}

In this section, we study tensor factors $M\ovt N$ by assuming $M$ is a full factor. For this, we will use the topological structure of $\mathrm{TF}(M\ovt N)$ discussed in the last section. We particularly prove Theorem \ref{trace scaling}.

\begin{thm} \label{main open map}
Let $M$ and $N$ be two infinite factors. Suppose that $M$ is full. Then the following map is open:
\begin{gather*} \iota : \;   \mathcal{U}(M \ovt N) \times \Aut(M) \times \Aut(N)  \rightarrow \Aut(M \ovt N) \\
 (u,\alpha, \beta) \mapsto \Ad(u) \circ (\alpha \otimes \beta).
\end{gather*}
In particular, $\Out(M) \times \Out(N)$ is an open subgroup of $\Out(M \ovt N)$.
\end{thm}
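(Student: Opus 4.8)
The plan is to realize $\iota$ as a \emph{continuous homomorphism of topological groups} and then prove openness at the identity by a ``locating tensor factors'' argument driven by the fullness of $M$. First I would put on $\mathcal{G} := \mathcal{U}(M \ovt N) \rtimes (\Aut(M) \times \Aut(N))$ the semidirect product structure in which $(\alpha,\beta)$ acts on $\mathcal{U}(M \ovt N)$ by $u \mapsto (\alpha \otimes \beta)(u)$; then $\mathcal{G}$ is a topological group and $\iota \colon \mathcal{G} \to \Aut(M \ovt N)$ is a continuous group homomorphism, since $\iota\big((u_1,\alpha_1,\beta_1)(u_2,\alpha_2,\beta_2)\big) = \Ad\big(u_1(\alpha_1 \otimes \beta_1)(u_2)\big) \circ (\alpha_1\alpha_2 \otimes \beta_1\beta_2) = \iota(u_1,\alpha_1,\beta_1) \circ \iota(u_2,\alpha_2,\beta_2)$. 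As a continuous homomorphism of topological groups is open once it is open at the identity, it suffices to show: for every neighbourhood $\mathcal{V}_0 \times \mathcal{V}_M \times \mathcal{V}_N$ of $(1,\id,\id)$ in $\mathcal{G}$, every $\theta \in \Aut(M \ovt N)$ sufficiently close to $\id$ can be written $\theta = \Ad(u) \circ (\alpha \otimes \beta)$ with $u \in \mathcal{V}_0$, $\alpha \in \mathcal{V}_M$, $\beta \in \mathcal{V}_N$.

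Next I would bring in the \emph{full} tensor factor $M_1 := M \otimes \C \in \mathrm{TF}(M \ovt N)$, with $M_1^c = N_2 := \C \otimes N$. Using $\sigma_{\theta(M_1)} = (\theta \otimes \theta) \circ \sigma_{M_1} \circ (\theta \otimes \theta)^{-1}$ together with Theorem \ref{thm for TF(M)}, the map $\theta \mapsto \theta(M_1)$ is continuous from $\Aut(M \ovt N)$ to $\mathrm{TF}(M \ovt N)$, so $\theta(M_1) \to M_1$ as $\theta \to \id$. By Lemma \ref{clopen set} applied to the full tensor factor $M_1$, the set $\{Q \in \mathrm{TF}(M \ovt N) : M_1 \prec_{M \ovt N} Q\}$ is clopen, hence a neighbourhood of $M_1$; applying this also to $\theta^{-1}$ (still close to $\id$) and transporting by $\theta$, I get, for $\theta$ close enough to $\id$, both $M_1 \prec_{M \ovt N} \theta(M_1)$ and $\theta(M_1) \prec_{M \ovt N} M_1$, hence $\theta(M_1) \sim M_1$ by Proposition \ref{tensor lemma for stably unitary conjugacy}.

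The hard part will be to upgrade ``$\theta(M_1) \sim M_1$ for $\theta$ near $\id$'' to ``$\theta(M_1) = u M_1 u^*$ for some $u \in \mathcal{U}(M \ovt N)$ \emph{near} $1$''. Since $M$ and $N$ are infinite, $M_1$ and $M_1^c = N_2$ are both properly infinite, which lets one replace stable unitary conjugacy of tensor factors by genuine unitary conjugacy (standard, via $P \ovt \B(\ell^2) \cong P$), so at least $\theta(M_1) = u M_1 u^*$ for \emph{some} $u$. Concretely one checks $\mathcal{N}_{M \ovt N}(M_1) = \mathcal{U}(M_1)\mathcal{U}(N_2)$ --- an automorphism of $M \ovt N$ fixing $M_1$ and $N_2$ globally is a tensor product, and if inner its tensor components are inner by injectivity of $\Out(M) \times \Out(N) \hookrightarrow \Out(M \ovt N)$ --- so the orbit of $M_1$ is $\mathcal{U}(M \ovt N)/\mathcal{U}(M_1)\mathcal{U}(N_2)$. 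The crucial point, which I expect to require the most care, is that \emph{fullness of $M_1$} forces the quotient topology on this orbit to coincide with the one induced from $\mathrm{TF}(M \ovt N)$, i.e.\ the orbit map $u \mapsto u M_1 u^*$ is open near $M_1$ --- the exact analogue, for the conjugation action on $\mathrm{TF}(M \ovt N)$, of the defining openness of $\Ad \colon \mathcal{U} \to \Inn$ for a full factor. Granting this, since $\theta(M_1)$ tends to $M_1$ inside that orbit, one finds $u \in \mathcal{V}_0$ with $u M_1 u^* = \theta(M_1)$. (When $M_*$ and $N_*$ are separable there is a shortcut: the steps above already show that $\mathrm{image}(\iota)$ is a subgroup containing a neighbourhood of $\id$, hence an open, so Polish, subgroup of $\Aut(M \ovt N)$, and the open mapping theorem for Polish groups then yields openness of $\iota$ onto its image, hence of $\iota$.)

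Finally I would set $\psi := \Ad(u^*) \circ \theta$: then $\psi(M_1) = M_1$ and $\psi(N_2) = \psi(M_1' \cap (M \ovt N)) = N_2$, so $\psi = \alpha \otimes \beta$ with $\alpha := \psi|_{M_1} \in \Aut(M)$, $\beta := \psi|_{N_2} \in \Aut(N)$, giving $\theta = \Ad(u) \circ (\alpha \otimes \beta) = \iota(u,\alpha,\beta)$. Since $u$ is near $1$ and $\theta$ near $\id$, $\psi$ is near $\id$, and for a fixed normal state $\rho$ on $N$ and any $\phi \in M_*$ one has $\|\phi \circ \alpha - \phi\|_{M_*} \le \|(\phi \otimes \rho) \circ \psi - \phi \otimes \rho\|_{(M \ovt N)_*}$ (using $\|x \otimes 1\| = \|x\|$), so $\alpha \in \mathcal{V}_M$ once $\psi$ is close enough, and symmetrically $\beta \in \mathcal{V}_N$; this proves $\iota$ open. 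The last assertion follows: $\Ad(u)$ being inner, the image of $\iota$ in $\Out(M \ovt N)$ is exactly the image of $\Aut(M) \times \Aut(N)$, namely $\Out(M) \times \Out(N)$ under the embedding recalled in the preliminaries, and since $\iota$ is open and $\Aut(M \ovt N) \to \Out(M \ovt N)$ is open, this image is an open subgroup of $\Out(M \ovt N)$.
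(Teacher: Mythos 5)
Your first half runs parallel to the paper and is sound: continuity of $\theta \mapsto \theta(M_1)$ (via Theorem \ref{thm for TF(M)}), Lemma \ref{clopen set} applied to the full tensor factor $M_1$ and to $\theta^{-1}$, Proposition \ref{tensor lemma for stably unitary conjugacy}, and the upgrade from stable to genuine unitary conjugacy using that $M_1$, $N_2$, $\theta(M_1)$, $\theta(N_2)$ are properly infinite. The problem is exactly the step you flag and then skip: producing a conjugating unitary \emph{close to} $1$. Your assertion that this is ``the exact analogue of the defining openness of $\Ad\colon \mathcal U(M)\to \Inn(M)$ for a full factor'' is not accurate: fullness of $M$ controls unitaries of $M$ modulo scalars, whereas here you must control unitaries of the ambient algebra $M \ovt N$ (with $N$ completely arbitrary) modulo the normalizer $\T\cdot(\mathcal U(M)\otimes\mathcal U(N))$ of $M_1$, knowing only that $uM_1u^*$ is close to $M_1$ in $\mathrm{TF}(M\ovt N)$ --- note that $\Ad(u)$ itself need not be anywhere near $\id$. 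This is precisely where the paper imports nontrivial rigidity from \cite[Theorem A]{HMV16}, used twice: first, $[\alpha_i\otimes\beta_i]=[\theta_i]\to\id$ in $\Out(M\ovt N)$ implies $([\alpha_i],[\beta_i])\to\id$ in $\Out(M)\times\Out(N)$ (the embedding is a homeomorphism onto its range when $M$ is full), and second, once $\alpha_i,\beta_i$ are corrected to converge in $\Aut$, the resulting $u_i$ with $\Ad(u_i)\to\id$ can be written as $u_i(1\otimes v_i)^*\to 1$ with $\Ad(v_i)\to\id$ in $\Aut(N)$. Without an input of this kind your general argument has a genuine gap at its key step (the easy estimate you give for $\alpha,\beta$ once $u$ is small is fine, but it is downstream of the missing point).

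Your separable-predual shortcut, on the other hand, is correct and genuinely different from the paper: the completed first half shows that the image of $\iota$ contains a neighbourhood of the identity, hence is an open (therefore closed, therefore Polish) subgroup of $\Aut(M\ovt N)$, and the open mapping theorem for continuous surjective homomorphisms of Polish groups then gives openness of $\iota$ --- bypassing \cite{HMV16} altogether (modulo the routine check that your semidirect product is a topological group, i.e.\ joint continuity of the $\Aut(M)\times\Aut(N)$-action on $\mathcal U(M\ovt N)$). What this buys is a softer proof; what it costs is generality: Theorem \ref{main open map} is stated for arbitrary infinite factors with no separability hypothesis, and the Baire-category argument does not apply there, so as written your proposal proves a weaker statement than the one in the paper unless the deferred orbit-openness claim is actually established.
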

\begin{proof}
We have to show that the image by $\iota$ of any neighoborhood of the identity $\mathcal{V} \subset \mathcal{U}(M\ovt N) \times \Aut(M) \times \Aut(N)$ is again a neighborhood of the identity in $\Aut(M \ovt N)$. Let $(\theta_i)_{i \in I}$ be a net in $\Aut(M \ovt N)$ which converges to the identity and let us show that $\theta_i \in \iota(\mathcal{V})$ for $i$ large enough. Since $\theta_i(M)$ converges to $M$ in $\mathrm{TF}(M \ovt N)$, we know by Lemma \ref{clopen set} that $M \prec_{M \ovt N} \theta_i(M)$ for $i$ large enough. By the same argument applied to $\theta_i^{-1}$, we also get $\theta_i(M) \prec_{M \ovt N} M$ for $i$ large enough. Therefore $\theta_i(M) \sim M$, hence also $\theta_i(N) \sim N$ for all $i$ large enough. This shows that $\theta_i$ is eventually of the form $\theta_i = \iota(u_i,\alpha_i,\beta_i)$ for some triples $(u_i,\alpha_i,\beta_i)$ in the domain of $\iota$. Next, we have to show that we can actually take $(u_i,\alpha_i,\beta_i) \in \mathcal{V}$. Observe that the outer automorphism class $[\alpha_i \otimes \beta_i]$ converges to the identity in $\Out(M \ovt N)$. By \cite[Theorem A]{HMV16}, this implies that $([\alpha_i],[\beta_i])$ converges to the identity in $\Out(M) \times \Out(N)$. Therefore, up to replacing $\alpha_i$ by $\Ad(v_i) \circ \alpha_i$, $\beta_i$ by $\Ad(w_i) \circ \beta_i$ and $u_i$ by $u_i(v_i^* \otimes w_i^*)$ for some unitaries $v_i \in \mathcal{U}(M)$ and $w_i \in \mathcal{U}(N)$, we may simply assume that both $\alpha_i$ and $\beta_i$ converges to the identity in $\Aut(M)$ and $\Aut(N)$. Then, in that case, since $\theta_i=\iota(u_i,\alpha_i,\beta_i)$ converges to the identity, we must also have that $\Ad(u_i)$ converges to the identity in $\Aut(M \ovt N)$. Since $M$ is full, by \cite[Theorem A]{HMV16}, there exists $v_i \in \mathcal{U}(N)$ such that $u_i(1 \otimes v_i)^*$ converges to $1$ strongly and $\Ad(v_i)$ converges to the identity in $\Aut(N)$. We conclude that $\theta_i=\iota(u_i(1 \otimes v_i)^*, \alpha_i, \Ad(v_i) \circ \beta_i) \in \iota(\mathcal{V})$ for $i$ large enough.
\end{proof}

\begin{thm} \label{cocycle perturbation}
Let $M$ and $N$ be two infinite factors with separable predual. Suppose that $M$ is full. Let $\theta : \R \curvearrowright M \ovt N$ be a one-parameter group. Then there exists two one-parameter groups $\alpha : \R \curvearrowright M$ and $\beta : \R \curvearrowright N$ such that $\alpha \otimes \beta$ is a cocycle perturbation of $\theta$, i.e.\ there exists a continuous map $u : \R \rightarrow \mathcal{U}(M \ovt N)$ such that 
$$ \forall s,t \in \R, \quad u_{s+t}= u_s\theta_s(u_t),$$
$$ \forall t \in \R, \quad \alpha_t \otimes \beta_t=\Ad(u_t) \circ  \theta_t.$$
\end{thm}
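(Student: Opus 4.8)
The plan is to reduce, via Theorem \ref{main open map} and the connectedness of $\R$, to an untwisting problem for a cocycle action of $\R$, and then to solve that problem using the cohomological triviality of $\R$. First I would compose $\theta$ with the quotient map $q \colon \Aut(M \ovt N) \to \Out(M \ovt N)$ to get a continuous homomorphism $q \circ \theta \colon \R \to \Out(M \ovt N)$. By Theorem \ref{main open map}, $\Out(M) \times \Out(N)$ is an open --- hence also closed --- subgroup of $\Out(M \ovt N)$, so it contains the connected component of the identity; since $\R$ is connected this gives $q(\theta_t) \in \Out(M) \times \Out(N)$ for every $t$. Equivalently, each $\theta_t$ can be written $\theta_t = \Ad(u_t) \circ (\alpha_t \otimes \beta_t)$ with $u_t \in \mathcal{U}(M \ovt N)$, $\alpha_t \in \Aut(M)$ and $\beta_t \in \Aut(N)$; as the preduals are separable all groups in sight are Polish, and I would apply a Borel selection theorem to make $t \mapsto (u_t, \alpha_t, \beta_t)$ Borel with value $(1, \id, \id)$ at $t = 0$.

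Next I would measure the failure of $\phi_t := \Ad(u_t) \circ \theta_t = \alpha_t \otimes \beta_t$ to be a homomorphism: one has $\phi_s \circ \phi_t = \Ad(c(s,t)) \circ \phi_{s+t}$ with $c(s,t) := u_s\,\theta_s(u_t)\,u_{s+t}^* \in \mathcal{U}(M \ovt N)$ a Borel $2$-cocycle. Since $\Ad(c(s,t)) = (\alpha_s\alpha_t\alpha_{s+t}^{-1}) \otimes (\beta_s\beta_t\beta_{s+t}^{-1})$ is both inner and tensor split, fullness of $M$ --- via \cite[Theorem A]{HMV16}, as in the proof of Theorem \ref{main open map} --- forces $\alpha_s\alpha_t\alpha_{s+t}^{-1} \in \Inn(M)$ and $\beta_s\beta_t\beta_{s+t}^{-1} \in \Inn(N)$ and lets one write $c(s,t)$ as a scalar times $a(s,t) \otimes b(s,t)$ with $a(s,t) \in \mathcal{U}(M)$, $b(s,t) \in \mathcal{U}(N)$. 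Thus $(\alpha_t)_t$ is a Borel (projective) cocycle action of $\R$ on $M$, and $(\beta_t)_t$ one on $N$. Because $\R$ is amenable and simply connected, these cocycle actions can be untwisted: there are Borel maps $t \mapsto x_t \in \mathcal{U}(M)$ and $t \mapsto y_t \in \mathcal{U}(N)$ so that $\widetilde\alpha_t := \Ad(x_t) \circ \alpha_t$ and $\widetilde\beta_t := \Ad(y_t) \circ \beta_t$ are genuine one-parameter groups --- and being Borel homomorphisms between Polish groups they are automatically continuous, so $\widetilde\alpha \colon \R \curvearrowright M$ and $\widetilde\beta \colon \R \curvearrowright N$ are continuous one-parameter groups with the same classes as $\alpha$ and $\beta$ in the respective $\Out$-groups.

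Finally I would recover the cocycle relating $\widetilde\alpha \otimes \widetilde\beta$ to $\theta$. These have the same image in $\Out(M \ovt N)$, so $\widetilde\alpha_t \otimes \widetilde\beta_t = \Ad(v_t) \circ \theta_t$ for a $v_t \in \mathcal{U}(M \ovt N)$, Borel in $t$ (one more selection) with $v_0 = 1$. Comparing one-parameter group relations gives $v_{s+t} = \rho(s,t)\, v_s\, \theta_s(v_t)$ for a Borel $\T$-valued $2$-cocycle $\rho$ on $\R$; since $H^2(\R, \T) = 0$ (Borel cohomology of $\R$ coincides with continuous cohomology and $\R$ is simply connected), $\rho$ is a coboundary and can be absorbed into $v_t$, and then $u_t := v_t$ is a genuine $\theta$-cocycle. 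Its continuity follows from that of $\Ad(v_t) = (\widetilde\alpha_t \otimes \widetilde\beta_t) \circ \theta_t^{-1}$ together with the cocycle identity (or from automatic continuity of Borel $\R$-cocycles), and by construction $\widetilde\alpha_t \otimes \widetilde\beta_t = \Ad(u_t) \circ \theta_t$, which is the conclusion with $\alpha = \widetilde\alpha$, $\beta = \widetilde\beta$.

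I expect the main obstacle to be the untwisting step --- turning the Borel projective cocycle action $(\alpha_t)$ into a genuine continuous one-parameter group with the prescribed class in $\Out(M)$. This is where the hypotheses really enter: separability of the preduals underlies all the selection and automatic-continuity arguments, and fullness of $M$ is what both makes $\Out(M)$ a Polish group (so that $\widetilde\alpha$ is automatically continuous) and allows the defect $c(s,t) \in \mathcal{U}(M \ovt N)$ to be split into a part over $M$ and a part over $N$. Some care is also needed because $\Out(N)$ need not be Hausdorff, which is why I would phrase everything in terms of the Polish group $\Aut(N)$ rather than $\Out(N)$.
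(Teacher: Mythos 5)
Your overall strategy is the same as the paper's: use the openness of $\iota$ from Theorem \ref{main open map} together with the connectedness of $\R$ to see that $\theta$ lands in the image of $\iota$ and to choose a Borel lift $t \mapsto (u_t,\alpha_t,\beta_t)$; untwist the resulting projective data on each tensor factor separately; and finally kill the leftover scalar $2$-cocycle using $H^2(\R,\T)=0$. The first and last steps are fine as you describe them (and your splitting of $c(s,t)$ into a scalar times $a(s,t)\otimes b(s,t)$ only needs the injectivity of $\Out(M)\times\Out(N)\to\Out(M\ovt N)$, valid for arbitrary factors, not fullness).

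The genuine gap is exactly the step you yourself flag: untwisting the Borel projective cocycle action $(\alpha_t)_t$ into a continuous one-parameter group. You assert this ``because $\R$ is amenable and simply connected,'' but neither property is the operative mechanism, and untwisting cocycle actions of amenable groups is in general a deep theorem (Ocneanu) requiring hypotheses on the action and the factor that are not available here. What the paper actually uses is Sutherland's cohomological lifting theorem \cite{Su80}: since $t\mapsto[\alpha_t]\in\Out(M)$ is a (Borel) group morphism, $M$ is \emph{infinite} (properly infinite), and the obstruction group $H^3(\R,\T)$ vanishes, there is a Borel map $t\mapsto v_t\in\mathcal{U}(M)$ with $t\mapsto\Ad(v_t)\circ\alpha_t$ a continuous group morphism; the same is done for $\beta$ using that $N$ is infinite. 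Note that the hypothesis that $M$ and $N$ are infinite, which is part of the statement, never enters your argument, whereas it is precisely what this step needs. Relatedly, your closing remarks misplace the role of fullness: it is not needed to split $c(s,t)$ nor to get continuity of $\widetilde\alpha$ (a Borel homomorphism into the Polish group $\Aut(M)$ is automatically continuous, with no reference to $\Out(M)$ being Polish); fullness of $M$ enters only through Theorem \ref{main open map}, i.e.\ in the very first step.
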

\begin{proof} Since $\R$ is connected and the continuous morphism between Polish groups
\begin{align*} \iota : \quad &  \mathcal{U}(M \ovt N) \times \Aut(M) \times \Aut(N)  \rightarrow \Aut(M \ovt N) \\
& (u,\alpha, \beta) \mapsto \Ad(u) \circ (\alpha \otimes \beta) 
\end{align*}
is open, the image of $\theta$ is contained in the image of $\iota$ and there exists a borel lift
\begin{align*} \rho : \quad &  \R \rightarrow \mathcal{U}(M \ovt N) \times \Aut(M) \times \Aut(N) \\
& t \mapsto (u_{-t},\alpha_t,\beta_t)
\end{align*}
such that $\theta = \iota \circ \rho$. Observe that $\R : t \mapsto [\alpha_t] \in \Out(M)$ is a group morphism. By \cite{Su80}, since $M$ is infinite and the cohomology group $H^3(\R,\T)$ is trivial, we can find a borel map $\lambda \mapsto v_t \in \mathcal{U}(M)$ such that $t \mapsto \Ad(v_t) \circ \alpha_t$ is a continuous group morphism. Therefore, up to replacing $u_{-t}$ by $u_{-t}(v_t^* \otimes 1)$ and $\alpha_t$ by $\Ad(v_t) \circ \alpha_t$, we may assume that $\alpha : \R \ni t \mapsto \alpha_t \in \Aut(M)$ is a continuous group morphism. Similarly, we may assume that $\beta : t \mapsto \beta_t$ is a continuous group morphism. Then we have that $t \mapsto \alpha_t \otimes \beta_t = \Ad(u_t) \circ \theta_t$ is also a group morphism. This implies that $\Ad(u_{s+t})=\Ad(u_s)\Ad(\theta_s(u_t))$ for all $s,t \in \R$. Therefore, $u_{s+t}=\chi(s,t) u_s\theta_s(u_t)$ where $\chi : \R \times \R \rightarrow \T$ is a scalar $2$-cocycle. Since $H^2(\R,\T)$ is trivial, the $2$-cocycle $\chi$ is a coboundary. Thus, we may perturb $u_t$ by scalars in $\T$ so that $t \mapsto u_t$ becomes a true $1$-cocycle.
\end{proof}

\begin{proof}[Proof of Theorem \ref{trace scaling}]
Let $M$ and $N$ be two factors of type $\II_\infty$ with separable predual and suppose that one of them is full.

$(\rm i)$ There are two surjective maps 
\begin{align*}
	&\mathrm{Out}(M \ovt N) \to \mathcal{F}(M\ovt N) ;\\
	&\mathrm{Out}(M) \times \mathrm{Out}( N) \to \mathcal{F}(M)\mathcal{F}(N).
\end{align*}
They induce a surjective map from $\mathrm{Out}(M \ovt N )/\mathrm{Out}(M) \times \mathrm{Out}(N)$ onto $\mathcal{F}(M\ovt N) / \mathcal{F}(M)\mathcal{F}(N)$. By Theorem \ref{main open map}, $\mathrm{Out}(M \ovt N )/\mathrm{Out}(M) \times \mathrm{Out}(N)$ is discrete hence countable (because $M_*$ and $N_*$ are separable) and we get the conclusion.

$(\rm ii)$ Let $\theta : \R^*_+ \rightarrow \Aut(M \ovt N)$ be a trace scaling action. Then, by Theorem \ref{cocycle perturbation}, we can find two actions $\alpha : \R^*_+ \rightarrow \Aut(M)$ and $\beta : \R^*_+ \rightarrow \Aut(N)$ such that $(\alpha_\lambda \otimes \beta_\lambda) \circ \theta_{\lambda}^{-1}$ is inner for all $\lambda > 0$. In particular, we have $\mathrm{Mod}(\alpha_\lambda)\mathrm{Mod}(\beta_\lambda)=\mathrm{Mod}(\theta_\lambda)=\lambda$ for all $\lambda > 0$. Since $\lambda \mapsto \mathrm{Mod}(\alpha_\lambda)$ is a group homomorphism, there must exist some $s \in \R$ such that $\mathrm{Mod}(\alpha_\lambda)=\lambda^s$ hence $\mathrm{Mod}(\beta_\lambda)=\lambda^{1-s}$ for all $\lambda > 0$. We conclude that $M$ admits a trace scaling action (if $s \neq 0$) or $N$ admits a trace scaling action (if $s \neq 1$).
\end{proof}

\section{Noncommutative Bernoulli shifts}
	In this section, we investigate the structure of full factors arising from  Bernoulli actions. For this, we first observe that well-known arguments in the deformation/rigidity theory for Bernoulli actions (mostly established in \cite{Po03}) also work in the type III setting. We will then prove Theorem \ref{bernoulli strongly prime} and \ref{bernoulli remembers groups}.

	We first prove the following rigidity results for Bernoulli actions. Recall that our definition of $A\prec_MB$ coincides with the usual one if $M$ is $\sigma$-finite and $A,B\subset M$ are with expectation. 

\begin{thm}\label{rigidity for Bernoulli shift}
Let $\alpha : \Gamma \curvearrowright (B_0,\varphi_0)^{\ovt \Gamma} =:(B,\varphi)$ be a noncommutative Bernoulli shift. Let $N$ be any $\sigma$-finite von Neumann algebra and put $M:=N \ovt (B\rtimes \Gamma)$. Let $p\in M$ be a projection and $P,Q \subset pMp$ von Neumann algebras with expectation such that $P$ and $Q$ are commuting and that $Q$ is finite. Then one of the following conditions holds:
\begin{itemize}
	\item [$(\rm i)$] $Q \prec_M N\ovt L(\Gamma)$; 
	\item [$(\rm ii)$] $Q \prec_M N\ovt B_0^F$ for some finite subset $F\subset \Gamma$; or
	\item [$(\rm iii)$] $P \lessdot_M N\ovt B$.
\end{itemize}
\end{thm}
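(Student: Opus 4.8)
The plan is to use Popa's malleable deformation for Bernoulli shifts together with the spectral gap / weak containment machinery of the paper. Recall that $B = (B_0,\varphi_0)^{\ovt\Gamma}$ embeds into the ``doubled'' algebra $\widetilde{B} = (B_0 * B_0, \varphi_0 * \varphi_0)^{\ovt\Gamma}$, and there is a one-parameter family $(\theta_t)_{t\in\R}$ of $\varphi$-preserving automorphisms of $\widetilde{M} := N \ovt (\widetilde B \rtimes \Gamma)$ commuting with the $\Gamma$-action, which converges pointwise to the identity as $t\to 0$, together with the ``period-2'' flip symmetry $\beta$ with $\beta\theta_t\beta = \theta_{-t}$. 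All of this works verbatim in the type III, state-preserving setting (this is the ``well-known arguments also work in type III'' remark preceding the theorem).

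\textbf{Step 1: transfer to a bimodule / conditional expectation statement.} First I would run the standard dichotomy: either $\theta_t \to \id$ \emph{uniformly} on the unit ball of $P$ (in $\|\cdot\|_{\varphi}^\#$) as $t\to 0$, or it does not. If it does, a standard ``transversality + local quantization'' argument (as in \cite{Po03}, adapted to states) shows that $P$ can be unitarily conjugated close to $\widetilde{B}\rtimes\Gamma$ inside $\widetilde M$ in a way that forces, via the flip $\beta$, a corner of $P$ into $M$; chasing this through gives exactly left-amenability $P \lessdot_M N \ovt B$, i.e.\ conclusion (iii). (Here I would use Theorem \ref{bimodule theorem} to phrase ``there is a conditional expectation'' as the relevant weak-containment statement, and the paper's $\lessdot_M$ notation is tailored precisely so that normality is not needed.)

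\textbf{Step 2: the non-uniform case and the use of $Q$.} If $\theta_t$ does not converge uniformly on $\Ball(P)$, then since $Q$ commutes with $P$ and is \emph{finite}, one gets a nonzero $Q$-central, asymptotically $\theta_t$-invariant ``vector'' — more precisely one finds a net of unitaries in $P$ (or a suitable corner) witnessing a spectral-gap failure relative to $Q$. This is where finiteness of $Q$ is essential: it lets us average over $\mathcal U(Q)$ / use the trace on $Q$ to produce an honest fixed point of $\theta_t$ in the $Q$-$Q$-bimodule $\rL^2\langle \widetilde M, Q\rangle$ sitting at a positive ``distance'' $t$. Feeding this fixed vector back through the malleable deformation (the usual $\theta_{2t} = \theta_t\beta\theta_t\beta$ trick to push the intertwiner all the way to $t$ arbitrary, hence to $\widetilde B \rtimes \Gamma$ and then, by applying $\beta$, back inside $M$) yields $Q \prec_M N\ovt(\widetilde B\rtimes\Gamma)$ intertwined into the original copy, which after the standard ``spreading out'' argument (using that the Bernoulli action has no invariant finite sets of coordinates unless one lands in $L(\Gamma)$ or in finitely many coordinates) splits into the two alternatives (i) $Q \prec_M N\ovt L(\Gamma)$ and (ii) $Q \prec_M N\ovt B_0^F$ for a finite $F\subset\Gamma$.

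\textbf{Main obstacle.} The genuinely delicate point is carrying out the ``spectral gap $\Rightarrow$ intertwining'' dichotomy purely in the state-preserving type III framework, since the classical arguments of \cite{Po03} are phrased with traces: one must replace $\|\cdot\|_2$ estimates by $\|\cdot\|_\varphi^\#$ estimates, be careful that all the subalgebras involved are \emph{with expectation} so that $\prec_M$ and the bimodule reformulations are available (this is why the hypotheses demand ``with expectation''), and use the finiteness of $Q$ rather than of the ambient algebra to produce the averaging/fixed-vector step. I expect the cleanest route is to phrase everything from the start in terms of the $\Gamma$-equivariant $\widetilde B$-$\widetilde B$-bimodule decomposition of $\rL^2(\widetilde B)$ into the ``$\theta_t$-eigen''-type pieces, reduce to a weak-containment statement between $P$-$P$- (resp.\ $Q$-$Q$-) bimodules, and then invoke Theorem \ref{bimodule theorem} and Popa's intertwining Theorem \ref{intertwining theorem} as black boxes — so that the only ``new'' work is the bookkeeping needed to see that Popa's transversality inequality and the deformation estimates survive the passage to faithful normal states.
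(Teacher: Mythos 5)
You have assembled the right toolbox (Ioana's tilted algebra $\widetilde M$ with the deformation $(\theta_t,\beta)$, transversality, spectral gap, and Popa's intertwining), but your dichotomy is attached to the wrong conclusions, and this is a genuine logical error rather than a bookkeeping issue. In the correct argument (the paper follows the proof of \cite[Theorem 4.2]{Ma16}, applied to the \emph{finite} algebra $Q$), the two branches are: either $P' \cap p\widetilde M^\omega p \not\subset pM^\omega p$ (failure of spectral gap for $P$), or $(\theta_t)_t$ converges uniformly on $(Q)_1$ and $Q \prec_{\widetilde M} \theta_1(Q)$. It is the \emph{uniform convergence} branch, run on the finite algebra $Q$, that produces the intertwining conclusions $(\rm i)$ or $(\rm ii)$ (this is exactly where finiteness of $Q$ is used, to convert uniform convergence into an intertwiner as in \cite[Lemma 5.1]{Ma16}); and it is the \emph{failure-of-spectral-gap} branch that produces the soft conclusion $(\rm iii)$, via $z\rL^2(P) \prec \rL^2(\widetilde M)\ominus\rL^2(M)$ and the decomposition of $\rL^2(\widetilde M)\ominus\rL^2(M)$ into pieces $\rL^2\langle M, (N\ovt B_0^{F^c})\rtimes\Gamma_F\rangle$ with $\Gamma_F$ finite, which yields $P \lessdot_M N\ovt B$. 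Your proposal inverts this pairing: in your Step 1 you claim that uniform convergence of $\theta_t$ on $\Ball(P)$ gives $P\lessdot_M N\ovt B$, which is false -- uniform convergence is a rigidity statement and, when it can be exploited at all, it yields an intertwining conclusion, never relative amenability (take $P=N\ovt L(\Gamma)$ with $\Gamma$ a property (T) group: $\theta_t\to\id$ uniformly on $\Ball(P)$, yet $P$ is not left amenable over $N\ovt B$ when $\Gamma$ is nonamenable). In your Step 2 you claim that \emph{failure} of uniform convergence on $\Ball(P)$, combined with finiteness of $Q$, produces a fixed vector and hence $Q\prec_M\cdots$; this is backwards as well: the non-rigid branch produces almost-invariant vectors escaping $M$, i.e.\ weak containment/amenability-type information, and no averaging over $\mathcal U(Q)$ can turn that into an intertwiner. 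The averaging-over-the-finite-algebra trick you invoke belongs to the uniform-convergence branch.

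Concretely, the missing ideas are: (1) the dichotomy must be phrased so that the deformation is tested against the finite algebra $Q$ (or equivalently via the spectral gap condition $P'\cap p\widetilde M^\omega p\subset pM^\omega p$, which transfers nonamenability of $P$ over $N\ovt B$ into uniform convergence on the commutant, hence on $(Q)_1$); and (2) conclusion $(\rm iii)$ does not come from any conjugation of $P$ close to $\widetilde B\rtimes\Gamma$, but from the $M$-$M$-bimodule decomposition of $\rL^2(\widetilde M)\ominus\rL^2(M)$ over the subalgebras $(N\ovt B_0^{F^c})\rtimes\Gamma_F$ and the amenability of the finite groups $\Gamma_F$, which converts $z\rL^2(P)\prec\rL^2(\widetilde M)\ominus\rL^2(M)$ into a conditional expectation witnessing $P\lessdot_M N\ovt B$. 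Your concluding remarks about the type $\III$ adaptations ($\|\cdot\|_\varphi^\#$ estimates, subalgebras with expectation) are on target, but they do not repair the inverted dichotomy, which is the heart of the proof.
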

\begin{proof}
	We only give a sketch of the proof. Following \cite{Io06} (see also \cite[Section 1]{CPS11} and \cite[Section 5]{Ma16}), we define a von Neumann algebra $\widetilde{M}$ and its deformations $(\theta_t,\beta)$. 
We then apply the proof of \cite[Theorem 4.2]{Ma16} to the finite algebra $Q$, and we get either:
\begin{enumerate}
	\item $P' \cap p\widetilde{M}^\omega p\not\subset pM^\omega p$ for some ultrafilter $\omega \in \beta \N$; or
	\item $(\theta_t)_t$ converges uniformly on $(Q)_1$ (in the $\ast$-strong topology) and $Q\prec_{\widetilde{M}} \theta_1(Q)$.
\end{enumerate}
Following the proof of \cite[Lemma 5.1]{Ma16}, the second condition directly implies $(\rm i)$ or $(\rm ii)$.

	We next consider the case that $P' \cap p\widetilde{M}^\omega p\not\subset pM^\omega p$. Then, by the proof of \cite[Lemma 4.1]{Ma16}, we have as $P$-$P$-bimodules
		$$ z\rL^2(P) \prec \rL^2(\widetilde{M}) \ominus \rL^2(M)$$
for some nonzero projection $z \in \mathcal{Z}(P)$. Recall that we have a decomposition as $M$-$M$-bimodules (see \cite[Theorem 5.2]{Ma16})
	$$\rL^2(\widetilde{M})\ominus \rL^2(M) = \bigoplus_i \rL^2\langle M,B_i \rangle,$$
	where each $B_i$ is of the form that $B_i = (N\ovt B_0^{F_i^c})\rtimes \Gamma_i$ for some finite $F_i \subset \Gamma$ and finite group $\Gamma_i\leq \Gamma$. Since $\Gamma_i$ is amenable, it holds as $M$-$M$-bimodules that
	$$\rL^2(\widetilde{M})\ominus \rL^2(M) \prec  \bigoplus_i \rL^2 \langle M, N\ovt B_0^{F_i^c} \rangle.$$
Thus we obtain $z\rL^2(P) \prec  \bigoplus_i \rL^2 \langle M, N\ovt B_0^{F_i^c} \rangle$ as $P$-$P$-bimodules. This means that there exists a conditional expectation from $zLz$ onto $zP$ where $L= \bigoplus_i \langle M, N\ovt B_0^{F_i^c} \rangle$ and $P$ is embedded diagonally in $L$. Since $\langle M , N \ovt B \rangle$ embeds diagonally in $L$, we can restrict it to a conditional expectation from $z \langle M, N \ovt B \rangle z$ on $Pz$. We conclude that $P \lessdot_M N \ovt B$.
\end{proof}

The following two lemma are useful to control normalizers in Bernoulli shift von Neumann algebras.

\begin{lem}\label{mixing lemma for Bernoulli shift1}
	Keep the notation $M=(N\ovt B)\rtimes \Gamma$ as in Theorem \ref{rigidity for Bernoulli shift}. Let $C_0 \subset B_0$ be a von Neumann subalgebra (possibly trivial) with expectation which is globally preserved by $\sigma^{\varphi_0}$ and put $C:=\ovt_{\Gamma}(C_0,\varphi_0)\subset B$.  Let $p\in M$ be a projection and $P \subset pMp$ a von Neumann subalgebra with expectation. The following assertions hold true.
\begin{itemize}
	\item [$(\rm i)$] Let $F\subset \Gamma$ be a finite set and assume that $p\in N\ovt B_0^F$, $P \subset p(N\ovt (C \vee B_0^F))p$, and $P\not\prec_{N\ovt (C \vee B_0^F)}N\ovt (C\vee B_0^E)$ for all genuine subsets $E\subset F$. Then any $x\in pMp$ such that $xa = \beta(a)x$ for all $a\in P$ for some $\beta\in \Aut(P)$, is contained in $(N\ovt B)\rtimes \Gamma_F$, where $\Gamma_F:=\{g\in \Gamma\mid gF=F\}$.

	\item [$(\rm ii)$] Assume that $P \prec_M N\ovt (C\vee B_0^F)$ for a finite set $F\subset \Gamma$ and that  $P\not\prec_M N \ovt C$. Then we have $\mathcal{N}_{qMq}(Pq)'' \prec_M N\ovt B$ for some projection $q\in P' \cap pMp$.

\end{itemize}
\end{lem}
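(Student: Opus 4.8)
The statement concerns the standard ``malleable deformation + mixing'' toolkit for Bernoulli actions, now in the type III setting. I will treat the two items separately but with a common technical backbone: the fact that the Bernoulli shift $\Gamma \actson B = \ovt_\Gamma(B_0,\varphi_0)$ is (weakly) \emph{mixing} relative to the subalgebras $N \ovt B_0^F$ in a suitable bimodule sense, and that this mixing passes to subalgebras $P$ which do not ``shrink'' into a smaller $B_0^E$.

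\smallskip

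For item $(\rm i)$: write $B_F := N \ovt (C \vee B_0^F)$ and let $x \in pMp$ satisfy $xa = \beta(a)x$ for all $a \in P$. Expand $x = \sum_{g \in \Gamma} x_g \lambda_g$ in the crossed product $M = (N\ovt B)\rtimes\Gamma$. The intertwining relation forces, for each $g$, the coefficient $x_g$ (viewed as an element of $N \ovt B$) to satisfy a twisted intertwining between $P$ and its image under the partial automorphism built from $\beta$ and the shift by $g$; concretely, $x_g a = \sigma_g(\beta^{-1}(a))\, x_g$ up to the obvious identifications. Now I invoke the relative weak mixing of the Bernoulli shift: for $g \notin \Gamma_F$ the sets $F$ and $gF$ differ, so the $P$-$P$-bimodule realized by such an $x_g$ inside $\rL^2(M)$ is weakly mixing over $N\ovt(C\vee B_0^{F \cap gF})$ with $F\cap gF \subsetneq F$; combined with the hypothesis $P \not\prec_{B_F} N\ovt(C\vee B_0^E)$ for proper $E \subsetneq F$, this forces $x_g = 0$. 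Hence $x \in \overline{\operatorname{span}}\{x_g\lambda_g : g\in\Gamma_F\} \subset (N\ovt B)\rtimes\Gamma_F$. The key computational point — which I would present carefully rather than wave at — is the exact form of the $P$-$P$-bimodule generated by a single Fourier coefficient $x_g$ and why $F \cap gF \subsetneq F$ for $g\notin\Gamma_F$ yields weak mixing; this is where the precise meaning of ``weakly mixing'' for Bernoulli shifts (i.e.\ that $\rL^2(B)\ominus\rL^2(B_0^{F})$ is a direct sum of copies of the coarse bimodule over $B_0^F$, relatively) does the work. Standard references for the tracial analogue are \cite{Po03, Io06}; the type III adaptation follows \cite{Ma16, VV14}.

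\smallskip

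For item $(\rm ii)$: by Popa's intertwining theorem (Theorem \ref{intertwining theorem}), the hypothesis $P\prec_M N\ovt(C\vee B_0^F)$ yields projections, a partial isometry $v$ and a $\ast$-homomorphism $\psi : Pp_0 \to N\ovt(C\vee B_0^F)$ with $v\psi(a) = av$ for $a\in Pp_0$. After cutting down by a suitable projection $q\in P'\cap pMp$ (absorbing $vv^*$ into $q$, passing to $v^*v$ on the target side) we may assume $Pq$ embeds with a normal conditional expectation into $B_F := N\ovt(C\vee B_0^F)$ via $\psi$, and the hypothesis $P\not\prec_M N\ovt C$ upgrades to $\psi(Pq)\not\prec_{B_F} N\ovt(C\vee B_0^E)$ for all proper $E\subsetneq F$ (shrinking $F$ if necessary). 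Now any element of the normalizer $u\in\mathcal N_{qMq}(Pq)$ conjugates $Pq$ to itself, so $u v^* $ (or rather $v u v^*$, after the standard bookkeeping) implements an intertwining of $\psi(Pq)$ of exactly the type treated in item $(\rm i)$: $(vuv^*)\psi(a) = \psi(uau^*)(vuv^*)$. Applying $(\rm i)$ gives $vuv^* \in (N\ovt B)\rtimes\Gamma_F$, and since $\Gamma_F$ is a \emph{finite} group, $(N\ovt B)\rtimes\Gamma_F$ has an obvious conditional expectation onto $N\ovt B$ after amplification — more precisely $(N\ovt B)\rtimes\Gamma_F \prec_M N\ovt B$ because $\Gamma_F$ is finite hence amenable (indeed the inclusion is finite-index-like). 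Therefore $\psi(Pq)$ together with the $v$-conjugate of the normalizer sits inside a finite amplification of $N\ovt B$, which pulls back to $\mathcal N_{qMq}(Pq)'' \prec_M N\ovt B$.

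\smallskip

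\textbf{Expected main obstacle.} The bookkeeping around the partial isometry $v$ in item $(\rm ii)$ — keeping track of which cut-down projections live in $P'\cap pMp$ versus inside $B_F$, and checking that the normalizer really is transported to an honest twisted-intertwiner of $\psi(Pq)$ so that item $(\rm i)$ applies verbatim — is the delicate part; in the tracial setting this is routine (cf.\ \cite[Section 8]{IPP08}-style arguments) but here one must be careful that all the algebras involved are globally $\sigma^\varphi$-invariant and with expectation, so that Popa intertwining in the type III sense is available throughout. The other place requiring genuine care, as noted above, is the weak-mixing computation in $(\rm i)$: verifying that $F\cap gF\subsetneq F$ for $g\notin\Gamma_F$ forces the relevant bimodule to be weakly mixing over $N\ovt(C\vee B_0^{F\cap gF})$, which is the heart of why ``no intertwining into a smaller $B_0^E$'' is the right hypothesis.
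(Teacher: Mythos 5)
Your plan is essentially the paper's own proof: in $(\rm i)$ the paper also expands $x=\sum_g x_g\lambda_g$, notes $x_g\alpha_g(a)=\beta(a)x_g$, and argues that a nonzero $x_g$ gives $P\prec_{N\ovt(C\vee B_0^F)}N\ovt(C\vee B_0^{F\cap gF})$, which the hypothesis rules out unless $gF=F$; in $(\rm ii)$ the paper likewise arranges a minimal $F$ (this is where $P\nprec_M N\ovt C$ keeps $F$ nonempty, hence $\Gamma_F$ finite), transports the normalizer through the intertwining partial isometry coming from an amplified embedding (via \cite[Lemma 2.6]{Is19}), applies $(\rm i)$ to land in $(N\ovt B)\rtimes\Gamma_F\ovt\B$, and uses finiteness of $\Gamma_F$ to conclude $\prec_M N\ovt B$. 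The points you flag as delicate (the relative mixing computation behind the passage to $F\cap gF$, and the corner/partial-isometry bookkeeping) are exactly the steps the paper handles via its definition of $\prec$ and the cited lemma, so your route is the same one.
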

\begin{proof}
For simplicity, we will write $D^F := C\vee B_0^F$ for all $F\subset \Gamma$. 

	$(\rm i)$ Take $x\in pMp$ as in the statement and let $x=\sum_{g\in \Gamma}x_g \lambda_g\in (N\ovt B)\rtimes \Gamma$ be the Fourier decomposition. By comparing coefficients, it holds that 
	$$x_g \alpha_g(a) = \beta(a)x_g, \quad \text{for all } a\in P, \ g\in \Gamma.$$
Fix $g\in \Gamma$ and we prove that if $x_g\neq 0$, then $F=gF$. If $x_g \neq 0$, then one has $P=\beta(P)\prec_{N\ovt B} \alpha_g(P)$. 
Since $P\subset N\ovt D^F$, this implies $P \prec_{N\ovt B} N\ovt D^{gF}$. Thus by our definition of $\prec$, we have
	$$P \prec_{N\ovt D^F} N\ovt D^{F\cap gF}.$$
By the assumption of $P$, this implies $F\cap gF = F$, hence $F=gF$. This finished the proof of item $(\rm i)$.

	$(\rm ii)$ Fix a finite set $F \subset \Gamma$ such that $P \prec_M N\ovt D^F$ and take $(H,f,\pi,w)$ as in \cite[Lemma 2.6]{Is19}. Write $\B=\B(H)$ for simplicity. We may assume the support of $E_{N\ovt D^F \ovt \B}(w^*w)$ is $f$. 
Assume that there is a genuine subset $E\subset F$ such that $\pi(P)\prec_{N\ovt D^F \ovt \B} N\ovt D^E \ovt \B$. Then by the choice of $f$, this implies $P \prec_M N\ovt D^E$. In this case, we can replace $F$ by the smaller set $E$. By continuing this procedure, we can finally find $F$ such that $P \prec_M N\ovt D^F$ with $(H,f,\pi,w)$ such that $\pi(P)\not\prec_{N\ovt D^F \ovt \B} N\ovt D^E \ovt \B$ for all genuine subsets $E \subset F$. 

	In this setting we can apply item $(\rm i)$  to the inclusion $\pi(P)\subset N\ovt D^F \ovt \B$ (by regarding $N\ovt \B$ as $N$). Write $f_0=w^*w \in \pi(P)' \cap f(M\ovt \B)f$ and $e_0 \otimes e_{1,1} =ww^*  \in ({P}' \cap pMp) \otimes \C e_{1,1}$ (where $e_{1,1}\in \B$ is a minimal projection), and observe that $\Ad(w^*) : e_0(M \ovt \B) e_0 \rightarrow f_0(M \ovt \B)f_0$ sends $Pe_0$ onto $\pi(P)f_0$. Therefore, we have
	$$w^*[\mathcal{N}_{e_0{M}e_0}({P}e_0) \otimes \C e_{1,1}] w = \mathcal{N}_{f_0({M}\ovt \B)f_0}(\pi(P)f_0).$$
Using item $(\rm i)$ , it is easy to see that the right hand side of this equation is contained in $(N\ovt B) \rtimes \Gamma_F \ovt \B$. We obtain that 
	$$\mathcal{N}_{e_0{M}e_0}({P}e_0)''\prec_{M} (N\ovt B) \rtimes \Gamma_F.$$
Finally, since $\Gamma_F$ is a finite group by assumption, we conclude that $$\mathcal{N}_{e_0{M}e_0}({P}e_0)''\prec_{M} N\ovt B.$$\end{proof}

\begin{lem}\label{mixing lemma for Bernoulli shift2}
	Keep the notation $M=(N\ovt B)\rtimes \Gamma$ as in Lemma \ref{rigidity for Bernoulli shift}. 
Let $p\in M$ be a projection and $P\subset pMp$ von Neumann subalgebras with expectations. The following assertions hold true.
\begin{itemize}
	\item [$(\rm i)$] Assume that $p\in N \rtimes \Gamma$ and $P \subset p(N\rtimes \Gamma )p$. If $P \nprec_{N\rtimes \Gamma}N$, then one has $\mathcal{N}_{pMp}(P) \subset  N \rtimes \Gamma$.

	\item [$(\rm ii)$] If $P\prec_M N\rtimes \Gamma$ and $P\nprec_{M}N$, then one has $\mathcal{N}_{qMq}(Pq)'' \prec_M N\rtimes \Gamma$ for some projection $q\in P' \cap pMp$.

\end{itemize}
\end{lem}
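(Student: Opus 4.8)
The plan is to deduce $(\rm ii)$ from $(\rm i)$ by Popa's intertwining-by-bimodules, in exact parallel with the way Lemma~\ref{mixing lemma for Bernoulli shift1}$(\rm ii)$ was obtained from Lemma~\ref{mixing lemma for Bernoulli shift1}$(\rm i)$, while $(\rm i)$ is obtained from a mixing argument. For $(\rm i)$, I would write $M=N\ovt(B\rtimes\Gamma)$ and $A:=N\rtimes\Gamma=N\ovt L(\Gamma)$, with canonical conditional expectation $\rE_A\colon M\to A$, and reduce everything to the following statement: the $A$-$A$-bimodule
$$\cH\ :=\ \rL^2(M)\ominus\rL^2(A)$$
is weakly contained in the $N$-relative coarse bimodule $\rL^2(A)\otimes_N\rL^2(A)$ of $A$. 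To prove this I would decompose $\rL^2(B)=\bigoplus_{F}\cH_F$ according to the finite ``Bernoulli support'' $F\subset\Gamma$ of a vector ($\cH_F$ being the part depending exactly on the coordinates in $F$), which gives $\cH=\bigoplus_{\emptyset\neq F}\rL^2(N)\ovt\cH_F\ovt\ell^2(\Gamma)$ as Hilbert spaces, the left $L(\Gamma)$-action permuting the summands via $F\mapsto gF$. Grouping the $F$'s into $\Gamma$-orbits and using that the stabilizer $\Gamma_F=\{g\in\Gamma:gF=F\}$ of a nonempty finite set is finite (left translation of $\Gamma$ on itself is free), the representation of $\Gamma$ on each $\bigoplus_{F'\in\Gamma\cdot F}\cH_{F'}$ is induced from a representation of the finite group $\Gamma_F$, hence is contained in a multiple of the regular representation; the associated $A$-$A$-bimodule is then contained in a multiple of $\rL^2(A)\otimes_N\rL^2(A)$. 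This is the only place where mixing of the Bernoulli shift enters, and it is the crossed-product analogue of the ``$B$-support'' analysis behind Lemma~\ref{mixing lemma for Bernoulli shift1}; in the type $\III$ case one runs it with the state-preserving decomposition of $\rL^2(B,\varphi)$, as in \cite{Ma16}.

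\textbf{Finishing $(\rm i)$.} Granting the containment above, given $u\in\mathcal{N}_{pMp}(P)$ one sets $w:=u-\rE_A(u)\in p\cH p$ and shows $w=0$. Writing $\theta:=\Ad(u)|_P\in\Aut(P)$, the relation $vu=u\,\theta^{-1}(v)$ for $v\in\mathcal{U}(P)$, together with the $A$-bimodularity of $\rE_A$ and $v,\theta^{-1}(v)\in A$, yields $vw=w\,\theta^{-1}(v)$. Hence $Pw=wP$, so $\cK:=\overline{\spn}\,PwP$ is a nonzero $P$-$P$-sub-bimodule of $\cH$ that is finite as a left and as a right $P$-module (it is a cyclic image of $\rL^2(P)$ on each side, the cyclic vector $w$ being bounded). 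Then $\cK\otimes_P\overline{\cK}$ contains $z\rL^2(P)$ for some nonzero $z\in\mathcal{Z}(P)$; since $\cK\subset\cH$ and $\cH$ embeds into a multiple of $\rL^2(A)\otimes_N\rL^2(A)$, the $P$-$P$-bimodule ${}_P[\rL^2(A)\otimes_N\rL^2(A)]_P$ fails to be left weakly mixing, which by Theorem~\ref{intertwining theorem} gives $P\prec_A N$, contradicting the hypothesis. Therefore $w=0$, i.e.\ $u\in A=N\rtimes\Gamma$.

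\textbf{Deducing $(\rm ii)$.} Assuming $P\prec_M N\rtimes\Gamma$ and $P\nprec_M N$, I would pick intertwining data $(H,f,\pi,w_0)$ as in \cite[Lemma~2.6]{Is19}: a projection $f\in(N\rtimes\Gamma)\ovt\B(H)$, a unital normal $\ast$-homomorphism $\pi\colon P\to f\bigl((N\rtimes\Gamma)\ovt\B(H)\bigr)f$, and a partial isometry $w_0\in M\ovt\B(H)$ with $w_0^*w_0=f$ and $w_0w_0^*=e_0\otimes e_{1,1}$ for some projection $e_0\in P'\cap pMp$, with $\Ad(w_0^*)$ carrying $Pe_0$ onto $\pi(P)f$. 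Now $(N\rtimes\Gamma)\ovt\B(H)=(N\ovt\B(H))\rtimes\Gamma$ and $M\ovt\B(H)=(N\ovt\B(H)\ovt B)\rtimes\Gamma$ are again of the form covered by $(\rm i)$ with $N$ replaced by $N\ovt\B(H)$; moreover $\pi(P)\prec_{(N\rtimes\Gamma)\ovt\B(H)}N\ovt\B(H)$ would give $P\prec_M N$ through $w_0$, so in fact $\pi(P)\nprec_{(N\rtimes\Gamma)\ovt\B(H)}N\ovt\B(H)$. Applying $(\rm i)$ to $\pi(P)\subset f\bigl((N\ovt\B(H))\rtimes\Gamma\bigr)f$ gives $\mathcal{N}_{f(M\ovt\B(H))f}(\pi(P))\subset(N\rtimes\Gamma)\ovt\B(H)$, and transporting this back through $w_0$ yields $\mathcal{N}_{e_0Me_0}(Pe_0)''\prec_M N\rtimes\Gamma$; thus $q:=e_0\in P'\cap pMp$ works.

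\textbf{Main obstacle.} The delicate point is the displayed weak-containment claim in $(\rm i)$: realizing $\rL^2(M)\ominus\rL^2(N\rtimes\Gamma)$ inside a multiple of the $N$-coarse bimodule of $N\rtimes\Gamma$ via the Bernoulli-support decomposition and the finiteness of stabilizers, and carrying out the accompanying bimodule bookkeeping carefully enough to remain valid in the non-tracial (type $\III$) setting. Everything else is a routine rerun of the deformation/rigidity normalizer techniques for Bernoulli shifts.
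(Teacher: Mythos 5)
Your proof follows essentially the same route as the paper: part $(\rm i)$ rests on the observation that $\rL^2(M)\ominus\rL^2(N\rtimes\Gamma)$ is, as an $(N\rtimes\Gamma)$-bimodule, (contained in) a multiple of $\rL^2\langle N\rtimes\Gamma,N\rangle$, so that $P\nprec_{N\rtimes\Gamma}N$ makes it weakly mixing over $P$ and hence controls normalizers, and part $(\rm ii)$ is deduced from $(\rm i)$ via the intertwining data of Lemma 2.6 of [Is19] exactly as in the paper. The only difference is that you spell out the Bernoulli-support/coarse-bimodule decomposition and the standard weak-mixing normalizer argument, which the paper records as known observations.
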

\begin{proof}
$(\rm i)$ Up to replacing $P$ by $\widetilde{P}=P \oplus p^{\perp}(N \rtimes \Gamma)p^{\perp}$, we may assume that $p=1$. We only have to show that the $P$-$P$-bimodule $\rL^2(M) \ominus \rL^2(N \rtimes \Gamma)$ is weakly mixing. Obseve that the $L(\Gamma)$-bimodule $\rL^2(B \rtimes \Gamma) \ominus \rL^2(L(\Gamma))$ is a multiple of the coarse $L(\Gamma)$-bimodule. Thus the $(N \rtimes \Gamma)$-bimodule $\rL^2(M) \ominus \rL^2(N \rtimes \Gamma)$ is a multiple of the $(N \rtimes \Gamma)$-bimodule $\rL^2 \langle N \rtimes \Gamma, N \rangle$. Since $P \nprec_{N \rtimes \Gamma} N$, the $P$-$P$-bimodule $\rL^2 \langle N \rtimes \Gamma, N \rangle$ is weakly mixing. Thus the $P$-$P$-bimodule $\rL^2(M) \ominus \rL^2(N \rtimes \Gamma)$ is also weakly mixing.
%Since $P_0$ is finite, there exists a net $(u_i)_i$ in $\mathcal{U}(P_0)$ such that $E_{N\ovt C}(b^* u_i a) \to 0$ $\ast$-strongly for all $a,b\in M$. We then follow the proof of \cite[Lemma 2.4.(1)]{CPS11}, and obtain that 
%	$$ E_{(N\ovt C)\rtimes \Gamma}(b^* u_i a) \to 0  \quad \text{$\ast$-strongly for all }a,b\in \ker(E_{(N\ovt C)\rtimes \Gamma}) .$$
%Let $v\in \mathcal{N}_{pMp}(P)$ and put $\beta(a)=vav^* $ for all $a\in P$ and $\tilde{v}=v- E_{(N\ovt C)\rtimes \Gamma}(v)$. 
%It then holds that 
%\begin{align*}
%	\|E_{(N\ovt C)\rtimes \Gamma}(\tilde{v}\tilde{v}^* )\|_{\varphi} 
%	=\ & \|\beta(u_i)E_{(N\ovt C)\rtimes \Gamma}(\tilde{v} \tilde{v}^* )\|_{\varphi}\\ 
%	=\ & \|E_{(N\ovt C)\rtimes \Gamma}(\tilde{v} u_i \tilde{v}^* )\|_{\varphi} \to 0.
%\end{align*}
%This implies that $\tilde{v}=0$, hence $v\in (N\ovt C)\rtimes \Gamma$. 

$(\rm ii)$ This follows in a similar way to the proof of Lemma \ref{mixing lemma for Bernoulli shift1}.$(\rm ii)$. 
Take $(H,f,\pi,w)$ as in \cite[Lemma 2.6]{Is19} for $P\preceq_M N \rtimes \Gamma$, and we may assume that 
$\pi(P)\nprec_{N \rtimes \Gamma \ovt \B} N \ovt \B$, where $\B=\B(H)$. 
By item $(\rm i)$, we have
	$$\mathcal{N}_{f(M \ovt \B)f}(\pi( P )) \subset f(N\rtimes \Gamma \ovt \B)f.$$
Since $w^*w \in \pi(P)' \cap f({M}\ovt \B)f\subset  f(N \rtimes \Gamma \ovt \B)f$, we can assume $w^*w = f$. 
Putting $ e_0\otimes e_{1,1} =ww^* \in (P' \cap pMp) \otimes \C e_{1,1}$, one has 
	$$ w^* [ \mathcal{N}_{e_0Me_0}(Pe_0)  \otimes \C e_{1,1}] w  = \mathcal{N}_{f(M \ovt \B)f}(\pi(P))  \subset f((N \rtimes \Gamma) \ovt \B)f. $$
This implies the conclusion.
\end{proof}

	We next show that the sufficient condition in Proposition \ref{action lemma} is easily verified for Bernoulli actions.

\begin{prop} \label{bernoulli weakly bicentralized}
Let $\alpha : \Gamma \curvearrowright (B_0,\varphi_0)^{\ovt \Gamma} =:(B,\varphi)$ be a noncommutative Bernoulli shift where $\Gamma$ is infinite and $B_0$ is nontrivial. Then $\alpha$ is centrally free. In particular, for any subset $F \subset \Gamma$, the subalgebra $\mathcal{Z}(B) \vee B_0^{F}$ is weakly bicentralized in $B \rtimes \Gamma$.
\end{prop}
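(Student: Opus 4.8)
The plan is to verify central freeness of the Bernoulli action directly from the definition, and then deduce the bicentralizer statement from Proposition~\ref{action lemma}.(ii) together with Proposition~\ref{proposition weakly bicentralized}. First I would fix $g \in \Gamma \setminus \{e\}$ and a nonzero central projection $z \in \mathcal{Z}(B)$; since $B = \ovt_{\Gamma}(B_0,\varphi_0)$, the center $\mathcal{Z}(B)$ is the tensor product $\ovt_{\Gamma}(\mathcal{Z}(B_0),\varphi_0)$ and the shift $\alpha_g$ permutes the tensor factors with no fixed point on $\Gamma$ (as $g$ has infinite orbits, $\Gamma$ being infinite — here one uses that $g^n$, $n \in \Z$, gives infinitely many distinct group elements unless $g$ has finite order, in which case one still has a nontrivial finite orbit; the point is that $\{g^n : n\}$ acts on $\Gamma$ by translation, so orbits are infinite). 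I would then pick a site $h \in \Gamma$ and a nontrivial element $b_0 \in B_0$ with $\varphi_0(b_0) = 0$ (possible since $B_0$ is nontrivial and $\varphi_0$ is faithful), and build a central sequence: using that $g$ moves $h$ far away under iteration, choose an infinite sequence of ``fresh'' sites and place copies of $b_0$ there to manufacture $b \in B_\omega$ (more precisely a bounded sequence $(b_n)$ supported on sites escaping to infinity) which is centralizing for $B$ but for which $\alpha_g(b)z \ne bz$ because $\alpha_g$ shifts the support onto a disjoint set of sites and $\varphi_0(b_0)=0$ makes the two free of each other relative to $z$.

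The cleanest way to organize the contradiction is: suppose $\alpha_g(b)z = bz$ for every $b \in B_\omega$; I want to produce $b$ violating this. Take the orbit $O = \{g^n h : n \in \Z\}$, which is infinite, enumerate it as $(h_k)_{k \ge 1}$, and set $b_n = (\text{the element of } B$ equal to $b_0$ at site $h_n$, and $1$ elsewhere$)$. Then $(b_n)$ is uniformly bounded, converges $*$-strongly to $0$ weighted against $\varphi$ (since $\varphi_0(b_0)=0$ and $\|b_0\|_{\varphi_0} > 0$... actually one wants $(b_n)$ to be a nontrivial central sequence, so instead take $b_n$ to be a $*$-strongly convergent-to-zero sequence that is centralizing — the standard Bernoulli central sequence). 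Concretely, let $b_n$ be supported at site $h_n$ only; then for any $a$ supported on a fixed finite set $E$, for $n$ large $h_n \notin E$ so $a$ and $b_n$ commute, giving $\|b_n a - a b_n\|_\varphi \to 0$ for $a$ in the $\|\cdot\|_\varphi$-dense $*$-subalgebra, hence $b := (b_n)^\bullet \in B' \cap B^\omega$; one arranges $\varphi^\omega(b^*b) \ne 0$ by normalizing $b_0$. Then $\alpha_g(b_n)$ is supported at site $g h_n = h_{n+1}$ (after re-indexing the orbit so that $g$ acts as a shift on the index), which is a site where $b_{n+1}$ — wait, better: $\alpha_g(b_n)$ is supported at a site disjoint from the support of $b_n$ for every $n$, so $\alpha_g(b_n) - b_n$ has $\|\cdot\|_\varphi$-norm bounded below by $\sqrt{2}\,\|b_0 - \varphi_0(b_0)\|_{\varphi_0} = \sqrt 2\,\|b_0\|_{\varphi_0} > 0$ uniformly, and this survives multiplication by any fixed nonzero $z \in \mathcal{Z}(B)$ on a set of indices of full $\omega$-measure (since $z$ lives on finitely many sites up to $\varepsilon$, approximate $z$ by $z_E$ supported on a finite set $E$, and for $n$ large both supports avoid $E$). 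Hence $\alpha_g(b)z \ne bz$ in $B_\omega$, contradicting the assumption. This establishes central freeness.

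With central freeness in hand, Proposition~\ref{action lemma}.(ii) immediately gives that $B$ is weakly bicentralized in $B \rtimes \Gamma$. For the ``In particular'' clause, I would argue as follows. The subalgebra $\mathcal{Z}(B) \vee B_0^F = \mathcal{Z}(B) \ovt B_0^F$ sits inside $B = B_0^F \ovt \ovt_{\Gamma \setminus F}(B_0,\varphi_0)$; and $\mathcal{Z}(B) = \mathcal{Z}(B_0^F) \ovt \ovt_{\Gamma\setminus F}(\mathcal{Z}(B_0),\varphi_0)$, so writing $B = B_0^F \ovt B^{F^c}$ we get $\mathcal{Z}(B)\vee B_0^F = B_0^F \ovt \mathcal{Z}(B^{F^c})$. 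By Proposition~\ref{proposition weakly bicentralized}.(iii), $\mathcal{Z}(B^{F^c})$ is weakly bicentralized in $B^{F^c}$ (the predual is separable), and hence by Proposition~\ref{proposition weakly bicentralized}.(ii) tensoring with $B_0^F$, the algebra $\mathcal{Z}(B)\vee B_0^F$ is weakly bicentralized in $B$. Finally, $B$ is weakly bicentralized in $B\rtimes \Gamma$ by the first part, so Proposition~\ref{proposition weakly bicentralized}.(i) (transitivity) yields that $\mathcal{Z}(B)\vee B_0^F$ is weakly bicentralized in $B \rtimes \Gamma$, as claimed.

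The main obstacle I anticipate is the bookkeeping around the central projection $z$: one must check that multiplying the nontrivial central sequence $(\alpha_g(b_n) - b_n)$ by $z$ does not kill it in $B_\omega$, which requires approximating $z$ by an element supported on finitely many Bernoulli sites and then using that the central sequence escapes those sites — a routine but slightly delicate $\varepsilon$-argument. Everything else is a direct unwinding of definitions and an invocation of the already-established Proposition~\ref{proposition weakly bicentralized}.
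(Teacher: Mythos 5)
Your deduction of the ``In particular'' clause is fine and matches the paper's intent: write $B=B_0^F\ovt B^{F^c}$, identify $\mathcal{Z}(B)\vee B_0^F=B_0^F\ovt\mathcal{Z}(B^{F^c})$, and chain Proposition \ref{proposition weakly bicentralized} (iii), (ii), (i) with Proposition \ref{action lemma} (ii). The problem is in your proof of central freeness.

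Your central sequence is built by placing one \emph{fixed} mean-zero element $b_0\in B_0$ at sites $h_n\to\infty$. This only produces an element of $B_\omega$ (which is what the definition of central freeness in this paper requires) when $b_0$ lies in the centralizer $(B_0)_{\varphi_0}$: for your sequence one has $\|b_n\varphi-\varphi b_n\|=\|b_0\varphi_0-\varphi_0 b_0\|$ for \emph{every} $n$, a nonzero constant if $b_0\notin (B_0)_{\varphi_0}$, so $(b_n)$ is not a centralizing sequence; it is then not even clear that it defines an element of the Ocneanu ultrapower $B^\omega$, and your passage from ``$[b_n,a]\to 0$ for $a$ in the algebraic core'' to ``$b\in B'\cap B^\omega$'' uses exactly the centralizing property you do not have (this step is automatic only in the tracial case). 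Since the proposition allows an arbitrary base $(B_0,\varphi_0)$, the centralizer may well be trivial, $(B_0)_{\varphi_0}=\C$ (which forces $B_0$ to be a type $\III_1$ factor); in that case there is simply no nontrivial fixed $b_0$ for which your construction works, and this is precisely the case the statement is designed to cover. The paper's proof splits accordingly: the case $(B_0)_{\varphi_0}\neq\C$ is the ``obvious'' one (your argument, including the finite-support approximation of $z$, is the right one there), while in the case $(B_0)_{\varphi_0}=\C$ one must choose \emph{varying} unitaries $u_n$ in the site-$h_n$ copies of $B_0$ with $\varphi(u_n)=0$ and $\|u_n\varphi-\varphi u_n\|\le 1/n$ (available because $B_0$ is a $\III_1$ factor, e.g.\ since the centralizer of $\varphi_0^\omega$ in $B_0^\omega$ is diffuse), so that $u=(u_n)^\omega\in B_\omega$ and $\varphi(\alpha_g(u)u^*)=0$; note that in this case $B$ is a factor, so the central projection $z$ causes no trouble. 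Without an argument of this kind your proof does not cover the general base, so there is a genuine gap.
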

\begin{proof}
The central freeness of $\alpha$ is obvious if $(B_0)_{\varphi_0} \neq \C$. Suppose now that $(B_0)_{\varphi_0}=\C$ (this forces $B_0$ to be a type $\III_1$ factor). Take $g \in \Gamma \setminus \{ 1 \}$. Let $(h_n)_{n \in \N}$ a sequence in $\Gamma$ which goes to infinity, then we can find a sequence of unitaries $u_n \in \alpha_{h_n}(B_0)$ such that $\varphi(u_n)=0$ and $\|u_n \varphi-\varphi u_n\| \leq \frac{1}{n}$. Then $u=(u_n)^\omega \in B_\omega$ for $\omega \in \beta\N \setminus \N$ and we have $\varphi(\alpha_g(u)u^*)=0$. Thus $\alpha$ is centrally free. By Proposition \ref{action lemma}, we then have that $B$ is weakly bicentralized in $M$ and thanks to Proposition \ref{proposition weakly bicentralized}, we conclude that $\mathcal{Z}(B) \vee B_0^F$ is weakly bicentralized in $M$ for every subset $F \subset \Gamma$.
\end{proof}

\begin{lem} \label{trivial bicentralizer}
Let $M$ and $N$ be factors with separable predual. Assume that $M$ is a type $\III_1$ factor with trivial bicentralizer. Then $M \ovt N$ is a type $\III_1$ factor with trivial bicentralizer.
\end{lem}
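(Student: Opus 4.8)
The plan is to reduce the statement about $M \ovt N$ to the known characterization of trivial bicentralizer via central sequences in the continuous core, or — more in the spirit of the present paper — via the ultraproduct criterion used in Proposition \ref{weakly bicentralized criterion}. Recall that a type $\III_1$ factor $M$ with separable predual has trivial bicentralizer if and only if for some (equivalently any) faithful normal state $\varphi$ on $M$ and some cofinal ultrafilter $\omega$, one has $(M^\omega)_{\varphi^\omega}' \cap M = \C$, and moreover (by Haagerup's theorem, or by \cite{Ma18}) $M$ then admits a faithful normal state whose centralizer in $M^\omega$ is already a factor. So first I would fix faithful normal states $\varphi$ on $M$ and $\psi$ on $N$, form $\chi = \varphi \otimes \psi$ on $M \ovt N$, and note $\sigma^\chi = \sigma^\varphi \otimes \sigma^\psi$, so $((M \ovt N)^\omega)_{\chi^\omega}$ contains $(M^\omega)_{\varphi^\omega} \ovt (N^\omega)_{\psi^\omega}$ in the natural way.

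The key step is the computation of the relative commutant $((M \ovt N)^\omega)_{\chi^\omega}' \cap (M \ovt N)$. Using that $(M^\omega)_{\varphi^\omega} \otimes 1 \subset ((M \ovt N)^\omega)_{\chi^\omega}$ and $1 \otimes (N^\omega)_{\psi^\omega} \subset ((M \ovt N)^\omega)_{\chi^\omega}$, any element $x$ of the relative commutant must commute with $(M^\omega)_{\varphi^\omega} \otimes 1$ and with $1 \otimes (N^\omega)_{\psi^\omega}$. Since $M$ has trivial bicentralizer, $(M^\omega)_{\varphi^\omega}$ is, after possibly replacing $\varphi$ by a suitable faithful normal state, a $\mathrm{II}_1$ factor; in particular $(M^\omega)_{\varphi^\omega}' \cap M = \C$, and one also has the stronger ``freeness'' statement that $(M^\omega)_{\varphi^\omega}' \cap (M \ovt N)^\omega = (1 \otimes N)^\omega$ or at least $(M^\omega)_{\varphi^\omega}' \cap (M \ovt N) = \C \otimes N$. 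Granting such a tensor-splitting of relative commutants in ultraproducts (which is where the trivial-bicentralizer hypothesis on $M$ is really used, via the fact that a $\mathrm{II}_1$ factor central sequence algebra absorbs nothing on the $M$ side), we get $x \in \C \otimes N$, and then commuting with $1 \otimes (N^\omega)_{\psi^\omega}$ forces $x \in 1 \otimes ((N^\omega)_{\psi^\omega}' \cap N)$, which is $\C$ since $N$ is a factor. Thus $((M \ovt N)^\omega)_{\chi^\omega}' \cap (M \ovt N) = \C$, and this is precisely the ultraproduct criterion for $M \ovt N$ being a type $\III_1$ factor with trivial bicentralizer; that $M \ovt N$ is type $\III_1$ is automatic since $M$ is.

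The main obstacle is exactly justifying the splitting $(M^\omega)_{\varphi^\omega}' \cap (M \ovt N) \subset \C \otimes N$ (and its refinement inside the full ultrapower). One careful way to do this is to invoke the result, due to Popa and refined by Houdayer–Marrakchi–Verraedt / Ando–Haagerup, that for a $\mathrm{II}_1$ factor $Q \subset (M \ovt N)^\omega$ of the form $Q = (M^\omega)_{\varphi^\omega} \otimes 1$, the relative commutant computation reduces to the fact that the inclusion $M \subset M \ovt N$ is ``$\varphi$-weakly mixing'' relative to the $\mathrm{II}_1$-factor central sequences — concretely, one uses that $\mathrm{L}^2(M \ovt N) \ominus (\mathrm{L}^2 M \otimes 1)$, viewed as an $(M^\omega)_{\varphi^\omega}$-bimodule inside $\mathrm{L}^2((M\ovt N)^\omega)$, has no nonzero $(M^\omega)_{\varphi^\omega}$-central vectors because $(M^\omega)_{\varphi^\omega}$ is a $\mathrm{II}_1$ factor whose standard bimodule does not weakly contain any such piece. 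Alternatively, and perhaps more cleanly, one can pass to the continuous cores: $\core_\chi(M \ovt N) = \core_\varphi(M) \ovt N$ appropriately, reduce to the semifinite statement that $\core_\varphi(M)$ has trivial relative asymptotic centralizer, and apply Connes–Størmer / Haagerup-type transitivity of the bicentralizer under tensoring with a factor. I would present the ultraproduct version as the main line, citing \cite{AH12,Ma18} for the needed facts about $(M^\omega)_{\varphi^\omega}$ and for the ultraproduct characterization of trivial bicentralizer, and remark that the semifinite-core argument gives an alternative.
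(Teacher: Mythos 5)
Your reduction to the product state $\chi=\varphi\otimes\psi$ and the first containment are fine in principle: since $(M^\omega)_{\varphi^\omega}\otimes 1\subset ((M\ovt N)^\omega)_{\chi^\omega}$ and $(M^\omega)_{\varphi^\omega}'\cap M=\C$ (Ando--Haagerup plus Haagerup's state-independence for type $\III_1$), a slice-map argument does give $((M\ovt N)^\omega)_{\chi^\omega}'\cap(M\ovt N)\subset \C\otimes N$. The genuine gap is the last step. Commuting with $1\otimes (N^\omega)_{\psi^\omega}$ only places your element in $1\otimes\bigl((N^\omega)_{\psi^\omega}'\cap N\bigr)$, and by Ando--Haagerup this relative commutant is exactly the bicentralizer of $N$ with respect to $\psi$; it is \emph{not} equal to $\C$ ``since $N$ is a factor''. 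The lemma imposes no hypothesis on $N$: if $N$ is itself of type $\III_1$, triviality of its bicentralizer is precisely Connes' bicentralizer problem, which is open. So your argument proves the statement only under the additional assumption that $N$ has trivial bicentralizer, which defeats the purpose of the lemma, whose whole point is that triviality of the bicentralizer of $M$ alone controls $M\ovt N$ for an arbitrary factor $N$. To get past $\C\otimes N$ one must exploit elements of $((M\ovt N)^\omega)_{\chi^\omega}$ that are not of the split forms $a\otimes 1$ or $1\otimes b$, and this is exactly what the recent bicentralizer-flow technology does: the paper's proof is a two-line citation of \cite[Proposition 7.1]{AHHM18} (the bicentralizer flow of $M\ovt N$ is trivial) followed by \cite[Theorem D]{Ma18} (which then yields trivial bicentralizer); no elementary shortcut of the kind you sketch is known.

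Your proposed alternative via continuous cores also does not work as stated: the continuous core of $M\ovt N$ with respect to $\chi$ is the crossed product by $\sigma^\varphi\otimes\sigma^\psi$, which is not the core of $M$ tensored with $N$ unless $\psi$ is a trace, and there is no ``Connes--St\o rmer/Haagerup transitivity of the bicentralizer under tensoring with a factor'' available in the literature --- stability of trivial bicentralizer under tensoring is exactly the content of this lemma and, at present, rests on the results of \cite{AHHM18} and \cite{Ma18} cited above.
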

\begin{proof}
By \cite[Proposition 7.1]{AHHM18}, the bicentralizer flow of $M \ovt N$ is trivial. By \cite[Theorem D]{Ma18}, we conclude that $M \ovt N$ has trivial bicentralizer.
\end{proof}

Now we can prove Theorem \ref{bernoulli strongly prime}.

\begin{proof}[Proof of Theorem \ref{bernoulli strongly prime}]
Let $M=(N \ovt B) \rtimes \Gamma$  be as in the last statement in Theorem \ref{bernoulli strongly prime}. Suppose that $M=P \ovt Q$ and observe that $P$ and $Q$ are full. We have to show that $B \rtimes \Gamma \prec_M P$ or $B \rtimes \Gamma \prec_M Q$. Let $K$ be a full type $\III_1$ factor with trivial bicentralizer (e.g.\ a free Araki-Woods factor). By Lemma \ref{trivial bicentralizer}, up to replacing $P,Q,N$ by $P \ovt K,Q \ovt K, N\ovt K \ovt K$ in $M \ovt K \ovt K$ respectively, we may assume that $P$ and $Q$ have trivial bicentralizers. Then we can find irreducible finite subfactors with expectation $P_0 \subset P$ and $Q_0 \subset Q$. By Theorem \ref{rigidity for Bernoulli shift}, we know that one of the following conditions holds:
\begin{itemize}
	\item [$(\rm i)$] $Q_0 \prec_M N\ovt L(\Gamma)$; 
	\item [$(\rm ii)$] $Q_0 \prec_M N\ovt B_0^F$ for some finite subset $F\subset \Gamma$; or
	\item [$(\rm iii)$] $P \lessdot_M N\ovt B$.
\end{itemize}
If $P_0 \prec_M N$ or $Q_0 \prec_M N$ then, by taking the commutants, we get $B \rtimes \Gamma \prec_M P$ or $B \rtimes \Gamma \prec_M Q$ and we are done. So we may assume, for the sake of a contradiction, that $P_0 \nprec_M N$ and $Q_0 \nprec_M N$. Then, if one of the conditions $(\rm i)$ or $(\rm ii)$ is satisfied, we can apply Lemma \ref{mixing lemma for Bernoulli shift1} or Lemma \ref{mixing lemma for Bernoulli shift2} to get $P \prec_M N \ovt L(\Gamma)$ or $P \prec_M N \ovt B$. If $P \prec_M N \ovt L(\Gamma)$, then by Lemma \ref{mixing lemma for Bernoulli shift2}, we get $M \prec_M N \ovt L(\Gamma)$ which is not possible because $B_0$ is non-trivial. If $P \prec_M N \ovt B$, then a fortiori, we have $P \lessdot_M N \ovt B$. Thus it only remains to deal with the case where condition $(\rm iii)$ holds. 

Now we assume condition (iii) holds. Since $B$ is the increasing union of $\mathcal{Z}(B) \vee B_0^F (=:D^F)$ over finite subsets $F \subset \Gamma$, Proposition \ref{convergence weak containment} shows, as $M$-$M$-bimodules $$\rL^2 \langle M, N \ovt B \rangle  \prec \bigoplus_{F} \rL^2\langle M, N \ovt D^F \rangle.$$ Since $P \prec_M^w N \ovt B$, we get as $P$-$P$-bimodules $$\rL^2(P) \prec \bigoplus_{F} \rL^2 \langle M, N \ovt D^F \rangle.$$
But thanks to Proposition \ref{bernoulli weakly bicentralized}, we also have as $M$-$M$-bimodules
$$ \rL^2 \langle M, N \ovt D^F \rangle \prec \rL^2(M).$$
Since $P$ is full, we conclude that $P \prec_M N \ovt D^F$ for some finite subset $F \subset \Gamma$ (e.g.\ the proof of Lemma \ref{key rigidity lemma}). 
Therefore, by Lemma \ref{mixing lemma for Bernoulli shift1}, we must have $P \prec_M N \ovt \mathcal Z(B)$. Since $\mathcal Z(B)$ is amenable, we get $P \lessdot_M N$. Finally, since $P$ and $N$ are tensor factors, Lemma \ref{key rigidity lemma} is applied and we conclude $P \prec_M N$.
\end{proof}

We next prove Theorem \ref{bernoulli remembers groups}. We prepare a lemma.

\begin{lem} \label{conjugacy bernoulli}
Let $A$ and $B$ be two $\sigma$-finite factors and let $G \curvearrowright A$ and $H \curvearrowright B$ be two outer actions of discrete groups $G$ and $H$. Suppose that $M=A \rtimes G= B \rtimes H$ with $ A \prec_M B$. Then there exists a unitary $u \in \mathcal{U}(M)$, a normal subgroup $G_0 \lhd G$ and a finite normal subgroup $H_0 \lhd H$  such that $u(A \rtimes G_0)u^*=B \rtimes H_0$. If we also have $B \prec_M A$, then $G_0$ is also finite.
\end{lem}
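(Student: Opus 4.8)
The hypothesis $A \prec_M B$ together with Lemma \ref{mixing lemma for Bernoulli shift2}$(\rm ii)$ — or rather the normalizer control it provides — should be the engine of the proof. I would start from $A \prec_M B$ and aim to upgrade it to a \emph{unitary conjugacy of corners} with control on the normalizers, then propagate that to the full crossed products. First, I would observe that since $A$ and $B = (B_0,\varphi_0)^{\ovt H}\rtimes H$ are both tensor ``base'' algebras of Bernoulli-type crossed products, and $A$ is regular in $M$ (being normalized by the group $G$), the intertwining $A \prec_M B$ combined with $\mathcal{N}_M(A)'' = M$ forces, via the mixing/malnormality properties encoded in Lemma \ref{mixing lemma for Bernoulli shift1} and \ref{mixing lemma for Bernoulli shift2}, that actually $A \prec_M B$ with \emph{unitary conjugacy}: there is $u \in \mathcal{U}(M)$ with $uAu^* \subseteq B$. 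The point is that $A$ being generated by $B_0^{\ovt G}$-type components with the Bernoulli mixing property is essentially ``maximal'' among such subalgebras, so the embedding $\theta\colon A \to B$ produced by Theorem \ref{intertwining theorem} must be (a corner of) an isomorphism onto $B$, up to a unitary.

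\emph{Key steps in order.} (1) Use $A \prec_M B$ and the regularity $\mathcal{N}_M(A)'' = M$ to get, via Lemma \ref{mixing lemma for Bernoulli shift2}, that $\mathcal{N}_{qMq}(Aq)'' \prec_M B\rtimes H$ trivially (it is all of $qMq$), and then use the Bernoulli mixing in the $B$-direction (Lemma \ref{mixing lemma for Bernoulli shift1}$(\rm i)$) to conclude there is a unitary $u$ with $uAu^* \subseteq B$. Replacing $B$ by $u^*Bu$ (equivalently $M = A\rtimes G = uBu^* \rtimes (uHu^*)$), we may assume $A \subseteq B$ from now on. (2) Since both $A$ and $B$ are regular in $M$ and $A \subseteq B$, the inclusion $A \subseteq B \subseteq M$ exhibits $B = A \rtimes G_0$ for some subgroup $G_0 < G$ (the elements of $G$ that, up to inner perturbation, normalize $A$ inside $B$), and $G_0 \lhd G$ because $A \lhd_{} B$ is normalized by all of $G$ (each $g \in G$ sends $B$ to a subalgebra of $M$ containing $A$ with the same normalizer, forcing $g B g^* = B$ up to the appropriate identification, hence $G_0$ normal). (3) Dually, $B = B_0^{\ovt H} \rtimes H$ and $A \subseteq B$ with $A$ regular in $B$ means $A = B_0^{\ovt H} \rtimes H_0'$ for a subgroup $H_0' < H$... but wait, $A$ is itself a full crossed product, so one compares: $A \rtimes (G/G_0) = B = B_0^{\ovt H}\rtimes H$, and reading off the ``core'' Bernoulli parts (which are rigid by Theorem \ref{rigidity for Bernoulli shift}) gives that $G/G_0$ and $H$ have a common Bernoulli base, hence $H_0 := $ the kernel of the action-data identification is finite normal in $H$ and $G/G_0 \cong H/H_0$; tracing back, $u(A\rtimes G_0)u^* = B\rtimes H_0$ with $H_0 \lhd H$ finite. (4) For the last sentence: if also $B \prec_M A$, symmetry gives $uBu^* \subseteq A$ for some unitary, so combining the two inclusions (after adjusting unitaries) $A$ and $B$ are unitarily conjugate, whence $G_0$ — being the ``defect'' between two conjugate algebras — is also finite.

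\emph{The main obstacle.} The hardest step is (2)–(3): extracting the normal subgroup structure from the inclusion $A \subseteq B$ of two crossed-product algebras and pinning down that the quotient groups match up to finite kernels. This is exactly the kind of computation done in \cite{Io06} for property (T) bases, and here the role of property (T) is played by fullness plus the Bernoulli mixing from Theorem \ref{rigidity for Bernoulli shift} and the malnormality lemmas; the technical care is in checking that the ``complement'' of $A$ inside $B$ is captured by a genuine subgroup (not just a subalgebra), which uses that $A$ contains the full Bernoulli base $A_0^{\ovt G}$ and is therefore \emph{quasi-regular} in a way compatible with the group structure — this is where one invokes Lemma \ref{mixing lemma for Bernoulli shift1}$(\rm i)$ to see that any element of $M$ intertwining $A$ to itself lives in $(N \ovt B)\rtimes\Gamma_F$ with $\Gamma_F$ finite. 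Normality of $G_0$ and finiteness of $H_0$ then follow from the symmetry of the setup under the two group actions. The first step (1), promoting $\prec_M$ to genuine unitary conjugacy, is routine given the mixing lemmas, and the final claim about $G_0$ being finite when $B \prec_M A$ is an immediate symmetry argument.
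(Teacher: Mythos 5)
Your proposal misreads the setting of the lemma, and this causes two genuine gaps. First, the lemma is stated for \emph{arbitrary} $\sigma$-finite factors $A$, $B$ carrying outer actions of discrete groups; no Bernoulli structure is assumed (and indeed the lemma is later applied in the proof of Theorem \ref{compact minimal rigidity}, where $A=B$ is the hyperfinite $\II_1$ factor with a compact group action). So every appeal you make to Theorem \ref{rigidity for Bernoulli shift} and to the mixing Lemmas \ref{mixing lemma for Bernoulli shift1} and \ref{mixing lemma for Bernoulli shift2} is out of scope: there is no Bernoulli base $B_0^{\ovt H}$, no malnormality of the form you invoke, and no ``core Bernoulli parts'' to compare in your step (3). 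Second, and more seriously, your step (1) is false as stated: $A\prec_M B$ together with regularity of $A$ does \emph{not} yield a unitary $u$ with $uAu^*\subseteq B$. The correct conclusion is only $uAu^*\subset B\rtimes H_0$ for some \emph{finite normal} subgroup $H_0\lhd H$, and the finite group cannot be removed (take $H=\Z/2\times G$, $A=B\rtimes\Z/2$, $M=B\rtimes H=A\rtimes G$: then $A\prec_M B$ but $A$ is not unitarily conjugate into $B$). The finite normal subgroup $H_0$ appearing in the statement is precisely this unavoidable defect, and your argument never produces it; consequently your step (4) (``combining the two inclusions, $A$ and $B$ are unitarily conjugate, whence $G_0$ is finite'') also has no support — mutual embeddability by unitaries does not give conjugacy, and finiteness of $G_0$ needs its own argument.

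For comparison, the paper's proof is short and uses none of the Bernoulli machinery: by \cite[Proposition 4.4]{Is19} one finds $u\in\mathcal U(M)$ and a finite normal subgroup $H_0\lhd H$ with $uAu^*\subset B\rtimes H_0$; since the action of $G$ on $A$ is outer, intermediate subalgebras of $A\subset A\rtimes G$ are of the form $A\rtimes G_0$, so $u^*(B\rtimes H_0)u=A\rtimes G_0$ for some subgroup $G_0<G$, which is normal because $B\rtimes H_0$ is regular in $M$. Finally, if also $B\prec_M A$, then $B\rtimes H_0\prec_M A$ because $H_0$ is finite, hence $A\rtimes G_0\prec_M A$, which forces $G_0$ to be finite. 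If you want to salvage your outline, the missing ingredient is exactly a statement of the type of \cite[Proposition 4.4]{Is19} (intertwining of a regular subalgebra into the base of a crossed product by an outer action, up to a finite normal subgroup); without it, neither the existence nor the finiteness of $H_0$ is reached.
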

\begin{proof}
By \cite[Proposition 4.4]{Is19}, we can find a unitary $u \in \mathcal{U}(M)$ such that $uAu^* \subset B \rtimes H_0$ for some finite normal subgroup $H_0 \lhd H$. Since $A \subset A \rtimes G$ has the intermediate subfactor property, we know that $u^*(B \rtimes H_0)u=A \rtimes G_0$ for some subgroup $G_0 < G$. Since $B \rtimes H_0$ is regular in $M$, the subgroup $G_0$ must be normal in $G$. If we also assume that $B \prec_M A$, then $B \rtimes H_0 \prec_M A$ because $H_0$ is finite. Thus we get $A \rtimes G_0 \prec_M A$ and this forces $G_0$ to be finite.
\end{proof}

\begin{proof}[Proof of Theorem \ref{bernoulli remembers groups}]
Let $M=A \rtimes G = B \rtimes H$. Let $Q$ be a diffuse finite subalgebra with expectation in $A_0' \cap A$. By, Theorem \ref{rigidity for Bernoulli shift}, we have either
\begin{itemize}
	\item [$(\rm i)$]  $Q \prec_M L(H)$; 
	\item [$(\rm ii)$] $Q \prec_M B_0^F$ for some finite subset $F\subset H$; or
	\item [$(\rm iii)$] $A_0 \lessdot_M B$.
\end{itemize}
In the case $(\rm i)$, by applying Lemma \ref{mixing lemma for Bernoulli shift2} three times we get $A_0 \prec_M L(H)$, then $A \prec_M L(H)$ and finally $M \prec_M L(H)$ which is not possible. In the case $(\rm ii)$, by applying Lemma \ref{mixing lemma for Bernoulli shift1}, we get $A_0 \prec_M B$ which implies condition $(\rm iii)$. Finally, assume $(\rm iii)$ holds. Then, since $B$ is the increasing union of $B_0^F$ over finite subsets $F \subset H$, Proposition \ref{convergence weak containment} implies that $\rL^2(A_0) \prec  \mathcal{H}=\bigoplus_{F} \rL^2 \langle M, B_0^F \rangle$ as $A_0$-$A_0$-bimodules.

On the other hand, by Proposition \ref{bernoulli weakly bicentralized}, we have $ \rL^2 \langle M, B_0^F \rangle \prec \rL^2(M)$ as $M$-$M$-bimodules. Moreover, $_{A_0}\rL^2(A)_{A_0}$ is a multiple of $\rL^2(A_0)$ while $_{A_0}(\rL^2(M) \ominus \rL^2(A))_{A_0}$ is a multiple of the coarse bimodule $\rL^2(A_0) \otimes \rL^2(A_0)$. This shows that $_{A_0}\rL^2(M)_{A_0} \prec \rL^2(A_0)$. Thus we have $ \rL^2 \langle M, B_0^F \rangle \prec \rL^2(A_0)$ as $A_0$-$A_0$-bimodules for every finite subset $F \subset H$.

Therefore we have showed that $\bigoplus_{F} \rL^2 \langle M, B_0^F \rangle$ is weakly equivalent to $\rL^2(A_0)$ as an $A_0$-$A_0$-bimodule. Since $A_0$ is a full factor, Proposition \ref{full bimodule} implies that $A_0 \prec_M B_0^F$ for some finite subset $F \subset H$. By Lemma \ref{mixing lemma for Bernoulli shift1}, we conclude that $A \prec_M B$. Similarly, we have $B \prec_M A$ and we can therefore apply Lemma \ref{conjugacy bernoulli}.
\end{proof}

\section{Compact actions of higher rank lattices}

In this section, we prove Theorem \ref{irreducible lattice thm} and \ref{compact minimal rigidity}. 
We first translate the unique prime factorization property, using the flip map $\sigma_P$ on the double $\widehat{M}$.

\begin{prop}\label{UPF prop}
Let $\mathcal{C}$ be a set of factors. Then the following are equivalent:
\begin{itemize}
\item [$(\rm i)$] Every $P \in \mathcal{C}$ is prime and for every finite family $P_1, \dots, P_n \in \mathcal{C}$, the factor $M=P_1 \ovt \cdots \ovt P_n$ has the Unique Prime factorization property.
\item [$(\rm ii)$] For every finite family $P_1, \dots, P_n \in \mathcal{C}$, and every automorphism $\alpha$ of the factor $M=P_1 \ovt \cdots \ovt P_n$, there exists a permutation $\sigma$ of $\{1, \dots, n\}$ such that $\alpha(P_i) \sim_M P_{\sigma(i)}$ for all $i \in \{1, \dots, n \}$.
\end{itemize}
\end{prop}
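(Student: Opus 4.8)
The plan is to run everything through the flip-map correspondence $\mathrm{TF}(M)\ni P\mapsto\sigma_P$ and Proposition~\ref{tensor lemma for stably unitary conjugacy}. The one observation I would isolate first is that \emph{two tensor factors in tensor product position that are stably unitarily conjugate must be of type $\mathrm{I}$}: if $Q_1\subset Q_2^c$, then a direct check on $\widehat M=M\ovt M$ gives $\sigma_{Q_1}\circ\sigma_{Q_2}=\sigma_{Q_1\ovt Q_2}$ (the two flips move disjoint groups of coordinates), so $Q_1\sim_M Q_2$ is equivalent, by Proposition~\ref{tensor lemma for stably unitary conjugacy}, to $\sigma_{Q_1\ovt Q_2}\in\Inn(\widehat M)$, i.e.\ to $Q_1\ovt Q_2\sim_M\C$, i.e.\ to $Q_1\ovt Q_2$ being of type $\mathrm{I}$. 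I would also record the bookkeeping facts that $\sim_M$ is preserved by automorphisms and by passing to commutants (both follow from $\sigma_{\theta(Q)}=(\theta\ovt\theta)\sigma_Q(\theta\ovt\theta)^{-1}$ and $\sigma_{Q^c}=\sigma_M\circ\sigma_Q$), and that $X\sim_M Y$ with $X,Y$ both in tensor product position with a tensor factor $Z$ implies $X\ovt Z\sim_M Y\ovt Z$ (since $\sigma_{X\ovt Z}\sigma_{Y\ovt Z}=\sigma_X\sigma_Z\sigma_Y\sigma_Z=\sigma_X\sigma_Y$, because $\sigma_Z$ commutes with $\sigma_X$ and $\sigma_Y$). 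Granting this, $(\rm i)\Rightarrow(\rm ii)$ is short: the type $\mathrm{I}$ factors among the $P_i$ are all stably unitarily conjugate to $\C$ and may be matched up arbitrarily, so we may concentrate on the non-type-$\mathrm{I}$ ones; UPF forces $\alpha(P_i)\sim_M\ovt_{j\in S_i}P_j$, primeness of $\alpha(P_i)$ forces $|S_i|=1$ (a tensor product of two non-type-$\mathrm{I}$ factors is not prime), and if $\sigma(i)=\sigma(j)$ with $i\ne j$ then $\alpha(P_i)\sim_M\alpha(P_j)$ while these are in tensor product position, hence of type $\mathrm{I}$ by the observation — a contradiction. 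So $\sigma$ is a permutation.

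For $(\rm ii)\Rightarrow(\rm i)$ I would first prove primeness of each $P\in\mathcal C$. Suppose $P=A\ovt B$ with $A,B$ both non-type $\mathrm{I}$; apply $(\rm ii)$ to the family $(P,P)$ and to the flip $\sigma_A\in\Aut(P\ovt P)$ of the tensor factor $A\subset P$. Writing $P^{(1)},P^{(2)}$ for the two copies and $A^{(k)},B^{(k)}$ for the corresponding copies of $A,B$, the resulting permutation of $\{1,2\}$ gives $A^{(2)}\ovt B^{(1)}=\sigma_A(P^{(1)})\sim_{P\ovt P}P^{(1)}$ or $\sim_{P\ovt P}P^{(2)}$. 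In the first case $A^{(2)}\prec_{P\ovt P}P^{(1)}$ with $A^{(2)}$ in tensor product position with $P^{(1)}$, so Proposition~\ref{tensor lemma2} produces $D\in\mathrm{TF}(P^{(1)})$ with $A^{(2)}\sim_{P\ovt P}D$, and since $D$ is then in tensor product position with $A^{(2)}$ the observation forces $A$ to be of type $\mathrm{I}$, a contradiction; in the second case the same argument applied to $B^{(1)}\prec_{P\ovt P}P^{(2)}$ forces $B$ to be of type $\mathrm{I}$. Hence every $P\in\mathcal C$ is prime.

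The heart of $(\rm ii)\Rightarrow(\rm i)$ is the UPF statement for $M=P_1\ovt\cdots\ovt P_n$, and the key lemma I would prove is: for any $Q\in\mathrm{TF}(M)$ and any $i$, either $P_i\prec_M Q$ or $P_i\prec_M Q^c$. To see this, apply $(\rm ii)$ to the $2n$-fold family $(P_1,\dots,P_n,P_1,\dots,P_n)$, whose tensor product is $\widehat M$, and to the automorphism $\sigma_Q$: it sends the tensor factor $P_i^{(1)}\subset M_1$ to a factor $\sim_{\widehat M}$-equivalent to one of the $2n$ copies, say $P_a^{(b)}\subset M_b$, so $\sigma_Q(P_i^{(1)})\prec_{\widehat M}M_b$. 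By Lemma~\ref{lemma for intertwining and flip}, $\sigma_Q(P_i^{(1)})\prec_{\widehat M}M_2$ is equivalent to $P_i\prec_M Q$, while using $\sigma_{Q^c}=\sigma_M\circ\sigma_Q$ and $\sigma_M(M_2)=M_1$ the relation $\sigma_Q(P_i^{(1)})\prec_{\widehat M}M_1$ is equivalent to $P_i\prec_M Q^c$; so $b=2$ gives the first alternative and $b=1$ the second. With this lemma I would prove the UPF statement by induction on $n$. Given $Q\in\mathrm{TF}(M)$, after replacing $Q$ by $Q^c$ if necessary we may assume $P_n\prec_M Q$; then $Q^c\prec_M P_n^c=M':=P_1\ovt\cdots\ovt P_{n-1}$ by Lemma~\ref{lemma for intertwining and flip}, so Proposition~\ref{tensor lemma2} gives $D\in\mathrm{TF}(M')\subset\mathrm{TF}(M)$ with $Q^c\sim_M D$, whence $Q\sim_M D^c=D_0\ovt P_n$ with $D_0:=D'\cap M'\in\mathrm{TF}(M')$. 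The induction hypothesis applied to $M'$ yields $T\subset\{1,\dots,n-1\}$ with $D_0\sim_{M'}\ovt_{j\in T}P_j$, hence $D_0\sim_M\ovt_{j\in T}P_j$, and tensoring by the perpendicular factor $P_n$ (legitimate by the bookkeeping facts) gives $Q\sim_M\ovt_{j\in T\cup\{n\}}P_j$, closing the induction.

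The step I expect to be the main obstacle, and the one place where hypothesis $(\rm ii)$ is genuinely exploited beyond the primeness argument, is the key lemma: one must pass to the $2n$-fold doubling so that $\sigma_Q$ becomes an automorphism of a tensor product of members of $\mathcal C$, and then Lemma~\ref{lemma for intertwining and flip} is exactly the device that converts the combinatorial conclusion of $(\rm ii)$ — ``$\sigma_Q(P_i^{(1)})$ lands, up to stable unitary conjugacy, in the first or the second copy of $M$'' — into the intertwining dichotomy $P_i\prec_M Q$ or $P_i\prec_M Q^c$. Everything else is then routine, provided one is careful that all manipulations of $\sim_M$ (transport by automorphisms, passage to commutants, tensoring by a perpendicular factor, upgrading $\sim_{M'}$ to $\sim_M$) are justified, which is ensured by Proposition~\ref{tensor lemma for stably unitary conjugacy} together with the elementary identities $\sigma_{Q^c}=\sigma_M\circ\sigma_Q$ and $\sigma_{X\ovt Z}=\sigma_X\circ\sigma_Z$.
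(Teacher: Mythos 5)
Your proposal is correct and takes essentially the same route as the paper: the easy implication, and for $(\rm ii)\Rightarrow(\rm i)$ the application of $(\rm ii)$ to the flip automorphism $\sigma_Q$ on the doubled algebra $\widehat{M}$, converted via Lemma \ref{lemma for intertwining and flip} into the dichotomy $P_i \prec_M Q$ or $P_i \prec_M Q^c$ (and, for primeness, into $Q \prec_P \C$ or $Q^c \prec_P \C$). The only difference is that you spell out the bookkeeping the paper leaves implicit, namely the induction (using Proposition \ref{tensor lemma2} and the compatibility of $\sim_M$ with commutants and with tensoring by a commuting tensor factor) that upgrades this dichotomy to the full UPF conclusion.
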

\begin{proof}
$(\rm i) \Rightarrow (\rm ii)$ is obvious. Let us prove the other direction. Assume that $(\rm ii)$ holds. Let $P \in \mathcal{C}$ and take $Q \in \mathrm{TF}(P)$. Then by $(\rm ii)$, the automorphism $\sigma_Q$ of $P \ovt P$ satisfies $\sigma_Q(P \otimes 1) \sim_{\widehat{P}} 1 \otimes P$ or $\sigma_Q(P \otimes 1) \sim_{\widehat{P}} P \otimes 1$. Applying Lemma \ref{lemma for intertwining and flip}, in the first case, we get $Q^c \prec_P \C$ and in the second case we get $Q \prec_P \C$. Thus, $Q$ or $Q^c$ is of type $\mathrm{I}$. This shows that $P$ is prime. Now, consider $P_1, \dots, P_n \in \mathcal{C}$ and $M=P_1 \ovt \cdots \ovt P_n$ and take $Q \in \mathrm{TF}(M)$. By $(\rm ii)$, for every $i$, we must have $\sigma_Q(P_i) \sim_{M\ovt M} P_j \otimes 1$ or $\sigma_Q(P_i) \sim_{M\ovt M} 1 \otimes P_j$ for some $j \in \{1, \dots, n \}$. In the first case, we get $P_i \prec_M Q^c$ and in the second case we get $P_i \prec_M Q$. We conclude that $M$ has the UPF property.
\end{proof}

In what follows, by \emph{higher rank} irreducible  lattice we mean an irreducible lattice $\Gamma < G$ where $G$ is a connected semisimple Lie group with finite center such that every simple quotient of $G$ has real rank $\geq 2$. It is known that such a lattice $\Gamma$ has property (T) and satisfies the conclusion of Margulis' normal subgroup theorem, i.e.\ any normal subgroup $N < \Gamma$ is either finite (and contained in the center) or has finite index in $\Gamma$. 

We will need the following elementary lemma. Recall that two subgroups $H_1,H_2$ of a same group $H$ are \emph{commensurable} if $H_1 \cap H_2$ has finite index in both $H_1$ and $H_2$.

\begin{lem} \label{permutation lattice}
Let $L_1,\dots,L_n, R_1,\dots,R_n$ be irreducible higher rank lattices. Let $H < L_1 \times \cdots \times L_n$ and $K < R_1 \times \dots \times R_n$ be two finite index subgroups and $\phi : H \rightarrow K$ an isomorphism. Then there exists a permutation $\sigma$ of $\{1, \dots, n \}$ such that $\phi(L_i \cap H)$ and $R_{\sigma(i)}$ are commensurable for all $i \in \{1, \dots, n\}$.
\end{lem}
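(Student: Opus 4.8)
The plan is to reduce the statement to the well-known fact that for irreducible higher rank lattices, any subgroup of a finite product $L_1 \times \cdots \times L_n$ projects onto a finite index subgroup of each factor (a consequence of Margulis' normal subgroup theorem together with the fact that such lattices have no nontrivial finite-index-free quotients of the relevant type), combined with a rigidity statement that an isomorphism between finite index subgroups of such products must "respect the product structure up to permutation." More precisely, first I would observe that since $H$ has finite index in $L_1 \times \cdots \times L_n$, each $L_i \cap H$ is a normal subgroup of $H$ of finite index in $L_i$, hence (by Margulis normal subgroup theorem applied inside $L_i$, or simply because finite index subgroups of irreducible higher rank lattices are again irreducible higher rank lattices) $L_i \cap H$ is itself an irreducible higher rank lattice, and $H$ contains the internal direct product $(L_1 \cap H) \times \cdots \times (L_n \cap H)$ as a finite index subgroup. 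The same holds for $K$ and the $R_j \cap K$. So, replacing $H$ by $(L_1\cap H)\times\cdots\times(L_n\cap H)$ and $K$ correspondingly (which only changes things up to commensurability and does not affect the conclusion), we may assume $H = H_1 \times \cdots \times H_n$ and $K = K_1 \times \cdots \times K_n$ with $H_i \leq L_i$, $K_j \leq R_j$ irreducible higher rank lattices.

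Next I would analyze the image $\phi(H_i) \leq K_1 \times \cdots \times K_n$. Since $H_i$ is normal in $H$, its image $\phi(H_i)$ is normal in $K = K_1 \times \cdots \times K_n$. The key structural step is to understand normal subgroups of a finite product of irreducible higher rank lattices: I claim any such normal subgroup $N$ is, up to finite index, a sub-product $\prod_{j \in S} K_j$ for some subset $S \subseteq \{1,\dots,n\}$. To see this, for each $j$ let $p_j$ denote the projection to $K_j$; then $p_j(N)$ is normal in $K_j$, so by Margulis it is either finite or finite index. Let $S = \{ j : p_j(N) \text{ is infinite (finite index)} \}$. For $j \in S$, $N \cap K_j$ is normal in $K_j$ hence finite or finite index; a short argument using that $K_j$ has trivial center modulo finite (or passing to finite index subgroups) together with the fact that $N$ surjects with finite-index image onto $\prod_{j\in S} K_j / (\text{centers})$ — here I would invoke that irreducible higher rank lattices have the property that a subgroup of $\prod_{j\in S} K_j$ which projects onto each factor with finite index and is normal must actually contain $\prod_{j \in S} K_j$ up to finite index, because there are no "diagonal" normal subgroups when the factors are non-commensurable, and when they are commensurable one handles the finitely many graph-type subgroups directly. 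So $\phi(H_i)$ is commensurable to $\prod_{j \in S_i} K_j$ for a subset $S_i$. Since the $\phi(H_i)$ pairwise commute (as $\phi$ is an isomorphism and the $H_i$ commute) and $\phi$ is surjective onto $K$, the sets $S_i$ must be pairwise disjoint and cover $\{1,\dots,n\}$; moreover each $\phi(H_i)$ is itself (commensurable to) an irreducible higher rank lattice, and $\prod_{j \in S_i} K_j$ is such a lattice only when $|S_i| = 1$. Hence each $S_i$ is a singleton $\{\sigma(i)\}$ and $\sigma$ is a permutation with $\phi(L_i \cap H) = \phi(H_i)$ commensurable to $R_{\sigma(i)} = R_{\sigma(i)} \cap K$ up to the reductions made.

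The main obstacle I anticipate is the structural lemma about normal subgroups of finite products of irreducible higher rank lattices — specifically ruling out or controlling "diagonal" normal subgroups when two factors $K_i, K_j$ happen to be abstractly commensurable (or even isomorphic). In that case there can be graph-type subgroups of $K_i \times K_j$, but they are not \emph{normal} unless the associated automorphism is trivial modulo inner, and a normal subgroup projecting onto both factors with finite index would force $K_i \cong K_j$ compatibly; I would handle this by noting such a normal subgroup still has finite index in $K_i \times K_j$ (its projections are finite index and its intersection with one factor, being normal in that factor, is finite or finite index — if finite for both factors the subgroup is central-by-finite, contradicting infiniteness of the relevant projections), so it is commensurable to the full product $K_i \times K_j$ anyway, which is all the statement requires. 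The rest of the argument is bookkeeping with commensurability and the observation that an irreducible higher rank lattice is not commensurable to a nontrivial direct product, which follows from, e.g., the fact that such lattices are not virtually products (their Lie groups are not products of the trivial-real-rank building blocks — irreducibility of the lattice prevents a finite index subgroup from splitting).
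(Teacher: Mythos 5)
Your strategy is viable and genuinely different in organization from the paper's: the paper argues by induction on $n$, peeling off one factor at a time using Margulis' normal subgroup theorem together with the fact that a normal subgroup of a higher rank lattice commuting with a finite index subgroup is finite, whereas you classify directly the normal subgroups $\phi(L_i\cap H)\lhd K$ inside (a finite index subgroup of) $R_1\times\cdots\times R_n$, showing each is commensurable to a subproduct, and then finish with a commutation and covering argument. Both routes ultimately rest on the same two inputs (Margulis' theorem and finiteness of centralizers or FC-centers in such lattices), and your endgame (pairwise commuting images, disjointness, covering, and the fact that an irreducible lattice is not virtually a product of two infinite groups) is correct.

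However, as written there is a gap at exactly the point you flag as the main obstacle: the structural lemma that a normal subgroup $N$ of $K_1\times\cdots\times K_n$ is commensurable to a subproduct. Your explicit argument only excludes the configuration where the intersections of $N$ with \emph{both} relevant factors are finite; it does not exclude the mixed configuration where $p_j(N)$ has finite index in $K_j$ while $N\cap K_j$ is finite (and $N\cap K_{j'}$ has finite index for some other $j'$). In that configuration $N$ is commensurable to no subproduct, so the lemma as stated would fail; hence this case must be ruled out, and your ``central-by-finite'' remark does not cover it. The missing (standard) step is the commutator argument applied factorwise: normality gives $[N,K_j]\subseteq N\cap K_j$, so if $N\cap K_j$ is finite then every element of $p_j(N)$ has finite conjugacy class in $K_j$, and since an irreducible higher rank lattice has finite FC-center (it is ICC modulo its finite center), $p_j(N)$ is finite. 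Thus $p_j(N)$ infinite forces $N\cap K_j$ of finite index, which is exactly what the subproduct statement needs; with this inserted your proof goes through. Two smaller points: you cannot simultaneously assume that $H$ and $K$ are direct products (shrinking $H$ to $(L_1\cap H)\times\cdots\times(L_n\cap H)$ destroys the product form of its image under $\phi$), but this reduction is unnecessary, since $\phi(L_i\cap H)$ is normal in $K=\phi(H)$ in any case and the structural lemma, with the same proof, holds for normal subgroups of a finite index subgroup of $R_1\times\cdots\times R_n$ (note that $p_j(K)$ and $K\cap R_j$ have finite index in $R_j$, so Margulis still applies); and for the last step it suffices to observe that the sets $S_i$ are nonempty, pairwise disjoint (two elementwise commuting finite index subgroups of $R_j$ would make $R_j$ virtually abelian) and cover $\{1,\dots,n\}$, hence are singletons.
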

\begin{proof}
We proceed by induction. The result is obvious for $n=1$. Let $n\geq 2$ and suppose that we have proved the result for $n-1$. For each $i$, let $\pi_i$ be the projection on $R_i$. Observe that $L_n \cap H$ has finite index in $L_n$. In particular, $L_n \cap H$ is infinite. Thus there exists $i$ such that $\pi_i(\phi(L_n \cap H))$ is infinite. Assume, without loss of generality, that $i=n$. Note that $\phi(L_n \cap H)$ is a normal subgroup of $K$. Thus $\pi_n(\phi(L_n \cap H))$ is a normal subgroup of $\pi_n(K) \subset R_n$. But $\pi_n(K)$ is an irreducible lattice because it has finite index in $R_n$. Therefore, since $\pi_n(\phi(L_n \cap H))$ is infinite, it must actually have finite index in $\pi_n(K)$, hence also in $R_n$. But, if we let $H'=(L_1 \times \dots \times L_{n-1}) \cap H$, then $\pi_n(\phi(H'))$ is also a normal subgroup of $\pi_n(K)$ which commutes with $\pi_n(\phi(L_n \cap H))$. Thus $\pi_n(\phi(H'))$ is finite. Let $H'' \subset H'$ be the kernel of $\pi_n \circ \phi |_{H'}$. We have that $H''$ is a finite index subgroup of $L_1 \times \dots \times L_{n-1}$ and $\phi(H'') \subset R_1 \times \dots \times R_{n-1}$. Therefore, we can apply the induction hypothesis and wet get that $\phi(H'' \cap L_i)$ and $R_{\sigma(i)}$ are commensurable for some permutation $\sigma$ of $\{1, \dots, n-1\}$. Since $H''$ has finite index in $H$, we actually have that $\phi(H \cap L_i)$ and $ R_{\sigma(i)}$ are commensurable. It only remains to show that $\phi(H \cap L_n)$ and $R_n$ are commensurable.

We know that $\pi_i(\phi(H \cap L_n))$ is finite for all $i \leq n-1$ because it is a normal subgroup of $\pi_i(K)$ and it commutes with $\phi(H \cap L_i) \cap R_i$ which has finite index in $R_i$. Thus the kernel of $\pi_i|_{\phi(H \cap L_n)}$ has finite index in $\phi(H \cap L_n)$ for all $i \leq n-1$. We deduce that the intersection of all this kernels, which is precisely $\phi(H \cap L_n) \cap R_n$, has finite index in $\phi(H \cap L_n)$. It also has finite index in $R_n$ because it is an infinite normal subgroup of $K \cap R_n$. We conclude that $\phi(L_n \cap H)$ and $R_n$ are commensurable.
\end{proof}
%\begin{thm}
%Let $\Gamma < G$ be an irreducible higher rank lattice. Let $\Gamma \curvearrowright (X,\mu)$ be a compact free ergodic pmp action. Then the crossed product $M=\rL^\infty(X) \rtimes \Gamma$ is prime. 
%Moreover, for any finite family of factors $M_1, \dots, M_n$ of that form, the tensor product $M_1 \ovt \cdots \ovt M_n$ has the Unique Prime Factorization property.
%\end{thm}
\begin{proof}[Proof of Theorem \ref{irreducible lattice thm}]
Let $M_i=A_i \rtimes \Gamma_i$, $A=A_1 \ovt \cdots \ovt A_n$, $\Gamma=\Gamma_1 \times \cdots \times \Gamma_n$ and $M=M_1 \ovt \cdots \ovt M_n=A \rtimes \Gamma$. Let $\alpha$ be an automorphism of $M$. 
By Proposition \ref{UPF prop}, we have only to show that there exists a permutation $\sigma$ of $\{1, \dots, n\}$ such that $\alpha(M_i) \sim_M M_{\sigma(i)}$ for all $i \in \{1, \dots, n \}$. By \cite[Theorem 1.4]{BIP18}, up to composing $\alpha$ by an inner automorphism, we may assume that $\alpha(A)=A$. By \cite[Theorem A]{Io08} (see also \cite[Theorem 5.21]{Fu09}), up to composing $\alpha$ by $\Ad(u)$ for some $u \in \mathcal{N}_M(A)$, we may further assume that $\alpha$ induces a virtual self-conjugacy of the action $\Gamma \curvearrowright A$. In particular, there exists finite index subgroups $H,K \subset \Gamma_1 \times \dots \times \Gamma_n$ and an isomorphism $\theta : H \rightarrow K$ such that $\alpha(u_g) \in \T u_{\theta(g)}$ for all $g \in H$. By Lemma \ref{permutation lattice}, there exists a permutation $\sigma$ of $\{1, \dots, n\}$ such that $\theta(H \cap \Gamma_i)$ and $\Gamma_{\sigma(i)}$ are commensurable for all $i \in \{1, \dots, n \}$. Let $F_i = \theta(H \cap \Gamma_i) \cap \Gamma_{\sigma(i)}$ which has finite index in $\Gamma_{\sigma(i)}$. Then $L(\Gamma_{\sigma(i)}) \prec_M L(F_i) \subset \alpha(L(\Gamma_i))$. By taking relative commutants, we obtain $\alpha(M_i)' \cap M \prec_M (L(\Gamma_{\sigma(i)})' \cap M_{\sigma(i)})  \ovt (M_{\sigma(i)}' \cap M)$. Since $L(\Gamma_{i})' \cap M_i=L(\mathcal{Z}(\Gamma_i))$ is a finite dimensional abelian algebra, we get $\alpha(M_i)' \cap M \prec_M M_{\sigma(i)}' \cap M$. Finally, by taking relative commutants again, we conclude that $M_{\sigma(i)} \prec_M \alpha(M_i)$ for all $i \in \{1, \dots, n \}$ as we wanted.
\end{proof}

Now, we will prove Theorem \ref{compact minimal rigidity}. We will need the following intertwining lemma.

\begin{lem} \label{minimal intertwining}
Let $\Gamma \curvearrowright N$ be a minimal action of an ICC group $\Gamma$ on a $\II_1$ factor $N$ and let $M = N \rtimes \Gamma$. Let $P \in \mathrm{TF}(M)$ and suppose that $L(\Gamma) \prec_M P$. Then there exists a tensor product decomposition $N=A \ovt B$ with $\Gamma$ acting trivially on $B$ and a unitary $u \in \mathcal{U}(M)$ such that $uPu^*=A \rtimes \Gamma$.
\end{lem}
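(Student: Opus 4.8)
The plan is to reduce the statement to the assertion that $P^c := P'\cap M$ can be conjugated by a unitary of $M$ into the fixed point algebra $N^\Gamma$, and then to read off the tensor decomposition by a Fourier-coefficient argument. Two preliminary facts are needed. First, since $\Gamma$ is ICC, expanding a generic element of $M = N\rtimes\Gamma$ in Fourier series shows $L(\Gamma)'\cap M = N^\Gamma$; minimality of the action then forces $N^\Gamma$ to be a factor and $(N^\Gamma)'\cap M = L(\Gamma)$, so that $N^\Gamma\vee L(\Gamma) = N^\Gamma\ovt L(\Gamma)$. Second, I would use the elementary remark that if $M_0 = Q_0\ovt Q_0^c$ is a factor and $Q_0\subseteq N_0\subseteq M_0$ with $N_0$ admitting a conditional expectation, then $N_0 = Q_0\ovt(Q_0'\cap N_0)$, which follows because the trace-preserving expectation $E_{N_0}$ is $Q_0$-bimodular and hence carries the $Q_0^c$-direction into $Q_0'\cap N_0$.

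Next I would apply Popa's intertwining theorem (Theorem \ref{intertwining theorem}) to $L(\Gamma)\prec_M P$, obtaining a projection $f\in P$, a nonzero partial isometry $v$, and a unital normal $*$-homomorphism $\theta\colon L(\Gamma)\to fPf$ with $v\theta(a)=av$. After replacing $\theta$ by a suitable reduction (cutting down by a minimal projection of $\theta(L(\Gamma))'\cap fPf$ when this relative commutant is of type $\mathrm{I}$, or replacing $v$ by $vw^*$ for a partial isometry $w$ in the relative commutant when it is a $\mathrm{II}_1$ factor), I may assume $p:=v^*v\in P$; then automatically $q:=vv^*\in L(\Gamma)'\cap M = N^\Gamma$. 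Since $p\in P$ commutes with $P^c$, the corner $pP^c$ is a unital copy of $P^c$ inside $pMp$, and $\Ad(v)$ carries it isomorphically onto a subfactor $\widetilde{P^c}:=vP^cv^*$ of $(L(\Gamma)q)'\cap qMq = qN^\Gamma q$; moreover $qMq = \Ad(v)(pPp)\ \ovt\ \widetilde{P^c}$, with $L(\Gamma)q$ sitting inside the first tensor factor. Thus a copy of $P^c$ has been placed inside a corner of the fixed point factor $N^\Gamma$.

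The heart of the argument — and the step I expect to be the main obstacle — is to promote this partial picture to a genuine unitary conjugation $uP^cu^*\subseteq N^\Gamma$ with $u\in\mathcal U(M)$, equivalently $u^*L(\Gamma)u\subseteq P$. Since a priori $\tau(q)<1$, this is not formal: one must spread $\widetilde{P^c}$ out over all of $N^\Gamma$ and match it with a conjugate of $P^c$ inside $M$. Here I would use that $P^c\in\mathrm{TF}(M)$ is regular in $M$ (its normalizer contains $\mathcal U(P)\mathcal U(P^c)$ and so generates $M$) together with the factoriality of $N^\Gamma$, assembling the unitary $u$ from a system of matrix units over $q$ inside $N^\Gamma$ and a matching system inside $pPp\subseteq P$. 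Bookkeeping of the amplifications that appear is the delicate point, and is where one exploits that in the situations the lemma gets applied to, $P^c$ is McDuff and hence $\mathcal{F}(P^c)=\R^*_+$, which removes the amplification obstruction.

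Finally, with $B:=uP^cu^*\subseteq N^\Gamma$ in hand, the wrap-up is algebraic: $\Gamma$ fixes $B$ pointwise, so each canonical unitary $u_g$ commutes with $B$; set $A:=B'\cap N$, which is $\Gamma$-invariant since $B$ is globally fixed. Applying the intermediate-subalgebra remark to $M = B\ovt(B'\cap M)$ with $B\subseteq N$ gives $N = A\ovt B$ (so $A$ is a factor), with $\Gamma$ acting trivially on $B$. A last Fourier-coefficient computation shows that any $x=\sum_g x_g u_g$ commuting with $B\subseteq N^\Gamma$ must have all $x_g\in B'\cap N = A$, whence $uPu^* = B'\cap M = A\rtimes\Gamma$, which is the desired conclusion.
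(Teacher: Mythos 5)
The wrap-up of your argument (once $B:=uP^cu^*\subseteq N^\Gamma$ is in hand) and your preliminary observations ($L(\Gamma)'\cap M=N^\Gamma$, factoriality of $N^\Gamma$, the intermediate-subalgebra and Fourier-coefficient computations) are fine and agree with the paper's endgame. But the step you yourself flag as ``the heart of the argument'' --- promoting the intertwining $L(\Gamma)\prec_M P$ to an honest unitary conjugation $u^*L(\Gamma)u\subseteq P$ --- is not actually carried out, and your proposed way around the amplification problem is not legitimate: you invoke that ``in the situations the lemma gets applied to, $P^c$ is McDuff and hence $\mathcal{F}(P^c)=\R^*_+$.'' The lemma is stated for an arbitrary $P\in\mathrm{TF}(M)$ with $L(\Gamma)\prec_M P$, so an appeal to extra properties of $P^c$ is a change of statement, not a proof; worse, it is circular in context, since in the proof of Theorem \ref{compact minimal rigidity} the lemma is applied to tensor factors whose complements are not yet known to be McDuff (McDuffness of tensor factors is item $(\rm i)$ of that theorem, which is itself deduced from the claim that uses this lemma). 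So as written there is a genuine gap exactly at the main step.

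The paper closes this gap without any hypothesis on $P^c$, by never trying to conjugate the given $P^c$ into $N^\Gamma$ on the nose. Instead, from $L(\Gamma)\prec_M P$ and the proof of \cite[Proposition 12]{OP03} one first gets a genuine containment $L(\Gamma)\subset P_0$ for some $P_0\in\mathrm{TF}(M)$ with $P_0\sim P$ (stably conjugate, not unitarily conjugate --- this is where the corner/partial-isometry bookkeeping you describe is absorbed). Then $B:=P_0^c\subset L(\Gamma)'\cap M=N^\Gamma$ automatically, so $N=A\ovt B$ with $A=B'\cap N$ and $P_0=A\rtimes\Gamma$. Finally, the stable conjugacy $P\sim P_0$ is converted into exact unitary conjugacy by writing $P=uP_0^tu^*=u(A^t\rtimes\Gamma)u^*$ and re-splitting $N=A^t\ovt B^{1/t}$: since the decomposition $N=A\ovt B$ in the conclusion is not prescribed in advance, the amplification parameter $t$ is absorbed into the choice of $A$ and $B$ rather than into the fundamental group of $P^c$. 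In short, your plan insists on matching the given $P^c$ exactly inside $N^\Gamma$, which creates the amplification obstruction you then cannot remove; working up to $\sim$ until the last line, and using the freedom in $A$ and $B$, is what makes the lemma true in the stated generality.
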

\begin{proof}
Since $\Gamma$ is ICC and acts minimally on $N$, we have that $L(\Gamma)$ and $L(\Gamma)' \cap M=N^\Gamma$ are $\II_1$ factors. Since by assumption $L(\Gamma) \prec_M P$, the proof of \cite[Proposition 12]{OP03}, shows that we have $L\Gamma \subset P_0$ for some $P_0 \in \mathrm{TF}(M)$ with $P_0 \sim_M P$.
Put $B= P_0' \cap M$ and observe that $B \subset L(\Gamma)' \cap M=N^\Gamma$. Since $B \in \mathrm{TF}(M)$, we can write $N=A \ovt B$ by putting $A=B' \cap N \subset P_0$ and we get $P_0=A \rtimes \Gamma$ as we wanted. Finally, since $P \sim P_0$, we have $P=uP_0^tu^*=u(A^t \rtimes \Gamma)u^*$ for some $u \in \mathcal{U}(M)$ and $t > 0$ where $N=A^t \ovt B^{1/t}$.
%Since $L(\Gamma) \prec_M P$, we can find a $*$-morphism $\theta : qL(\Gamma)q \rightarrow pPp$ and some partial isometry $v$ with $vv^* \in q(L(\Gamma)' \cap N)q$ and $v^*v \in \theta(qL(\Gamma)q)' \cap pMp$ such that $xv=v\theta(x)$ for all $x \in q L(\Gamma)q$. Since $\Gamma$ is ICC and acts minimally on $N$, we have that $L(\Gamma)$ and $L(\Gamma)' \cap N=N^\Gamma$ are factors. Thus, by expanding $v$, we may assume that $p=q=1$ and $vv^* \in \mathcal{Z}(\theta(L(\Gamma))' \cap M)$. But since $P$ is a tensor factor and $\theta(L(\Gamma)) \subset P$, we have  $\mathcal{Z}(\theta(L(\Gamma))' \cap M) \subset P$. Thus, we have $vv^* \in P$. Now, since $P$ and $L(\Gamma)' \cap M$ are factors, we can expand $v$ and get a unitary $u \in \mathcal{U}(M)$ such that $L(\Gamma) \subset uPu^*$. 
%Then $B=(uPu^*)' \cap M \subset L(\Gamma)' \cap M=N^\Gamma$ because $\Gamma$ is ICC. Since $B \in \mathrm{TF}(M)$, we can write $N=A \ovt B$ with $A \subset uPu^*$ and we get $uPu^*=A \rtimes \Gamma$ as we wanted.
\end{proof}

\begin{proof}[Proof of Theorem \ref{compact minimal rigidity}]
The main step is to prove the following fact.
\begin{newclaim} For every tensor product decomposition $M=P\ovt Q$, there exists a unitary $u \in \mathcal{U}(M)$, a direct product decomposition $\Gamma=G \times H$ and a tensor product decomposition $R=A \ovt B$ with $G$ and $H$ acting trivially on $B$ and $A$ respectively, such that $uPu^*=A \rtimes G$ and $uQu^*=B \rtimes H$.
\end{newclaim}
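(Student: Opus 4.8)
The plan is to study the flip automorphism $\sigma_P$ on the double. Set $\widehat{M}=M\ovt M=(R\ovt R)\rtimes(\Gamma\times\Gamma)$; since a product of compact minimal actions of ICC higher rank lattices is again one, $\widehat{M}$ is of the same type as $M$, and every structural result available for $M$ applies verbatim to $\widehat{M}$ and to the automorphism $\sigma_P\in\Aut(\widehat{M})$. First I would run the $W^*$-superrigidity machinery for such crossed products on $\sigma_P$: by the relevant case of \cite[Theorem 1.4]{BIP18}, $\sigma_P$ preserves $R\ovt R$ up to an inner automorphism, and then by cocycle superrigidity for compact actions of property (T) groups (as in the use of \cite[Theorem A]{Io08} in the proof of Theorem \ref{irreducible lattice thm}) one obtains $w\in\mathcal{U}(\widehat{M})$ such that $\Ad(w)\circ\sigma_P$ induces a virtual self-conjugacy of the action $\widehat{\Gamma}\curvearrowright R\ovt R$, given by an isomorphism $\delta\colon\widehat{H}\to\widehat{K}$ between finite index subgroups of $\widehat{\Gamma}=\Gamma\times\Gamma$ (so, up to inner, $\Ad(w)\circ\sigma_P$ restricts to an automorphism $\rho$ of $R\ovt R$ with $\rho\alpha_g\rho^{-1}=\alpha_{\delta(g)}$ and sends $u_g$ to a scalar multiple of $u_{\delta(g)}$ for $g\in\widehat{H}$).

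The heart of the matter is then a group-theoretic analysis of $\delta$. By Margulis' theorems, $\Gamma$ is up to finite index a direct product $\Lambda_1\times\cdots\times\Lambda_m$ of irreducible higher rank lattices, so $\widehat{\Gamma}$ is up to finite index a product of $2m$ such lattices, and by Lemma \ref{permutation lattice} $\delta$ permutes these factors, up to commensurability, by a permutation $\pi$. Since $\sigma_P$ is an involution and $\sigma_M\circ\sigma_P=\sigma_{P^c}$, where $\sigma_M$ is the standard flip of $\widehat{M}$ (which induces the transposition $\tau\colon j\leftrightarrow j+m$ of the $2m$ factors), one gets $\pi^2=\id$ and $\pi\tau=\tau\pi$. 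Now I would exploit that $\sigma_P$ is the flip of a \emph{genuine} tensor factor, through the identity $\sigma_P(M_1)=P_2\ovt Q_1$: a coarse/weak-mixing computation shows $P_1\ovt Q_1\nprec_{\widehat{M}}P_2\ovt Q_1$ whenever $P$ is nonamenable, hence $\sigma_P(M_1)\nsim M_1$ in that case, while $\pi$ mapping one $\tau$-orbit onto a distinct one ("untwisted") would force $\sigma_P(M_1)\sim M_1$, and the "twisted" analogue forces $P^c$ amenable via the same dichotomy applied to $\sigma_{P^c}$. Thus, when $P$ and $Q$ are both nonamenable, $\pi$ must stabilize each $\tau$-orbit $\{j,j+m\}$, acting on it either trivially or by the transposition. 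Letting $I_G$ be the orbits on which $\pi$ is the transposition and $I_H$ the rest (both nonempty, else $P$ or $Q$ is of type $\mathrm{I}$), the canonicity of the decomposition of a semisimple Lie group into simple factors upgrades this partition to an honest direct product $\Gamma=G\times H$, and $\delta$ exchanges $G^{(1)}\leftrightarrow G^{(2)}$ and fixes $H^{(1)},H^{(2)}$.

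To finish, I would transport this back to $M$. By the uniqueness of compact minimal actions \cite{MT06}, the splitting $\Gamma=G\times H$ forces a compatible tensor splitting $R=A\ovt B$ with $G$ acting trivially on $B$ and $H$ trivially on $A$; consequently $\Ad(w)\circ\sigma_P$ agrees, up to a further inner automorphism, with the flip $\sigma_{A\rtimes G}\in\Aut(\widehat{M})$ of the tensor factor $A\rtimes G\in\mathrm{TF}(M)$. Hence $\sigma_P\circ\sigma_{A\rtimes G}\in\Inn(\widehat{M})$, so $P\sim A\rtimes G$ and $Q=P^c\sim B\rtimes H$ by Proposition \ref{tensor lemma for stably unitary conjugacy}; rescaling $A$ to $A^t$ and $B$ to $B^{1/t}$ exactly as in the last paragraph of the proof of Lemma \ref{minimal intertwining} converts this $\sim$-equivalence into an honest unitary conjugacy, producing the desired $u\in\mathcal{U}(M)$ with $uPu^*=A\rtimes G$ and $uQu^*=B\rtimes H$. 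The case where $P$ (or $Q$) is amenable is handled along the same lines: there $\sigma_P$ is not inner but, by the weak-mixing dichotomy, $\pi$ can only be built from trivial $\tau$-orbits and untwisted pairings, so $G=\{e\}$, $P$ is conjugate into $R^\Gamma$, and the splitting $R=A\ovt B$ is read off directly.

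The hard part will be precisely the bookkeeping that glues these pieces together. Both \cite[Theorem 1.4]{BIP18}, cocycle superrigidity, and Lemma \ref{permutation lattice} only control $\delta$ up to passing to finite index subgroups and up to inner perturbations, whereas the conclusion demands an honest direct product $\Gamma=G\times H$, an honest tensor splitting $R=A\ovt B$, and an honest unitary $u\in\mathcal{U}(M)$; making the weak-mixing dichotomy interact cleanly with the combinatorics of the involution $\pi$ (including ruling out or interpreting the "twisted" pairings), and pushing all the "up to finite index / up to $\sim$" information down to on-the-nose statements while keeping track of the fact that $\sigma_P$ must remain a flip throughout, is where the real work lies.
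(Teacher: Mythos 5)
There is a genuine gap: what you defer as ``bookkeeping'' in your last paragraph is in fact the entire proof, and several of the tools you invoke are not the right ones for this noncommutative setting. First, \cite[Theorem 1.4]{BIP18} is a statement about Cartan subalgebras of $\rL^\infty(X)\rtimes\Gamma$; here the core $\widehat{R}=R\ovt R$ is the hyperfinite $\II_1$ factor, and what one actually gets (the paper uses \cite[Theorem 4.16]{BIP18}) is only the intertwining $\sigma_P(\widehat{R})\prec_{\widehat{M}}\widehat{R}$, which must then be upgraded to $\Ad(u)\circ\sigma_P(\widehat{R})=\widehat{R}$ using outerness of the action and the ICC assumption (Lemma \ref{conjugacy bernoulli}, the finite normal subgroups being trivial). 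Second, once $\widehat{R}$ is genuinely preserved, the outerness of the minimal action immediately yields an honest $\theta\in\Aut(\Gamma_1\times\Gamma_2)$ with $\alpha(u_g)\in\widehat{R}u_{\theta(g)}$ for \emph{all} $g$; no cocycle superrigidity is needed (and \cite[Theorem A]{Io08} concerns p.m.p.\ actions on probability spaces, so it does not apply to the action on $R$ anyway), and no finite-index loss ever arises. Your detour through a virtual isomorphism $\delta$ and Lemma \ref{permutation lattice} is therefore both unnecessary and problematic: that lemma requires $\Gamma$ to be a product of irreducible lattices, which for a general ICC higher rank lattice holds only up to finite index, whereas the claim demands an honest decomposition $\Gamma=G\times H$; the paper gets this directly from ICC-ness by splitting $\theta(\Gamma_2)=G_1\times H_2$.

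More seriously, the second half of the argument is missing. Your final step asserts that $\Ad(w)\circ\sigma_P$ agrees with the flip $\sigma_{A\rtimes G}$ up to an inner automorphism, so that $\sigma_P\circ\sigma_{A\rtimes G}\in\Inn(\widehat{M})$; this does not follow from the uniqueness of minimal actions \cite{MT06}, since an automorphism of $\widehat{M}$ that preserves $\widehat{R}$ and fixes the canonical unitaries modulo $\widehat{R}$ need not be inner. The paper instead extracts the intertwinings $L(G)\prec_M P$ and $L(H)\prec_M Q$ via a spectral gap argument inside $L=(R_1\rtimes G_1)\ovt(R_2\rtimes H_2)$ (property (T) of $G_1,H_2$ forces central sequences of $L$ into $R_1^{G_1}\ovt R_2^{H_2}$, so the amenable tensor factor $\alpha(R_1)$ must embed there, and one takes commutants), and then upgrades these to actual containments $L(G)\subset P$, $L(H)\subset Q$ by repeated applications of Lemma \ref{minimal intertwining} together with careful bimodule and commutant computations, from which $A$, $B$ and the unitary $u$ are finally read off. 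None of these steps appear in your sketch, and your substitute for them (the weak-mixing dichotomy on $\tau$-orbits and the claimed innerness) is asserted rather than proved; as written, the proposal is a plan whose decisive steps remain open.
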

\begin{proof}[Proof of the claim] Consider the double action of $\widehat{\Gamma}=\Gamma_1 \times \Gamma_2$ on $\widehat{R}=R_1 \ovt R_2$ and let $\widehat{M}=M_1 \ovt M_2=\widehat{R} \rtimes \widehat{\Gamma}$. Since the action of $\Gamma$ on $R$ is compact, then so is the action of $\widehat{\Gamma}$ on $\widehat{R}$. Thus, we have $\sigma_P(\widehat{R}) \prec_{\widehat{M}} \widehat{R}$ by \cite[Theorem 4.16]{BIP18}. By Lemma \ref{conjugacy bernoulli}, we can find a unitary $u \in \mathcal{U}(\widehat{M})$ such that $\alpha(\widehat{R})=\widehat{R}$ where $\alpha=\Ad(u) \circ \sigma_P$. Then there exists $\theta \in \Aut(\Gamma_1 \times \Gamma_2)$ such that $\alpha(u_g) \in \widehat{R}u_{\theta(g)}$ for all $g \in \Gamma_1 \times \Gamma_2$ (where $u_g$ is a canonical unitary in $\widehat{R}\rtimes \widehat{\Gamma}$ for $g\in \widehat{\Gamma}$). Since $\Gamma$ is ICC, we can find a direct product decomposition $\Gamma=G \times H$ such that $\theta(\Gamma_2)=G_1 \times H_2$. This implies that 
$$ \alpha(M_2 \ovt R_1)=\alpha(\widehat{R} \rtimes \Gamma_2)=(R_1 \rtimes G_1) \ovt (R_2 \rtimes H_2)=:L.$$ 
Here $G_1$ and $H_2$ have property (T) so that every central sequence of $L$ lies in $R_1^{G_1} \ovt R_2^{H_2}$. But $\alpha(R_1) \in \mathrm{TF}(L)$ is amenable and therefore we get $\alpha(R_1) \prec_{L} R_1^{G_1} \ovt R_2^{H_2}$ (if not, one can construct a central sequence from $\alpha(R_1)$ which is away from $R_1^{G_1} \ovt R_2^{H_2}$). By taking commutants inside $L$, we get $L(G_1) \ovt L(H_2) \prec_L \alpha(M_2)=u(P_1 \ovt Q_2)u^*$. We conclude that $L(G) \prec_M P$ and $L(H) \prec_M Q$. Now, we will show that, up to exchanging $P,Q$ with equivalent ones in $\mathrm{TF}(M)$ and up to unitary conjugating in $M$, we actually have $L(G) \subset P$ and $L(H) \subset Q$.

By applying Lemma \ref{minimal intertwining} to $G$ acting on $N:=R \rtimes H$, we may assume that $L(G) \subset P$, $Q \subset L(G)' \cap N=R^G \rtimes H$ and $Q\in \mathrm{TF}(N)$. We still have $L(H) \prec_M Q$. We claim that we actually have $L(H) \prec_N Q$. Indeed, as an $N$-$N$-bimodule, we have $\rL^2(M)=\bigoplus_{g \in G} \rL^2(N)u_g$. But $L(H)$ and $Q$ are both fixed by $G$. Thus, we have that ${_{L(H)}} \rL^2(M)_Q$ is unitarily equivalent to a mutiple ${_{L(H)}}\rL^2(N)_Q$. This shows that $L(H) \prec_N Q$. Now, by applying Lemma \ref{minimal intertwining} again to $H$ acting on $R$, up to same equivalences as before, we have a tensor product decomposition $R=C \ovt D$ with $H$ acting trivially on $C$ such that $Q=D \rtimes H$ (but a priori, we no longer have $L(G) \subset P$).  Put $$Z:=L(H)' \cap M= R^H \rtimes G$$ and observe that $Z = P\ovt (L(H)' \cap Q)=P\ovt D^H$ (because $L(H)\subset Q \in \mathrm{TF}(M)$). 
Then since $D^H\in \mathrm{TF}(Z)$ is amenable and $G$ has property (T), the same reasoning as above shows that $D^H \prec_Z L(G)' \cap Z=R^\Gamma$. By taking the commutants inside $Z$, we get $L(G) \prec_Z P$. Now, Lemma \ref{minimal intertwining} implies that there exists a unitary $u \in \mathcal{U}(Z)$ such that $L(G) \subset u P u^*$. Since $Z$ commutes with $L(H-$, we still have $L(H) \subset uQu^*$ and therefore we may assume that $L(G) \subset P$ and $L(H) \subset Q$. 

Since $\Gamma$ is ICC, hence also $G$ and $H$, we have $Q \subset L(G)' \cap M=R^G \rtimes H$. Thus $R \rtimes H=A \ovt Q$ for some $A \subset R \rtimes H$. Since $A$ commutes with $L(H) \subset Q$, we have $A \subset R^H$. In particular, $R=A \ovt B$ for some $B \subset Q$. Then $G$ acts trivially on $B$ and $H$ acts trivially on $A$ and $P=A \rtimes G$ and $Q=B \rtimes H$ as we wanted.
\end{proof} We can now prove items $(\rm i)$, $(\rm ii)$ and $(\rm iii)$ of Theorem \ref{compact minimal rigidity}.

$(\rm i)$ Let $P$ be a tensor factor of $M$ which is not of type $\mathrm{I}$. By the claim, $P$ is unitarily conjugate to a factor of the form $A \rtimes G$ where $A$ must be a hyperfinite $\II_1$ factor. Note that $G \curvearrowright A$ is a compact minimal action. By the uniqueness of minimal actions of compact groups on the hyperfinite $\II_1$ factor \cite{MT06}, we know that $\sigma : G \curvearrowright A$ is conjugate to $\sigma \otimes 1 : G \curvearrowright A \ovt R$. Thus $P \cong A \rtimes G \cong (A \rtimes G) \ovt R$ is McDuff.

$(\rm ii)$ Suppose that $M$ is not semi-prime and write $M=P \ovt Q$ where $P$ and $Q$ are nonamenable. Then by the claim, we can assume that $P=A \rtimes G$ and $Q=B \rtimes H$ where $\Gamma=G \times H$ is a nontrivial direct product decomposition and $R=A \ovt B$. If $P$ or $Q$ is not semi-prime, we can repeat this procedure. This must stop at some point because the length of a direct product decomposition of the lattice $\Gamma$ is bounded by its rank.

$(\rm iii)$ Suppose that we have a tensor product decomposition $M=M_1 \ovt \cdots \ovt M_n$ with $M_i=R_i \rtimes \Gamma_i$ as in item $(\rm ii)$ of the theorem. Let $M=P \ovt Q$ be another tensor product decomposition with $P$ and $Q$ nonamenable. By the claim, we may assume that $P=A \rtimes G$ and $Q=B \rtimes H$ with $\Gamma=G \times H$. Since $\Gamma=\Gamma_1 \times \cdots \times \Gamma_n$ is ICC, we have a decomposition $\Gamma_i=G_i \times H_i$ for all $i \leq n$ with $G=G_1 \times \cdots \times G_n$ and $H=H_1 \times \cdots \times H_n$. Let $K=\overline{\Gamma}$ be the closure of $\Gamma$ in $\Aut(R)$. Then we have $K=\overline{\Gamma_1} \times \cdots \times \overline{\Gamma_n}=\overline{G} \times \overline{H}$. Thus $\overline{\Gamma_i}=\overline{G_i} \times \overline{H_i}$. By the uniqueness of the minimal action of $\overline{\Gamma_i}$ on the hyperfinite $\II_1$ factor, we know that the action $\overline{\Gamma_i} \curvearrowright A_i$ is conjugate to a tensor product of a minimal action of $\overline{G_i}$ and a minimal action of $\overline{H_i}$. Since $R_i \rtimes \Gamma_i$ is semi-prime, this forces either $G_i$ or $H_i$ to be amenable, hence finite because it has property (T), hence trivial because $\Gamma$ is ICC. Since this holds for all $i \in \{1, \cdots, n \}$, we conclude that $G=\times_{i \in I} \Gamma_i$ and $H=\times_{j \in J} \Gamma_j$ for some partition $I \sqcup J=\{1, \dots, n\}$. Now, since $R$ is hyperfinite, it has only one nontrivial tensor product decomposition up to conjugacy by an automorphism. Thus, we can find $\theta \in \Aut(R)$ such that $\theta(A)=\ovt_{i \in I} R_i$ and $\theta(B)=\ovt_{j \in J}R_j$. By the uniqueness of the minimal action of $\overline{G}$ and $\overline{H}$ on the hyperfinite $\II_1$ factor, we may assume that $\theta|_A$ is $G$-equivariant and that $\theta|_B$ is $H$-equivariant, hence that $\theta$ is $\Gamma$-equivariant. We conclude that $\theta$ extends to an automorphism of $M$ such that $\theta(P)=\ovt_{i \in I} M_i$ and $\theta(Q)=\ovt_{j \in J} M_j$.
\end{proof}

\section{Full factors without Unique Prime Factorization}

The goal of this section is to provide examples of full factors which do not satsify the UPF property. We first need to recall some definitions and make some general observations about the notion of spectral gap.

We say that a a unitary representation $\pi : G \rightarrow \mathcal{U}(H)$ has \emph{spectral gap} if it has no almost invariant vectors, or equivalently, if there exists a finite set $K \subset G$ such that $\| \frac{1}{|K|}\sum_{g \in K} \pi(g) \| < 1$. Such a set $K$ will be called a \emph{critical set}.

Following \cite[Section 3]{Po06b}, we say that a representation $\pi : G \rightarrow \mathcal{U}(H)$ has \emph{stable spectral gap} if $\pi \otimes \rho$ has spectral gap for any other representation $\rho : G \rightarrow \mathcal{U}(K)$. It is known that $\pi$ has stable spectral gap if and only if the representation $\pi \otimes \bar{\pi}$ of $G$ on $H \otimes \overline{H}$ has spectral gap \cite[Lemma 3.2]{Po06b}.
\begin{prop} \label{stable gap set}
Let $G \curvearrowright I$ be an action of a group $G$ on a set $I$ and let $\pi : G \curvearrowright \ell^2(I)$ be the associated representation. Then $\pi$ has spectral gap if and only if it has stable spectral gap.
\end{prop}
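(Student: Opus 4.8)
The plan is to prove the only nontrivial implication, namely that spectral gap of $\pi$ forces stable spectral gap, i.e.\ that $\pi \otimes \rho$ has spectral gap for \emph{every} unitary representation $\rho \colon G \to \mathcal U(\cK)$; the converse is immediate by taking $\rho$ to be the trivial one-dimensional representation, for which $\pi \otimes \rho \cong \pi$. The entire content is a ``pointwise norm'' trick that is special to permutation representations: applying the map $v \mapsto \|v\|_{\cK}$ fiberwise sends a $\cK$-valued almost invariant vector for $\pi \otimes \rho$ to a scalar almost invariant vector for $\pi$ of exactly the same norm, because $\rho(g)$ acts isometrically on each fiber. No deformation/rigidity input is needed; this is really just a bookkeeping lemma on permutation representations.

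Concretely, I would first identify $\ell^2(I) \otimes \cK$ with the space $\ell^2(I,\cK)$ of square-summable $\cK$-valued functions on $I$, under which $\pi \otimes \rho$ acts by $[(\pi \otimes \rho)(g)\zeta](i) = \rho(g)\big(\zeta(g^{-1}i)\big)$. Suppose, for contradiction, that $\pi \otimes \rho$ fails to have spectral gap, so that there is a net of unit vectors $(\zeta_j)$ in $\ell^2(I,\cK)$ with $\|(\pi \otimes \rho)(g)\zeta_j - \zeta_j\| \to 0$ for every $g \in G$. Define $\xi_j \in \ell^2(I)$ by $\xi_j(i) = \|\zeta_j(i)\|_{\cK}$; then $\|\xi_j\|_{\ell^2(I)}^2 = \sum_{i \in I} \|\zeta_j(i)\|_{\cK}^2 = \|\zeta_j\|^2 = 1$, so each $\xi_j$ is a unit vector in $\ell^2(I)$.

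The estimate then closes in one line. Fix $g \in G$. Since $\rho(g)$ is isometric on $\cK$ one has $(\pi(g)\xi_j)(i) = \|\zeta_j(g^{-1}i)\|_{\cK} = \|\rho(g)\zeta_j(g^{-1}i)\|_{\cK}$, so by the reverse triangle inequality
$$\|\pi(g)\xi_j - \xi_j\|^2 \;=\; \sum_{i \in I} \big| \, \|\zeta_j(g^{-1}i)\|_{\cK} - \|\zeta_j(i)\|_{\cK} \, \big|^2 \;\leq\; \sum_{i \in I} \big\| \rho(g)\zeta_j(g^{-1}i) - \zeta_j(i) \big\|_{\cK}^2 \;=\; \|(\pi \otimes \rho)(g)\zeta_j - \zeta_j\|^2 \;\longrightarrow\; 0 .$$
Hence $(\xi_j)$ is a net of unit vectors in $\ell^2(I)$ that is almost invariant under $\pi$, contradicting the assumed spectral gap of $\pi$. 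Therefore $\pi \otimes \rho$ has spectral gap, and since $\rho$ was arbitrary, $\pi$ has stable spectral gap.

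There is essentially no obstacle here; the only thing to watch is that $(\pi \otimes \rho)(g)$ simultaneously permutes the index set $I$ (by the permutation implementing $\pi$) and rotates each fiber by $\rho(g)$ — after taking fiberwise norms the fiber rotation disappears and exactly $\pi(g)$ remains, which is why the estimate is so short. If one wishes to invoke less, one may instead treat only the case $\rho = \bar\pi$ and appeal to \cite[Lemma 3.2]{Po06b}, observing that $\ell^2(I) \otimes \overline{\ell^2(I)}$ is the permutation representation attached to the diagonal action $G \curvearrowright I \times I$; but the fiberwise argument above already disposes of all $\rho$ uniformly and needs no external result.
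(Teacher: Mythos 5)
Your proof is correct and follows essentially the same route as the paper: identify $\ell^2(I)\otimes\mathcal K$ with $\ell^2(I,\mathcal K)$ and apply the fiberwise norm map, which sends almost invariant vectors for $\pi\otimes\rho$ to almost invariant vectors for $\pi$ of the same norm (the paper attributes this trick to Choda and leaves the reverse-triangle-inequality estimate implicit, which you spell out).
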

\begin{proof}
This is the idea of \cite{Ch81}. Suppose that $\pi$ has spectral gap and let $\rho : G \curvearrowright H$ be any unitary representation of $G$. Let $\xi_n \in \ell^2(I) \otimes H$ be a sequence of $\pi \otimes \rho$ almost invariant vectors. View each $\xi_n$ as a function $\xi_n : I \ni  i \mapsto \xi_n(i) \in H$. Define a sequence $\eta_n \in \ell^2(I)$ by $\eta_n(i)=\|\xi_n(i)\|$ for all $i \in I$. Then $(\eta_n)_{n \in \N}$ is a sequence of almost invariant vectors for $\pi$. Thus $\| \xi_n\|=\|\eta_n\| \to 0$ when $n \to \infty$. This shows that $\pi \otimes \rho$ has spectral gap. 
\end{proof}

When the representation $\pi$ in Proposition \ref{stable gap set} has (stable) spectral gap, we will simply say that the action $G \curvearrowright I$ has spectral gap. Recall that an ICC group $G$ is \emph{non-inner amenable} \cite{Ef73} if and only if its action on itself by conjugation $\Ad : \Gamma \curvearrowright \Gamma \setminus \{1\}$ has spectral gap.

%Let $\Gamma \curvearrowright I$ be a discrete group acting on a set $I$. We say that this action is \emph{amenable} if the associated representation of $\Gamma$ on $\ell^2(I)$ has almost invariant vectors (i.e.\ does not have spectral gap). This is equivalent to the existence of almost $\Gamma$-invariant vectors in $\ell^1(I)^+$, or equivalently of a $\Gamma$-invariant mean on $I$. By definition, an ICC group $\Gamma$ is inner amenable if the adjoint action $\Gamma \curvearrowright \Gamma \setminus \{1 \}$ (action by conjugation) is amenable.

Similarly, we say that an action of a group $G$ on a $\II_1$ factor $M$ has (stable) spectral gap if the associated Koopman representation of $G$ on $\rL^2(M) \ominus \C$ has (stable) spectral gap. We denote by $\Ad : \mathcal{U}(M) \curvearrowright M$ the canonical action of $\mathcal{U}(M)$ on $M$ by inner automorphisms. By \cite{Co74}, $M$ is full if and only if the action $\Ad$ has spectral gap. The next result shows that it actually has stable spectral gap, like in the group case. The proof is essentially due to A.\ Ioana. We warmly thank him for allowing us to reproduce it here.

\begin{thm} \label{full stable gap}
Let $M$ be a full $\II_1$ factor. Then the adjoint action $\Ad : \mathcal{U}(M) \curvearrowright M$ has stable spectral gap.
\end{thm}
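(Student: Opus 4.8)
The plan is to relate the stable spectral gap of the adjoint action $\Ad : \mathcal U(M) \curvearrowright M$ to a spectral gap property of a genuine unitary group representation of the type covered by Proposition \ref{stable gap set}, namely a representation coming from an \emph{action on a set}. The point is that the Koopman representation of $\mathcal U(M)$ on $\rL^2(M) \ominus \C$ is not literally a permutation representation, so one cannot apply Proposition \ref{stable gap set} directly; the trick (due to Ioana) is to \emph{discretize}: approximate a central sequence in the von Neumann algebra by combinatorial data living in a countable index set on which $\mathcal U(M)$ acts, at the cost of an $\varepsilon$. Concretely, I would first recall from \cite{Po06b} that it suffices to show that $\rL^2(M) \ominus \C$ has stable spectral gap, which by \cite[Lemma 3.2]{Po06b} is equivalent to the representation $\pi \otimes \bar\pi$ on $(\rL^2(M) \ominus \C) \otimes \overline{(\rL^2(M)\ominus \C)}$ having spectral gap. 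Equivalently, using the standard identification of Hilbert--Schmidt operators, it is equivalent to the conjugation action of $\mathcal U(M)$ on $\rL^2(M \ovt M\op) \ominus \C$ (i.e.\ on the Hilbert--Schmidt class on $\rL^2(M)$), or on $\rL^2(M) \otimes \overline{\rL^2(M)}$, having spectral gap.

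Next I would set up the discretization. Fix a critical set $K = \{u_1,\dots,u_k\} \subset \mathcal U(M)$ witnessing the (ordinary) spectral gap of $\Ad$ on $\rL^2(M) \ominus \C$, say $\|\frac1k\sum_i \Ad(u_i)\| \le 1-\delta$ on $\rL^2(M)\ominus\C$. Suppose for contradiction that $\pi\otimes\bar\pi$ has almost invariant vectors; realize these as a sequence $(T_n)$ of Hilbert--Schmidt operators on $\rL^2(M)$ with $\|T_n\|_2 = 1$, $\|u_i T_n u_i^* - T_n\|_2 \to 0$ for each $i$, and $\tau(T_n) \to 0$ (subtracting off the trace to stay in the orthogonal complement of $\C$). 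Passing to the finite set $\{u_1,\dots,u_k\}$, the key is that a Hilbert--Schmidt operator is a norm-limit of finite-rank operators, and a finite-rank operator is built from finitely many vectors of $\rL^2(M)$; approximating each such vector in $\|\cdot\|_2$ by an element of $M$ (which is dense), one arranges that, up to an error that is uniformly small in $n$, $T_n$ is replaced by an operator $S_n$ lying in the algebraic tensor $M \odot M\op$ of bounded rank, with $\|S_n\|_2$ bounded below and $\|u_i S_n u_i^* - S_n\|_2$ still small. The essential combinatorial input is then that for such ``polynomially bounded'' data, the relevant quantities only involve finitely many noncommutative monomials, and one can regard the approximant as an almost invariant vector for the conjugation action of $\mathcal U(M)$ on a separable Hilbert space of the form $\ell^2(J)$ where $J$ is a countable set on which $\mathcal U(M)$ acts — one builds $J$ from a countable weakly dense subgroup $G_0 \le \mathcal U(M)$ containing $u_1,\dots,u_k$ and from a countable $\|\cdot\|_2$-dense $G_0$-stable subset of $M$, letting $G_0$ act by left/right multiplication; here Proposition \ref{stable gap set} applies and upgrades spectral gap to stable spectral gap.

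The main obstacle — and the heart of Ioana's argument — is making the discretization step quantitatively honest: one must show that the ordinary spectral gap of $\Ad$, which a priori only controls vectors in $\rL^2(M)\ominus\C$, forces spectral gap for $S_n \in M \odot M\op$ viewed inside $\ell^2(J)$, \emph{without} the approximation errors accumulating as $n\to\infty$. This requires a uniform bound: the rank of the approximant, and the number of monomials used, must be controlled independently of $n$, which one gets by a standard finite-dimensional reduction (cut down $T_n$ to the spectral projection of $|T_n|$ above a fixed threshold). Once that uniformity is in hand, the chain of reductions closes: $\Ad$ has spectral gap on $\rL^2(M)\ominus\C$ $\Rightarrow$ the associated permutation-type representation on $\ell^2(J)$ has spectral gap $\Rightarrow$ (by Proposition \ref{stable gap set}) it has stable spectral gap $\Rightarrow$ $\pi\otimes\bar\pi$ has spectral gap $\Rightarrow$ (by \cite[Lemma 3.2]{Po06b}) $\Ad$ has stable spectral gap, contradicting the assumed existence of almost invariant vectors for $\pi\otimes\bar\pi$. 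I would also remark that the separability of $M_*$ is used only to pass to the countable dense subgroup $G_0$ and the countable dense $G_0$-orbit in $M$, and that $M$ being a finite (tracial) factor is what makes the Hilbert--Schmidt picture and the truncation argument available.
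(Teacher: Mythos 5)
There is a genuine gap, and it sits exactly where you place the ``heart'' of the argument: the discretization step is not merely technically delicate, it is unjustified in principle. Proposition \ref{stable gap set} concerns permutation representations, i.e.\ representations in which an orthonormal basis is permuted; its proof (taking $\eta_n(i)=\|\xi_n(i)\|$) uses this orthogonality essentially. Your set $J$, a countable $\|\cdot\|_2$-dense $G_0$-invariant subset of $M$, carries no such structure: in $\ell^2(J)$ distinct points of $J$ are declared orthonormal, whereas in $\rL^2(M)$ they are certainly not, and there is no bounded $G_0$-equivariant map in either direction relating $\ell^2(J)$ to $\rL^2(M)\ominus\C$ or to the Hilbert--Schmidt space. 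Consequently neither of the two implications you need --- that spectral gap of $\Ad$ forces spectral gap of the permutation representation on $\ell^2(J)$, and that stable spectral gap of $\ell^2(J)$ forces spectral gap of $\pi\otimes\bar\pi$ --- follows from the construction; approximating a finite-rank $T_n$ by elements of $M\odot M\op$ with entries in $J$ does not produce an almost invariant vector of comparable norm in $\ell^2(J\times J)$, no matter how uniformly the rank is controlled. (This is precisely why the group case is special: in $L(\Gamma)$ the canonical unitaries form an orthonormal basis permuted by conjugation, and no analogue exists for a general full $\II_1$ factor.) A further symptom that the reduction cannot work as stated is that your scheme never uses the nonamenability of $M$ beyond fullness, nor any genuinely $\II_1$-factor input, whereas stable spectral gap requires both.

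For contrast, the paper's proof (due to Ioana) is not a discretization at all. Given a net $(\xi_i)$ in $H\otimes H$, $H=\rL^2(M)\ominus\C$, almost invariant under $\Ad(u)\otimes\Ad(u)$, one forms the norm-closed left ideal $I=\{T\in\B(\rL^2(M\ovt M)) : \|T\xi_i\|\to 0\}$. Differentiating $e^{\ri th}\otimes e^{\ri th}$ shows $x\otimes 1+1\otimes x$ lies in the C$^*$-algebra generated by $\{u\otimes u\}$, whence $1+u\otimes u^*-uJuJ\otimes 1-u\otimes JuJ\in I$ for every unitary $u$. One then averages over three finitely supported measures on $\mathcal{U}(M)$: $\|\mathbb{E}_{\mu_1}(u\otimes u^*)\|\le 1-\varepsilon$ (this is where the $\II_1$ factor structure enters, via Lemma \ref{norm estimate}), $\|\mathbb{E}_{\mu_2}(uJuJ)\|_{\B(H)}<\varepsilon/2$ (fullness), and $\|\mathbb{E}_{\mu_3}(u\otimes JuJ)\|<\varepsilon/2$ (nonamenability). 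The resulting element of $I$ is a perturbation of $1$ of norm $<1$ away from it, hence invertible, so $1\in I$ and $\xi_i\to 0$. If you want to salvage your outline, you would have to replace the passage through $\ell^2(J)$ by an argument of this operator-algebraic kind; as written, the reduction to Proposition \ref{stable gap set} does not go through.
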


\begin{lem} \label{norm estimate}
Let $M$ be a $\II_1$ factor. Then there exists unitaries $u_1, \dots, u_n \in \mathcal{U}(M)$ such that
$$ \left \| \frac{1}{n} \sum_{k=1}^n u_k \otimes u_k^* \right \| < 1$$
\end{lem}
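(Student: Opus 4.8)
The norm $\|\cdot\|$ here is the $C^*$-norm of the von Neumann tensor product $M \ovt M$ (equivalently, the operator norm of left multiplication on $L^2(M \ovt M)$). The plan is to reduce the statement to an elementary computation inside a finite-dimensional subfactor, using that the quantity in question can only decrease when passing to a subalgebra.

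First I would recall that since $M$ is a $\II_1$ factor it contains a unital copy of $M_2(\C)$: choose a projection $p \in M$ with $\tau(p) = \tfrac12$ and a partial isometry $v \in M$ with $v^*v = p$ and $vv^* = 1 - p$, and let $N \subset M$ be the unital subfactor generated by the matrix units $e_{11} = p$, $e_{22} = 1-p$, $e_{12} = v^*$, $e_{21} = v$, so that $N \cong M_2(\C)$. Then $N \ovt N \cong M_2(\C) \otimes M_2(\C)$ is a finite-dimensional, hence $C^*$-, subalgebra of $M \ovt M$; since $*$-homomorphisms of $C^*$-algebras are isometric, the norm of any element of $N \ovt N$ computed in $M \ovt M$ coincides with its norm as a $4 \times 4$ matrix. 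So it is enough to exhibit the required unitaries inside $N$.

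Inside $N$ I would take $n = 4$ and let $u_1 = 1$, $u_2 = e_{12} + e_{21}$, $u_3 = \ri(e_{21} - e_{12})$ and $u_4 = e_{11} - e_{22}$ be the four Pauli unitaries; each is self-adjoint, so $u_k^* = u_k$. A direct check on the four basis vectors of $\C^2 \otimes \C^2$ gives $\sum_{k=1}^4 u_k \otimes u_k = 2\,\mathrm{Swap}$, where $\mathrm{Swap} \in M_2(\C) \otimes M_2(\C)$ denotes the flip unitary. Hence $\tfrac14 \sum_{k=1}^4 u_k \otimes u_k^* = \tfrac12\,\mathrm{Swap}$, which has norm $\tfrac12 < 1$; by the previous paragraph this is also its norm in $M \ovt M$, and the lemma follows with $n=4$. (For a better constant one may instead use the $d^2$ Weyl unitaries $X^aZ^b$ of a unital copy of $M_d(\C) \subset M$, which form an operator basis orthonormal for the normalized trace and satisfy $\tfrac{1}{d^2}\sum_{a,b} u_{a,b} \otimes u_{a,b}^* = \tfrac1d\,\mathrm{Swap}$, so the average can be made arbitrarily small in norm at the expense of a larger $n$.)

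I do not expect any real obstacle: the only points are the observation that the estimate is inherited from a matrix subalgebra and the classical identity expressing the flip operator as an average of the operators $u \otimes u^*$ over a unitary operator basis. The sole bit of care needed is that the inclusion $N \ovt N \hookrightarrow M \ovt M$ is isometric, which is immediate since $N$ is finite-dimensional.
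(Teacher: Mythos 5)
Your proposal is correct, and it proves the lemma by a genuinely different (and more constructive) route than the paper. You produce explicit unitaries: a unital copy $N\cong M_2(\C)$ inside the $\II_1$ factor $M$ and the four Pauli unitaries $1, e_{12}+e_{21}, \ri(e_{21}-e_{12}), e_{11}-e_{22}$, for which the classical identity $\tfrac14\sum_{k=1}^4 u_k\otimes u_k^*=\tfrac12\,\mathrm{Swap}$ holds in $N\otimes N\cong M_4(\C)$; since the unital inclusion of the finite-dimensional C$^*$-algebra $N\otimes N$ into $M\ovt M$ is isometric, the norm in $M\ovt M$ is exactly $\tfrac12<1$, with $n=4$. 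The paper instead argues by contradiction: if no finite family of unitaries worked, one would get a net of unit vectors $\xi_i\in\rL^2(M)\ovt\rL^2(M)$ with $(u\otimes u^*)\xi_i-\xi_i\to 0$ for every unitary $u$, hence $(a\otimes 1-1\otimes a)\xi_i\to 0$ for all $a\in M$ and then $\bigl((ab-ba)\otimes 1\bigr)\xi_i\to 0$; choosing $a,b$ with $ab-ba$ invertible (possible in a $\II_1$ factor) yields the contradiction. Your argument buys an explicit quantitative bound (norm $\tfrac12$ with $n=4$, improvable to $1/d$ via a Weyl unitary basis of $M_d(\C)$, as you note) and applies verbatim to any von Neumann algebra containing a unital copy of $M_2(\C)$, not just $\II_1$ factors; the paper's argument is soft and non-constructive, requiring no explicit unitaries, and isolates the structural input it uses (an invertible commutator). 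Both interpretations of the norm agree — it is the C$^*$-norm in $M\ovt M$, i.e.\ the operator norm on $\rL^2(M)\otimes\rL^2(M)$, which is also what the paper's proof uses — and your stronger explicit estimate is more than enough for the application in Theorem \ref{full stable gap}.
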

\begin{proof}
If not, then there exists a net of unit vectors $(\xi_i)_{i \in I}$ in $\rL^2(M) \ovt \rL^2(M)$ such that $(u \otimes u^*) \xi_i - \xi_i \to 0$ for all $u \in \mathcal{U}(M)$, or equivalently $(a \otimes 1 - 1 \otimes a)\xi_i \to 0$ for all $a \in M$. It is easy to see that this implies that $(ab \otimes 1- ba \otimes 1)\xi_i \to 0$ for all $a,b \in M$. But since $M$ is a $\II_1$ factor, we can find $a,b \in M$ such that $ab-ba$ is invertible.
\end{proof}

\begin{proof}[Proof of Theorem \ref{full stable gap}]
Let $H=\rL^2(M) \ominus \C$ and we will show that the action $\Ad\otimes \overline{\Ad}$ on $H\otimes \overline{H}$ has spectral gap. Since $\overline{\Ad}$ is canonically identified with $\Ad$, we consider the action $\Ad\otimes\Ad$ on $H\otimes H$. 
Let $(\xi_i)_{i}$ be a net of vectors in $H \otimes H$ such that $$\lim_i \| (u \otimes u)\xi_i (u \otimes u)^* - \xi_i \|=0$$ for all $u \in \mathcal{U}(M)$. Let $I=\{T \in \B(\rL^2(M \ovt M)) \mid \lim_i \| T \xi_i \|=0 \}$. Observe that $I$ is a norm closed left ideal. Thus we have $a-Ja^*J \in I$ for every $a$ in the $C^*$-algebra $A$ generated by $\{ u \otimes u \mid u \in \mathcal{U}(M) \} \subset M \ovt M$, where $J$ is the canonical conjugation on $\rL^2(M)\otimes \rL^2(M)$. For every $h=h^* \in M$, we have $$h \otimes 1+1 \otimes h = \lim_{t \to 0} \frac{1}{\ri t}(e^{\ri t h} \otimes e^{\ri t h}-1) \in A.$$
By taking linear combinations, we get $x \otimes 1 + 1 \otimes x \in A$ for all $x \in M$. Let $u \in \mathcal{U}(M)$ and $a=u \otimes 1 + 1 \otimes u$. Since $a \in A$, we have $a^*-JaJ \in I$ hence
$$ 1+u \otimes u^* - uJuJ \otimes 1 - u \otimes JuJ=(u \otimes 1)(a^*-JaJ) \in I.$$
By Lemma \ref{norm estimate}, by regarding $\mathcal{U}(M)$ as a discrete group, we can find a probability distribution $\mu_1 \in \ell^{1}(\mathcal{U}(M))^+$ with finite support such that $$ \| \mathbb{E}_{\mu_1}( u \otimes u^*) \|_{\B(H \otimes H)} =1- \varepsilon$$ for some $\varepsilon > 0$, where we used the notation
$$\mathbb{E}_{\mu_1}(u\otimes u^*) :=  \sum_{u\in \mathcal{U}(M)} (u\otimes u^*) \, \mu_1(u).$$
Note that this implies that $$ \| \mathbb{E}_{\nu \ast \mu_1}( u \otimes u^*) \|_{\B(H \otimes H)}  \leq 1- \varepsilon $$ for any other $\nu \in \ell^{1}(\mathcal{U}(M))^+$. 
Since $M$ is full, the representation $\Ad$ has spectral gap \cite{Co75}, so we can find $\mu_2 \in \ell^{1}(\mathcal{U}(M))^+$ such that $$ \| \mathbb{E}_{\mu_2}( u JuJ ) \|_{\B(H) }< 1.$$ 
Since $\mathcal U(M)\ni u \mapsto uJuJ$ is a representation, if we replace $\mu_2$ by $\mu_2^{\ast n}$ for some $n$ large enough, we can actually assume that $$ \| \mathbb{E}_{\mu_2}( u JuJ ) \|_{\B(H) } < \frac{\varepsilon}{2}.$$ Similarly, since $M$ is nonamenable, the representation $\mathcal U(M)\ni u \mapsto u\otimes JuJ$ has spectral gap, so we can find $\mu_3$ such that $$\|  \mathbb{E}_{\mu_3}( u \otimes JuJ ) \|_{\B(H \otimes H) }  < \frac{\varepsilon}{2}.$$ Finally, by letting $\mu=\mu_3 \ast \mu_2 \ast \mu_1$ and $f(u) = u \otimes u^* - uJuJ \otimes 1 - u \otimes JuJ$ for $u\in \mathcal{U}(M)$, we obtain
$$ \left \| \mathbb{E}_{\mu}( f(u) ) \right \|_{\B(H \otimes H) } < (1-\varepsilon) +\frac{\varepsilon}{2} + \frac{\varepsilon}{2} = 1.$$ 
Thus $1 + \mathbb{E}_{\mu}( f(u) ) $ is invertible. But $1 + \mathbb{E}_{\mu}( f(u) ) \in I$  because $1+f(u) \in I$ for all $u \in \mathcal{U}(M)$. Since $I$ is a left ideal, we conclude that $1 \in I$, i.e.\ $\lim_i \| \xi_i \|=0$ as we wanted.
\end{proof}

The next theorem provides a class of full $\II_1$ factors without the Unique Prime Factorization property. In fact, these factors have infinitely many tensor product decompositions up to stable unitary conjugacy. Under some assumptions (see \cite{Is16} for the definition of \emph{strong primeness} and examples), these tensor product decompositions can still be completely classified.

\begin{thm}
Let $M$ be a $\II_1$ factor and $G$ an ICC group. Let $\sigma_0 : G \curvearrowright M$ be an action by inner automorphisms and consider the diagonal action $\sigma=\sigma_0^{\N} : G \curvearrowright M^{\ovt \N}$. Let $N=M^{\ovt \N} \rtimes_{\sigma} G$. Then the following properties are satisfied:
\begin{itemize}
\item [$(\rm i)$] For every finite subset $F \subset \N$, $M^{\ovt F} \subset M^{\ovt \N}$ is a tensor factor of $N$ whose relative commutant is isomorphic to $N$. The tensor factors $M^{\ovt F}$ are pairwise not stably unitarily conjugate.
\item [$(\rm ii)$] If $G$ is non-inner amenable and $\sigma_0$ has stable spectral gap then $N$ is a full factor.
\item [$(\rm iii)$] If moreover $M$ is strongly prime and $G$ is a hyperbolic group, then for every $P \in \mathrm{TF}(N)$, there exists a finite subset $F \subset \N$ such that $P \sim M^{\ovt F}$ or $P^c \sim M^{\ovt F}$. In particular, $N$ does not admit any prime factorization.
\end{itemize}
\end{thm}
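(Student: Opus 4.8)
The plan is to handle the three items in turn; item (iii) carries all the weight.

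\textbf{Item (i).} Fix unitaries $v_g\in\mathcal{U}(M)$ with $\sigma_0(g)=\Ad(v_g)$; then $\sigma$ restricted to $M^{\ovt F}$ is implemented by $w_g:=v_g^{\otimes F}\in\mathcal{U}(M^{\ovt F})$. A direct computation shows that $w_g^\ast u_g$ commutes with $M^{\ovt F}$, acts on $M^{\ovt(\N\setminus F)}$ exactly as $\sigma$ does, and that $M^{\ovt F}$ together with $M^{\ovt(\N\setminus F)}$ and the $w_g^\ast u_g$ generate $N$; hence $M^{\ovt F}\in\mathrm{TF}(N)$ with relative commutant $W^\ast\!\big(M^{\ovt(\N\setminus F)},\,\{w_g^\ast u_g\}\big)$, which is again a crossed product of a copy of $M^{\ovt\N}$ by $G$ along the same diagonal action (one absorbs the scalar $2$-cocycle produced by the $w_g$ into $M^{\ovt\N}$ using spare tensor factors), hence isomorphic to $N$. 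For the non-equivalence, I would use the flip calculus: for commuting tensor factors $\sigma_{P\ovt Q}=\sigma_P\circ\sigma_Q$, and the $\sigma_{M^{\ovt S}}$ are commuting involutions of $\widehat{N}$, so for $F_1\neq F_2$ one gets $\sigma_{M^{\ovt F_1}}\circ\sigma_{M^{\ovt F_2}}=\sigma_{M^{\ovt S}}$ with $S=(F_1\setminus F_2)\cup(F_2\setminus F_1)\neq\emptyset$; since $M^{\ovt S}$ is a non-type-$\mathrm{I}$ factor, $\sigma_{M^{\ovt S}}\notin\Inn(\widehat{N})$, so $M^{\ovt F_1}\nsim M^{\ovt F_2}$ by Proposition \ref{tensor lemma for stably unitary conjugacy}.

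\textbf{Item (ii).} I would show directly that $\Ad\colon\mathcal{U}(N)\curvearrowright N$ has spectral gap. Decompose $\rL^2(N)\ominus\C=\big((\rL^2(M^{\ovt\N})\ominus\C)\otimes\delta_e\big)\oplus\big(\rL^2(M^{\ovt\N})\otimes\ell^2(G\setminus\{e\})\big)$; then $\Ad(u_h)$ acts on the first summand as the Koopman operator $\sigma_h$ and on the second as $\sigma_h\otimes c_h$, where $c$ is the conjugation representation on $\ell^2(G\setminus\{e\})$. Because $c$ is a permutation representation and $G$ is non-inner-amenable, $c$ has \emph{stable} spectral gap by Proposition \ref{stable gap set}, hence retains a fixed critical set after tensoring with the $\sigma_0^\N$-Koopman representation of $\rL^2(M^{\ovt\N})$; and because $\sigma_0$ has stable spectral gap, the Cauchy--Schwarz estimate underlying \cite[Lemma 3.2]{Po06b} gives that $\sigma_0^{\otimes k}$ has spectral gap with a critical set independent of $k\geq1$, so the Koopman representation of $\sigma_0^\N$ on $\rL^2(M^{\ovt\N})\ominus\C$ has spectral gap. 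Convolving the two critical distributions produces one that is critical on both summands simultaneously, so $N$ is full. (Equivalently: a centralizing net $(x_i)$ in $N$ satisfies $\|[x_i,u_h]\|_2\to0$; non-inner-amenability forces $\sum_{g\neq e}\|(x_i)_g\|_2^2\to0$, so $x_i\approx\rE_{M^{\ovt\N}}(x_i)$, and the reduced net is $\sigma$-almost-invariant, so the spectral gap of the $\sigma_0^\N$-Koopman representation makes it scalar plus $o(1)$.)

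\textbf{Item (iii).} Let $P\in\mathrm{TF}(N)$ and set $B=M^{\ovt\N}$. \emph{Step A.} Since $G$ is hyperbolic, hence bi-exact, $N=B\rtimes G$ is bi-exact relative to $B$, so in the decomposition $N=P\ovt P^c$ one of $P,P^c$ is amenable relative to $B$; as its normalizer in $N$ is all of $N$, which is not amenable relative to $B$, the relative solidity dichotomy of Ozawa--Popa forces it into $B$, so $P\prec_N B$ or $P^c\prec_N B$ (alternatively this step runs via Popa's spectral gap rigidity applied to the quasi-cocycle deformation of $B\rtimes G$, using that $P^c$, resp.\ $P$, is full). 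Say $P\prec_N B$. \emph{Step B.} Since $B=\overline{\bigcup_k M^{\ovt k}}$ and the conditional expectations $\rE_{M^{\ovt k}}\circ\rE_B$ converge to $\rE_B$, Proposition \ref{convergence weak containment} gives $\rL^2(P)\prec\bigoplus_k\rL^2\langle N,M^{\ovt k}\rangle$ as $P$-bimodules; each $M^{\ovt k}$ is weakly bicentralized in $N$ (Proposition \ref{proposition weakly bicentralized}, since its commutant is a factor), so the right-hand side is weakly contained in a multiple of $\rL^2(P)$; as $P$ is full, Proposition \ref{full bimodule}—exactly as in Lemma \ref{key rigidity lemma}—yields $P\prec_N M^{\ovt k}$ for some finite $k$. \emph{Step C.} By Proposition \ref{tensor lemma2}, $P\sim_N D$ for some $D\in\mathrm{TF}(M^{\ovt k})\subset\mathrm{TF}(N)$; since $M$ is strongly prime, $M^{\ovt k}$ has the Unique Prime Factorization property with the $k$ copies of $M$ as prime factors \cite{Is16}, so $D\sim_{M^{\ovt k}}M^{\ovt F}$ for some $F\subset\{1,\dots,k\}$, hence $D\sim_N M^{\ovt F}$ and $P\sim_N M^{\ovt F}$. (If instead $P^c\prec_N B$, apply Steps B--C to $P^c$.) This proves the displayed assertion.

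\textbf{Conclusion and main obstacle.} The preceding analysis identifies $\mathrm{TF}(N)/{\sim}$ with $\{[M^{\ovt F}]:F\text{ finite}\}\cup\{[(M^{\ovt F})^c]:F\text{ finite}\}$, which is infinite (item (i)). If $N=N_1\ovt\cdots\ovt N_m$ with every $N_i$ prime and not of type $\mathrm{I}$, then—since $(M^{\ovt F})^c$ is not stably prime (its $\mathrm{TF}/{\sim}$ is infinite) while $M^{\ovt F}$ is prime only when $|F|=1$—each $N_i\sim M^{\ovt F_i}$ with $|F_i|=1$, whence $\sigma_N\equiv\sigma_{M^{\ovt T}}\pmod{\Inn(\widehat{N})}$ for a finite set $T$, i.e.\ $N\sim M^{\ovt T}$; but then $\mathrm{TF}(N)/{\sim}$ would be finite, a contradiction, so $N$ admits no prime factorization. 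The main obstacle is Step A of item (iii): extracting, from the hyperbolicity of $G$, the dichotomy $P\prec_N B$ or $P^c\prec_N B$—this is where the genuine $C^*$-algebraic and deformation/rigidity input lives. The two substantive black boxes are precisely this relative bi-exactness/solidity machinery (Step A) and the fact that strongly prime factors satisfy UPF for finite tensor products (Step C); items (i), (ii) and the rest of (iii) are essentially formal given the tools already developed in the paper.
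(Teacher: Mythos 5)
Your proposal follows the paper's own proof quite closely in overall architecture. For (ii) you are essentially reproving Choda's criterion, which the paper simply cites as \cite{Ch81}: the two inputs are identical (spectral gap of $\bigoplus_{F}\pi_0^{\otimes F}$ from stable spectral gap of $\pi_0$ --- the paper gets this in one line by writing the sum as $\pi_0\otimes\pi'$ --- and stable spectral gap of the conjugation representation via Proposition \ref{stable gap set}), and your decomposition of $\rL^2(N)\ominus\C$ is exactly how \cite{Ch81} works, so this is fine. For (iii), your Steps B and C are the paper's argument essentially verbatim (Proposition \ref{convergence weak containment}, weak bicentralizedness of the blocks, fullness of $P$ fed into Lemma \ref{key rigidity lemma}, then strong primeness of $M$ via \cite{Is16}); only Step A differs in presentation: the paper cites the proof of \cite[Theorem C and Proposition 7.1(1)]{Is16} for the dichotomy $P\prec_N B$ or $P^c\prec_N B$, while you reconstruct it from bi-exactness plus what is really Popa--Vaes relative strong solidity for hyperbolic groups (``relatively amenable with non-relatively-amenable normalizer forces embedding into $B$''); that is a legitimate black box of the same nature, but be aware that plain bi-exactness of $G$ is not by itself the statement you use, and the commuting-subalgebra dichotomy over a completely arbitrary base $B$ is exactly what the cited results of \cite{Is16} are calibrated for. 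Two places where you supply more than the paper writes, and correctly: the flip computation $\sigma_{M^{\ovt F_1}}\sigma_{M^{\ovt F_2}}=\sigma_{M^{\ovt S}}$ giving pairwise inequivalence of the $M^{\ovt F}$ via Proposition \ref{tensor lemma for stably unitary conjugacy}, and the closing deduction that the classification of $\mathrm{TF}(N)/{\sim}$ rules out any prime factorization (the paper leaves this ``in particular'' to the reader).

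The one step that would fail as you describe it is the parenthetical in (i) about the relative commutant being isomorphic to $N$. The relative commutant of $M^{\ovt F}$ is the twisted crossed product $M^{\ovt(\N\setminus F)}\rtimes_{\sigma,\bar c}G$ with $\bar c=\bar c_0^{\,|F|}$, where $c_0(g,h)=v_{gh}^*v_gv_h$. Your proposed absorption of this scalar $2$-cocycle ``into $M^{\ovt\N}$ using spare tensor factors'' goes the wrong way: if you correct the twisted unitaries $\lambda_g$ by the implementing unitaries $v_g^{\otimes F'}$ of a spare finite block $F'$, the new canonical unitaries have cocycle $\bar c_0^{\,|F|+|F'|}$, not $1$ --- block corrections compound the twist rather than cancel it, because any unitaries of a factor implementing $\sigma_g$ on that factor carry the same obstruction class. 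So the absorption as stated is not a proof; it is harmless exactly when the implementing unitaries can be chosen to form a genuine homomorphism $g\mapsto v_g$ (trivial obstruction class), which holds in both of the paper's examples ($v_g=\lambda_g$ for $M=L(G)$, and the free-group example, where one may define the action on free generators), but in the generality of the statement one would need a separate argument that $M^{\ovt\N}\rtimes_{\sigma,\bar c_0^{\,k}}G\cong N$. To be fair, the paper's own one-sentence proof of (i) does not address this cocycle issue either, so you are no worse off than the authors here; but you should flag it as an assumption (or restrict to multiplicative implementing unitaries) rather than present the ``spare tensor factor'' absorption as a mechanism, since the computation shows that mechanism does the opposite of what you want.
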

\begin{proof}
$(\rm i)$ This follows from the assumption that $\sigma_0$ is inner, which implies that $\sigma$ is also inner on every $M^{\ovt F}$ where $F \subset \N$ is a finite subset.

$(\rm ii)$ By \cite{Ch81}, it is enough to show that $\sigma$ has spectral gap. Let $H=\rL^2(M) \ominus \C$. Let $\pi_0 : G \rightarrow H$ the representation associated to $\sigma_0$. Then the representation associated to $\sigma$ is $\pi=\bigoplus_{F \subset \N} \pi_0^{\otimes F}$ on $\bigoplus_{F \subset \N} H^{\otimes F}$. In particular, $\pi$ is equivalent to $\pi_0 \otimes \pi'$ for some representation $\pi'$. Since $\pi_0$ has stable spectral gap, we conclude that $\pi$ has spectral gap.

$(\rm iii)$ Let $B=M^{\ovt \N}$. By the proof of \cite[Theorem C and Proposition 7.1.(1)]{Is16}, we have $P \prec_N B$ or $P^c \prec_N B$. Assume without loss of generality that $P \prec_N B$. Then $\rL^2(P)$ is contained in ${_P}\rL^2\langle N,B \rangle_P$. Since $B$ is the increasing union of $M^{\ovt F}$ over finite subsets $F \subset \N$, Proposition \ref{convergence weak containment} implies
$$\rL^2 \langle N, B \rangle  \prec  \bigoplus_{F} \rL^2 \langle N, M^{\ovt F} \rangle $$
as $P$-$P$-bimodules.
By Lemma \ref{key rigidity lemma}, we get $P \prec_N M^{\ovt F}$ for some finite subset $F \subset \N$. Since $M$ is strongly prime, we get $P \sim_N M^{\ovt K}$ for some subset $K \subset F$ (see the proof of \cite[Proposition D]{Is16}).
\end{proof}

\begin{example}
Let $G$ be any non inner amenable ICC group. Let $M=L(G)$ and $\sigma_0 : G \curvearrowright M$ the action by inner conjugation. Then the assumptions of $(\rm ii)$ are satisfied thanks to Proposition \ref{stable gap set}. If $G$ is hyperbolic, then $(\rm iii)$ is also satisfied \cite{Is16}.
\end{example}

\begin{example}
Let $M$ be any full $\II_1$ factor. By Theorem \ref{full stable gap}, we can find a critical family of unitaries $u_1,\dots,u_n \in \mathcal{U}(M)$ witnessing the stable spectral gap of $\Ad : \mathcal{U}(M) \curvearrowright M$. Define an action $\sigma_0 : \F_n \curvearrowright M$ of the free group $\F_n=\langle a_1,\dots,a_n \rangle$ by letting $\sigma_0(a_k)=\Ad(u_k)$ for all $k \in \{1,\dots,n\}$. Then $\sigma_0$ has stable spectral gap and $\F_n$ is non-inner amenable. Thus $N=M^{\ovt \N} \rtimes_{\sigma} \F_n$ is a full $\II_1$ factor which satisfies $N \cong N \ovt M$. Note that $\F_n$ is hyperbolic, so if $M$ is strongly prime, then property $(\rm iii)$ is also satisfied.
\end{example}

\begin{cor}
For every full (separable) $\II_1$ factor $M$, there exists a full (separable) $\II_1$ factor $N$ such that $N \cong N \ovt M$.
\end{cor}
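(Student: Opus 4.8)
The plan is to produce $N$ exactly as in the last Example above, namely as an infinite tensor power of $M$ twisted by a free-group crossed product, and then to quote the preceding theorem. First I would invoke Theorem~\ref{full stable gap}: since $M$ is a full $\II_1$ factor, the adjoint action $\Ad : \mathcal{U}(M) \curvearrowright M$ has stable spectral gap. Writing $H = \rL^2(M) \ominus \C$ and $\pi$ for the associated Koopman representation of $\mathcal{U}(M)$, this means (by the characterization recalled before Proposition~\ref{stable gap set}) that $\pi \otimes \bar\pi$ has spectral gap, so there is a finite critical family $u_1, \dots, u_n \in \mathcal{U}(M)$ with
$$\left\| \frac{1}{n} \sum_{k=1}^n \pi(u_k) \otimes \overline{\pi(u_k)} \right\| < 1 .$$
Note that necessarily $n \geq 2$, since a single unitary can never form a critical family; in particular $\F_n$ will be ICC and non-inner amenable.

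Next I would build a finitely generated group acting on $M$ by inner automorphisms with stable spectral gap. Let $\F_n = \langle a_1, \dots, a_n \rangle$ and, using freeness, define a homomorphism $v : \F_n \to \mathcal{U}(M)$ by $v(a_k) = u_k$ and set $\sigma_0 = \Ad \circ v : \F_n \curvearrowright M$. This is a (trace-preserving) action by inner automorphisms whose Koopman representation $\pi_0$ on $H$ satisfies $\pi_0(a_k) = \pi(u_k)$, so $\{a_1,\dots,a_n\}$ is a critical family for $\pi_0 \otimes \bar\pi_0$; hence, by \cite[Lemma~3.2]{Po06b}, $\sigma_0$ has stable spectral gap. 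Now put $\sigma = \sigma_0^{\ovt\N} : \F_n \curvearrowright M^{\ovt\N}$ and $N = M^{\ovt\N} \rtimes_\sigma \F_n$. This is a $\II_1$ factor (the trace on $M^{\ovt\N}$ is $\sigma$-invariant) with separable predual whenever $M$ has. Applying part~(ii) of the theorem preceding this corollary, with $G = \F_n$, gives that $N$ is full.

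Finally, for the isomorphism $N \cong N \ovt M$ I would apply part~(i) of the same theorem with the singleton $F = \{0\}$: it asserts that $M = M^{\ovt\{0\}}$ is a tensor factor of $N$ whose relative commutant is isomorphic to $N$, and hence $N \cong M \ovt N$. This finishes the proof.

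The construction is a direct quotation of the last Example, so there is no genuine obstacle; the only point requiring (routine) care is the statement inside part~(i) that the relative commutant of $M^{\ovt\{0\}}$ in $N$ is isomorphic to $N$. Here one uses that $\sigma_0$ — hence its restriction to the single tensor slot — is inner, implemented by the unitary representation $g \mapsto v(g)$: the unitary $1$-cocycle $\lambda_g \mapsto v(g) \otimes \lambda_g$ untwists the crossed product and yields $N \cong M^{\ovt\{0\}} \ovt \bigl( M^{\ovt(\N\setminus\{0\})} \rtimes_{\sigma_0^{\ovt(\N\setminus\{0\})}} \F_n \bigr)$, after which the bijection $\N \setminus \{0\} \cong \N$ identifies the second tensor factor with $N$ itself, so that $N \cong M \ovt N$.
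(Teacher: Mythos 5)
Your proof is correct and follows essentially the same route as the paper: invoke Theorem \ref{full stable gap} to extract a critical family $u_1,\dots,u_n\in\mathcal{U}(M)$, let $\F_n$ act by the inner automorphisms $\Ad(u_k)$, and apply parts $(\rm i)$ and $(\rm ii)$ of the preceding theorem to $N=M^{\ovt\N}\rtimes_\sigma\F_n$. Your additional verifications (that $n\geq 2$, and the cocycle untwisting behind part $(\rm i)$) are accurate but routine, exactly as the paper treats them.
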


Our next result provides an example of a full $\II_1$ factor $M$ such that $M \cong M \ovt M$. In fact, we will use a remarkable construction due to Meier \cite{Me82} of a \emph{finitely generated} group $G$ such that $G \cong G \times G$. Let us recall the construction of $G$. Consider first the following group 
$$ T= \langle a,b,s,t \mid ta^2t^{-1}=a^3, \: sb^2s^{-1}=b^3,\:  t=[b,sbs^{-1}],\:  s=[a,tat^{-1}] \rangle.$$
Note that $\{a,t\}$ and $\{b,s\}$ generate two copies of the Baumslag-Solitar group
$$\mathrm{BS}(2,3)=\langle a,t \mid ta^2t^{-1}=a^3 \rangle.$$
Moreover, $t$ and $[a,tat^{-1}]$ freely generate a free subgroup of rank $2$ inside $\mathrm{BS}(2,3)$. Therefore, we can think of $T$ as an amalgamated free product of two copies of $\mathrm{BS}(2,3)$ with amalgamation over $\F_2$. Next, we consider $T^\N$ the infinite product group (which is uncountable) and we embed $T \subset T^\N$ diagonally. We define an element $h=(a,a^2,a^3,a^4,\dots) \in T^\N$ and finally we let $G$ be the subgroup of $T^\N$ generated by $T$ and $h$. Then one can show that the isomorphism
\begin{gather*}T^\N \rightarrow T^\N \times T^\N\\
(x_n)_{n \in \N} \mapsto (x_{2n})_{n \in \N} \times (x_{2n+1})_{n \in \N}
\end{gather*}
sends $G$ onto $G \times G$. In particular, $G \cong G \times G$.

\begin{prop} \label{meier group}
Meier's group $G$ is non-inner amenable. In particular $M=L(G)$ is a full separable $\II_1$ factor which satisfies $M \cong M \ovt M$.
\end{prop}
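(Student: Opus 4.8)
The plan is to show that $G$ is non-inner amenable, i.e.\ that the conjugation action $\Ad : G \curvearrowright G \setminus \{1\}$ has spectral gap, and then invoke the theory recalled above (together with $G \cong G \times G$) to conclude that $M = L(G)$ is full and satisfies $M \cong M \ovt M$. For the spectral gap, the natural strategy is to use the amalgamated free product structure of the base group $T = \mathrm{BS}(2,3) \ast_{\F_2} \mathrm{BS}(2,3)$. First I would observe that a group which surjects onto a non-amenable group, or more usefully which contains a suitable non-amenable subgroup acting with spectral gap by conjugation, inherits non-inner amenability. Here the key point is that $T$ acts on its Bass--Serre tree associated to the amalgam over $\F_2$; since the two vertex groups $\mathrm{BS}(2,3)$ are not equal to the edge group $\F_2$, this action is non-elementary and $T$ is non-amenable (in fact it contains non-abelian free subgroups). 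One then checks that $T$ is ICC and non-inner amenable: this is where I would look for a critical set. A clean way is to exhibit, inside $T$, two free subgroups in ``general position'' with respect to the amalgam decomposition so that the conjugation representation restricted to them has no almost invariant vectors, using the standard ping-pong/tree-geometry estimate that $\| \frac{1}{|K|} \sum_{g \in K} \lambda(g) \| < 1$ for a suitable finite set $K$ of elements acting hyperbolically on the tree with pairwise ``transverse'' axes.

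Next I would propagate non-inner amenability from $T$ to $G$. Since $T \subset G \subset T^{\N}$ with $T$ embedded diagonally, the conjugation action of $G$ on itself restricts, on the $T$-part, to something controlled by the diagonal conjugation action of $T$ on $T^{\N}$ (which is just the $\N$-fold diagonal of $\Ad_T$, hence has spectral gap as soon as $\Ad_T$ does, by the same averaging argument used in Proposition~\ref{stable gap set}). The technical content is to verify that $G$ is ICC and that the critical set $K \subset T \subset G$ witnessing spectral gap of $\Ad_T$ on $T \setminus \{1\}$ remains a critical set for $\Ad_G$ on $G \setminus \{1\}$; concretely one decomposes $\ell^2(G \setminus \{1\})$ into orbits of $\Ad_T$ and shows each orbit carries a representation weakly contained in (a multiple of) the regular-type representation on which $K$ already has the norm bound, or is trivial (the latter forbidden once one knows $(G)' \cap G$-type centralizers are trivial, i.e.\ ICC-ness plus no finite conjugacy classes). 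The cleanest formulation: a finitely generated ICC group $G$ with a finite-index-like or diagonally embedded non-inner-amenable subgroup $T$ such that $G = \bigcup_g g T g^{-1}$-analysis is controlled — here one really just needs that the $G$-conjugation orbits refine to finite unions of $T$-conjugation orbits plus that no $G$-conjugacy class is finite.

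Once $G$ is known to be non-inner amenable and ICC, it is classical (Effros \cite{Ef73}, and the spectral-gap reformulation recalled before Theorem~\ref{full stable gap}) that $M = L(G)$ is a full $\II_1$ factor. Finally, the isomorphism $G \cong G \times G$ gives $L(G) \cong L(G) \ovt L(G)$, i.e.\ $M \cong M \ovt M$, completing the proof. Combined with Theorem~A of the paper, this $M$ is then a full separable $\II_1$ factor with $\mathrm{TF}(M)/{\sim}$ infinite (it contains all the classes $[M^{\ovt F}]$), hence a full factor without Unique Prime Factorization.

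I expect the main obstacle to be the verification that $G$ (not just $T$) is ICC and non-inner amenable: Meier's group is built as a subgroup of an uncountable product $T^{\N}$, and one must be careful that the extra diagonal element $h = (a, a^2, a^3, \dots)$ does not destroy the conjugation spectral gap or create finite conjugacy classes. The safest route is probably to reduce everything to the non-inner amenability of a single diagonally embedded copy of $T$ and to a soft argument that adding finitely many generators to an ICC non-inner-amenable group inside an ambient group keeps it ICC and non-inner amenable provided no new finite conjugacy classes appear — which can be checked directly from the structure of $T^{\N}$ since an element of $G$ with finite $G$-conjugacy class would, projecting to enough coordinates, force a finite $T$-conjugacy class, contradicting ICC-ness of $T$. (I would consult Ozawa on the sharpest available statement, as the Acknowledgments suggest the authors did.)
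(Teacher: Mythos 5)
Your overall architecture coincides with the paper's: prove $T$ non-inner amenable, propagate spectral gap through $T \subset G \subset T^{\N}$ using Proposition \ref{stable gap set}, and conclude fullness of $L(G)$ plus $M \cong M \ovt M$ from $G \cong G \times G$. The genuine gap is in your first step. Non-inner amenability of $T=\mathrm{BS}(2,3) \ast_{\F_2} \mathrm{BS}(2,3)$ is \emph{not} obtainable by the ping-pong/transverse-axes estimate you sketch: a critical set built from hyperbolic elements of the Bass--Serre tree controls (at best) the part of $\ell^2(T\setminus\{1\})$ supported on conjugacy classes of loxodromic type, but it says nothing about conjugation-invariant means concentrated on elliptic elements, i.e.\ on conjugates of the vertex groups. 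This is not a technicality: the vertex groups $\mathrm{BS}(2,3)$ are themselves ICC \emph{inner amenable} groups (Stalder), so ``contains free subgroups in general position'' cannot suffice, and ruling out means living on the vertex-group conjugates is exactly the hard point. The paper handles it by invoking Duchesne--Tucker-Drob--Wesolek \cite[Theorem 1.1]{DTW19}, which forces any conjugation-invariant mean on $T$ to be supported on the amalgam $\F_2$, and then uses non-inner amenability of $\F_2$; your proposal contains no substitute for this input.

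Your second step is essentially the paper's, but more convoluted than necessary and with some superfluous requirements. You do not need the $G$-conjugation orbits to refine into $T$-orbits, nor a separate ICC verification: it suffices to show that the conjugation action of the subgroup $T$ on $T^{\N}\setminus\{1\}$ has spectral gap, since $G\setminus\{1\}$ is a $T$-invariant subset and almost invariant vectors for $\Ad_G$ are almost invariant for $\Ad_T$ (spectral gap on $G\setminus\{1\}$ then also forbids nontrivial finite conjugacy classes, so ICC is automatic). For that spectral gap, the clean device in the paper is the ``first nontrivial coordinate'' decomposition: $\pi\colon T \curvearrowright \ell^2(T^{\N}\setminus\{1\})$ is contained in a multiple of $\pi_0 \otimes \rho$ with $\pi_0\colon T \curvearrowright \ell^2(T\setminus\{1\})$ and $\rho\colon T \curvearrowright \ell^2(T^{\N})$, and Proposition \ref{stable gap set} upgrades the spectral gap of $\pi_0$ to stable spectral gap; your phrase ``the $\N$-fold diagonal of $\Ad_T$'' misdescribes this action (coordinates may be trivial), though your appeal to Proposition \ref{stable gap set} is in the right spirit. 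The final step (fullness via Effros/Connes and $L(G)\cong L(G)\ovt L(G)$ from $G\cong G\times G$) is fine.
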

\begin{proof}
First, we observe that $T$ is non-inner amenable. Indeed, $T$ is an amalgamated free product with amalgamation over a free group of rank $2$. By \cite[Theorem 1.1]{DTW19}, we know that any conjugacy invariant mean on $T$ must be supported on the amalgam hence trivial because free groups of rank $2$ are non-inner amenable. Thus $T$ is not inner amenable.

Now, consider the action of $T$ by conjugation on $T^{\N} \setminus \{1\}$ diagonally. We claim that this action has spectral gap and this will imply a fortiori that $G$ is non-inner amenable because $T \subset G \subset T^{\N}$. Let $\pi : T \curvearrowright \ell^2(T^{\N} \setminus \{1\})$ be the associated representation. Observe that a sequence $(t_n)_{n \in \N} \in T^{\N}$ is non-trivial if and only if there exists at least one $n \in \N$ such that $t_n \neq 1$. This shows that $\pi$ is contained in a multiple of $\pi_0 \otimes \rho$ where $\pi_0 : T \curvearrowright \ell^2(T \setminus \{1 \})$ and $\rho : T \curvearrowright \ell^2(T^{\N})$. Since $T$ is non-inner amenable, $\pi_0$ has spectral gap. Thus $\pi_0 \otimes \rho$ also has spectral gap by Proposition \ref{stable gap set}. We conclude that $\pi$ has spectral gap.
\end{proof}

\appendix

\bibliographystyle{plain}

\end{document}